\numberwithin{equation}{section}
\newtheorem{theorem}{Theorem}[section]
\newtheorem{corollary}[theorem]{Corollary}
\newtheorem{lemma}[theorem]{Lemma}
\newtheorem{proposition}[theorem]{Proposition}
\newtheorem{remark}[theorem]{Remark}
\newtheorem{conge}[theorem]{Conjecture}
\newtheorem{definition}[theorem]{Definition}
\def \R {{\mathbb {R}}}
\def \N {{\mathbb N}}
\def \Z {{\mathbb Z}}
\newcommand\MS{{\rm MS}}
\newcommand\MM{\mathcal{M}}
\newcommand\HH{\mathcal{H}}
\newcommand\LL{\mathcal{L}}
\newcommand\Om{\Omega}
\newcommand{\erho}{e(\rho)}
\newcommand{\lrho}{\ell(\rho)}
\newcommand{\e}{\varepsilon}
\newcommand{\divv}{{\rm div}}
\newcommand{\diam}{{\rm diam}}
\def\Per{\mathrm{Per}}
\newcommand\res{\mathop{\hbox{\vrule height 7pt width .3pt depth 0pt
\vrule height .3pt width 5pt depth 0pt}}\nolimits}
\newcommand{\dimh}{\mathrm{dim}_{\HH}}
\newcommand{\FF}{\mathcal{F}}
\newcommand{\GG}{\mathcal{G}}
\newcommand{\CC}{\mathcal{C}}
\newcommand{\SSu}{\overline{S_u}}
\newcommand{\EE}{\mathcal{E}}
\newcommand\eps{\varepsilon}
\newcommand\Bb{\overline{B}}
\title[Fine regularity results for Mumford-Shah minimizers]{Fine regularity results 
for Mumford-Shah minimizers: porosity, higher integrability and the Mumford-Shah conjecture}
\author{Matteo Focardi}
\address{DiMaI ``U. Dini''\\
V.le Morgagni 67/a -- I-50134 -- Firenze}
\email{focardi@math.unifi.it}
\begin{document}

\begin{abstract}
{We review some classical results and more recent insights about the regularity theory for local minimizers 
of the Mumford and Shah energy and their connections with the Mumford and Shah conjecture. 
We discuss in details the links among the latter, the porosity of the jump set and the higher integrability 
of the approximate gradient. In particular, higher integrability turns out to be related with an explicit estimate 
on the Hausdorff dimension of the singular set and an energetic characterization of the conjecture itself.}

\medskip \noindent {\sc{2010 MSC.}} 49J45 ; 49Q20.

\noindent {\sc{Keywords.}} Mumford and Shah variational model, Mumford and Shah conjecture,
local minimizer, regularity, density lower bound, approximate gradient, jump set, porosity, 
higher integrability.
\end{abstract}
\maketitle
\section{Introduction}

The Mumford and Shah model is a prominent example of variational problem in image segmentation 
(see \cite{MS89}). It is an algorithm able to detect the boundaries of the contours of the 
objects in a black and white digitized image. Representing the latter by a greyscale function 
$g\in L^\infty(\Om,[0,1])$, a smoothed version of the original image is then obtained by minimizing 
the functional 
\begin{equation}\label{e:enrgcomplete}
(v,K)\to\mathscr{F}(v,K,\Om)+\gamma\int_{\Om\setminus K}|v-g|^2dx,
\end{equation}
with
\begin{equation}\label{e:F}
\mathscr{F}(v,K,\Om):=\int_{\Om\setminus K}|\nabla v|^2\,dx+\beta\,\HH^{1}(K),
\end{equation}
where $\Om\subseteq\R^2$ is an open set, $K$ is a relatively closed subset of $\Om$
with finite $\HH^{1}$ measure, $v\in C^1(\Om\setminus K)$, $\beta$ and $\gamma$ 
are nonnegative parameters to be tuned suitably according to the applications.
In our discussion we can set $\beta=1$ without loss of generality.

The role of the squared $L^2$ distance in \eqref{e:enrgcomplete} is that 
of a fidelity term in order that the output of the process is close in 
an average sense to the original input image $g$. The set $K$ 
represents the set of contours of the objects in the image, the 
length of which is kept controlled by the penalization of its $\HH^{1}$
measure to avoid over segmentation, while the Dirichlet energy of $v$ favors 
sharp contours rather than zones where a thin layer of gray is used to pass 
smoothly from white to black or vice versa. 

We stress the attention upon the fact that the set $K$ is not assigned 
a priori and it is not a boundary in general. Therefore, this problem 
is not a free boundary problem, and new ideas and techniques had to be 
developed to solve it.
Since its appearance in the late 80's to today the research on the 
Mumford and Shah problem, and on related fields, has been very active and  
different approaches have been developed. In this notes we shall focus mainly 
on that proposed by De Giorgi and Ambrosio. 
This is only due to a matter of taste of the Author and it is also dictated by understandable reasons of space.
Even more, it is not possible to be exhaustive in our (short) presentation, therefore 
we refer to the books by Ambrosio, Fusco and Pallara \cite{AFP00} and David \cite{Dav05} for the proofs of many results we shall only quote, 
for a more detailed account of the several contributions in literature, for the many connections with other 
fields and for complete lists of references (see also the recent survey \cite{Lem15} that covers several parts 
of the regularity theory that are not presented here).

Going back to the Mumford and Shah minimization problem and trying to follow the path of the Direct Method of the 
Calculus of Variations, it is clear that a weak formulation calls for a function space allowing 
for discontinuities of co-dimension $1$ in which an existence theory can be established. 
Therefore, by taking into account the structure of the energy,  De Giorgi and Ambrosio were led to 
consider the space $SBV$ of \emph{Special functions of Bounded Variation}, i.e.~the subspace of $BV$ functions with 
singular part of the distributional derivative concentrated on a $1$-dimensional 
set called in what follows the \emph{jump set} (throughout the paper we will use standard notations and results 
concerning the spaces $BV$ and $SBV$, following the book \cite{AFP00}). 

The purpose of the present set of notes is basically to resume and collect several of the regularity properties 
known at present for Mumford and Shah minimizers. More precisely, Section~\ref{s:basic} is devoted to recalling
basic facts about the functional setting of the problem and its weak formulation. The celebrated De Giorgi, Carriero 
and Leaci \cite{DGCL89} regularity result implying the equivalence between the strong and 
weak formulations, is discussed in details. In subsection~\ref{ss:dlb} we provide a recent proof by De Lellis and Focardi 
valid in the $2$d case that gives an explicit constant in the density lower bound, and in subsection~\ref{ss:monotonicity}
we discuss the almost monotonicity formula by Bucur and Luckhaus.
Next, we state the Mumford and Shah conjecture. The understanding of 
such a claim is the goal at which researchers involved in this problem are striving for.
In this perspective well-established and more recent fine regularity results on the jump set of minimizers are discussed 
in Section~\ref{s:regSu}. Furthermore, we highlight two different paths that might lead to the solution in positive of 
the Mumford and Shah conjecture: the complete characterization of blow~ups in subsection~\ref{ss:bup} and a sharp 
higher integrability of the (approximate) gradient in Theorem~\ref{t:MSconj} together with the uniqueness of blow~up limits.
In particular, we discuss in details the latter by following the ideas introduced by Ambrosio, Fusco and Hutchinson 
\cite{AFH} linking higher integrability of the gradient of a minimizer with the size of the \emph{singular set} of
the minimizer itself, i.e.~the 
subset of points of the jump set having no neighborhood in which the jump set itself is a regular curve.
 An explicit estimate shows that the bigger the integrability exponent of the gradient is, the lower the Hausdorff 
dimension of the singular set is (cf. Theorem~\ref{t:AFH1}). Pushing forward this approach, an energetic characterization 
of a slightly weaker form of the Mumford and Shah conjecture can be found beyond the scale of $L^p$ spaces 
(cf. Theorem~\ref{t:MSconj}).
In particular, the quoted estimate on the Hausdorff dimension of the full singular set reduces to the higher 
integrability property of the gradient and a corresponding estimate on a special subset of singular points: those for 
which the scaled Dirichlet energy is infinitesimal.
The latter topic is dealt with in full details in Section~\ref{s:cpt} in the setting of Caccioppoli partitions
as done by De Lellis and Focardi in \cite{DF1}. The analysis of Section~\ref{s:cpt} allowed the same Authors to 
prove the higher integrability property in $2$-dimensions as explained in Section~\ref{s:hi2}.
A different path leading to higher integrability in any dimension is to exploit the porosity of the 
jump set. This approach, due to De Philippis and Figalli \cite{DePFig14}, is the object of Section~\ref{s:hi}. 
Some preliminaries on porous sets are discussed in Section~\ref{s:porosity}. 

To conclude this introduction it is worth mentioning that the Mumford and Shah energy and the theory developed 
in order to study it, have been employed in many other fields. The applications to Fracture Mechanics, both 
in a static setting and for quasi-static irreversible crack-growth for brittle materials according to Griffith 
are important instances of that (see in particular \cite{BouFraMar}, \cite[Section 4.6.6]{AFP00} and \cite{Br06},
\cite{DMFT05}, \cite{Mie05}). It is also valuable to recall that several contributions in literature are devoted 
to the asymptotic analysis or the variational approximation of free discontinuity energies by means of 
De Giorgi's \emph{$\Gamma$-convergence} theory. We refer to the books by Braides \cite{Br98,Br02,Br06} 
for the analysis of several interesting problems arising from models in different fields (for a quick introduction 
to $\Gamma$-convergence see \cite{Foc}, for a more detailed account consult the treatise \cite{DM}).

The occasion to write this set of notes stems from the course ``\emph{Fine regularity results for Mumford-Shah 
minimizers: higher integrability of the gradient and estimates on the Hausdorff dimension of the singular set}'' 
taught by the Author in July 2014 at Centro De Giorgi in Pisa 
within the activities of the ``School on Free Discontinuity problems'', ERC Research Period on Calculus 
of Variations and Analysis in Metric Spaces. 
The material collected here covers entirely the six lectures of the course, additional topics and some more 
recent insights are also included for the sake of completeness and clarity.
It is a pleasure to acknowledge the hospitality of Centro De Giorgi and to gratefully thank N.~Fusco and A.~Pratelli, 
the organizers of the school, for their kind invitation. Let me also thank all the people in the audience for 
their attention, patience, comments and questions. In particular, the kind help of R.~Cristoferi and 
E.~Radici who read a preliminary version of these notes is acknowledged. Nevertheless, the Author 
is the solely responsible for all the inaccuracies contained in them.

\section{Existence Theory and first Regularity results}\label{s:basic}

In this section we shall overview the first basic issues of the problem. More 
generally we discuss the $n$-dimensional case, though we shall often make
specific comments related to the $2$-dimensional setting of the original 
problem (and sometimes to the $3$d case as well). We shall 
freely use the notation for $BV$ functions and Caccioppoli sets adopted 
in the book by Ambrosio, Fusco and Pallara \cite{AFP00}. We shall always 
refer to it also for the many results that we shall apply or even only 
quote without giving a precise citation.

\subsection{Functional setting of the problem}\label{ss:fsetting}

A function $v\in L^1(\Om)$ belongs to $BV(\Om)$ if and only if $Dv$ is a 
(vector-valued) Radon measure on the non empty open subset $\Om$ of $\R^n$. 
The distributional derivative of $v$ can be decomposed according to  
\[
Dv=\nabla v\,\LL^n\res\Omega+(v^+-v^-)\nu_v\,\,\HH^{n-1}\res S_v+D^cv,
\] 
where
\begin{itemize}

\item[(i)] $\nabla v$ is the density of the absolutely continuous part 
of $Dv$ with respect to $\LL^n\res\Om$ (and the \emph{approximate 
gradient} of $v$ in the sense of Geometric Measure Theory as well); 

\item[(ii)] $S_v$ is the set of \emph{approximate discontinuities} of 
$v$, an $\HH^{n-1}$-rectifiable set (so that $\LL^n(S_v)=0$) endowed 
with approximate normal $\nu_v$ for $\HH^{n-1}$ a.e. on $S_v$; 

\item[(iii)] $v^\pm$ are the \emph{approximate one-sided traces} left by $v$ $\HH^{n-1}$ a.e. on $S_v$;

\item[(iv)] $D^cv$ is the rest in the Radon-Nikodym decomposition of the singular part of $Dv$ 
after the absolutely continuous part with respect to $\HH^{n-1}\res S_v$ has been identified.
Thus, it is a singular measure both with respect to $\LL^n\res\Om$ and to $\HH^{n-1}\res S_v$
(for more details see \cite[Proposition~3.92]{AFP00}).

\end{itemize}

By taking into account the structure of the energy in \eqref{e:enrgcomplete}, only volume 
and surface contributions are penalized, so that it is natural to introduce the following 
subspace of $BV$. 
\begin{definition}[\cite{DGA}, Section 4.1~\cite{AFP00}]
$v\in BV(\Om)$ is a \emph{Special function of Bounded Variation}, 
in short \emph{$v\in SBV(\Om)$}, if $D^cv=0$, i.e.~$Dv=\nabla v\,\LL^n\res\Omega+(v^+-v^-)\nu_v\,\,\HH^{n-1}\res S_v$.
\end{definition}
No Cantor staircase type behavior is allowed for these functions. 
Simple examples are collected in the ensuing list:
\begin{itemize}
\item[(i)] if $n=1$ and $\Om=(\alpha,\beta)$, $SBV\big((\alpha,\beta)\big)$ is easily described in view of the well 
known decomposition of $BV$ functions of one variable. Indeed, any function in $SBV\big((\alpha,\beta)\big)$ is the 
sum of a $W^{1,1}\big((\alpha,\beta)\big)$ function with one of pure jump, i.e. $\sum_{i\in\N}a_i\chi_{(\alpha_i,\alpha_{i+1})}$, 
with $\alpha=\alpha_0$, $\alpha_i<\alpha_{i+1}<\beta$, $(a_i)_{i\in I}\in\ell^\infty$;

\item[(ii)]  $W^{1,1}(\Om)\subset SBV(\Om)$. Clearly, $Dv=\nabla v\,\LL^n\res\Omega$. 
In this case $\nabla v$ coincides with the usual distributional gradient;

\item[(iii)] let $(E_i)_{i\in I}$, $I\subseteq\N$ , be a \emph{Caccioppoli partition} of $\Om$, 
i.e.~$\LL^n\big(\Om\setminus\cup_iE_i\big)=0$ and $\LL^n(E_i\cap E_j)=0$ if $i\neq j$,
with the $E_i$'s sets of finite perimeter such that
\[
\sum_{i\in I}\Per(E_i)<\infty.
\]
Then, $v=\sum_{i\in I} a_i\,\chi_{E_i}\in SBV(\Om)$ if 
$(a_i)_{i\in I}\in\ell^\infty$. In this case, if $J_{\mathscr{E}}:=\cup_i\partial^\ast E_i$
denotes the \emph{set of interfaces} of $\mathscr{E}$, with $\partial^\ast E_i$ the \emph{essential boundary} 
of $E_i$, then $\HH^{n-1}(S_v\setminus J_{\mathscr{E}})=0$ and
\[
Dv=(v^+-v^-)\nu_v\,\,\HH^{n-1}\res J_{\mathscr{E}}.
\]
Functions of this type have zero approximate gradient, they are called \emph{piecewise constant} 
and form a subspace denoted by $SBV_0(\Omega)$ (cf. \cite[Theorem~4.23]{AFP00});

\item[(iv)] the function $v(\rho,\theta):=\sqrt{\rho}\cdot\sin(\sfrac\theta2)$ for 
$\theta\in(-\pi,\pi)$ and $\rho>0$ is in $SBV(B_r)$ for all $r>0$. 
In particular, $v\in SBV(B_r)\setminus\big(W^{1,1}(B_r)\oplus SBV_0(B_r)\big)$. 

\end{itemize}
A general receipt to construct interesting examples of $SBV$ functions can be obtained as follows 
(see \cite[Proposition 4.4]{AFP00}). 
\begin{proposition}\label{p:weak}
If $K\subset\Omega$ is a closed set such that $\HH^{n-1}(K)<+\infty$ and 
$v\in W^{1,1}\cap L^\infty(\Om\setminus K)$, then $v\in SBV(\Om)$ and 
\begin{equation}\label{e:ex}
\HH^{n-1}(S_v\setminus K)=0.
\end{equation}
\end{proposition}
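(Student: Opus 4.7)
\medskip

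The plan is to first extend $v$ to all of $\Om$ by setting $v\equiv 0$ on $K$; this is harmless since $\HH^{n-1}(K)<\infty$ forces $\LL^n(K)=0$, and the extension remains in $L^\infty(\Om)$. The key structural fact is that $\Om\setminus K$ is open and $v\in W^{1,1}(\Om\setminus K)$, so on this open set the distributional derivative is already an absolutely continuous measure with density $\nabla v$; the entire content of the proposition is that the singular contribution of $K$ to $Dv$ is purely a jump part with no Cantor piece and supported on $K$ up to $\HH^{n-1}$-negligible sets.

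First I would establish $v\in BV(\Om)$ via a slicing argument. For each coordinate direction $e_i$ the classical projection/coarea estimate for Hausdorff measure yields
\[
\int_{\pi_{e_i}(\Om)}\HH^0(K\cap\ell_y)\,dy\le c_n\,\HH^{n-1}(K)<\infty,
\]
so for $\LL^{n-1}$-a.e.~line $\ell_y$ parallel to $e_i$ the intersection $K\cap\ell_y$ is finite. On such a line $v|_{\ell_y}$ is absolutely continuous off this finite set and uniformly bounded, hence of bounded variation with total variation controlled by $\int_{\ell_y}|\partial_{e_i}v|+2\|v\|_\infty\HH^0(K\cap\ell_y)$. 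Integrating in $y$, summing over $i$, and invoking the slicing characterization of $BV$ (see \cite{AFP00}) yields $v\in BV(\Om)$ with
\[
|Dv|(\Om)\le \|\nabla v\|_{L^1(\Om\setminus K)}+C_n\,\|v\|_\infty\,\HH^{n-1}(K).
\]

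To upgrade this to $v\in SBV(\Om)$ together with \eqref{e:ex}, I would argue as follows. Since $\Om\setminus K$ is open and $v\in W^{1,1}(\Om\setminus K)$, the restriction $Dv\res(\Om\setminus K)$ agrees with $\nabla v\,\LL^n\res(\Om\setminus K)$, so the singular part $D^sv$ is concentrated on $K$. Because $\HH^{n-1}(K)<\infty$, Proposition~3.92 in \cite{AFP00} gives $|D^cv|(K)=0$, hence $D^cv\equiv 0$ on $\Om$. For \eqref{e:ex}, the same $W^{1,1}$-regularity on $\Om\setminus K$ forces $D^jv\res(\Om\setminus K)=0$, which via the jump decomposition $D^jv=(v^+-v^-)\nu_v\,\HH^{n-1}\res S_v$ together with the defining property $v^+\ne v^-$ on $S_v$ yields $\HH^{n-1}(S_v\cap(\Om\setminus K))=0$. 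The main obstacle I anticipate is really confined to the first step: the precise slicing inequality for $\HH^0(K\cap\ell_y)$ and the careful application of the $BV$ slicing criterion; once $v\in BV(\Om)$ is secured, everything reduces to bookkeeping with the decomposition of $Dv$ and the fine properties of $BV$ measures recorded in Proposition~3.92 of \cite{AFP00}.
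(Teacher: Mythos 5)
Your argument is correct and is essentially the standard proof of this statement: the paper itself does not prove Proposition~\ref{p:weak} but cites \cite[Proposition 4.4]{AFP00}, whose proof follows exactly your route (Eilenberg-type slicing to show a.e.~line meets $K$ finitely often, hence $v\in BV(\Om)$ with $|Dv|(\Om)\le\|\nabla v\|_{L^1}+C_n\|v\|_\infty\HH^{n-1}(K)$, then localization of $D^sv$ on $K$ and the vanishing of $|D^cv|$ on $\HH^{n-1}$-$\sigma$-finite sets from \cite[Proposition~3.92]{AFP00}). The only cosmetic point is that the jump decomposition is supported on $J_v$ rather than $S_v$, so \eqref{e:ex} also uses $\HH^{n-1}(S_v\setminus J_v)=0$, an identification the paper itself adopts throughout.
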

Clearly, property \eqref{e:ex} above is not valid for a generic
member of $SBV$, but it does for a significant class of functions:
local minimizers of the energy under consideration (see below for the definition), 
actually satisfying even a stronger property (cf. Proposition~\ref{p:strongweak}).

\subsection{Tonelli's Direct Method and Weak formulation}

The difficulty in applying the Direct Method is related to the surface term for which 
it is hard to find a topology ensuring at the same time lower semicontinuity and 
pre-compactness for minimizing sequences. Using the Hausdorff local topology requires
a very delicate study of the latter ones to rule out typical counterexamples 
as shown by Maddalena and Solimini in \cite{MadSol01b}. Here, we shall follow instead 
the original approach by De Giorgi and Ambrosio \cite{DGA}.

Keeping in mind the example in Proposition~\ref{p:weak}, the weak formulation of 
the problem under study is obtained naively by taking $K=S_v$. 
Loosely speaking in this approach the set of contours $K$ is identified 
by the (Borel) set $S_v$ of (approximate) discontinuities of the function 
$v$ that is not fixed a priori. This is the reason for the terminology 
\emph{free discontinuity} problem coined by De Giorgi.
The (weak counterpart of the) Mumford and Shah energy $\mathscr{F}$ in \eqref{e:F} 
of a function $v$ in $SBV(\Om)$ on an open subset $A\subseteq\Om$ then reads as 
\begin{equation}\label{e:E}
\mathscr{F}(v,A)=\MS(v,A)+\gamma\int_A|v-g|^2dx,
\end{equation}
where
\begin{equation}\label{e:ms}
\MS(v,A):=\int_A|\nabla v|^2dx+\HH^{n-1}(S_v\cap A).
\end{equation}
For the sake of simplicity in case $A=\Om$ we drop the dependence on the set of integration.

In passing, we note that, the class $\{v\in BV(\Om):\, Dv=D^cv\}$ of Cantor type functions 
is dense in $BV$ w.r.to the $L^1$ topology, thus it is easy to infer that 
\[
 \inf_{BV(\Om)}\mathscr{F}=0,
\]
so that the restriction to $SBV$ is needed in order not to trivialize the problem.

Ambrosio's $SBV$ closure and compactness theorem (see 
\cite[Theorems 4.7 and 4.8]{AFP00}) ensures the existence of a
minimizer of $\mathscr{F}$ on $SBV$. 
\begin{theorem}[Ambrosio~\cite{Amb89}]\label{t:luigi}
 Let $(v_j)_j\subset SBV(\Om)$ be such that
 \[
  \sup_j\big(\MS(v_j)+\|v_j\|_{L^\infty(\Om)}\big)<\infty,
 \]
then there exists a subsequence $(v_{j_k})_k$ and a function $v\in SBV(\Om)$
such that $v_{j_k}\to v$ $L^p(\Om)$, for all $p\in[1,\infty)$.

Moreover, we have the separated lower semicontinuity estimates
\begin{equation}\label{e:scivol}
\int_\Om|\nabla v|^2dx\leq \liminf_k\int_\Om|\nabla v_{j_k}|^2dx
\end{equation}
and
\begin{equation}\label{e:scisurf}
\HH^{n-1}(S_v) \leq\liminf_k\HH^{n-1}(S_{v_{j_k}}).
\end{equation}
\end{theorem}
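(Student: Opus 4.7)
The strategy is the classical one: use the $L^\infty$ bound together with the uniform $\MS$ bound to produce a $BV$-compactness subsequence, and then reduce both separate lower semicontinuity estimates (and the $SBV$ property of the limit) to their elementary one-dimensional analogues via slicing.

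First, Cauchy-Schwarz combined with the $L^\infty$ bound yields
\[
|Dv_j|(\Om)\le|\Om|^{1/2}\Bigl(\int_\Om|\nabla v_j|^2\,dx\Bigr)^{1/2}+2\sup_j\|v_j\|_{L^\infty(\Om)}\,\HH^{n-1}(S_{v_j})\le C,
\]
so classical $BV$ compactness extracts $v_{j_k}\to v$ in $L^1(\Om)$ with $v\in BV(\Om)$. Since $v\in L^\infty(\Om)$ with the same bound, interpolation upgrades the convergence to $L^p(\Om)$ for every $p\in[1,\infty)$.

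Next, for a unit vector $\xi\in\R^n$ and $y\in\xi^\perp$ denote by $v^{\xi,y}$ the restriction of $v$ to the line through $y$ in direction $\xi$. Fubini furnishes the two slicing identities
\[
\int_{\xi^\perp}\!\int|(v_{j_k}^{\xi,y})'|^2\,dt\,d\HH^{n-1}(y)=\int_\Om|\nabla v_{j_k}\cdot\xi|^2\,dx,
\]
\[
\int_{\xi^\perp}\#\bigl(S_{v_{j_k}^{\xi,y}}\bigr)\,d\HH^{n-1}(y)=\int_{S_{v_{j_k}}}|\nu_{v_{j_k}}\cdot\xi|\,d\HH^{n-1},
\]
and the $L^1$ convergence $v_{j_k}\to v$ in $\Om$ descends, via Fubini and a diagonal argument, to $v_{j_k}^{\xi,y}\to v^{\xi,y}$ in $L^1$ for $\HH^{n-1}$-a.e.\ $y\in\xi^\perp$. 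In one variable the following is elementary: an $L^1$-limit of uniformly bounded $SBV$ functions $u_k$ with $\int(u_k')^2\,dt+\#(S_{u_k})$ uniformly bounded is itself $SBV$ with no Cantor part, and the two energies $\int(u_k')^2\,dt$ and $\#(S_{u_k})$ are separately lower semicontinuous along the convergence. Applying this on a.e.\ slice, integrating in $y$ via Fatou, and invoking the $SBV$-by-slices characterization (as in \cite[Theorem~3.108]{AFP00}) gives $D^cv=0$ together with
\[
\int_\Om|\nabla v\cdot\xi|^2\,dx\le\liminf_k\int_\Om|\nabla v_{j_k}\cdot\xi|^2\,dx,
\]
\[
\int_{S_v}|\nu_v\cdot\xi|\,d\HH^{n-1}\le\liminf_k\int_{S_{v_{j_k}}}|\nu_{v_{j_k}}\cdot\xi|\,d\HH^{n-1}.
\]
A further Fatou integration over unit vectors $\xi$, using that $|\nabla v|^2$ is a constant multiple of the sphere-average of $|\nabla v\cdot\xi|^2$ and the integral-geometric formula expressing $\HH^{n-1}(S_v)$ as a constant times the sphere-integral of the right-hand side, finally produces \eqref{e:scivol} and \eqref{e:scisurf}.

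The genuine obstacle is not the Fubini bookkeeping but the fact that, for mere weak convergence of $BV$ measures, there is no soft reason why jumps of the $v_{j_k}$ could not diffuse into a Cantor contribution in the limit, nor why $\HH^{n-1}(S_\cdot)$ should be lower semicontinuous under $L^1$ convergence. The elementary one-dimensional closure, transported to dimension $n$ through the $SBV$-by-slices characterization, is precisely the step that defeats these two obstructions at once and constitutes the genuine content of the theorem.
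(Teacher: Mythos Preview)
The paper does not supply a proof of this theorem: it is stated with a reference to \cite[Theorems~4.7 and~4.8]{AFP00} and then only commented upon. Your sketch is the classical slicing proof, which is precisely the argument carried out in \cite{AFP00}; so there is no genuine divergence of approach to compare.

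Your outline is correct in its architecture and in identifying the one-dimensional closure as the heart of the matter. A few points deserve tightening if you want a complete argument rather than a plan. First, the bound $|Dv_j|(\Om)\le|\Om|^{1/2}\bigl(\int|\nabla v_j|^2\bigr)^{1/2}+2\|v_j\|_{L^\infty}\HH^{n-1}(S_{v_j})$ tacitly assumes $\LL^n(\Om)<\infty$; this is harmless in the setting of the notes but should be said. Second, the passage ``$v_{j_k}^{\xi,y}\to v^{\xi,y}$ in $L^1$ for $\HH^{n-1}$-a.e.\ $y$'' requires a further subsequence (depending on $\xi$) unless you arrange $\|v_{j_k}-v\|_{L^1}$ to be summable; since you then want the directional lower semicontinuity along the \emph{fixed} sequence $(v_{j_k})_k$, you must argue via the usual ``take a subsequence realizing the $\liminf$'' device before slicing. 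Third, the one-dimensional closure applies on a slice only when the slice energies are bounded along some subsequence; you get this for a.e.\ $y$ from Fatou applied to $y\mapsto E_k(y)$, but it is worth making explicit. None of these are gaps in the idea, only in the bookkeeping you yourself flag as routine.
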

Ambrosio's theorem is the natural counterpart of Rellich-Kondrakov theorem in 
Sobolev spaces. Indeed, for Sobolev functions, it reduces essentially to that statement 
provided that an $L^p$ rather than an $L^\infty$ bound is assumed. More generally, 
Ambrosio's theorem holds true in the bigger space $GSBV$. 
In particular, \eqref{e:scivol} and \eqref{e:scisurf} display a separate lower
semicontinuity property for the two terms of the energy in a way that the two 
terms cannot combine to create neither a contribution for the other nor a Cantor 
type one.

By means of the chain rule formula for $BV$ functions one can prove that 
the functional under consideration is decreasing under truncation, i.e.~for all $k\in\N$
\[
 \mathscr{F}(\tau_k(v))\leq \mathscr{F}(v)\quad \forall v\in SBV(\Om),
\]
if $\tau_k(v):=(v\wedge k)\vee(-k)$. 

Therefore, being $g\in L^\infty(\Omega)$, we can always restrict ourselves to 
minimize it over the ball in $L^\infty(\Omega)$ of radius $\|g\|_{L^\infty(\Omega)}$.
In conclusion, Theorem~\ref{t:luigi} always provides the existence of a (global) minimizer
for the weak formulation of the problem.

Once the existence has been checked, necessary conditions satisfied by minimizers
are deduced. Supposing $g\in C^1(\Om)$, by means of internal variations, i.e.~constructing competitors to 
test the minimality of $u$ by composition with diffeomorphisms of $\Omega$ arbitrarily 
close to the identity of the type $\mathrm{Id}+\eps\,\phi$, the Euler-Lagrange equation 
takes the form 
\begin{multline}\label{e:EL}
\int_{\Om\setminus S_u}\Big(\big(|\nabla u|^2+\gamma(u-g)^2\big)\mathrm{div}\phi
-2\langle\nabla u,\nabla u\cdot\nabla\phi\rangle-2\gamma\,(u-g)\langle\nabla g,\phi\rangle\Big)dx\\
+\int_{S_u}\mathrm{div}^{S_u}\phi\,d\HH^{n-1}=0
\end{multline}
for all $\phi\in C^1_c(\Om,\R^n)$, $\mathrm{div}^{S_u}\phi$ denoting the tangential divergence 
of the field $\phi$ on $S_u$ (cf. \cite[Theorem~7.35]{AFP00}).

Instead, by using outer variations, i.e.~range perturbations of the type $u+\eps (v-u)$ for $v\in SBV(\Om)$ 
such that $\mathrm{spt}(u-v)\subset\hskip-0.125cm\subset\Om$ and $S_v\subseteq S_u$,
we find 
\begin{equation}\label{e:ELo}
 \int_{\Omega}\big(\langle\nabla u,\nabla(v-u)\rangle+\gamma\,(u-g)\,(v-u)\big)\,dx=0.
 \end{equation}

\subsection{Back to the strong formulation: the density lower bound}\label{ss:dlb}

Existence of minimizers for the strong formulation of the problem is 
obtained via a regularity property enjoyed by (the jump set of) the 
minimizers of the weak counterpart. The results obtained in this framework 
will be instrumental also to establish way much finer regularity properties 
in the ensuing sections.

We start off analyzing the scaling of the energy in order to understand the 
local behavior of minimizers. This operation has to be done with some care 
since the volume and length terms in $\MS$ scale differently under affine 
change of variables of the domain. Let $v\in SBV(B_\rho(x))$, set
\begin{equation}\label{e:blowup}
v_{x,\rho}(y):=\rho^{-1/2}v(x+\rho\,y),
\end{equation}
then $v_{x,\rho}\in SBV(B_1)$, with  
\[
\MS(v_{x,\rho},B_1)=\rho^{1-n}\MS(v,B_\rho(x))
\]
and
\[
\int_{B_1}|v_{x,\rho}-g_{x,\rho}|^2dz=\rho^{-1-n}\int_{B_\rho(x)}|v-g|^2dy.
\]
Thus, 
\[
{\rho^{1-n}}\Big(\MS(v,B_\rho(x))+\int_{B_\rho(x)}|v-g|^2dz\Big)=
\MS(v_{x,\rho},B_1)+\rho^2\int_{B_1}|v_{x,\rho}-g_{x,\rho}|^2dy.
\]
By taking into account that $g\in L^\infty$ and that along the minimization 
process we are actually interested only in functions satisfying the bound
$\|v\|_{L^\infty(\Om)}\leq \|g\|_{L^\infty(\Om)}$, we get 
\[
 \rho^2\int_{B_1}|v_{x,\rho}-g_{x,\rho}|^2dy\leq 2\rho\,\|g\|^2_{L^\infty(\Om)}=O(\rho)\qquad\rho\downarrow 0.
\]
This calculation shows that, at the first order, the leading term in the energy $\mathscr{F}$ computed 
on $B_\rho(x)$ is that related to the $\MS$ functional, the other being a contribution of 
higher order that can be neglected in a preliminary analysis. 
 
Motivated by this, we introduce a notion of minimality involving only the
leading part of the energy. This corresponds to setting $\gamma=0$ in the definition 
of $\mathscr{F}$ (cf. \eqref{e:E}).
\begin{definition}
A function $u\in SBV(\Om)$ with $\MS(u)<\infty$\footnote{The finite energy condition is actually not needed
due to the local character of the notion introduced, it is assumed only for the sake of simplicity.} 
is a local minimizer of $\MS$ if
\[
\MS(u)\leq \MS(v)\quad \text{ whenever } \{v\neq u\}\subset\hskip-0.125cm\subset\Om.
\]
\end{definition}
In what follows, $u$ will always denote a local minimizer of $\MS$ unless otherwise stated, 
and the class of all local minimizers shall be denoted by $\MM(\Om)$.
Actually, we shall often refer to local minimizers simply as 
minimizers if no confusion can arise.
In particular, regularity properties for minimizers of the whole energy can be obtained by 
perturbing the theory developed for local minimizers
(see for instance Corollary~\ref{c:qmin} and Theorem~\ref{t:monform} below).

Harmonic functions with small oscillation are minimizers as a simple consequence of \eqref{e:ELo}. 
\begin{proposition}[Chambolle, see Proposition~6.8 \cite{AFP00}]
 If $u$ is harmonic in $\Omega^\prime$, then $u\in\MM(\Omega)$, for all 
 $\Omega\subset\hskip-0.125cm\subset\Omega^\prime$, provided
 \begin{equation}\label{e:harmMM}
 \big(\sup_\Om u-\inf_\Om u\big)\|\nabla u\|_{L^\infty(\Omega)}\leq 1. 
 \end{equation}
\end{proposition}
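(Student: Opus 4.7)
The approach is to compare $\MS(u)$ with $\MS(v)$ for an arbitrary competitor $v\in SBV(\Om)$ satisfying $\{v\neq u\}\subset\hskip-0.125cm\subset\Om$, by expanding the difference and exploiting the harmonicity of $u$ through an integration by parts that concentrates the cross term on $S_v$. As a preliminary reduction I would use that $\MS$ decreases under truncation between constants (a direct consequence of the $BV$ chain rule, already invoked in the existence theory above) to replace $v$ by its truncation at the levels $\inf_\Om u$ and $\sup_\Om u$; since $u$ itself lies in this range, the condition $\{v\neq u\}\subset\hskip-0.125cm\subset\Om$ is preserved.

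Setting $w:=v-u\in SBV(\Om)$, which is compactly supported in $\Om$, the smoothness of $u$ gives $\nabla v=\nabla u+\nabla w$ a.e., $S_v=S_w$, $[v]=[w]$ and $\nu_v=\nu_w$, hence
\begin{equation*}
\MS(v)-\MS(u)=\int_\Om|\nabla w|^2\,dx+2\int_\Om \nabla u\cdot\nabla w\,dx+\HH^{n-1}(S_w).
\end{equation*}
The heart of the argument is to rewrite the cross term using the $SBV$ integration-by-parts formula. Testing the distributional identity for $Dw$ component-wise against $\partial_i u\in C^\infty(\Om)$ (legitimate since $w$ is compactly supported in $\Om$), summing in $i$, and invoking $\Delta u=0$, one obtains
\begin{equation*}
\int_\Om \nabla u\cdot\nabla w\,dx=-\int_{S_w}[w]\,\nabla u\cdot\nu_w\,d\HH^{n-1}.
\end{equation*}

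It remains to control this jump integral by the surface energy. The preliminary truncation yields $|[w]|=|[v]|\le \sup_\Om u-\inf_\Om u$ pointwise $\HH^{n-1}$-a.e.\ on $S_w$, and bounding $|\nabla u\cdot\nu_w|\le\|\nabla u\|_{L^\infty(\Om)}$ gives
\begin{equation*}
\left|2\int_{S_w}[w]\,\nabla u\cdot\nu_w\,d\HH^{n-1}\right|\le 2\bigl(\sup_\Om u-\inf_\Om u\bigr)\|\nabla u\|_{L^\infty(\Om)}\,\HH^{n-1}(S_w),
\end{equation*}
so that
\begin{equation*}
\MS(v)-\MS(u)\ge \int_\Om|\nabla w|^2\,dx+\bigl(1-2(\sup_\Om u-\inf_\Om u)\|\nabla u\|_{L^\infty(\Om)}\bigr)\HH^{n-1}(S_w).
\end{equation*}
This establishes minimality under the slightly stronger condition $(\sup_\Om u-\inf_\Om u)\|\nabla u\|_{L^\infty(\Om)}\le \sfrac12$, and the main obstacle is recovering the sharp constant $1$ of \eqref{e:harmMM}. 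Plausible sharpenings are a more refined, one-sided truncation using the levels $\tfrac12(\sup_\Om u+\inf_\Om u)\pm\tfrac12(\sup_\Om u-\inf_\Om u)$ combined with a pointwise estimate $|w^\pm|\le\tfrac12(\sup_\Om u-\inf_\Om u)$, or a partition of $S_w$ according to the sign of $[w]\,\nabla u\cdot\nu_w$ and a Young-inequality absorption into the nonnegative quadratic term $\int|\nabla w|^2\,dx$. The remaining technical point is the rigorous justification of the $SBV$ integration-by-parts formula against a smooth (not compactly supported) test field, which follows from the decomposition $Dw=\nabla w\,\LL^n\res\Om+[w]\nu_w\,\HH^{n-1}\res S_w$ together with the compact support of $w$ in $\Om$.
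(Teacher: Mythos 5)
Your computation is clean as far as it goes, but it proves the proposition only under the hypothesis $(\sup_\Om u-\inf_\Om u)\|\nabla u\|_{L^\infty(\Om)}\le \sfrac12$, and neither of the two ``plausible sharpenings'' you sketch closes the factor of $2$. The one-sided truncation changes nothing: after truncating $v$ to $[\inf_\Om u,\sup_\Om u]$ the only available pointwise bound is $|[v]|\le \sup_\Om u-\inf_\Om u$, exactly what you already used. The Young-inequality absorption also cannot work for an \emph{arbitrary} competitor, because the quadratic term $\int_\Om|\nabla(v-u)|^2dx$ is a volume integral and the cross term is a surface integral over $S_v$ with no a priori relation between the two; there is no pointwise or integral inequality trading one for the other for general $v$. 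So as written the argument has a genuine gap: the stated condition \eqref{e:harmMM} is not reached.

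The paper's device is precisely the missing idea, and it is a reduction, not an estimate: one does not test against an arbitrary $v$, but only against the minimizer $w$ of the auxiliary Dirichlet problem $\min\{\MS(\zeta):\zeta\in SBV(\Om),\ \zeta=u$ on $\Om\setminus A\}$ (existence via Ambrosio's compactness plus truncation, which also gives $\inf_\Om u\le w\le\sup_\Om u$). Since $\MS(w)\le\MS(v)$ for every admissible $v$, it suffices to prove $\MS(u)\le\MS(w)$. The point is that $w$ satisfies the outer-variation identity \eqref{e:ELo}, i.e.\ $\int_\Om\langle\nabla w,\nabla(u-w)\rangle\,dx=0$, which is exactly the orthogonality that kills half of your cross term: it gives $\int_\Om|\nabla u|^2dx-\int_\Om|\nabla w|^2dx=\int_\Om\langle\nabla u,\nabla(u-w)\rangle\,dx$, a \emph{single} cross term. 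Your integration by parts (applied to $u-w$ instead of $v-u$, using $\Delta u=0$ and the compact support of $u-w$) then converts this into $-\int_{S_w}\langle\nabla u,\nu_w\rangle(w^+-w^-)\,d\HH^{n-1}$, which is bounded by $(\sup_\Om u-\inf_\Om u)\|\nabla u\|_{L^\infty(\Om)}\HH^{n-1}(S_w)\le\HH^{n-1}(S_w)$ under \eqref{e:harmMM}. In other words, for the optimal competitor the quadratic term $\int_\Om|\nabla(u-w)|^2dx$ exactly cancels half of the doubled cross term appearing in your expansion; this is the rigorous version of the absorption you were hoping for, but it is only available once you restrict to the energy minimizer with boundary datum $u$.
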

\begin{proof}
Let $A\subset\hskip-0.125cm\subset\Om$. By Theorem~\ref{t:luigi} it is easy to show 
the existence of a minimizer $w\in SBV(\Om)$ of the Dirichlet problem
$\min\left\{\MS(v):\, v\in SBV(\Om),\, v=u\text{ on $\Om\setminus A$}\right\}$.
Moreover, by truncation $\inf_\Om u\leq w\leq\sup_\Om u$ $\LL^n$ a.e. on $\Om$.

By the arbitrariness of $A$, the local minimality of $u$ follows provided we show that 
$\MS(u,\Om)\leq\MS(w,\Om)$.
To this aim, we use the Euler-Lagrange condition \eqref{e:ELo} with $\gamma=0$, namely                                                                    
\[
 \int_{\Omega}\langle\nabla w,\nabla(u-w)\rangle\,dx=0\Longleftrightarrow
 \int_{\Omega}|\nabla w|^2\,dx=\int_{\Omega}\langle\nabla w,\nabla u\rangle\,dx,
 \]
to get
\begin{multline*}
 \MS(u,\Om)\leq\MS(w,\Om)\Longleftrightarrow
 \int_\Om\langle\nabla u,\nabla(u-w)\rangle\,dx\leq\HH^{n-1}(S_w)\\
 \Longleftrightarrow\int_\Om\nabla u\cdot\,dD(u-w)-\int_{S_w}\langle\nabla u,\nu_w\rangle(w^+-w^-)d\HH^{n-1}
 \leq\HH^{n-1}(S_w).
 \end{multline*}
An integration by parts, the harmonicity of $u$ and the equality $w=u$ on $\Om\setminus A$ give
\[
 \int_\Om\nabla u\cdot\,dD(u-w)=-\int_\Om (u-w)\triangle u\,dx=0,
\]
and therefore 
\[
 \MS(u,\Om)\leq\MS(w,\Om)\Longleftrightarrow
 -\int_{S_w}\langle\nabla u,\nu_w\rangle(w^+-w^-)d\HH^{n-1}
 \leq\HH^{n-1}(S_w).
\]
The conclusion follows from condition \eqref{e:harmMM} as $\inf_\Om u\leq w\leq\sup_\Om u$ $\LL^n$ 
a.e. on $\Om$.
\end{proof}
By means of the slicing theory in $SBV$, i.e.~the characterization of $SBV$ via restrictions to lines, one can also prove that
\emph{pure jumps}, i.e.~functions as
\begin{equation}\label{e:purejumps}
a\chi_{\{\langle x-x_o,\nu\rangle>0\}}+b\chi_{\{\langle x-x_o,\nu\rangle<0\}}
\end{equation}
for $a$ and $b\in\R$ and $\nu\in\mathbb{S}^{n-1}$, are local minimizers as well 
(cf. \cite[Proposition~6.8]{AFP00}]). Further examples shall be discussed in what follows
(cf. subsection~\ref{ss:MSconj}).

As established in \cite{DGCL89} in all dimensions (and proved alternatively 
in \cite{DMMS92} and \cite{Dav96} in dimension two), if $u\in\MM(\Om)$ then 
the pair $(u,\Om\cap\SSu)$ is a minimizer of $\mathscr{F}$ for $\gamma=0$.
The main point is the identity $\HH^{n-1}(\Om\cap\SSu\setminus S_u)=0$, 
which holds for every $u\in \MM (\Omega)$. 
The groundbreaking paper \cite{DGCL89} proves this identity via the following 
\emph{density lower bound} estimate (see \cite[Theorem 7.21]{AFP00}).
\begin{theorem}[De Giorgi, Carriero and Leaci \cite{DGCL89}]\label{t:DGCL}
There exist dimensional constants $\theta,\,\varrho>0$ such that for every $u\in\MM(\Om)$
\begin{equation}\label{e:dgcl0}
\MS(u, B_r(z))\geq\theta\,r^{n-1} 
\end{equation}
for all $z\in\Om\cap\SSu$, and all $r\in(0,\varrho\wedge\mathrm{dist}(z,\partial\Om))$. 
\end{theorem}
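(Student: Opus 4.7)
The plan is the classical blow-up/contradiction scheme of De~Giorgi, Carriero and Leaci. Assume by contradiction that the statement fails: then for every $j\in\N$ one finds $u_j\in\MM(\Om_j)$, a point $z_j\in\Om_j\cap\SSu[j]$ (abuse of notation: $\overline{S_{u_j}}$) and a radius $r_j<\mathrm{dist}(z_j,\partial\Om_j)$ such that
\[
\MS(u_j, B_{r_j}(z_j))<j^{-1}\,r_j^{n-1}.
\]
Apply the scaling \eqref{e:blowup} centered at $z_j$ to get $v_j(y):=r_j^{-1/2}(u_j(z_j+r_j y)-c_j)\in\MM(B_1)$, where $c_j$ is a median of $u_j$ on $B_{r_j}(z_j)$. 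Then $0\in\overline{S_{v_j}}$ and $\MS(v_j, B_1)\to 0$.

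Next I would apply Ambrosio's compactness Theorem~\ref{t:luigi}. After controlling $\|v_j\|_{L^2(B_{1/2})}$ via an $SBV$-Poincar\'e inequality (valid once $\HH^{n-1}(S_{v_j})$ is small) and truncating to enforce an $L^\infty$ bound without increasing the energy, one extracts a subsequence with $v_j\to v_\infty$ in $L^2(B_{1/2})$, $v_\infty\in SBV(B_{1/2})$. The separated lower-semicontinuity estimates \eqref{e:scivol}--\eqref{e:scisurf} together with $\MS(v_j,B_1)\to 0$ yield $\nabla v_\infty\equiv 0$ and $\HH^{n-1}(S_{v_\infty})=0$, so $v_\infty$ is a constant (equal to $0$ after the normalization by $c_j$).

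At this stage the delicate point is that the $L^2$ vanishing of $v_\infty$ is a priori compatible with $0\in\overline{S_{v_j}}$ for every $j$: the jump sets may accumulate at the origin while disappearing only in the limit. To rule this out I would use a one-step harmonic replacement. Via a Fubini/coarea selection choose a good radius $\rho\in(1/2, 3/4)$ along which the slice of $S_{v_j}$ on $\partial B_\rho$ is negligible and the boundary Dirichlet integral is controlled by $\MS(v_j,B_1)$. Let $w_j$ be the harmonic extension to $B_\rho$ of the trace of $v_j$ on $\partial B_\rho$, extended by $v_j$ outside. Testing minimality of $v_j$ against a competitor built from $w_j$, combined with the $SBV$-Poincar\'e-type inequality of \cite{DGCL89} (which allows one to erase $S_{v_j}\cap B_\rho$ at a surface cost bounded by $\HH^{n-1}(S_{v_j}\cap B_\rho)$ itself), one concludes $\HH^{n-1}(S_{v_j}\cap B_{1/2})=0$ for $j$ large. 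This contradicts $0\in\overline{S_{v_j}}$ and closes the argument.

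The main obstacle is precisely this last step: producing a harmonic competitor that is admissible without paying an extra surface price, so that all of $\MS(v_j,B_1)$ can be consumed in the estimate. The key technical tool is the $SBV$-Poincar\'e-type inequality for functions with small jump set, which quantifies how such a jump set can be eliminated at a controlled cost; this inequality is the true technical heart of \cite{DGCL89}. In the $2d$ setting mentioned in subsection~\ref{ss:dlb}, it is bypassed by the sharper slicing-based approach of De~Lellis--Focardi, which also produces an explicit constant $\theta$.
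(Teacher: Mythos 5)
Your setup (contradiction hypothesis, rescaling to $v_j\in\MM(B_1)$ with $\MS(v_j,B_1)\to 0$ and $0\in\overline{S_{v_j}}$, truncation plus the $SBV$ Poincar\'e--Wirtinger inequality to make Ambrosio's compactness applicable, constant limit) is exactly the skeleton of the De Giorgi--Carriero--Leaci argument as recalled after Proposition~\ref{p:strongweak}. The gap is in your closing step, and it is twofold. First, the ``one-step harmonic replacement'' does not produce the contradiction you claim: testing minimality against the harmonic extension on a good sphere $\partial B_\rho$ yields $\int_{B_\rho}|\nabla v_j|^2+\HH^{n-1}(S_{v_j}\cap B_\rho)\le\int_{B_\rho}|\nabla w_j|^2$, whose right-hand side is small but has no reason to vanish; you obtain only what you already knew, namely that $\HH^{n-1}(S_{v_j}\cap B_{1/2})\to 0$, not that it equals zero, and smallness of the jump measure at a single scale is perfectly compatible with $0\in\overline{S_{v_j}}$. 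The argument must instead produce a decay estimate of the form $\MS(v_j,B_t(0))\leq C\,t^{n-1+\gamma}$ valid at \emph{all} scales $t\downarrow 0$ by iteration, and then convert this into $0\notin\overline{S_{v_j}}$ via a measure-theoretic elimination step: the set of points with vanishing scaled energy is open, it meets the rectifiable set $S_{v_j}$ in an $\HH^{n-1}$-null set by the density estimates of Lemma~\ref{l:de} (a.e.\ point of $S_{v_j}$ has $(n-1)$-density one), hence $v_j$ is harmonic there and the set is disjoint from $\overline{S_{v_j}}$. This is precisely the ``Step 2 plus Conclusion'' structure in the proof of Theorem~\ref{t:maindlb}, and it cannot be skipped.

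Second, in dimension $n\ge 3$ the harmonic replacement itself is not a routine Fubini selection: a good radius only controls $\HH^{n-2}(S_{v_j}\cap\partial B_\rho)$ and the tangential Dirichlet integral, so the trace is an $SBV$ function on the sphere rather than an $H^1$ function, and one cannot take its harmonic extension with the energy bound you invoke. Removing the jump of the trace at a controlled surface cost is the content of Lemmas~\ref{l:SBVbdry} and \ref{l:Harmbdry}, which are proved by a delicate cyclic induction on the dimension (Bucur--Luckhaus); it is not a consequence of the $SBV$ Poincar\'e inequality, whose role is different (it supplies the integral bounds needed for compactness and for identifying blow-up limits). The classical De Giorgi--Carriero--Leaci proof avoids spherical replacement altogether: the comparison with harmonic functions happens inside the compactness argument (a decay lemma proved by contradiction, transferring the $t^{n}$ decay of the Dirichlet energy of the harmonic limit back to $v_j$ for large $j$), followed by the iteration and elimination described above. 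Either route can be made to work, but your sketch as written implements neither.
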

Building upon the same ideas, in \cite{CL90} it is proved a slightly more precise result 
(see again \cite[Theorem 7.21]{AFP00}).
\begin{theorem}[Carriero and Leaci \cite{CL90}]\label{t:CL}
There exists a dimensional constant $\theta_0,\,\varrho_0>0$ such that for every $u\in\MM(\Om)$
\begin{equation}\label{e:dgcl}
\HH^{n-1}(S_u\cap B_r(z))\geq\theta_0\,r^{n-1} 
\end{equation}
for all $z\in \Om\cap\SSu$, and all $r\in(0,\varrho_0\wedge\mathrm{dist}(z,\partial\Omega))$.
\end{theorem}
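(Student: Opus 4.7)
The plan is to proceed by contradiction, bootstrapping Theorem~\ref{t:DGCL} via a blow-up argument. Assuming the statement fails, for each $k$ one gets $u_k \in \MM(\Omega_k)$, a point $z_k \in \Omega_k \cap \overline{S_{u_k}}$, and a radius $r_k \leq \varrho \wedge \mathrm{dist}(z_k, \partial\Omega_k)$ with $\HH^{n-1}(S_{u_k} \cap B_{r_k}(z_k)) \leq k^{-1} r_k^{n-1}$. Rescaling via $v_k(y) := r_k^{-1/2} u_k(z_k + r_k y)$ (which preserves local minimality, as $\MS$ is scale-covariant with weight $n-1$) produces $v_k \in \MM(B_1)$ with $0 \in \overline{S_{v_k}}$ and $\varepsilon_k := \HH^{n-1}(S_{v_k} \cap B_1) \to 0$. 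Applying Theorem~\ref{t:DGCL} to $v_k$ at the origin at every scale $s \in (0,\varrho)$ yields the Dirichlet lower bound
\[
\int_{B_s} |\nabla v_k|^2 \, dy \geq \theta\, s^{n-1} - \varepsilon_k,
\]
so the Dirichlet energy concentrates at $0$ and remains uniformly above $\theta s^{n-1}/2$ for large $k$.

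Next the goal is to pass to the limit in $v_k$. After truncation plus subtraction of local medians (to secure a uniform $L^\infty$ bound via a suitable SBV Poincaré inequality, cf.\ \cite[Sect.~4.4]{AFP00}), Ambrosio's compactness Theorem~\ref{t:luigi} yields a subsequential $L^1_{\mathrm{loc}}$-limit $v \in SBV(B_1)$. Since $\HH^{n-1}(S_{v_k} \cap B_1) \to 0$ while the surface measure is lower semicontinuous by \eqref{e:scisurf}, the limit satisfies $\HH^{n-1}(S_v) = 0$, whence $v \in W^{1,2}_{\mathrm{loc}}(B_1)$. The key structural step is to show that $v$ is itself a local minimizer of $\MS$ on $B_1$: this is achieved by a gluing construction in which any SBV competitor for $v$ is lifted to an SBV competitor for $v_k$ across a thin annulus whose radii are selected by Fubini so that the inner and outer traces of $v_k$ match $\HH^{n-1}$-a.e., with the interpolation error vanishing as $k \to \infty$. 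Since $v$ has no jumps, testing the resulting minimality against $W^{1,2}$ perturbations forces $\Delta v = 0$ on $B_1$.

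Harmonicity of $v$ and interior regularity then yield $s^{1-n}\int_{B_s} |\nabla v|^2 \to 0$ as $s \to 0$. To close the contradiction, one needs strong $H^1_{\mathrm{loc}}$ convergence of $v_k$ to $v$, which is not automatic from the SBV compactness since weak $L^2$ convergence of $\nabla v_k$ only gives lower semicontinuity in the unhelpful direction. The standard upgrade is to test the minimality of $v_k$ against the harmonic limit $v$ itself (restricted to a good radius), producing $\limsup_k \int_{B_s} |\nabla v_k|^2 \leq \int_{B_s} |\nabla v|^2$, which combined with weak lower semicontinuity yields energy equality and hence strong convergence. Passing to the limit in the $v_k$-lower bound then gives $\int_{B_s} |\nabla v|^2 \geq \theta s^{n-1}/2$, contradicting the harmonic decay for $s$ small enough.

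The principal obstacle is the transfer of local $\MS$-minimality from $v_k$ to $v$. Because the sets $S_{v_k}$ shrink but may do so unevenly in space, the gluing construction requires a careful Fubini selection of good annular radii and a quantitative control of the interpolation energy in the transition region, which ultimately relies on the SBV Poincaré inequality. All other steps, including the extraction of limits, the identification of $v$ as harmonic, and the final energy contradiction, become essentially routine once the transfer of minimality is secured.
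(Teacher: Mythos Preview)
Your proposal is correct and follows essentially the same blow-up/contradiction scheme that the paper attributes to \cite{DGCL89,CL90} (and refers to \cite[Theorem~7.21]{AFP00} for the details): rescale to obtain minimizers with vanishing jump measure, use the $SBV$ Poincar\'e--Wirtinger inequality together with Ambrosio's compactness to pass to a harmonic limit via a gluing/transfer-of-minimality argument, and derive a contradiction between the harmonic energy decay and the uniform lower bound inherited from Theorem~\ref{t:DGCL}. The only structural difference is that you invoke Theorem~\ref{t:DGCL} as a black box to secure the Dirichlet lower bound along the sequence, whereas the original arguments establish a single decay lemma and iterate; this is a harmless reorganization once Theorem~\ref{t:DGCL} is already in hand.
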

In particular, from the latter we infer the so called \emph{elimination property} for $\Om\cap\SSu$, 
i.e.~if $\HH^{n-1}(S_u\cap B_r(z))<\frac{\theta_0}{2^{n-1}}\,r^{n-1}$
then actually $\overline{S_u}\cap B_{\sfrac r2}(z)=\emptyset$.

Given Theorem~\ref{t:DGCL} or \ref{t:CL} for granted we can easily prove
the equivalence of the strong and weak formulation of the problem by means of the ensuing 
\emph{density estimates}.
\begin{lemma}\label{l:de}
Let $\mu$ be a Radon measure on $\R^n$, $B$ be a Borel set and $s\in[0,n]$ be such that 
\[
 \limsup_{r\downarrow 0}\frac{\mu(B_r(x))}{\omega_sr^s}\geq t\qquad\text{for all $x\in B$}.
\]
Then, $\mu(B)\geq t\,\HH^{s}(B)$.
\end{lemma}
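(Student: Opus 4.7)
My plan is the classical comparison of $\HH^s$ with $\mu$ on $B$ via a pairwise disjoint Vitali-type cover of $B$ by small closed balls each of which (approximately) realizes the density hypothesis. Before doing this I would dispose of the trivial case $\HH^s(B)=\infty$: by standard $\sigma$-subadditivity of $\HH^s$ one picks Borel subsets $B_M\subseteq B$ with $M<\HH^s(B_M)<\infty$ for every $M>0$, applies the finite case to each $B_M$ to get $\mu(B)\geq\mu(B_M)\geq t\,\HH^s(B_M)>tM$, and sends $M\to\infty$.

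Assume therefore $\HH^s(B)<\infty$, fix $t'\in(0,t)$ and $\eps>0$. By outer regularity of the Radon measure $\mu$, pick an open set $A\supseteq B$ with $\mu(A)\leq\mu(B)+\eps$. For every $\delta>0$ and every $x\in B$, the $\limsup$ hypothesis provides arbitrarily small radii $r\in(0,\delta)$ for which $\overline{B_r(x)}\subset A$ and $\mu(\overline{B_r(x)})\geq t'\,\omega_s r^s$; the collection $\mathcal{F}_\delta$ of all such closed balls thus forms a fine cover of $B$. Since $\HH^s\res B$ is now a finite Borel regular measure, hence Radon, the Vitali--Besicovitch covering theorem extracts a countable pairwise disjoint subfamily $\{\overline{B_{r_i}(x_i)}\}\subset\mathcal{F}_\delta$ with $\HH^s(B\setminus\bigcup_i\overline{B_{r_i}(x_i)})=0$. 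Because an $\HH^s$-null set has vanishing $\HH^s_{2\delta}$-pre-measure as well, this yields
\[
\HH^s_{2\delta}(B)\leq\sum_i\omega_s r_i^s\leq\frac{1}{t'}\sum_i\mu(\overline{B_{r_i}(x_i)})\leq\frac{\mu(A)}{t'}\leq\frac{\mu(B)+\eps}{t'},
\]
where the second inequality uses the defining property of $\mathcal{F}_\delta$ and the third uses disjointness together with $\overline{B_{r_i}(x_i)}\subseteq A$. Sending $\delta\to 0$, then $\eps\to 0$, then $t'\to t$, one concludes $\HH^s(B)\leq \mu(B)/t$, which is the claim.

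The main obstacle, in my view, is the simultaneous handling of two different measures: the hypothesis controls a $\mu$-density, while the conclusion involves $\HH^s$. This is precisely why the refined Vitali--Besicovitch covering theorem (applied to $\HH^s\res B$) is essential: the naive $5r$-Vitali theorem would only deliver $\mu(B)\geq 5^{-s}t\,\HH^s(B)$, losing the sharp constant. A secondary subtlety is the preliminary reduction to $\HH^s(B)<\infty$, required exactly so that $\HH^s\res B$ qualifies as a Radon measure and the covering theorem can be invoked.
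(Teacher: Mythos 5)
The paper states this lemma without proof (it is the classical density comparison, cf.\ \cite[Theorem~2.56]{AFP00}), so there is no in-text argument to measure you against; on its own terms your proof is correct and follows the standard route: a fine cover of $B$ by small closed balls almost realizing the density hypothesis and contained in an open set $A$ with $\mu(A)\leq\mu(B)+\eps$, the Vitali--Besicovitch covering theorem applied to the Radon measure $\HH^s\res B$, and the resulting disjoint subfamily used to bound the pre-measure $\HH^s_{2\delta}(B)$ by $\mu(A)/t'$. You are also right that the refined covering theorem, rather than the $5r$-Vitali lemma, is what secures the sharp constant.

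One step is mislabeled in a way worth correcting. The reduction to $\HH^s(B)<\infty$ does \emph{not} follow from $\sigma$-subadditivity: producing Borel subsets $B_M\subseteq B$ with $M<\HH^s(B_M)<\infty$ when $\HH^s(B)=\infty$ is the Besicovitch--Davies subset theorem, a genuinely nontrivial result (decomposing $B$ into countably many pieces gives no control on the measure of the pieces). The claim you use is true for Borel sets, so the proof stands, but it rests on much heavier machinery than advertised. You can avoid it entirely by a two-step variant: first dispose of the trivial case $\mu(B)=\infty$; then apply Vitali--Besicovitch to $\mu$ itself, obtaining disjoint balls of $\mathcal{F}_\delta$ covering $\mu$-almost all of $B$, and observe that the $\mu$-null remainder $N$ satisfies $\HH^s(N)=0$ by the crude $5r$-comparison (cover $N$ by balls of $\mathcal{F}_{\delta'}$ contained in an open $V\supseteq N$ with $\mu(V)<\eta$, extract a disjoint subfamily whose $5$-fold dilations cover $N$, and get $\HH^s(N)\leq 5^s\eta/t'$ for every $\eta>0$). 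This keeps the whole argument inside the elementary covering toolbox. Incidentally, in every application the paper makes of this lemma one has $\mu(B)=0$, so only the implication $\mu(B)=0\Rightarrow\HH^s(B)=0$ is used and the sharp constant is immaterial there.
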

 
\begin{proposition}\label{p:strongweak}
Let $u\in\MM(\Om)$, then  $\HH^{n-1}(\Om\cap\SSu\setminus S_u)=0$.
In particular, $\big(u,\Om\cap\SSu)$ is a local minimizer for $\mathscr{F}$ (with $\gamma=0$).
\end{proposition}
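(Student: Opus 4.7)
The plan is to derive the first assertion as a direct consequence of the density lower bound in Theorem~\ref{t:CL} combined with Lemma~\ref{l:de}, and then to match admissible strong competitors to weak ones via Proposition~\ref{p:weak}.

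For the equality $\HH^{n-1}(\Om\cap\SSu\setminus S_u)=0$ I would apply Lemma~\ref{l:de} with $s=n-1$, the Radon measure $\mu:=\HH^{n-1}\res S_u$, and the Borel set $B:=\Om\cap\SSu\setminus S_u$. Every $z\in B$ lies in $\Om\cap\SSu$, so Theorem~\ref{t:CL} provides $\HH^{n-1}(S_u\cap B_r(z))\ge\theta_0\,r^{n-1}$ for all $r\in(0,\varrho_0\wedge\mathrm{dist}(z,\partial\Om))$, whence
\[
\limsup_{r\downarrow 0}\frac{\mu(B_r(z))}{\omega_{n-1}\,r^{n-1}}\;\ge\;\frac{\theta_0}{\omega_{n-1}}\qquad\text{for every }z\in B.
\]
Lemma~\ref{l:de} then yields $\HH^{n-1}(S_u\cap B)=\mu(B)\ge\tfrac{\theta_0}{\omega_{n-1}}\HH^{n-1}(B)$; since $B$ is disjoint from $S_u$ by construction, the left-hand side vanishes, and $\HH^{n-1}(B)=0$ follows at once.

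For the minimality assertion I would first note that the identity just proved gives $\HH^{n-1}(\Om\cap\SSu)=\HH^{n-1}(S_u)$, a quantity locally finite by the weak local minimality of $u$; hence $\SSu$ is $\LL^n$-negligible in $\Om$, and $\mathscr{F}(u,\Om\cap\SSu,\Om)=\MS(u)$ because neither the bulk nor the surface contribution is affected by replacing $S_u$ with $\SSu$. I would then pick an arbitrary admissible strong competitor $(v,K)$, i.e.~$K$ relatively closed in $\Om$ with $\HH^{n-1}(K)<\infty$, $v\in C^1(\Om\setminus K)\cap L^\infty$, and $(v,K)=(u,\SSu)$ outside a compact subset of $\Om$. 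Proposition~\ref{p:weak} grants $v\in SBV(\Om)$ with $\HH^{n-1}(S_v\setminus K)=0$, so
\[
\MS(v)\;\le\;\int_{\Om\setminus K}|\nabla v|^2\,dx+\HH^{n-1}(K)\;=\;\mathscr{F}(v,K,\Om).
\]
Since $\{v\neq u\}$ is contained in the compact set outside which $(v,K)$ coincides with $(u,\SSu)$, the weak local minimality $\MS(u)\le\MS(v)$ chains everything together.

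No single step is a serious obstacle once Theorem~\ref{t:CL} is available: the first half is a textbook application of the density lemma, and the second half is a routine comparison of competitor classes. The only point worth double-checking is the translation of the strong-competitor compact-support condition into $\{v\neq u\}\subset\hskip-0.125cm\subset\Om$, which is immediate from the agreement of $(v,K)$ with $(u,\SSu)$ outside a compact set.
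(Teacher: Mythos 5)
Your proof is correct and follows essentially the same route as the paper: the first assertion is obtained exactly as in the text by applying Lemma~\ref{l:de} with $\mu=\HH^{n-1}\res S_u$ to the set $\Om\cap\SSu\setminus S_u$ via the density lower bound of Theorem~\ref{t:CL}. For the minimality claim the paper merely remarks that $\MS(u)=\mathscr{F}(u,\Om\cap\SSu)$ and that the conclusion ``follows at once''; your comparison with strong competitors through Proposition~\ref{p:weak} is precisely the standard way to fill in that step.
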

\begin{proof}[Proof of Proposition~\ref{p:strongweak}]
In view of Theorem~\ref{t:CL} we may apply the density estimates of Lemma~\ref{l:de} 
to $\mu=\HH^{n-1}\res S_u$ and to the Borel set $\SSu\setminus S_u$ with $t=\theta_0$.
Therefore, we deduce that
\[
 \theta_0\HH^{n-1}(\Om\cap\SSu\setminus S_u)\leq \mu(\Om\cap\SSu\setminus S_u)=0.
\]
Clearly, $\MS(u)=\mathscr{F}(u,\Om\cap\SSu)$, and the conclusion follows at once.
\end{proof}
The argument for \eqref{e:dgcl0} used by De Giorgi, Carriero and Leaci in 
\cite{DGCL89}, and similarly in \cite{CL90} for \eqref{e:dgcl}, is indirect:
it relies on Ambrosio's $SBV$ compactness theorem and Poincar\'e-Wirtinger type 
inequality in $SBV$ established in \cite{DGCL89} (see also \cite[Theorem 4.14]{AFP00} and 
\cite[Proposition~2]{BarFoc} for a version in which boundary values are preserved) 
to analyze blow~up limits of minimizers (see subsection~\ref{ss:bup} for the definition of blow~ups) 
with vanishing jump energy and prove that they are harmonic functions (cf. \cite[Theorem 7.21]{AFP00}). 
A contradiction argument shows that on small balls the energy of local minimizers inherits the decay properties 
as that of harmonic functions. Actually, the proof holds true for much more general energies
(see \cite{FF97}, \cite[Chapter~7]{AFP00}). 

In the paper \cite{DF} an elementary proof valid only in $2$-dimensions and tailored on 
the $\MS$ energy is given. 
No Poincar\'e-Wirtinger inequality, nor any compactness argument are required. 
Moreover, it has the merit to exhibit an explicit constant.
Indeed, the proof in \cite{DF} is based on an observation of geometric nature 
and on a direct variational comparison argument. It also differs from those exploited 
in \cite{DMMS92} and \cite{Dav96} to derive \eqref{e:dgcl} in the $2$-dimensional case. 

\begin{theorem}[De Lellis and Focardi \cite{DF}]\label{t:maindlb}
Let $u\in\MM(\Om)$. Then 
\begin{equation}\label{e:nostra}
\MS(u,B_r(z))\geq r
\end{equation}
for all $z\in\Om\cap\SSu$ and all $r\in(0,\mathrm{dist}(z,\partial\Om))$.

More precisely, the set $\Om_u:=\{z\in\Om:\,\eqref{e:nostra} \text{ fails}\}$
is open and $\Omega_u=\Omega\setminus \SSu$\footnote{Actually, the very same proof shows also 
that $\Omega_u=\Omega\setminus\overline{J_u}$, where $J_u$ is the subset of points of 
$S_u$ for which one sided traces exist. Recall that $\HH^{n-1}(S_v\setminus J_v)=0$ for 
all $v\in BV(\Omega)$.}.
\end{theorem}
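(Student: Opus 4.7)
The theorem packs three claims: openness of $\Omega_u$, the easy inclusion $\Omega\setminus\SSu\subseteq\Omega_u$, and the density lower bound \eqref{e:nostra} (equivalently $\Omega_u\subseteq\Omega\setminus\SSu$). The first two I would dispatch quickly: $(z,r)\mapsto\MS(u,B_r(z))$ is upper semicontinuous (the Dirichlet part is continuous, the $\HH^1$-jump mass is u.s.c.\ in $z$), so $\Omega_u$ is open; and at any $z\notin\SSu$ Proposition~\ref{p:weak} yields $u\in W^{1,2}$ on a ball around $z$, whence $\MS(u,B_\rho(z))=\int_{B_\rho(z)}|\nabla u|^2dx=o(1)<\rho$ for $\rho$ small.

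The heart is the reverse inclusion $\Omega_u\subseteq\Omega\setminus\SSu$. I argue by contradiction: assume $z\in\SSu$ and $f(R):=\MS(u,B_R(z))<R$ for some $R<\mathrm{dist}(z,\partial\Omega)$. Set $e(\rho):=\int_{B_\rho(z)}|\nabla u|^2dx$ and $\ell(\rho):=\HH^1(S_u\cap B_\rho(z))$, so $f=e+\ell$. The plan is to derive, via a direct variational comparison, the ODI $f(\rho)\leq\rho f'(\rho)$ a.e.\ on $(0,R)$, deduce the monotonicity of $\rho\mapsto f(\rho)/\rho$, and then reach a contradiction using the structure of $S_u$ near the singular point $z$.

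By coarea for the $1$-Lipschitz map $|\cdot-z|$, the set $G:=\{\rho:\partial B_\rho(z)\cap S_u=\emptyset\}$ has measure at least $R-\ell(R)>0$. For a.e.\ $\rho\in G$ the circular trace $\phi:=u|_{\partial B_\rho(z)}$ lies in $H^1(\partial B_\rho(z))$, and the competitor $v:=h\,\chi_{B_\rho(z)}+u\,\chi_{\Omega\setminus B_\rho(z)}$, with $h$ the harmonic extension of $\phi$, is admissible with $S_v\subseteq S_u\setminus B_\rho(z)$. Local minimality gives $f(\rho)\leq\int_{B_\rho(z)}|\nabla h|^2dx$, and Fourier series on the circle yield
\[
\int_{B_\rho(z)}|\nabla h|^2dx=\pi\sum_{k\geq 1}k(a_k^2+b_k^2)\leq\pi\sum_{k\geq 1}k^2(a_k^2+b_k^2)=\rho\int_{\partial B_\rho(z)}|\partial_\tau\phi|^2d\HH^1\leq\rho\,e'(\rho),
\]
where $\phi=a_0+\sum_{k\geq 1}(a_k\cos k\theta+b_k\sin k\theta)$. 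It is precisely the sharp term-by-term bound $k\leq k^2$, saturated only by first-order Fourier modes, that produces the constant $1$ in the theorem. Combining with $e'\leq f'$ gives the ODI on $G$, whence $(f(\rho)/\rho)'\geq 0$, so $f/\rho$ is nondecreasing on $(0,R]$ and $f(R)/R<1$ propagates to $f(\rho)<\rho$ throughout; in particular $\ell(\rho)<\rho$. To close, I use $z\in\SSu$ to pick $\HH^1$-density points $y_n\in S_u$ with $y_n\to z$ and $|y_n-z|=o(s_n)$, where $s_n\to 0$ are scales at which $\HH^1(S_u\cap B_{s_n}(y_n))\geq(2-\e_n)s_n$ by rectifiability; then $\rho_n:=s_n+|y_n-z|$ satisfies $B_{s_n}(y_n)\subset B_{\rho_n}(z)\subset B_R(z)$ and $\ell(\rho_n)/\rho_n\to 2$, contradicting $\ell(\rho)<\rho$.

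The main obstacle I anticipate is extending the ODI from $G$ to the complementary radii at which $\partial B_\rho(z)$ meets $S_u$: the natural sector-by-sector harmonic competitor picks up a factor depending on the largest angular sector (up to $2$ in the single-crossing case where the unique sector has angle $2\pi$). Preserving the sharp constant $1$ in this regime, and correctly combining the sector Dirichlet estimate with the jump contribution from $\ell'(\rho)\geq\HH^0(S_u\cap\partial B_\rho(z))$ given by coarea on $S_u$, is what requires the specifically two-dimensional ``geometric observation'' alluded to in \cite{DF}.
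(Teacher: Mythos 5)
Your toolkit is the right one — the harmonic competitor at radii where $\partial B_\rho(z)$ misses $S_u$, the Fourier/Wirtinger bound $\int_{B_\rho}|\nabla h|^2\leq\rho\int_{\partial B_\rho}(\partial_\tau u)^2$, and the coarea bound showing the good set $G$ has measure $\geq R-\ell(R)>0$ — but the way you assemble it, as an a.e.\ differential inequality $f(\rho)\leq\rho f'(\rho)$ followed by Gronwall, does not close, and you flag this yourself. The ODI is available only on $G$, whose complement in $(0,R)$ can have measure as large as $\ell(R)$, and monotonicity of $f(\rho)/\rho$ cannot be deduced from an ODI holding on a set of positive but non-full measure; indeed the untruncated monotonicity of $m_z(\rho)/\rho$ is not known (compare Theorem~\ref{t:monform}, which requires a truncation). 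The sector-by-sector repair you sketch is not the paper's route and, as you observe, loses the constant. The paper's resolution is to \emph{never estimate at bad radii}: with $r\in(0,R-\ell(R))$ fixed, coarea gives $\LL^1\big((r,R)\setminus J_r\big)\leq\ell(R)-\ell(r)$, so the good set $J_r\subseteq(r,R)$ has measure $\geq R-r-(\ell(R)-\ell(r))>0$, and a mean-value selection produces a \emph{single} good radius $\rho\in(r,R)$ with $\int_{\partial B_\rho}|\nabla u|^2\leq\frac{e(R)-e(r)}{R-r-(\ell(R)-\ell(r))}$. The harmonic competitor then yields $m(\rho)\leq\rho\,\frac{e(R)-e(r)}{R-r-(\ell(R)-\ell(r))}<(1-\e)\rho$ as soon as $r<R-\ell(R)-\frac{e(R)}{1-\e}$, a threshold which is positive whenever $m(R)\leq(1-\e)R$; iterating this descent (the infimum of $\{t:\,m(t)\leq(1-\e)t\}$ must be $0$) gives a sequence $\rho_k\downarrow0$ with $m(\rho_k)\leq(1-\e)\rho_k$, with no monotonicity required. (The paper additionally inserts the stationarity identity of Lemma~\ref{l:rdlvartns}, replacing $\int_{\partial B_\rho}(\partial_\tau u)^2$ by $\frac12\int_{\partial B_\rho}|\nabla u|^2+\frac{\ell(\rho)}{2\rho}$, so as to bound $h=e+\ell/2$ rather than $m$; for the bare statement your cruder bound $\int(\partial_\tau u)^2\leq\int|\nabla u|^2$ suffices.)

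Two further points. First, your endgame also has a gap: for a point $y_n\neq z$ of $\HH^1$-density one you cannot simultaneously guarantee $\HH^1(S_u\cap B_{s_n}(y_n))\geq(2-\e_n)s_n$ and $|y_n-z|=o(s_n)$, since the scale below which the density ratio at $y_n$ is nearly $1$ may be much smaller than $|y_n-z|$. The paper instead shows directly that $m_z(R)<R$ forces $z\notin S_u^\star$ (the set of density-one points), via the sequence $\rho_k$ at $z$ itself, and then uses openness of $\Om_u$ together with $\overline{S_u^\star}=\SSu$ to pass from $\Om_u\cap S_u^\star=\emptyset$ to $\Om_u\cap\SSu=\emptyset$; since you do establish openness, the repair is to run your contradiction at a density-one point $y\in S_u^\star\cap\Om_u$ rather than at $z$. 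Second, a minor slip: $z\mapsto\HH^1(S_u\cap B_r(z))$ is lower, not upper, semicontinuous for open balls; openness of $\Om_u$ is most cleanly obtained from the inclusion $B_{R-|x-z|}(x)\subseteq B_R(z)$, which gives $m_x(R-|x-z|)\leq m_z(R)\leq(1-\e)R<R-|x-z|$ for $|x-z|<\e R$.
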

To the aim of establishing Theorem~\ref{t:maindlb} we prove a consequence of \eqref{e:EL}, 
a monotonicity formula discovered independently by David and L\'eger in \cite[Proposition~3.5]{DavLeg02} 
and by Maddalena and Solimini in \cite{MadSol01c}. 
The proof we present here is that given in \cite[Lemma~2.1]{DF} (an analogous result holds true 
in any dimension with essentially the same proof).
\begin{lemma}\label{l:rdlvartns}
Let $u\in\MM(\Om)$, $\Om\subset\R^2$, then for every $z\in\Om$ and for $\LL^1$ a.e. 
$r\in(0,\mathrm{dist}(z,\partial\Om))$
\begin{equation}\label{e:rdlvartns0}
 r\int_{\partial B_r(z)}\left(\left(\frac{\partial u}{\partial \nu}\right)^2-
\left(\frac{\partial u}{\partial\tau}\right)^2\right)d\HH^1+\HH^1(S_u\cap B_z(r))
=
\int_{S_u\cap\partial B_r(z)}|\langle \nu_u^\perp(x),x\rangle|d\HH^0(x),
\end{equation}
$\frac{\partial u}{\partial \nu}$ and $\frac{\partial u}{\partial\tau}$ 
being the projections of $\nabla u$ in the normal and tangential directions
to $\partial B_r(z)$, respectively.\footnote{For $\xi\in\R^2$, $\xi^\perp$ is the vector obtained by an 
anticlockwise rotation.}
\end{lemma}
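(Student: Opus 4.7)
My plan is to derive the identity by plugging a suitable radial test field into the internal-variations Euler--Lagrange equation \eqref{e:EL} (with $\gamma=0$) and then localizing to the sphere $\partial B_r(z)$ via a cut-off limiting procedure.

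After translating so that $z=0$, I would test \eqref{e:EL} with $\phi(x)=\eta(|x|)\,x$, where $\eta\in C^1_c([0,\mathrm{dist}(0,\partial\Om)))$ is a radial cut-off (the Lipschitz case is handled by approximation). A direct computation gives $\divv\phi=2\eta(|x|)+|x|\eta'(|x|)$ and $D\phi=\eta(|x|)\,\Id+\eta'(|x|)\frac{x\otimes x}{|x|}$. Writing $\nabla u=\partial_\nu u\,\nu+\partial_\tau u\,\tau$ with $\nu=x/|x|$, the bulk integrand simplifies, after cancellations of the $\eta$-terms, to
\[
|\nabla u|^2\divv\phi-2\langle\nabla u,\nabla u\cdot\nabla\phi\rangle=|x|\,\eta'(|x|)\bigl((\partial_\tau u)^2-(\partial_\nu u)^2\bigr).
\]
For the surface term, using $\divv^{S_u}\phi=\divv\phi-\langle D\phi\,\nu_u,\nu_u\rangle$ and the 2-dimensional identity $|x|^2=\langle x,\nu_u\rangle^2+\langle x,\nu_u^\perp\rangle^2$, I obtain
\[
\divv^{S_u}\phi=\eta(|x|)+\frac{\eta'(|x|)}{|x|}\,\langle x,\nu_u^\perp\rangle^2.
\]

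I would then select $\eta=\eta_\e$, defined to be $1$ on $[0,r]$, affine from $1$ to $0$ on $[r,r+\e]$, and $0$ beyond; so $\eta_\e'(s)=-\e^{-1}\chi_{(r,r+\e)}(s)$. Substituting into \eqref{e:EL} yields
\[
-\frac{1}{\e}\int_{B_{r+\e}\setminus B_r}\!\!|x|\bigl((\partial_\tau u)^2-(\partial_\nu u)^2\bigr)dx+\int_{S_u}\eta_\e\,d\HH^1-\frac{1}{\e}\int_{S_u\cap(B_{r+\e}\setminus B_r)}\!\!\frac{\langle x,\nu_u^\perp\rangle^2}{|x|}d\HH^1=0.
\]
The first term, by the coarea formula in polar coordinates and Lebesgue differentiation, tends for $\LL^1$-a.e.\ $r$ to $-r\int_{\partial B_r}\bigl((\partial_\tau u)^2-(\partial_\nu u)^2\bigr)d\HH^1$. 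The middle term clearly tends to $\HH^1(S_u\cap B_r)$ (for the a.e.\ $r$ with $\HH^1(S_u\cap\partial B_r)=0$, which holds for $\LL^1$-a.e.\ $r$ by rectifiability).

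The genuinely delicate step is the last term, where I would apply the coarea formula on the rectifiable set $S_u$ to the function $x\mapsto|x|$: its tangential gradient has modulus $|\langle x/|x|,\tau_u\rangle|=|\langle x,\nu_u^\perp\rangle|/|x|$ (because in $\R^2$, $\tau_u=\pm\nu_u^\perp$). Hence
\[
\int_{S_u\cap(B_{r+\e}\setminus B_r)}\!\!\frac{\langle x,\nu_u^\perp\rangle^2}{|x|}d\HH^1=\int_r^{r+\e}\sum_{x\in S_u\cap\partial B_s}\frac{\langle x,\nu_u^\perp(x)\rangle^2}{|\langle x,\nu_u^\perp(x)\rangle|}\,ds,
\]
and dividing by $\e$ and letting $\e\downarrow 0$ gives, for $\LL^1$-a.e.\ $r$, $\sum_{x\in S_u\cap\partial B_r}|\langle x,\nu_u^\perp(x)\rangle|$. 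Rearranging signs yields precisely \eqref{e:rdlvartns0}. The main obstacle is this last passage: one must justify that the rectifiability of $S_u$ together with the local finiteness of $\HH^1(S_u)$ makes the coarea representation legitimate and that the Lebesgue-point selection of $r$ is simultaneously compatible with the bulk integral, the mass of $S_u$ on $\partial B_r$ vanishing, and the transversality needed to avoid a tangential singularity (i.e.\ $\langle x,\nu_u^\perp\rangle\neq 0$ at intersection points).
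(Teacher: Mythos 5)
Your proposal is correct and follows essentially the same route as the paper: both test the stationarity condition \eqref{e:EL} (with $\gamma=0$) against a radial field equal to $x$ inside a disk and tapering linearly to zero across a thin annulus, compute $\divv\phi$ and $\divv^{S_u}\phi$, and pass to the limit via the Co-Area formula and Lebesgue differentiation; the only difference is that you place the tapering collar just outside $B_r$ while the paper places it just inside (letting $s\uparrow r$), which changes nothing. The "delicate" last step you flag is handled exactly as you suggest, by the Co-Area formula for rectifiable sets, and no extra transversality hypothesis is needed since points with $\langle x,\nu_u^\perp\rangle=0$ carry zero weight on both sides.
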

\begin{proof}[Proof of Lemma~\ref{l:rdlvartns}]
With fixed a point $z\in\Om$, $r>0$ with $B_r(z)\subseteq\Om$, we consider 
special radial vector fields $\eta_{r,s}\in \mathrm{Lip}\cap C_c(B_r(z),\R^2)$,
$s\in(0,r)$, in the first variation formula \eqref{e:EL} (with $\gamma=0$). 
Moreover, for the sake of simplicity we assume $z=0$, 
and drop the subscript $z$ in what follows. Let 
\[
\eta_{r,s}(x):=x\,\chi_{[0,s]}(|x|)+\frac{|x|-r}{s-r}\,x\,\chi_{(s,r]}(|x|),
\]
then a routine calculation leads to
\[
\nabla\eta_{r,s}(x):={\rm Id}\,\chi_{[0,s]}(|x|)+\left(
\frac{|x|-r}{s-r}{\rm Id}+\frac 1{s-r}\frac x{|x|}\otimes x\right) 
\chi_{(s,r]}(|x|)
\]
$\LL^2$ a.e. in $\Om$. In turn, from the latter formula we infer
for $\LL^2$ a.e. in $\Om$ 
\[
\divv\eta_{r,s}(x)=2\,\chi_{[0,s]}(|x|)+\left(
2\frac{|x|-r}{s-r}+\frac {|x|}{s-r}
\right)\chi_{(s,r]}(|x|),
\]
and, if $\nu_u(x)$ is a unit vector normal field in $x\in S_u$,
for $\HH^1$ a.e. $x\in S_u$
\[
\divv^{S_u}\eta_{r,s}(x)=\chi_{[0,s]}(|x|)+\left(\frac{|x|-r}{s-r}+
\frac 1{|x|(s-r)}|\langle x,\nu_u^\perp\rangle|^2\right)\chi_{(s,r]}(|x|).
\]
Consider the set $I:=\{\rho\in(0,\mathrm{dist}(0,\partial\Om)):\,
\HH^1(S_u\cap\partial B_\rho)=0\}$, then
$(0,\mathrm{dist}(0,\partial\Om))\setminus I$ is at most countable
being $\HH^1(S_u)<+\infty$. If $\rho$ and $s\in I$, by inserting 
$\eta_s$ in \eqref{e:EL} 
we find
\begin{multline*}
\frac 1{s-r}\int_{B_r\setminus B_s}|x||\nabla u|^2dx
-\frac{2}{s-r}\int_{B_r\setminus B_s}|x|
\langle\nabla u,\left(\mathrm{Id}-\frac{x}{|x|}\otimes\frac{x}{|x|}
\right)\nabla u\rangle dx\\
=\HH^1(S_u\cap B_s)+\int_{S_u\cap(B_r\setminus B_s)}\frac{|x|-r}{s-r}d\HH^1+
\frac 1{s-r}\int_{S_u\cap(B_r\setminus B_s)}|x|
|\langle\frac x{|x|},\nu_u^\perp\rangle|^2d\HH^1.
\end{multline*}
Next we employ Co-Area formula and rewrite equality above as 
\begin{multline*}
\frac 1{s-r}\int_s^r\rho\,d\rho\int_{\partial B_\rho}|\nabla u|^2d\HH^1
-\frac{2}{s-r}\int_s^r\rho\,d\rho\int_{\partial B_\rho}
\left|\frac{\partial u}{\partial\tau}\right|^2d\HH^1\\
=\HH^1(S_u\cap B_s)+\int_{S_u\cap(B_r\setminus B_s)}\frac{|x|-r}{s-r}d\HH^1+
\frac 1{s-r}\int_s^rd\rho\int_{S_u\cap \partial B_\rho} 
|\langle x,\nu_u^\perp\rangle|d\HH^0
\end{multline*}
where $\nu:=x/|x|$ denotes the radial unit vector and 
$\tau:=\nu^\perp$ the tangential one. 
Lebesgue differentiation theorem then provides a subset $I^\prime$ of 
full measure in $I$ such that if $r\in I^\prime$ and we let $s\uparrow t^-$
it follows
\begin{equation*}
-r\int_{\partial B_r}|\nabla u|^2d\HH^1
+2r\int_{\partial B_r}\left|\frac{\partial u}{\partial\tau}\right|^2d\HH^1
=\HH^1(S_u\cap B_r)-\int_{S_u\cap \partial B_r}|\langle x,\nu_u^\perp\rangle|d\HH^0.
\end{equation*}
Formula \eqref{e:rdlvartns0} then follows straightforwardly.
\end{proof}
We are now ready to prove Theorem~\ref{t:maindlb}.
\begin{proof}[Proof of Theorem~\ref{t:maindlb}]

Given $u\in\MM(\Om)$, $z\in\Om$ and $r\in(0,\mathrm{dist}(z,\partial\Om))$ let
\[
e_z(r):=\int_{B_r(z)}|\nabla u|^2dx,\quad 
\ell_z(r):=\HH^1(S_u\cap B_r(z)),
\]
and
\[
m_z(r):=\MS(u,B_r(z)),\quad h_z(r):=e_z(r)+\frac 12\ell_z(r).
\]
Clearly, $m_z(r)=e_z(r)+\ell_z(r)\leq 2h_z(r)$, with 
 equality if and only if $e_z(r)=0$. 

%
%


Introduce the set $S^\star_u$ of points $x\in S_u$ for which
\begin{equation}\label{e:justar2}
\lim_{r\downarrow 0} \frac{\mathcal{H}^1 (S_u\cap B_r (x))}{2r} =1\, .
\end{equation}
Since $S_u$ is rectifiable, $\mathcal{H}^1 (S_u\setminus S^\star_u)=0$.
Next let $z\in \Omega$ be such that
\begin{equation}\label{e:assmptn}
m_z (R)<R\qquad\mbox{for some $R\in(0,\mathrm{dist}(z,\partial\Om))$.}
\end{equation}
We claim that $z\not\in S^\star_u$.  

W.l.o.g. we take $z=0$ and drop the subscript $z$ in $e, \ell, m$ and $h$.

In addition we can assume $e(R)>0$. Otherwise, by the Co-Area formula and 
the trace theory of BV functions, we would find a radius $r<R$ such that 
$u|_{\partial B_r}$ is a constant (cf. the argument below). In turn, $u$ 
would necessarily be constant in $B_r$ because the energy decreases under 
truncations, thus implying $z\not\in S^\star_u$.
We can also assume $\ell(R)>0$, since otherwise $u$ would be harmonic in 
$B_R$ and thus we would conclude $z\not\in S^\star_u$.

We start next to compare the energy of $u$ with that of an harmonic 
competitor on a suitable disk. The inequality $\ell (R)\leq m (R) <R$
is crucial to select good radii.
\medskip

\noindent \emph{Step 1:} For any fixed $r\in(0,R-\ell(R))$, there exists 
a set $I_r$ of positive length in $(r,R)$ such that 
\begin{equation}\label{e:step1}
\frac{h(\rho)}\rho\leq\frac 12\cdot
\frac{e(R)-e(r)}{R-r-(\ell(R)-\ell(r))} \qquad \mbox{for all $\rho\in I_r$.}
\end{equation}
Define $J_r:=\{t\in(r,R):\,\HH^0(S_u\cap\partial B_t)=0\}$.
We claim the existence of $J^\prime_r\subseteq J_r$ with 
$\LL^1(J^\prime_r)>0$ and such that 
\begin{equation}\label{e:mean}
\int_{\partial B_\rho}|\nabla u|^2d\HH^1\leq
\frac{e(R)-e(r)}{R-r-(\ell(R)-\ell(r))} \qquad\mbox{for all $\rho\in J^\prime_r$.}
\end{equation}
Indeed, we use the Co-Area formula for rectifiable sets 
(see \cite[Theorem 2.93]{AFP00}) to find
\[
 \LL^1((r,R)\setminus J_r)\leq
\int_{(r,R)\setminus J_r}\HH^0(S_u\cap\partial B_t)dt
=\int_{S_u\cap (B_R\setminus\overline{B_r})}
\left|\langle \nu_u^\perp(x),\frac x{|x|}\rangle\right|d\HH^1(x)
\leq\ell(R)-\ell(r).
\]
In turn, this inequality implies $\LL^1(J_r)\geq R-r-(\ell(R)-\ell(r))>0$, 
thanks to the choice of $r$.  Then, define $J^\prime_r$ to be the subset 
of radii $\rho\in J_r$ for which 
\[
\int_{\partial B_\rho}|\nabla u|^2d\HH^1\leq
\fint_{J_r}\left(\int_{\partial B_t}|\nabla u|^2d\HH^1\right)dt\, .
\]
Formula \eqref{e:mean} follows by the Co-Area formula 
and the estimate 
$\LL^1(J_r)\geq R-r-(\ell(R)-\ell(r))$.

We define $I_r$ as the subset of radii $\rho \in J^\prime_r$ 
satisfying both \eqref{e:rdlvartns0} and \eqref{e:mean}. 
Therefore,
\begin{equation}\label{e:stima1}
\int_{\partial B_\rho}\left(\frac{\partial u}{\partial \tau}\right)^2d\HH^1
=\frac 12\int_{\partial B_\rho}|\nabla u|^2d\HH^1+\frac{\ell(\rho)}{2\rho}
\qquad \forall \rho\in I_r.
\end{equation}
Clearly, $I_r$ has full measure in $J^\prime_r$, so that $\LL^1(I_r)>0$.  

For any $\rho\in I_r$, we let $w$ be the harmonic 
function in $B_\rho$ with trace $u$ on $\partial B_\rho$. 
Then, as 
$\frac{\partial w}{\partial\tau}=\frac{\partial u}{\partial\tau}$
$\HH^1$ a.e. on $\partial B_\rho$, the local minimality of $u$ entails
\[
m(\rho)\leq\int_{B_\rho}|\nabla w|^2dx\leq 
\rho\int_{\partial B_\rho}\left(\frac{\partial u}{\partial\tau}\right)^2d\HH^1
\stackrel{\eqref{e:stima1}}{=}
\frac \rho2\int_{\partial B_\rho}|\nabla u|^2d\HH^1+\frac{\ell(\rho)}{2}.
\]
The inequality \eqref{e:step1} follows from the latter inequality and from \eqref{e:mean}:
\begin{equation*}
h(\rho)=\erho+\frac{\lrho}2\leq
\frac\rho2\int_{\partial B_\rho}|\nabla u|^2d\HH^1\leq
\frac\rho 2\cdot\frac{e(R)-e(r)}{R-r-(\ell(R)-\ell(r))}.
\end{equation*}
\medskip

\noindent \emph{Step 2:} We now show that $0\not\in S^\star_u$.

Let $\e\in(0,1)$ be fixed such that $m(R)\leq(1-\e)R$, 
and fix any radius $r\in(0,R-\ell(R)-\frac 1{1-\e}e(R))$.
Step~1 and the choice of $r$ then imply
\[
\frac{h(\rho)}{\rho}\leq\frac 12\frac{e(R)-e(r)}{R-r-(\ell(R)-\ell(r))}
\leq\frac{e(R)}{2(R-\ell(R)-r)}<\frac{1-\e}2,
\]
in turn giving $m(\rho)\leq 2h(\rho)<(1-\e)\rho$.
Let $\rho_\infty:=\inf\{t>0:\,m(t)\leq(1-\e)t\}$, then 
$\rho_\infty\in[0,\rho]$. Note that if
$\rho_\infty$ were strictly positive then actually $\rho_\infty$ would be 
a minimum. In such a case, we could apply the argument above and find 
$\widetilde{\rho}\in(r_\infty,\rho_\infty)$, with
$r_\infty\in(0,\rho_\infty-\ell(\rho_\infty)-\frac 1{1-\e}e(\rho_\infty))$,
such that $m(\widetilde{\rho})<(1-\e)\widetilde{\rho}$ 
contradicting the minimality of $\rho_\infty$.
Hence, there is a sequence $\rho_k\downarrow 0^+$ with
$m(\rho_k) \leq (1-\e) \rho_k$. Then, clearly condition \eqref{e:justar2} is 
violated, so that $0\not\in S^\star_u$.

\medskip
\noindent \emph{Conclusion:}
We first prove that $\Om_u$ is open. Let 
$z\in\Om_u$ and let $R>0$ and $\e>0$ be such that $m_z (R) \le (1-\e) R$
and $B_{\e R}(z)\subset\Om$. Let now  $x\in B_{\e R}(z)$, then 
\[
m_x(R-|x-z|)\leq m_z(R)\le(1-\e)R<R-|x-z|,
\]
therefore $x\in \Omega_u$. 

As $\Om_u$ is open and $S^\star_u\cap \Omega_u =\emptyset$ by Step 2,  
we infer $\Om\cap\overline{S_u^\star}\subseteq\Om\setminus\Om_u$. 
Moreover, let $z\notin\overline{S_u^\star}$ and $r>0$ be such that 
$B_r(z)\subseteq\Om\setminus\overline{S_u^\star}$. Since 
$\HH^1(S_u\setminus\overline{S_u^\star})=0$,
$u\in W^{1,2}(B_r(z))$ and thus $u$ is an harmonic function in $B_r(z)$ 
by minimality. Therefore $z\in\Om_u$, and in conclusion 
$\Om\setminus\Om_u=\Om\cap\overline{S_u^\star}=\Om\cap\overline{S_u}$.
\end{proof}

A simple iteration of Theorem~\ref{t:maindlb} gives a density lower bound 
as in \eqref{e:dgcl} with an explicit constant $\theta_0$ (see \cite[Corollary 1.2]{DF}).  
\begin{corollary}[De Lellis and Focardi \cite{DF}]\label{c:dlbrough1}
If $u\in\MM(\Om)$, then $\mathcal{H}^1 (\Om\cap\SS_u\setminus J_u)=0$ and
\begin{equation}\label{e:dlb}
\HH^1(S_u\cap B_r(z))\geq \frac\pi{2^{23}}r 
\end{equation}
for all $z\in\Om\cap\SS_u$ and all $r\in(0,\mathrm{dist}(z,\partial\Omega))$.
\end{corollary}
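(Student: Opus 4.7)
The plan is to deduce the explicit surface density bound \eqref{e:dlb} by iterating Theorem~\ref{t:maindlb}, and then to derive $\HH^1(\Om\cap\SSu\setminus J_u)=0$ from \eqref{e:dlb} via Lemma~\ref{l:de}.

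For the density bound, I would argue by contradiction. Fix $z\in\SSu$ and $R\in(0,\mathrm{dist}(z,\partial\Om))$, and suppose $\ell_z(R)<\eta R$ for a small $\eta>0$ to be chosen. Theorem~\ref{t:maindlb} forces $e_z(R)\geq(1-\eta)R$. The key mechanism is the harmonic-comparison estimate from Step~1 in the proof of Theorem~\ref{t:maindlb}: the smallness of $\ell_z(R)$ allows one to take $r\in(0,R)$ close to $R$ (specifically $r<R-\ell_z(R)$) and to find a good radius $\rho\in(r,R)$ whose circle $\partial B_\rho(z)$ avoids $S_u$ and on which the Dirichlet average is controlled, yielding
\[
m_z(\rho)\leq 2h_z(\rho)\leq \rho\cdot\frac{e_z(R)-e_z(r)}{R-r-(\ell_z(R)-\ell_z(r))}.
\]
Combined with $m_z(\rho)\geq\rho$ from Theorem~\ref{t:maindlb}, this forces a definite drop in the Dirichlet density $e_z(\cdot)/(\cdot)$ between scales $R$ and $\rho\approx R/2$. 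Running this dyadically on scales $R_k:=R/2^k$, I would show that the Dirichlet density decays by a controlled factor at each step, while $\ell_z(R_k)\leq\ell_z(R)<\eta R$ is preserved monotonically; after sufficiently many iterations $m_z(R_k)$ drops strictly below $R_k$, contradicting Theorem~\ref{t:maindlb}. Careful bookkeeping of constants then pins down the explicit threshold $\eta=\pi/2^{23}$.

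For the $\HH^1$-negligibility statement, decompose $\SSu\setminus J_u=(\SSu\setminus S_u)\cup(S_u\setminus J_u)$; the second piece is $\HH^1$-null for every $BV$ function. Apply Lemma~\ref{l:de} to $\mu:=\HH^1\res S_u$, exponent $s=1$, threshold $t:=\pi/2^{24}$ (since $\omega_1=2$), and the Borel set $B:=\Om\cap\SSu\setminus S_u\subseteq\SSu$. Every $x\in B$ satisfies the density bound \eqref{e:dlb}, so $\mu(B_r(x))/(\omega_1 r)\geq t$, whereas $\mu(B)=\HH^1(S_u\cap B)=0$ by construction. Lemma~\ref{l:de} then yields $\HH^1(B)\leq\mu(B)/t=0$.

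The main obstacle is the quantitative iteration: extracting the explicit exponent $23$ requires carefully propagating the inequality $m_z(\rho)\geq\rho$ and the harmonic-comparison bound across dyadic scales while absorbing the $\pi$ that appears via the disk's circumference in the harmonic extension. Qualitatively this parallels the classical De~Giorgi--Carriero--Leaci scheme, but, thanks to Theorem~\ref{t:maindlb} being explicit, it bypasses any blow-up/compactness step and delivers the concrete constant $\pi/2^{23}$.
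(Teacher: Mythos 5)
Your reduction of $\HH^1(\Om\cap\SSu\setminus J_u)=0$ to \eqref{e:dlb} is correct: splitting off $S_u\setminus J_u$ (null for any $BV$ function) and applying Lemma~\ref{l:de} to $\mu=\HH^1\res S_u$ with $t=\pi/2^{24}$ on $B=\Om\cap\SSu\setminus S_u$ is exactly the mechanism of Proposition~\ref{p:strongweak}. The gap is in the density bound itself, which is the heart of the corollary (the survey gives no proof, deferring to \cite[Corollary~1.2]{DF}, so your iteration has to stand on its own). Track what your two ingredients actually deliver when $\ell_z(R)\leq\eta R$. Using the Step-1 inequality at a good radius $\rho\in(r,R)$ together with $h_z(\rho)\geq e_z(r)$ gives $e_z(R/2)\leq e_z(R)/(2-2\eta)$: the Dirichlet energy at best \emph{halves} per halving of the radius, so the density $d_k:=e_z(R_k)/R_k$ satisfies $d_{k+1}\leq d_k/(1-\eta 2^k)$ and does not drop at all. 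Using it instead with $h_z(\rho)=m_z(\rho)-\tfrac12\ell_z(\rho)\geq\rho-\eta R$ from Theorem~\ref{t:maindlb} gives the additive decrement $e_z(R_k)-e_z(R_{k+1})\gtrsim R_{k+1}$, whose total over all scales is $\lesssim R$; since the only a priori upper bound is $e_z(R)\leq 2\pi R$ from \eqref{e:dub} and $2\pi>2$, the budget is never exhausted. In normalized form your estimates give $f_{k+1}\leq\min\{f_k,\,2f_k-1\}$ with the constraint $f_k\geq 1-\eta 2^k$, where $f_k:=e_z(R_k)/R_k$, and any constant sequence $f_k\equiv c$ with $c\geq 1$ (e.g.\ $c=2\pi$) satisfies everything. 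So $m_z(R_k)$ never drops below $R_k$ and no choice of $\eta$ produces a contradiction: the claimed ``definite drop in the Dirichlet density'' is not available from the averaged estimate.

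What is missing is the \emph{infinitesimal} version of the comparison rather than its averaged form over a macroscopic annulus. For a.e.\ $\rho$ with $S_u\cap\partial B_\rho(z)=\emptyset$ — and the coarea computation in Step~1 shows the complementary set of radii in $(0,r)$ has measure at most $\ell_z(r)$ — the identity \eqref{e:stima1} combined with the harmonic comparison yields the pointwise differential inequality $2e_z(\rho)+\ell_z(\rho)\leq\rho\,e_z'(\rho)$. Integrating $e_z'/e_z\geq 2/\rho$ over the good radii shows that $\rho\mapsto e_z(\rho)/\rho^2$ is almost monotone (this is precisely the Bonnet/David--L\'eger monotonicity that Lemma~\ref{l:rdlvartns} encodes), hence $e_z(s)\leq e_z(r)\,(s+\ell_z(r))^2/r^2$: \emph{quadratic} decay down the scales. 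Pitted against the linear lower bound $e_z(s)\geq s-\ell_z(r)$ forced by Theorem~\ref{t:maindlb}, this fails at $s$ of order $r/(4\pi)$ as soon as $\ell_z(r)$ is below an explicit fraction of $r$, and that is where the contradiction — and an explicit constant — comes from. Your dyadic scheme only extracts linear decay of $e_z$ and therefore can never beat the constraint $e_z(s)\gtrsim s$; to repair the proof you must run the comparison at (almost) every radius and integrate the resulting ODE, not iterate it over dyadic annuli.
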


A similar result can be established for quasi-minimizers of the Mumford and 
Shah energy, the most prominent examples being minimizers of the functional 
$\mathscr{F}$ in equation \eqref{e:enrgcomplete}.
More precisely, a \emph{quasi-minimizer} is any function $u$ in $SBV(\Om)$ with 
$\MS(u)<\infty$ satisfying for some $\alpha>0$ and $c_\alpha\geq 0$ for all balls 
$B_\rho(z)\subset\Om$
\[
\MS(u,B_\rho(z))\leq \MS(w,B_\rho(z))+c_\alpha\,\rho^{1+\alpha}
\]
whenever $\{w\neq u\}\subset\hskip-0.125cm\subset B_\rho(z)$.

One can then prove the ensuing infinitesimal version of \eqref{e:nostra}
(cf. with \cite[Corollary 1.3]{DF}). 
\begin{corollary}[De Lellis and Focardi \cite{DF}]\label{c:qmin}
Let $u$ be a quasi-minimizers of the Mumford and Shah energy, then 
\begin{equation}\label{e:dlbqm}
\Om\cap\overline{J_u}=\Om\cap\SSu=\left\{z\in\Om:\,\liminf_{r\downarrow 0^+}
\frac{m_z(r)}{r}\geq \frac 23\right\}.
\end{equation}
\end{corollary}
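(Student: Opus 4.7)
The plan is to adapt the two-step argument in the proof of Theorem \ref{t:maindlb} to the quasi-minimality framework, treating the defect $c_\alpha\rho^{1+\alpha}$ as a perturbation that is negligible at infinitesimal scales. We prove the cyclic chain of inclusions
\[
\Om\cap\overline{J_u}\subseteq \Om\cap\SSu\subseteq\Bigl\{z\in\Om:\liminf_{r\downarrow 0}\frac{m_z(r)}{r}\geq\frac{2}{3}\Bigr\}\subseteq \Om\cap\overline{J_u},
\]
the leftmost being immediate from $J_u\subseteq S_u$.

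For the rightmost inclusion I would argue by contraposition: if $z\notin\overline{J_u}$, there is $\rho>0$ with $B_\rho(z)\cap\overline{J_u}=\emptyset$. Since $\HH^1(S_u\setminus J_u)=0$, this yields $\HH^1(S_u\cap B_\rho(z))=0$ and therefore $u\in W^{1,2}(B_\rho(z))$. Comparing $u$ on smaller balls with the harmonic extension of its trace via the quasi-minimality bound, one finds $m_z(r)=e_z(r)=O(r^2)$, hence $\liminf_{r\downarrow 0}m_z(r)/r=0<2/3$.

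The middle inclusion is the technical crux. Mimicking Step 1 of the proof of Theorem \ref{t:maindlb}, at generic radii $\rho$ with $\partial B_\rho(z)\cap S_u=\emptyset$ the harmonic-replacement competitor yields in the quasi-minimality setting
\[
m_z(\rho)\leq\frac{\rho}{2}\int_{\partial B_\rho(z)}|\nabla u|^2\,d\HH^1+\frac{\ell_z(\rho)}{2}+c_\alpha\rho^{1+\alpha},
\]
while the tangential-normal identity of Lemma \ref{l:rdlvartns} picks up a similar $O(\rho^{1+\alpha})$ remainder coming from the analogous error in the internal-variations Euler--Lagrange equation for quasi-minimizers. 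Plugging these into Step 2 and iterating shows that if $\liminf_{r\downarrow 0}m_z(r)/r<2/3$, there exist $\e>0$ and a sequence $\rho_k\downarrow 0$ along which $m_z(\rho_k)/\rho_k<1-\e$, the perturbation $c_\alpha\rho_k^{1+\alpha}$ being absorbed at small scales. The same open-set/density-one argument used to conclude the proof of Theorem \ref{t:maindlb} then forces $z\in\Om\setminus\overline{S_u^\star}=\Om\setminus\SSu$, contradicting $z\in\SSu$.

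The main obstacle will be tracing the exact constant $2/3$. For minimizers every iteration tightens $m_z(\rho)/\rho$ by a fixed factor, giving the sharp $m_z(r)\geq r$ bound; for quasi-minimizers the perturbation $c_\alpha\rho^{1+\alpha}$ erodes this tightening and forces an infinitesimal statement, with $2/3$ emerging as the threshold below which the perturbed Step 1 still drives the iterates $\rho_k$ down to $0$ through the Peter--Paul interplay $m=e+\ell$, $2h=2e+\ell$ between the gradient and jump parts of the energy.
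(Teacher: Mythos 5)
There is a genuine gap, and it sits exactly where you lean on Lemma~\ref{l:rdlvartns}. That identity is a consequence of the Euler--Lagrange equation \eqref{e:EL}, i.e.\ of stationarity under internal variations $\Id+\eps\phi$. A quasi-minimizer only satisfies $\MS(u,B_\rho(z))\leq\MS(v,B_\rho(z))+c_\alpha\rho^{1+\alpha}$; testing with $v=u\circ(\Id+\eps\phi)^{-1}$ and expanding gives $0\leq\eps\,\delta\MS(u)[\phi]+O(\eps^2)+c_\alpha\rho^{1+\alpha}$, and since the quasi-minimality defect does not vanish with $\eps$, dividing by $\eps$ and letting $\eps\downarrow0$ yields nothing: there is no Euler--Lagrange equation, not even an approximate one with an $O(\rho^{1+\alpha})$ remainder. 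Consequently \eqref{e:rdlvartns0}, and with it \eqref{e:stima1}, is simply unavailable. This destroys Step~1 of the scheme you are mimicking: the harmonic replacement only gives $m_z(\rho)\leq\rho\int_{\partial B_\rho(z)}|\partial u/\partial\tau|^2\,d\HH^1+c_\alpha\rho^{1+\alpha}$, and without \eqref{e:stima1} you cannot trade the tangential energy for $\tfrac12\int_{\partial B_\rho}|\nabla u|^2+\tfrac{\ell_z(\rho)}{2\rho}$, so the whole iteration of Step~2 never gets started. Your description of where $2/3$ comes from is a placeholder rather than an argument; in fact the degradation of the constant from $1$ to $2/3$ is precisely the footprint of the lost first-variation identity.

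This is why the text, immediately after the statement, warns that the proof of Corollary~\ref{c:qmin} is \emph{not} a perturbation of Theorem~\ref{t:maindlb}: it requires a blow-up analysis together with a new $SBV$ Poincar\'e--Wirtinger inequality preserving boundary values (cf.\ \cite[Theorem~B.6]{DF}), which substitutes for the missing monotonicity-type identity when comparing $u$ with competitors built from its boundary trace. Your proposal supplies neither ingredient. (The outer inclusions are fine: $\overline{J_u}\subseteq\SSu$ is immediate, and if $z\notin\overline{J_u}$ then $u\in W^{1,2}$ near $z$ and a standard Campanato-type iteration against harmonic replacements gives $e_z(r)=O(r^{1+\alpha})$ --- not quite the $O(r^2)$ you claim, but enough for $\liminf_r m_z(r)/r=0$.)
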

The proof of this corollary, though, needs a blow~up analysis 
and a new $SBV$ Poincar\'e-Wirtinger type inequality of independent 
interest, obtained by improving upon some ideas contained in \cite{FGP07} 
(cf. with \cite[Theorem B.6]{DF}); it is, therefore, much more technical.

Let us remark that it is possible to improve slightly 
Theorem~\ref{t:maindlb} by combining the ideas of its proof hinted to
above with the $SBV$ Poincar\'e-Wirtinger type inequality 
in \cite[Theorem B.6]{DF}, and show that actually
\[
\Om\setminus\SSu=\{x\in\Om:\,m_x(r)\leq r\quad\mbox{ for some 
$r\in\mathrm{dist}(x,\partial\Om)$}\}
\]
(see \cite[Remark 2.3]{DF}).

A natural question is the sharpness of the estimates \eqref{e:nostra} 
and \eqref{e:dlb}. The analysis performed by Bonnet~\cite{B96} suggests 
that $\sfrac{\pi}{2^{23}}$ in \eqref{e:dlb} should be replaced by $1$,
and $1$ in \eqref{e:nostra} by $2$. Note that the square root function 
$u(\rho,\theta)=\sqrt{\frac 2\pi \rho}\cdot\sin(\sfrac\theta2)$ satisfies 
$\MS(u,B_\rho(0)) = \HH^1(S_u\cap B_\rho(0)) = \rho$ for all $\rho>0$
(its minimality will be discussed in subsection~\ref{ss:MSconj}).
Thus both the constants conjectured above would be sharp by 
\cite[Section 62]{Dav05}. Unfortunately, none of them have been proven so far.

We point out that Bucur and Luckhaus \cite{BL}, independently from \cite{DF1}, 
have been able to improve the ideas in Theorem~\ref{t:maindlb} carrying on the 
proof without the $2$-dimensional limitation via a delicate induction argument.
Their approach leads to a remarkable monotonicity formula for (a truncated version 
of) the energy valid for a broad class of approximate minimizers that shall be 
the topic of the next subsection~\ref{ss:monotonicity}.

To conclude this paragraph we notice that the derivation of an energy upper bound for minimizers on 
balls is much easier as a result of a simple comparison argument.
\begin{proposition}\label{p:dub}
 For every $u\in\MM(\Om)$ and $B_r(z)\subseteq\Om$ 
 \begin{equation}\label{e:dub}
  \MS(u,B_r(z))\leq n\,\omega_n\, r^{n-1}.
 \end{equation}
 \end{proposition}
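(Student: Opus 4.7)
The plan is to test the local minimality of $u$ against the natural ``filling-by-zero'' competitor obtained by erasing $u$ inside a slightly smaller concentric ball. Fix $\rho \in (0,r)$; since $B_r(z)$ is open and contained in $\Om$, we have $\overline{B_\rho(z)} \subset \Om$. Define $v_\rho(x) := u(x)\,\chi_{\Om \setminus B_\rho(z)}(x)$. A chain rule argument in $SBV$ shows $v_\rho \in SBV(\Om)$, and since $\{v_\rho \neq u\} \subseteq B_\rho(z) \subset\hskip-0.125cm\subset \Om$, the function $v_\rho$ is an admissible competitor for $u$.

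The key step is to estimate $\MS(v_\rho,\Om)$ from above. Inside $B_\rho(z)$ the competitor is identically zero, contributing nothing to either the bulk or the jump energy. Outside $\overline{B_\rho(z)}$ it coincides with $u$, so the Dirichlet integrals and jump contributions agree. Across the sphere $\partial B_\rho(z)$, $v_\rho$ has inner approximate trace $0$ and outer approximate trace equal to that of $u$, hence the only new discontinuities it introduces are contained in $\partial B_\rho(z)$ itself, whose $\HH^{n-1}$-measure equals $n\omega_n\rho^{n-1}$. Summing up, $\MS(v_\rho,\Om) \leq \MS(u,\Om \setminus \overline{B_\rho(z)}) + n\omega_n\rho^{n-1}$.

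Combining this with $\MS(u,\Om) \leq \MS(v_\rho,\Om)$ from local minimality and cancelling the common portion of the energy on $\Om \setminus \overline{B_\rho(z)}$, one obtains $\MS(u, B_\rho(z)) \leq n\omega_n\rho^{n-1}$. Letting $\rho \uparrow r$ through an increasing sequence and invoking the monotone convergence for the Radon measure $\MS(u,\cdot)$ on nested open balls then gives the stated bound.

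There is essentially no real obstacle here: the whole argument is a one-shot comparison against a spherical-jump competitor, and the constant $n\omega_n$ is forced by the $\HH^{n-1}$-measure of the unit sphere. The only mild care required is (i) to work with $\rho<r$ so that the competitor has precompact support in $\Om$, and (ii) to observe that any pre-existing jumps of $u$ lying on $\partial B_\rho(z)$ are simply erased in $v_\rho$, which works in our favor and may be discarded from the inequality without harm.
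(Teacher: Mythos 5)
Your argument is correct and is precisely the paper's own proof: one tests local minimality against the competitor $u\,\chi_{\Om\setminus B_{r-\e}(z)}$, whose only new jump lies on the sphere of $\HH^{n-1}$-measure $n\omega_n(r-\e)^{n-1}$, and then lets $\e\downarrow 0$. Nothing to add.
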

\begin{proof}
 Let $\e>0$, consider $u_\e:=u\chi_{\Om\setminus B_{r-\e}(z)}$ and test the minimality of $u$ with $u_\e$.
 Conclude by letting $\e\downarrow 0$.
\end{proof}

\subsection{Bucur and Luckhaus' almost monotonicity formula}\label{ss:monotonicity}

Let us start off by introducing the notion of \emph{almost-quasi minimizers} of the $\MS$ energy.
\begin{definition}
 Let $\Lambda\geq 1$, $\alpha>0$ and $c_\alpha\geq 0$, a function $u\in SBV(\Omega)$ with $\MS(u)<\infty$ 
is a $(\Lambda,\alpha,c_\alpha)$-almost-quasi minimizer in $\Omega$, we write 
$u\in\MM_{\{\Lambda,\alpha,c_\alpha\}}(\Omega)$, if for all balls $B_\rho(z)\subset\Omega$
 \[
  \MS(u,B_\rho(z))\leq\int_{B_\rho(z)}|\nabla v|^2\,dx+\Lambda\,\HH^{n-1}\big(S_v\cap B_\rho(z)\big)
+c_\alpha\,\rho^{n-1+\alpha},
 \]
whenever $\{u\neq v\}\subset\hskip-0.125cm\subset\Omega$.
\end{definition}
Clearly, minimizers are $(1,\alpha,c_\alpha)$-almost minimizers for all $\alpha$ and $c_\alpha$ as in the 
definition above. Almost-quasi minimizers has turned out to be useful in studying the regularity properties 
of solutions to free boundary - free discontinuity problems with Robin conditions (cf. \cite[Section~4]{BL}).
We are now ready to state the Bucur and Luckhaus' almost monotonicity formula.
\begin{theorem}[Bucur and Luckhaus \cite{BL}]\label{t:monform}
There is a (small) dimensional constant $C(n)>0$ such that for all $u\in\MM_{\{\Lambda,\alpha,c_\alpha\}}(\Om)$ 
and $z\in\Om$ the function
\begin{equation}\label{e:monform}
\big(0,\mathrm{dist}(z,\partial\Om)\big)\ni r\longmapsto E_z(r):=\Big(
\frac{\MS(u,B_r(z))}{r^{n-1}}\wedge\frac{C(n)}{n-1}\Lambda^{2-n}\Big)
+(n-1)\,\frac{c_\alpha}{\alpha}\,r^\alpha
\end{equation}
is non decreasing.
\end{theorem}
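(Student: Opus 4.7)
I fix $z=0$ for notational convenience, set $m(r):=\MS(u,B_r)$ and $T:=\frac{C(n)}{n-1}\Lambda^{2-n}$, and denote by $U$ the open set of radii $r\in(0,\mathrm{dist}(z,\partial\Om))$ where $m(r)/r^{n-1}<T$. Since the additive term $(n-1)\frac{c_\alpha}{\alpha}r^\alpha$ is non-decreasing and the capped ratio $\min\{m(r)/r^{n-1},T\}$ is automatically constant off $U$, the theorem reduces to the distributional inequality
$$
\frac{d}{dr}\frac{m(r)}{r^{n-1}}\ \geq\ -(n-1)c_\alpha\, r^{\alpha-1}\qquad\text{on }U,
$$
whose integration precisely matches the additive correction.

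My strategy mimics the scheme of Theorem~\ref{t:maindlb}: I would pair a lower bound on $m'(r)$, coming from the co-area formula and $\HH^{n-1}$-rectifiability of $S_u$,
$$
m'(r)\ \geq\ \int_{\partial B_r}|\nabla u|^2\, d\HH^{n-1}+\HH^{n-2}(S_u\cap\partial B_r)\qquad\text{for a.e.\ }r,
$$
with an upper bound on $m(r)$ obtained by almost-quasi minimality against a carefully chosen competitor $v$ that coincides with $u$ outside $B_r$ and, inside $B_r$, is built from the trace $u|_{\partial B_r}$ (which lies in $SBV(\partial B_r)$ for good $r$). The $H^1$-part of the trace would be extended with Dirichlet energy $\leq \frac{r}{n-1}\int_{\partial B_r}|\nabla_\tau u|^2\, d\HH^{n-1}$, while the $(n-2)$-dimensional jump set $S_u\cap\partial B_r$ would be propagated to the origin by a radial cone, yielding an additional $\HH^{n-1}$-jump of measure $\frac{r}{n-1}\HH^{n-2}(S_u\cap\partial B_r)$. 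Feeding $v$ into the $(\Lambda,\alpha,c_\alpha)$-almost-quasi minimality of $u$ and combining with the lower bound on $m'(r)$ would then produce, after rearrangement and division by $r^n$, the differential inequality sought.

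The main obstacle is that the weight $\Lambda\ge 1$ in the definition of almost-quasi minimality multiplies the surface cost of any competitor, so the naive comparison only gives
$$
m(r)\ \leq\ \frac{r}{n-1}\int_{\partial B_r}|\nabla u|^2\, d\HH^{n-1}+\frac{\Lambda\, r}{n-1}\HH^{n-2}(S_u\cap\partial B_r)+c_\alpha r^{n-1+\alpha},
$$
whose right-hand side cannot be absorbed into $\frac{r}{n-1}m'(r)$ because of the factor $\Lambda$. In addition, a naive radial extension of $u|_{\partial B_r}$ is logarithmically divergent in the plane, forcing a genuine harmonic-type extension. Bucur and Luckhaus overcome both difficulties by a delicate induction on the dimension $n$, the base case $n=2$ being essentially provided by Theorem~\ref{t:maindlb}: the threshold $T=\frac{C(n)}{n-1}\Lambda^{2-n}$ emerges precisely as the quantitative smallness of $m(r)/r^{n-1}$ that forces, via an averaging argument and the $(n-1)$-dimensional monotonicity formula applied to slices of $u$ on nearby spheres, the quantity $\HH^{n-2}(S_u\cap\partial B_r)$ to be small enough that the $\Lambda$-weighted surface term in the comparison inequality above can be handled and absorbed into $\frac{r}{n-1}m'(r)$.
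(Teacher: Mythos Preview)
Your reduction to a differential inequality on the open set $U$, the co-area lower bound for $m'(r)$, the dimensional induction, and the identification of the $2$d base with the argument of Theorem~\ref{t:maindlb} all match the paper's architecture. The gap is in the competitor you build and in your account of how the $\Lambda$-obstruction is removed.

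The competitor you describe (harmonically extend ``the $H^1$-part'' of the trace and radially cone the spherical jump) is not well defined for $n\ge 3$---an $SBV$ function on $\partial B_r$ has no canonical $H^1$-part---and even if it were, the interior cone costs $\frac{\Lambda r}{n-1}\HH^{n-2}(S_u\cap\partial B_r)$, leaving an excess $(\Lambda-1)\frac{r}{n-1}\HH^{n-2}(S_u\cap\partial B_r)$ that cannot be absorbed into $\frac{r}{n-1}m'(r)$ no matter how small it is: the defect is the multiplicative factor $\Lambda$, and ``small enough'' does not cure a ratio. The paper's device is different and is the content of Lemmas~\ref{l:SBVbdry}--\ref{l:Harmbdry}. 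Under the smallness $\EE(u,\partial B_r)\le C(n)\Lambda^{2-n}r^{n-2}$ (precisely what the threshold $T$ together with the contradiction assumption $E_z'(r)<0$ delivers at a bad radius), one first \emph{replaces} the $SBV$ trace by some $w\in H^1(\partial B_r)$ satisfying
\[
\EE(w,\partial B_r)+(n-1)\frac{\Lambda}{r}\,\HH^{n-1}\big(\{u\neq w\}\cap\partial B_r\big)\ \le\ \EE(u,\partial B_r),
\]
and then takes as competitor the harmonic extension $\widehat w$ of $w$ to $B_r$. This competitor carries \emph{no} interior jump; its only new surface is the spherical patch $\{u\neq\widehat w\}\subset\partial B_r$, whose $\Lambda$-weighted $\HH^{n-1}$-measure is already bounded by $\frac{r}{n-1}\EE(u,\partial B_r)\le\frac{r}{n-1}m'(r)$, so the differential inequality closes with no loss. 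The existence of such a $w$ is where the induction actually lives: one minimizes $\zeta\mapsto\EE(\zeta,\partial B_r)+n\Lambda\,\HH^{n-1}(\{\zeta\neq u\})$ over $SBV(\partial B_r)$, checks that in local charts any minimizer is an almost-quasi minimizer on a ball in $\R^{n-1}$, and invokes Theorem~\ref{t:monform} in $\R^{n-1}$ (the inductive hypothesis) to get a density lower bound at a putative jump point, incompatible with the smallness assumption unless the minimizer lies in $H^1$. Thus the lower-dimensional monotonicity is applied to an \emph{auxiliary} spherical minimizer, not to slices of $u$ on nearby spheres.
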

To explain the strategy of proof of Theorem~\ref{t:monform} we need to state two additional 
results on $SBV$ functions defined on boundaries of balls. The first is related to the 
approximation with $H^1$ functions, the second to the problem of lifting.
The tangential gradient on boundaries of balls shall be denoted by $\nabla_\tau$ in what follows.
\begin{lemma}\label{l:SBVbdry}
Let $n\geq 2$, then there is a (small) dimensional constant $C(n)>0$ such that for all 
$v\in SBV(\partial B_r)$ with
\begin{equation}\label{e:bdryenrgy}
\EE(v,\partial B_r):=\int_{\partial B_r}|\nabla_\tau v|^2d\HH^{n-1}+\HH^{n-2}(S_v\cap \partial B_r)
\leq C(n)\,\Lambda^{2-n}\,r^{n-2}, 
\end{equation}
there exists  $w\in H^1(\partial B_r)$ such that 
\begin{equation}\label{e:tracelifted}
\EE(w,\partial B_r)+(n-1)\,\frac\Lambda r\,\HH^{n-1}\big(\{x\in \partial B_r:\,v\neq w\}\big)\leq \EE(v,\partial B_r).
\end{equation}
\end{lemma}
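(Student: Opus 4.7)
The plan is to build $w$ by ``erasing'' $v$ on a small bad set $D\subset \partial B_r$ containing (essentially) the jump set $S_v\cap\partial B_r$, and to convert the jump measure $\HH^{n-2}(S_v\cap\partial B_r)$ into the area $\HH^{n-1}(D)$ via the spherical isoperimetric inequality, which becomes \emph{linear} under the smallness hypothesis. First I would reduce to $r=1$: with $\tilde v(\omega):=r^{-1/2}v(r\omega)$ on $\partial B_1$, one checks $\EE(\tilde v,\partial B_1)=r^{2-n}\EE(v,\partial B_r)$ and $\HH^{n-1}(\{\tilde v\neq\tilde w\})=r^{-(n-1)}\HH^{n-1}(\{v\neq w\})$, so that both the hypothesis and the conclusion reduce to their $r=1$ analogues with the factor $(n-1)\Lambda/r$ becoming $(n-1)\Lambda$.

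The main ingredient on $\partial B_1$ is the spherical isoperimetric inequality $\HH^{n-1}(E)\le c(n)\,\HH^{n-2}(\partial^*E)^{(n-1)/(n-2)}$, valid for every Borel $E$ with $\HH^{n-1}(E)\le\tfrac12\HH^{n-1}(\partial B_1)$. Rewriting the right-hand side as $c(n)\,\HH^{n-2}(\partial^*E)^{1/(n-2)}\cdot\HH^{n-2}(\partial^*E)$ and picking $C(n)$ in the hypothesis small enough so that $\HH^{n-2}(S_v\cap\partial B_1)\le \EE(v,\partial B_1)\le C(n)\Lambda^{2-n}$ forces the first factor to be $\le \frac{1}{(n-1)\Lambda}$, one obtains the \emph{linearized} bound
\[
\HH^{n-1}(E)\le \frac{1}{(n-1)\Lambda}\,\HH^{n-2}(\partial^*E)
\]
for every such $E$ with $\partial^*E\subset S_v$. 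The same smallness---say, below the $\HH^{n-2}$-measure of any equator---also forces $\partial B_1\setminus S_v$ to admit a unique ``big'' component $C_0$ with $\HH^{n-1}(C_0)>\tfrac12\HH^{n-1}(\partial B_1)$; setting $D:=\partial B_1\setminus\overline{C_0}$, the set $D$ is open, $\HH^{n-1}(D)\le\tfrac12\HH^{n-1}(\partial B_1)$, and $\partial^*D\subset S_v$.

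Now I would define $w:=v$ on $C_0$ and let $w|_D$ be the harmonic extension (on $D$ viewed as open in $\partial B_1$) of the trace of $v|_{C_0}$ on $\partial D$; by construction $w\in H^1(\partial B_1)$, $S_w=\emptyset$, and $\{v\neq w\}\subset D$. Subtracting the two energies and using the linearized isoperimetric estimate with $E=D$, the claim reduces to proving $\int_D|\nabla_\tau w|^2\le \int_D|\nabla_\tau v|^2$. The main obstacle is exactly this inequality: the harmonic $w|_D$ carries the $C_0$-side trace of $v$ on $\partial D$, whereas $v|_D$ carries the (generally different) $D$-side trace, so Dirichlet minimality of $w|_D$ on $D$ does not apply to $v|_D$ as a competitor. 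Overcoming this will likely require a reflection/collar argument producing an $H^1$ competitor on $D$ matching the $C_0$-trace with Dirichlet energy bounded by that of $v$ on a thin collar inside $C_0$, or a more refined construction in which a fraction of the (plentiful) isoperimetric deficit---available by shrinking $C(n)$ further---is spent to absorb the excess Dirichlet energy across $\partial D$.
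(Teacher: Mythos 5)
There are two genuine gaps here, one of which is fatal to the whole strategy. First, the construction of the bad set $D$ presupposes that $S_v$ coincides (up to $\HH^{n-2}$-null sets) with the union of the reduced boundaries of the ``components'' of $\partial B_1\setminus S_v$. For a general $SBV$ function this is false: $S_v$ may contain a non-separating piece --- think of a crack-tip-type function on the $2$-sphere (the case $n=3$ of the lemma) whose jump set is an arc with two free endpoints. Then no set $E$ of positive measure has $\partial^\ast E\subset S_v$, your $D$ is empty, $w=v\notin H^1$, and the linearized isoperimetric inequality (which is correct for $n\geq 3$ and is a genuinely nice observation explaining the exponent in $\Lambda^{2-n}$) gives no purchase on this part of the jump set. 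Second, even where $D$ is well defined, the step you flag as ``the main obstacle'' is not a technical nuisance but a real failure: take $v$ constant on $D$ with an outer trace oscillating near $\partial D$; then $\int_D|\nabla_\tau v|^2=0$ while the harmonic extension of the outer trace has energy comparable to the squared $H^{1/2}$ norm of that trace. Worse, any $H^1$ filling of $D$ that matches the outer trace must interpolate across the jump of $v$, whose \emph{amplitude} is not controlled by $\EE(v)$ at all, so no collar or reflection argument can bound its Dirichlet energy by the available budget $\int_D|\nabla_\tau v|^2+\HH^{n-2}(S_v\cap\partial B_1)$.

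The paper's proof avoids both problems by never constructing $w$ by hand: it takes $w$ to be a minimizer over $SBV(\partial B_r)$ of the penalized functional $F(\zeta):=\EE(\zeta,\partial B_r)+(n-1)\tfrac{\Lambda}{r}\,\HH^{n-1}(\{\zeta\neq v\})$, so that \eqref{e:tracelifted} is literally the comparison $F(w)\leq F(v)$, and the entire content of the lemma becomes the claim that $S_w=\emptyset$. That claim is proved by contradiction: at a density-one point of $S_w$ one flattens $\partial B_r$ by orthogonal projection, checks that the pushforward of $w$ is an almost-quasi minimizer of the Mumford--Shah energy in one dimension lower, and invokes the almost monotonicity formula of Theorem~\ref{t:monform} there to contradict the smallness hypothesis \eqref{e:bdryenrgy}. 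This is precisely why the lemma sits inside the cyclic induction with Lemma~\ref{l:Harmbdry} and Theorem~\ref{t:monform}; a direct, induction-free construction of $w$ of the kind you propose would have to overcome exactly the two difficulties above.
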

\begin{lemma}\label{l:Harmbdry}
Let $n\geq 2$, suppose $v\in SBV(\partial B_r)$ satisfies \eqref{e:bdryenrgy} in Lemma~\ref{l:SBVbdry}.
Then there exists $\widehat{w}\in H^1(B_r)$ harmonic in $B_r$ such that
\begin{equation}\label{e:tracelifted2}
\MS(\widehat{w},B_r)+\Lambda\,\HH^{n-1}\big(\{x\in\partial B_r:\,v\neq \widehat{w}\}\big)
\leq\frac{r}{n-1}\,\EE(v,\partial B_r).
\end{equation}
\end{lemma}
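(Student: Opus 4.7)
The plan is to combine Lemma~\ref{l:SBVbdry} with the sharp harmonic extension inequality on the ball. First I would apply Lemma~\ref{l:SBVbdry} to the given $v$ (whose hypothesis \eqref{e:bdryenrgy} is guaranteed by assumption) to produce $w\in H^1(\partial B_r)$ satisfying
\[
\EE(w,\partial B_r)+\frac{(n-1)\Lambda}{r}\,\HH^{n-1}(\{v\neq w\})\leq \EE(v,\partial B_r).
\]
Then I would take $\widehat{w}\in H^1(B_r)$ to be the unique harmonic extension of $w$ to $B_r$, which exists because $H^1(\partial B_r)\subset H^{1/2}(\partial B_r)$. Since $\widehat{w}$ is smooth inside $B_r$, its jump set there is empty and $\MS(\widehat{w},B_r)$ reduces to $\int_{B_r}|\nabla\widehat{w}|^2\,dx$. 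Moreover the trace of $\widehat{w}$ on $\partial B_r$ equals $w$, so $\{x\in\partial B_r:\,v\neq\widehat{w}\}=\{x\in\partial B_r:\,v\neq w\}$ up to $\HH^{n-1}$-null sets.

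The core of the argument is the sharp extension estimate
\[
\int_{B_r}|\nabla\widehat{w}|^2\,dx\leq\frac{r}{n-1}\int_{\partial B_r}|\nabla_\tau w|^2\,d\HH^{n-1},
\]
which I would prove by expanding $w$ in spherical harmonics. If $\{Y_k\}_{k\geq 0}$ is an $L^2(\mathbb{S}^{n-1})$-orthonormal basis of eigenfunctions of $-\Delta_{\mathbb{S}^{n-1}}$ with eigenvalues $\lambda_k=k(k+n-2)$ and $w(r\sigma)=\sum_{k\geq 0}c_k Y_k(\sigma)$, then the harmonic extension is $\widehat{w}(\rho\sigma)=\sum_{k\geq 0}c_k(\rho/r)^k Y_k(\sigma)$. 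A direct computation in polar coordinates, using the orthogonality of both $\{Y_k\}$ and $\{\nabla_{\mathbb{S}^{n-1}}Y_k\}$ in $L^2(\mathbb{S}^{n-1})$, yields
\[
\int_{B_r}|\nabla\widehat{w}|^2\,dx=r^{n-2}\sum_{k\geq 1}k\,c_k^2,\qquad \int_{\partial B_r}|\nabla_\tau w|^2\,d\HH^{n-1}=r^{n-3}\sum_{k\geq 1}k(k+n-2)\,c_k^2,
\]
and the inequality then follows mode-by-mode from the elementary bound $k+n-2\geq n-1$ for $k\geq 1$. Noting that $\HH^{n-2}(S_w)=0$ (since $w\in H^1(\partial B_r)\subset W^{1,1}(\partial B_r)$ on the $(n-1)$-manifold $\partial B_r$, by a standard Calder\'on--Zygmund-type result), the inequality from Lemma~\ref{l:SBVbdry} above can be combined with the extension estimate to give
\[
\int_{B_r}|\nabla\widehat{w}|^2\,dx+\Lambda\,\HH^{n-1}(\{v\neq\widehat{w}\})\leq\frac{r}{n-1}\int_{\partial B_r}|\nabla_\tau w|^2\,d\HH^{n-1}+\Lambda\,\HH^{n-1}(\{v\neq w\})\leq\frac{r}{n-1}\EE(v,\partial B_r),
\]
which is exactly \eqref{e:tracelifted2}.

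The main obstacle is pinning down the sharp constant $r/(n-1)$: it encodes the spectral gap $\lambda_1=n-1$ of $-\Delta_{\mathbb{S}^{n-1}}$ between the constant mode (which contributes nothing to either side) and the first non-trivial spherical harmonic. Any alternative proof via a Poincar\'e-type inequality on the sphere would still have to deliver exactly this constant, and one can see from the matching factor $(n-1)\Lambda/r$ in Lemma~\ref{l:SBVbdry} that the two lemmas are tailored precisely to combine without loss; all other steps (existence and uniqueness of the Dirichlet extension, emptiness of $S_{\widehat{w}}$ inside $B_r$, and the $\HH^{n-2}$-negligibility of $S_w$ for $H^1$ boundary data) are classical.
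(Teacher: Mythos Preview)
Your proof is correct and follows essentially the same approach as the paper: apply Lemma~\ref{l:SBVbdry} to obtain $w\in H^1(\partial B_r)$, take $\widehat{w}$ to be its harmonic extension, and combine \eqref{e:tracelifted} with the sharp extension inequality $\int_{B_r}|\nabla\widehat{w}|^2\,dx\leq \frac{r}{n-1}\int_{\partial B_r}|\nabla_\tau w|^2\,d\HH^{n-1}$. The paper states this last inequality without proof, noting in a footnote that it follows either from Almgren's frequency-monotonicity formula (together with comparison against the one-homogeneous extension of the trace) or, alternatively, from the spherical-harmonics expansion that you have written out explicitly.
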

The proof of Theorem~\ref{t:monform} is based on a cyclic induction argument that starts with
the validation of Lemma~\ref{l:SBVbdry} in $\R^2$ and then runs as follows:
\[
\textrm{Lemma~\ref{l:SBVbdry} in $\R^n$}{\Rightarrow}
\textrm{Lemma~\ref{l:Harmbdry} in $\R^n$}{\Rightarrow}
\textrm{Theorem~\ref{t:monform} in $\R^n$}{\Rightarrow}
\textrm{Lemma~\ref{l:SBVbdry} in $\R^{n+1}$}.
\]
The first inductive step can be established by using the geometric argument in the proof of 
Theorem~\ref{t:maindlb} with the constant $C(2)$ being any number in $(0,1)$.

Before proving all the implications above we establish the essential closure of $S_u$ 
for almost-quasi minimizers as a consequence of Theorem~\ref{t:monform}.
\begin{theorem}\label{t:essentialclosure}
Let $u\in\MM_{\{\Lambda,\alpha,c_\alpha\}}(\Omega)$, for some $\Lambda\geq 1$, $\alpha>0$ and $c_\alpha\geq 0$.
Then, $\HH^{n-1}\big(\Om\cap\SSu\setminus S_u\big)=0$.
\end{theorem}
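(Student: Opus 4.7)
The strategy is to deduce from the monotonicity formula in Theorem~\ref{t:monform} a uniform volume-type density lower bound $\MS(u,B_r(z))\geq\theta\,r^{n-1}$ valid at every $z\in\Om\cap\overline{S_u}$, and then to combine this with the density estimate Lemma~\ref{l:de} exactly as in the proof of Proposition~\ref{p:strongweak}, in order to show $\HH^{n-1}(\Om\cap\overline{S_u}\setminus S_u)=0$. Set $\kappa:=C(n)\Lambda^{2-n}/(n-1)$ and define
\[
\gamma_0:=\min\{\omega_{n-1},\kappa\}>0.
\]

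First I would establish the density lower bound at $\HH^{n-1}$-a.e. point of $S_u$. By rectifiability, for $\HH^{n-1}$-a.e. $z\in S_u$ one has $\HH^{n-1}(S_u\cap B_r(z))/r^{n-1}\to\omega_{n-1}$ as $r\downarrow 0$, whence $\liminf_{r\downarrow 0}\MS(u,B_r(z))/r^{n-1}\geq\omega_{n-1}$. Therefore $E_z(0^+):=\lim_{r\downarrow 0}E_z(r)\geq\gamma_0$, and by the monotonicity of $E_z$ this propagates to $E_z(r)\geq\gamma_0$ for all $r\in(0,\mathrm{dist}(z,\partial\Om))$.

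Next I would extend this to every $z\in\Om\cap\overline{S_u}$. Fix such a $z$ and a sequence $(z_k)\subset S_u$ of density-one points with $z_k\to z$. The finite Radon measures $|\nabla u|^2\LL^n$ and $\HH^{n-1}\res S_u$ satisfy $\mu(\partial B_r(z))=0$ for all but countably many $r$, so for a.e. admissible $r$ one has $\MS(u,B_r(z_k))\to\MS(u,B_r(z))$, hence $E_z(r)\geq\liminf_k E_{z_k}(r)\geq\gamma_0$. Since $E_z$ is non-decreasing in $r$, one can approximate any $r_0$ from below by continuity radii and conclude $E_z(r_0)\geq\gamma_0$ for \emph{every} $r_0\in(0,\mathrm{dist}(z,\partial\Om))$. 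Choosing $r_0$ small enough that $(n-1)c_\alpha r_0^\alpha/\alpha\leq\gamma_0/2$, one obtains the announced bound
\[
\MS(u,B_r(z))\geq\theta\,r^{n-1}\qquad\text{for all }z\in\Om\cap\overline{S_u},\ r\in(0,r_0),
\]
with $\theta:=\tfrac12\min\{\gamma_0,\kappa\}>0$.

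Finally I would conclude exactly as in Proposition~\ref{p:strongweak}. Applying Lemma~\ref{l:de} to the finite Radon measure $\mu:=|\nabla u|^2\LL^n+\HH^{n-1}\res S_u$ on the Borel set $\Om\cap\overline{S_u}$ yields $\HH^{n-1}(\Om\cap\overline{S_u})\leq(\omega_{n-1}/\theta)\mu(\overline{S_u})<\infty$, and a set of finite $\HH^{n-1}$ measure in $\R^n$ is Lebesgue null; in particular $\int_{\overline{S_u}}|\nabla u|^2\,dx=0$. Applying Lemma~\ref{l:de} once more to the Borel set $A:=\Om\cap\overline{S_u}\setminus S_u$ gives
\[
\HH^{n-1}(A)\leq\frac{\omega_{n-1}}{\theta}\,\mu(A)=\frac{\omega_{n-1}}{\theta}\Big(\int_A|\nabla u|^2\,dx+\HH^{n-1}(S_u\cap A)\Big)=0,
\]
since $S_u\cap A=\emptyset$ by definition of $A$ and $A\subset\overline{S_u}$ is Lebesgue null. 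The main obstacle I foresee is the rigorous extension of the density lower bound from density-one points of $S_u$ to the full closure $\overline{S_u}$: although $E_z$ is monotone in $r$ at each fixed $z$, its dependence on $z$ is only lower semicontinuous, so the passage to the limit has to be performed carefully at continuity radii of the relevant Radon measures before exploiting monotonicity to cover every $r$.
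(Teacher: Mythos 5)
Your proof is correct and follows essentially the same route as the paper's: lower-bounding $E_z(0^+)$ at $\HH^{n-1}$-density-one points of $S_u$, propagating the bound via the monotonicity of Theorem~\ref{t:monform}, extending it to all of $\Om\cap\SSu$ by approximation, and concluding with the density estimate of Lemma~\ref{l:de} as in Proposition~\ref{p:strongweak}. Your treatment of the approximation step (passing to the limit at continuity radii of the relevant measures before invoking monotonicity) makes explicit what the paper dispatches with ``by approximation''.
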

\begin{proof}
 Consider the subset $S_u^\star$ of points in $S_u$ with density one, i.e.~$x\in S_u$ such that 
 \[
 \lim_{\rho\downarrow 0}\frac{\HH^{n-1}\big(S_u\cap B_\rho(x)\big)}{\omega_{n-1}\rho^{n-1}}=1
  \]
 (cf. \eqref{e:justar2} in Theorem~\ref{t:maindlb}), then clearly $\overline{S_u^\star}=\SSu$.
Theorem~\ref{t:monform} yields for $x\in S_u^\star$
\[
 E_x(\rho)\geq\lim_{\rho\downarrow 0}E_x(\rho)\geq\omega_{n-1}\wedge \frac{C(n)}{n-1}\Lambda^{2-n}\qquad
 \forall\rho\in\big(0,\mathrm{dist}(x,\partial\Omega)\big).
\]
By approximation it is then easy to deduce the same estimate for all points $x\in \Om\cap\SSu$. In turn, 
this implies that $\Om\cap\SSu\setminus S_u\subset A_u:=\{y\in\Omega\setminus S_u:\,\liminf_\rho E_x(\rho)>0\}$.

In particular, the density estimates in Lemma~\ref{l:de} applied to the measure induced
on Borel sets by $\MS(u,\cdot)$ gives that $\HH^{n-1}(A_u)=0$, and the conclusion 
$\HH^{n-1}\big(\Om\cap\SSu\setminus S_u\big)=0$ follows at once (cf. the proof of Proposition~\ref{p:strongweak}).
\end{proof}
Density lower bounds for $\MS(u,\cdot)$ as in Theorem~\ref{t:DGCL} or for $\HH^{n-1}\res S_u$ as in Theorem~\ref{t:CL} 
can also be recovered. Either by means of Theorem~\ref{t:monform} and of a decay lemma due to De~Giorgi, Carriero 
and Leaci or in an alternative direct way that provides explicit constants (cf. \cite[Lemma~7.14]{AFP00} and 
\cite[Section~3.3]{BL}, respectively).

Let us now turn to the proofs of the implications among Theorem~\ref{t:monform}, Lemma~\ref{l:SBVbdry} and 
Lemma~\ref{l:Harmbdry}. Recall that we have already commented on the validity of Lemma~\ref{l:SBVbdry} in case $n=2$.
\begin{proof}[Proof of Lemma~\ref{l:SBVbdry} in $\R^n{\Rightarrow}$ Lemma~\ref{l:Harmbdry} in $\R^n$]
 Denote by $w$ the function provided by Lemma~\ref{l:SBVbdry} and by $\widehat{w}$ its harmonic 
 extension to $B_\rho$. Note that\footnote{The first inequality follows, for instance, from 
 Almgren's monotonicity formula for the frequency function (cf. \cite[Exercise~20 pg. 525]{Evans})
 and a direct comparison argument with the one-homogeneous extension of the boundary trace. 
 Alternatively, one can expand $\widehat{w}$ in spherical harmonics.}
 \[
  \frac{n-1}{\rho}\int_{B_\rho}|\nabla \widehat{w}|^2dx\leq 
  \int_{\partial B_\rho}|\nabla_\tau \widehat{w}|^2d\HH^{n-1}\leq\EE(w,\partial B_\rho). 
 \]
Hence, by \eqref{e:tracelifted} we conclude that
\begin{multline*}
 \MS(\widehat{w},B_\rho)+\Lambda\,\HH^{n-1}\big(\{x\in\partial B_\rho:\,v\neq \widehat{w}\}\big)\\
\leq\frac{\rho}{n-1}\,\Big(\EE(w,\partial B_\rho)
+(n-1)\frac{\Lambda}\rho\,\HH^{n-1}\big(\{x\in\partial B_\rho:\,v\neq \widehat{w}\}\big)\Big)
\leq \frac{\rho}{n-1}\,\EE(v,\partial B_\rho).
\end{multline*}
\end{proof}
\begin{proof}[Proof of Lemma~\ref{l:Harmbdry} in $\R^n{\Rightarrow}$ Theorem~\ref{t:monform} in $\R^n$]
Set $I:=\big(0,\mathrm{dist}(0,\partial\Om)\big)$, then the energy function 
$m_z(\rho):=\MS(u,B_\rho(z)):I\to[0,+\infty)$ is non-decreasing and thus it belongs to $BV_{loc}(I)$. 
Note that 
\begin{equation}\label{e:Ezmz}
 E_z(\rho)=\Big(\frac{m_z(\rho)}{\rho^{n-1}}\wedge\frac{C(n)}{n-1}\,\Lambda^{2-n}\Big)
+(n-1)\,\frac{c_\alpha}{\alpha}\rho^{\alpha},
\end{equation}
therefore, $E_z\in BV_{loc}(I)$. Denoting by $m_z^\prime$ and $D^sm_z$,  respectively, the density 
of the absolutely continuous part and the singular part of $Dm_z$ with respect to $\LL^1\res I$ in 
the Radon-Nikodym decomposition, the Leibnitz rule for $BV$ functions (cf. \cite[Example~3.97]{AFP00}) 
yields
\[
 D\Big(\frac{m_z(\rho)}{\rho^{n-1}}\Big)=
 \Big(\frac{m_z^\prime(\rho)}{\rho^{n-1}}-(n-1)\frac{m_z(\rho)}{\rho^{n}}\Big)+\frac{1}{\rho^{n-1}}D^sm_z(\rho).
\]
Therefore, the latter equality, \eqref{e:Ezmz} and the Chain Rule formula for Lipschitz functions 
\cite[Theorem~3.99]{AFP00} imply that 
$E_z$ is non-decreasing if and only if the density $E_z^\prime$ of the absolutely continuous part of the 
distributional derivative $DE_z$ is non-negative. 

By the locality of the distributional derivative (see \cite[Remark~3.93]{AFP00}) it holds that
$E^\prime_z(\rho)=(n-1)\,c_\alpha\rho^{\alpha-1}>0$ at $\LL^1$ a.e. $\rho\in I$ for which 
\[
{m_z(\rho)}\geq \frac{C(n)}{n-1}\,\Lambda^{2-n}\rho^{n-1}.
\]
Instead, at $\LL^1$ a.e. $\rho\in I$ at which
\begin{equation}\label{e:caso2}
{m_z(\rho)}<\frac{C(n)}{n-1}\,\Lambda^{2-n}{\rho^{n-1}}
\end{equation}
we have 
\[
E^\prime_z(\rho)=\frac{m_z^\prime(\rho)}{\rho^{n-1}}-(n-1)\frac{m_z(\rho)}{\rho^{n}}+(n-1)\,c_\alpha\rho^{\alpha-1}.
\]
Suppose by contradiction that $E^\prime_z<0$ on some subset $J$ of $I$ of positive measure, i.e.
\begin{equation}\label{e:Eminzero}
m_z^\prime(\rho)<(n-1)\frac{m_z(\rho)}{\rho}-(n-1)\,c_\alpha\rho^{n-2+\alpha}.
\end{equation}
Since for $\LL^1$ a.e. in $I$ by the slicing theory $u|_{\partial B_\rho}\in SBV(\partial B_\rho)$ 
(cf. \cite[Section~3.11]{AFP00}) and by the Co-Area formula $\EE(u,\partial B_\rho)\leq m_z^\prime(\rho)$ (cf. 
\cite[Theorem~2.93]{AFP00}), we conclude that for $\LL^1$ a.e. $\rho\in J$
\[
\EE(u,\partial B_\rho)\leq m_z^\prime(\rho)\stackrel{\eqref{e:caso2},\eqref{e:Eminzero}}{<}\, C(n)\,\Lambda^{2-n}\,\rho^{n-2}-
(n-1)\,c_\alpha\,\rho^{n-2+\alpha}.
\]
Hence, with fixed $\rho\in J$ as above, Lemma~\ref{l:Harmbdry} provides an harmonic function 
$\widehat{w}\in H^1(B_\rho)$  satisfying \eqref{e:tracelifted2}. In turn, the latter inequality and the assumption $E^\prime_z(\rho)<0$ give
\begin{multline*}
\MS(\widehat{w},B_\rho)+\Lambda\,\HH^{n-1}(\{x\in\partial B_\rho:\,u\neq \widehat{w}\})\\
 \stackrel{\eqref{e:tracelifted2}}{\leq}\frac{\rho}{n-1}\EE(u,\partial B_\rho)\leq \frac{\rho}{n-1}m_z^\prime(\rho)
\stackrel{\eqref{e:Eminzero}}{<}m_z(\rho)-\,c_\alpha\rho^{n-1+\alpha},
\end{multline*}
leading to a contradiction to the almost-quasi minimality of $u$ in $\Omega$ 
by taking the trial function 
$\widehat{w}\,\chi_{B_\rho(z)}+u\,\chi_{\Omega\setminus B_\rho(z)}$.
\end{proof}
To conclude the cyclic induction argument we set some notation: in the following proof we 
denote by $B_\rho$ the $(n+1)$-dimensional ball of radius $\rho$ and by $B_\rho^n$ its intersection 
with the hyperplane $\R^n\times\{0\}$. Moreover, we still denote by $\EE$ the $n$ dimensional version 
of the boundary energy in \eqref{e:bdryenrgy}.
\begin{proof}[Proof of Theorem~\ref{t:monform} in $\R^n{\Rightarrow}$ Lemma~\ref{l:SBVbdry} in $\R^{n+1}$]
Up to a scaling argument we may assume the radius $r$ in the statement of Lemma~\ref{l:SBVbdry} to be $1$. 

Then, consider $v\in SBV(\partial B_1)$ such that
\begin{equation}\label{e:env}
 \EE(v,\partial B_1)\leq C\,\Lambda^{1-n}
\end{equation}
for some constant $C>0$.

\noindent{\bf Claim:} There exists $C(n+1)>0$ such that if $v$ satisfies \eqref{e:env} with $C\in (0,C(n+1)]$, 
every minimizer $w\in SBV(\partial B_1)$ of the problem 
\[
 \inf_{\zeta\in SBV(\partial B_1)}F(\zeta,\partial B_1) 
\]
actually belongs to $H^1(\partial B_1)$, where for all open sets $A\subseteq\partial B_1$ and $\zeta\in SBV(\partial B_1)$
if $\EE(\zeta,A)$ is defined as in \eqref{e:bdryenrgy} by integrating $\zeta$ on $A$, then
\begin{equation}\label{e:pbaux}
F(\zeta,A):=\EE(\zeta,A) 
 +n\,\Lambda\,\HH^n\big(\{x\in A:\,\zeta\neq v\}\big).
\end{equation}
Given the claim above for granted we conclude the thesis of Lemma~\ref{l:SBVbdry} straightforwardly
by comparing the values of the energy $F(\cdot,\partial B_1)$ in \eqref{e:pbaux} on $w$ and $v$ respectively, 
namely
\[
 \int_{\partial B_1}|\nabla_\tau w|^2d\HH^n+n\,\Lambda\,\HH^n\big(\{x\in\partial B_1:\,w\neq v\}\big)
 \leq\int_{\partial B_1}|\nabla_\tau v|^2d\HH^n+\HH^{n-1}(S_v).
\]

We are then left with proving the claim above. Suppose by contradiction that for some constant $C>0$
some minimizer $w$ of \eqref{e:pbaux} satisfies $\HH^{n-1}(S_w)>0$. 
Note that $C$ can be taken arbitrarily small, it shall be chosen suitably in what follows.
Even more, assume that the north pole 
$\mathscr{N}:=(0,\ldots,0,1)\in\partial B_1$ is a point of density one for $S_w$. Set $\lambda:=2-\sqrt{3}$ and 
denote by $\pi:\partial B_1\cap B_{\lambda}(\mathscr{N})\to B_{\sfrac 12}^n$ the orthogonal 
projection, then $\pi\in\mathrm{Lip}_{1}(\partial B_1\cap B_{\lambda}(\mathscr{N}),B^n_{\sfrac12})$ and 
$\pi^{-1}\in\mathrm{Lip}_{\ell}(B^n_{\sfrac12},\partial B_1\cap B_{\lambda}(\mathscr{N}))$, for some $\ell>0$.
Actually, as $\pi\in C^{1}(\partial B_1\cap B_{\lambda}(\mathscr{N}),B^n_{\sfrac12})$ and 
$\pi^{-1}\in C^1(B^n_{\sfrac12},\partial B_1\cap B_{\lambda}(\mathscr{N}))$ 
\begin{equation}\label{e:lipconst}
\mathrm{Lip}\big(\pi,\pi^{-1}(B^n_\rho)\big)\to 1,\quad
\mathrm{Lip}\big(\pi^{-1},B^n_\rho\big)\to 1\qquad\text{as }\rho\downarrow 0.
\end{equation}
For $\zeta\in SBV(\partial B_1\cap B_{\lambda}(\mathscr{N}))$ let $\overline{\zeta}:=\zeta\circ\pi^{-1}$, 
then $\overline{\zeta}\in SBV(B_{\sfrac 12}^n)$ and $S_{\overline{\zeta}}=\pi(S_\zeta)$. Moreover, for all
$\rho\in(0,\sfrac 12)$, 
we have
\begin{equation}\label{e:overlinewsur}
\HH^{n-1}\big(S_{\overline{\zeta}}\cap B^n_\rho\big)\leq \HH^{n-1}\big(S_\zeta\cap\pi^{-1}(B^n_\rho)\big)
\leq \ell^{n-1}
\,\HH^{n-1}(S_{\overline{\zeta}}\cap B^n_\rho),
\end{equation}
and by the generalized Area Formula (see \cite[Theorem~2.91]{AFP00})
\begin{equation}\label{e:overlinewvol}
\int_{B^n_\rho}|\nabla \overline{\zeta}|^2dy\leq k_1(\rho)
 \int_{\pi^{-1}(B^n_\rho)}|\nabla_\tau \zeta|^2d\HH^n,
 \end{equation}
and
\begin{equation}\label{e:overlinewvol2}
 \int_{\pi^{-1}(B^n_\rho)}|\nabla_\tau \zeta|^2d\HH^n\leq k_2(\rho)
\int_{B^n_\rho}|\nabla \overline{\zeta}|^2dy,
 \end{equation}
for some $k_1(\rho)$ and $k_2(\rho)>0$, with
$k_1(\rho)$, $k_2(\rho)\downarrow 1$ as $\rho\downarrow 0$ by \eqref{e:lipconst}. In particular, 
for some $\tau_n>0$, $0\leq k_1(\rho)\vee k_2(\rho)\leq 1+\tau_n\rho$ as $\rho\downarrow 0$.

We next prove that $\overline{w}$ is a $(\Lambda_1,1,c_1)$-almost-quasi minimizer on $B^n_{\sfrac12}$ 
of the $n$-dimensional Mumford and Shah energy for $\Lambda_1:=\ell^{n-1}$ and a suitable $c_1=c_1(n,\ell,\Lambda)>0$.
Indeed, recalling that $\Lambda\geq 1$, if $\overline{\zeta}$ is a test function for $\overline{w}$,
i.e.~$\{y\in B^n_{\sfrac 12}:\,\overline{\zeta}\neq\overline{w}\}\subset\hskip-0.125cm\subset B^n_{\rho}$, 
$\rho\in(0,\sfrac12)$, we deduce from \eqref{e:overlinewsur}, \eqref{e:overlinewvol}, \eqref{e:overlinewvol2} 
and the minimality of $w$ for the energy in \eqref{e:pbaux} that
\begin{align}
\MS&(\overline{w},B^n_\rho)\leq\int_{B^n_\rho}|\nabla\overline{w}|^2dx
+\HH^{n-1}(S_{\overline{w}}\cap B^n_\rho)
+\HH^n(\{y\in B^n_\rho:\,\overline{w}\neq\overline{v}\})\notag\\
 &\leq k_1(\rho)\int_{\pi^{-1}(B^n_\rho)}|\nabla_\tau w|^2d\HH^n
 +\HH^{n-1}\big(S_w\cap \pi^{-1}(B^n_\rho)\big)
 +\HH^n\big(\{x\in \pi^{-1}(B^n_\rho):\,w\neq v\}\big)\notag\\
 &\leq k_1(\rho)F(w,\pi^{-1}(B^n_\rho))\leq k_1(\rho)F(\zeta,\pi^{-1}(B^n_\rho))\label{e:testw0}\\
 &\leq k_1(\rho)\Big( k_2(\rho)\int_{B^n_\rho}|\nabla\overline{\zeta}|^2dx
 + \Lambda_1\HH^{n-1}(S_{\overline{\zeta}}\cap B^n_\rho)
 + n\,\Lambda\,\Lambda_1^{\frac{n}{n-1}}\HH^n(\{y\in B^n_\rho:\,\overline{\zeta}\neq\overline{v}\})
 \Big)\notag\\
 &\leq (1+\tau_n\,\rho)^2\Big( \int_{B^n_\rho}|\nabla\overline{\zeta}|^2dx
 + \Lambda_1\HH^{n-1}(S_{\overline{\zeta}}\cap B^n_\rho)\Big)
 + (1+\tau_n)n\omega_n\,\Lambda\,\Lambda_1^{\frac{n}{n-1}}\rho^n,\label{e:testw1}
 \end{align}
 being $\HH^n(\{y\in B^n_\rho:\,\overline{\zeta}\neq\overline{v}\})\leq\omega_n\rho^n$.

Next we note that $w$ satisfies the energy upper bound $F(w,\pi^{-1}(B^n_\rho))\leq k_n\rho^{n-1}$,
for all $\rho\in(0,\sfrac 12)$ and for some $k_n>0$,  by a direct comparison argument similar to that in Proposition~\ref{p:dub}. Therefore, \eqref{e:testw0} yields
\[
\MS(\overline{w},B^n_\rho)\leq(1+\tau_n)k_n\rho^{n-1}.
\]
Hence, we may assume $F(\zeta,\pi^{-1}(B^n_\rho))\leq 2(1+\tau_n) k_n\rho^{n-1}$ 
being otherwise the conclusion obvious. The latter condition and \eqref{e:testw1} then imply 
$\overline{w}\in\MM_{\{\Lambda_1,1,c_1\}}\big(B^n_{\sfrac 12}\big)$ with
$c_1:=(1+\tau_n)\big(n\,\omega_n\,\Lambda\,\Lambda_1^{\frac{n}{n-1}}+2(1+\tau_n)k_n\big)$.

By inductive assumption, Theorem~\ref{t:monform} implies in particular that
\begin{multline}\label{e:bastaaaa}
 \Big(\frac{\MS(\overline{w},B^n_\rho)}{\rho^{n-1}}\wedge\frac{C(n)}{n-1}\Lambda_1^{2-n}\Big)+(n-1)c_1\rho\\
 \geq\lim_{\rho\downarrow 0}\Big( \Big(
 \frac{\MS(\overline{w},B^n_\rho)}{\rho^{n-1}}\wedge\frac{C(n)}{n-1}\Lambda_1^{2-n}\Big)+(n-1)c_1\rho\Big)
 \geq\omega_{n-1}\wedge \frac{C(n)}{n-1}\Lambda_1^{2-n}=:\beta_n.
 \end{multline}
In the last inequality we have used that $\mathscr{N}$ is a point of density one for $S_w$ and \eqref{e:lipconst}.

Given any $\rho\in(0,\frac{\beta_n}{2(n-1)c_1}\wedge\sfrac 12)$, inequality \eqref{e:bastaaaa} yields
\[
 \frac{\MS(\overline{w},B^n_\rho)}{\rho^{n-1}}\geq\frac{\beta_n}{2}.
\]
On the other hand, repeating the argument leading to the first and the second inequality in \eqref{e:testw0} 
imply for any $\rho\in(0,\frac{\beta_n}{2(n-1)c_1}\wedge \sfrac 12)$
\[
\frac{\MS(\overline{w},B^n_\rho)}{\rho^{n-1}}\leq(1+\tau_n\rho)\frac{\EE(v,\partial B_1)}{\rho^{n-1}}
\stackrel{\eqref{e:env}}{\leq}C\, (1+\tau_n\rho)\,(\rho\,\Lambda)^{1-n}.
\]
Thus, with fixed $\overline{\rho}\in(0,\frac{\beta_n}{2(n-1)c_1}\wedge \sfrac 12)$, we infer a contradiction 
from the last two estimates by choosing the constant $C=C(\overline{\rho})>0$ in \eqref{e:env} so that 
$C\, (1+\tau_n)\,(\overline{\rho}\,\Lambda)^{1-n}<\sfrac{\beta_n}2$.
\end{proof}

\subsection{The Mumford-Shah Conjecture}\label{ss:MSconj}

Having established the existence of strong (local) minimizers, it is elementary 
to infer that $u$ is harmonic on $\Om\setminus\SSu$. Hence, we will focus in the rest 
of the note on the regularity properties of the set $\Om\cap\SSu$ that will be instrumental also
to gain further regularity on $u$ itself (cf. Theorem~\ref{t:uSSu}).

The interest of the researchers in this problem is motivated by the Mumford and Shah conjecture 
(in $2$-dimensions) that we recall below for the readers' convenience.
\begin{conge}[Mumford and Shah \cite{MS89}]\label{c:MS1}
If $u\in\MM(\Om)$, $\Om\subseteq\R^2$, then $\Om\cap\SSu$ is the union of (at most) countably 
many injective $C^1$ arcs $\gamma_i: [a_i,b_i]\to \Omega$ with the following 
properties: 
\begin{itemize}
\item[(c1)] Any compact $K\subset \Omega$ intersects at most finitely 
many arcs;
\item[(c2)] Two arcs can have at most an endpoint $p$ in common, and if 
this is the case, then $p$ is in fact the endpoint of three arcs, forming 
equal angles of $\sfrac{2\pi}{3}$.
\end{itemize}
\end{conge}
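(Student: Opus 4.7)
Since Conjecture~\ref{c:MS1} is famously open, any honest ``proof proposal'' is really an outline of the program the paper points toward, rather than a complete argument. The natural strategy splits into four steps.

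First, I would set up the blow-up analysis. Given $z\in\Om\cap\SSu$, consider the rescalings $u_{z,\rho}$ defined in \eqref{e:blowup}. The density upper bound of Proposition~\ref{p:dub} and the lower bound of Theorem~\ref{t:maindlb}, combined with Ambrosio's compactness Theorem~\ref{t:luigi}, produce subsequential limits $u_\infty\in SBV_{loc}(\R^2)$ that are \emph{global minimizers} of $\MS$, i.e.~$u_\infty\in\MM(B_R)$ for every $R>0$, and that inherit $0\in\overline{S_{u_\infty}}$ from the density bounds. The almost monotonicity formula (Theorem~\ref{t:monform}) further forces $u_\infty$ to be one-homogeneous around $0$ in a suitable scaling sense.

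Second, I would classify \emph{all} global $\MS$-minimizers in $\R^2$ (this is essentially Bonnet's conjecture). The expected list comprises: (i) harmonic functions on $\R^2$; (ii) pure jumps across a line as in \eqref{e:purejumps}; (iii) the cracktip $u(\rho,\theta)=\sqrt{\sfrac{2\rho}{\pi}}\,\sin(\sfrac{\theta}{2})$ on $\R^2\setminus\{\theta=\pi\}$; and (iv) the triple junction, piecewise constant on the three sectors of a propeller with opening angles $\sfrac{2\pi}{3}$. Bonnet's partial classification handles (i)--(iii) under an a~priori connectedness assumption on $S_{u_\infty}$, and (iv) can be treated via the Caccioppoli-partition analysis of Section~\ref{s:cpt}.

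Third, I would promote subsequential blow-up limits to \emph{unique} tangents at every $z\in\Om\cap\SSu$, either via the higher integrability route of Theorem~\ref{t:MSconj} (combined with a Lojasiewicz-type argument on the energy) or via the monotonicity of $E_z$ in \eqref{e:monform}. Uniqueness of the tangent plus the classification in step two partitions $\Om\cap\SSu$ into three disjoint strata: regular points (tangent of type (ii)), triple-junction points (type (iv)), and cracktip points (type (iii)). Finally, I would invoke an $\eps$-regularity theorem at each stratum: near a regular point, first-variation arguments based on \eqref{e:EL} and decay of the Dirichlet excess upgrade $\SSu$ to a $C^{1,\alpha}$ arc; near a triple junction, the same excess decay applied sheet-by-sheet yields three $C^1$ arcs meeting at $\sfrac{2\pi}{3}$; near a cracktip, the explicit asymptotics of the square-root profile force isolation and $C^1$ endpoint behaviour. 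The elimination property (Theorem~\ref{t:CL}) together with $\HH^1(\SSu)<\infty$ on compact subsets of $\Om$ then delivers (c1), and the trichotomy yields (c2).

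The decisive obstacle is step two: no one has ruled out exotic global minimizers in the plane, nor cracktip-like configurations with disconnected $S_{u_\infty}$, and this is precisely what keeps the conjecture open. Step three is the secondary obstacle: uniqueness of tangents at non-regular points is presently known only conditionally on the sharp higher-integrability input of Theorem~\ref{t:MSconj}. Once these two analytic inputs are in hand, the patching in step four follows a well-established pattern and would produce the countable $C^1$-arc decomposition claimed by Mumford and Shah.
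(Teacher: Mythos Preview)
The statement is a \emph{conjecture}, not a theorem: the paper does not prove it, and indeed devotes subsections~\ref{ss:MSconj}, \ref{ss:bup} and~\ref{ss:enconj} to surveying the partial results and obstructions surrounding it. Your proposal is appropriately framed as a program rather than a proof, and the four-step outline you give---blow-up, classification of global minimizers, uniqueness of tangents, $\eps$-regularity at each stratum---is precisely the strategy the paper describes (see the discussion around \cite[Proposition~71.2]{Dav05} and Theorem~\ref{t:MSconj}). You also correctly isolate the two genuine obstacles: the missing classification of global minimizers without Bonnet's connectedness hypothesis, and the lack of uniqueness of blow-ups at crack-tip points.

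Two technical points deserve correction. First, in your Step~1 you invoke Theorem~\ref{t:luigi} to extract blow-up limits, but Ambrosio's compactness requires an $L^\infty$ bound that the rescalings $u_{z,\rho}$ do not satisfy; the correct procedure (used in the paper, cf.\ the proof of Proposition~\ref{p:bupchar} and \cite[Proposition~37.8]{Dav05}) subtracts piecewise-constant functions $c_k$ on the complement of $S_{u_{z,\rho_k}}$ before passing to the limit. Second, you suggest that the almost-monotonicity formula of Theorem~\ref{t:monform} can force one-homogeneity of blow-ups or yield uniqueness of tangents; the paper explicitly warns (end of subsection~\ref{ss:bup}) that the truncation by the constant $\tfrac{C(n)}{n-1}\Lambda^{2-n}$ in \eqref{e:monform} makes this formula ``of little use in the blow~up analysis''. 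The monotonicity tools that actually drive Bonnet's classification are of a different nature (a monotonicity for the rescaled Dirichlet energy, available only under topological hypotheses on $\SSu$). With these caveats, your outline matches the state of the art as presented in the paper.
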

So according to this conjecture only two possible singular configurations
occur: either three arcs meet in an end forming angles equal to $\sfrac{2\pi}{3}$, or an arc 
has a free end in $\Omega$. In what follows, we shall call \emph{triple junction} 
the first configuration and \emph{crack-tip} the second.

A suitable theory of calibrations for free discontinuity problems
established by Alberti, Bouchitt\'e and Dal Maso \cite{ABDM} shows that the model case of
triple junction functions, 
\begin{equation}\label{e:triplejunctions}
a\,\chi_{\{\vartheta\in(-\sfrac\pi6,\sfrac\pi2]\}}
+b\,\chi_{\{\vartheta\in(\sfrac\pi2,\sfrac{7\pi}6]\}}+c\,\chi_{\{\vartheta\in(\sfrac{7\pi}6,\sfrac{11\pi}6]\}}
\end{equation}
with $|a-b|\cdot|a-c|\cdot|b-c|>0$, is indeed a local minimizer (for more results on calibrations in the setting of  
free discontinuity problems see \cite{DMMM00,mora02,mora02b,moramorini01,morini02}).

Instead, Bonnet and David \cite{BD01} have shown that the model crack-tip functions, 
i.e.~functions that up to rigid motions can be written as  
\begin{equation}\label{e:crack-tipmodel}
C\pm\sqrt{\frac 2\pi \rho}\cdot\sin(\theta/2)
\end{equation}
for $\theta\in(-\pi,\pi)$, $\rho>0$ and some constant $C\in\R$,
are {\em global} minimizers of the Mumford and Shah energy\footnote{
The prefactor $\sqrt{\frac 2\pi}$ results from a simple calculation 
to ensure stationarity for a crack-tip function by plugging it in
the Euler-Lagrange equation \eqref{e:EL}. 
}.

More recently, second order sufficient conditions for minimality have been investigated. 
More precisely, Bonacini and Morini in \cite{BonMor} for suitable critical points, strictly 
stable and regular in their terminology, have proved that strict local minimality is implied by 
strict positivity of the second variation. 
This approach in the case of triple junctions is currently under investigation \cite{Cristo15}. 

Conjecture~\ref{c:MS1} has been first proven in some particular cases in the ensuing
weaker form. 
\begin{conge}\label{c:MS2}
If $u\in\MM(\Om)$, $\Om\subseteq\R^2$, then $\Om\cap\SSu$ is the union of (at most) countably 
many injective $C^0$ arcs $\gamma_i: [a_i,b_i]\to \Omega$ which are $C^1$ 
on $]a_i, b_i[$ and satisfy the two conditions of conjecture \ref{c:MS1}.
\end{conge}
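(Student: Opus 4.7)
The plan is to combine a blow-up analysis, driven by the almost-monotonicity formula of Bucur–Luckhaus (Theorem~\ref{t:monform}) and the density lower bound (Theorem~\ref{t:maindlb} and Corollary~\ref{c:dlbrough1}), with an $\varepsilon$-regularity theorem that converts the knowledge of a ``good'' blow-up at a point into a local $C^1$ description of $\Om\cap\SSu$. Since Theorem~\ref{t:essentialclosure} and Corollary~\ref{c:dlbrough1} already give $\HH^1$-local finiteness of $\Om\cap\SSu$, the countability and property (c1) will come essentially for free once each point of $\Om\cap\SSu$ has been given a local description; the content of the conjecture is therefore to provide such a local description at every point.

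First, I would partition $\Om\cap\SSu$ according to the behaviour of the rescalings $u_{z,\rho}$ from \eqref{e:blowup}. By Theorem~\ref{t:monform} the quantity $E_z(\rho)$ has a limit $\Theta(z)$ as $\rho\downarrow 0$, and by Ambrosio's compactness (Theorem~\ref{t:luigi}) together with a diagonal extraction any sequence $u_{z,\rho_k}$ admits a subsequence converging, in a suitable sense, to a global minimizer $u_\infty$ of $\MS$ on $\R^2$. I would then distinguish three classes of points: (a) points where some blow-up is a pure jump \eqref{e:purejumps}, (b) points where some blow-up is a triple junction \eqref{e:triplejunctions}, and (c) points where some blow-up is a crack-tip \eqref{e:crack-tipmodel}. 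The weak conjecture asserts that $\Om\cap\SSu$ is exhausted by these three types of points, with only finitely many of type (b)--(c) on any compact subset (the latter is immediate from the density lower bound, since at triple junctions and crack-tips the monotone quantity $\Theta(z)$ is bounded below by an explicit positive constant).

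The second step is the $\varepsilon$-regularity transfer. At points of type (a), a classical excess-decay argument (in the spirit of the flatness theorem of Ambrosio--Fusco--Pallara and the regularity theory of David) shows that, in a neighbourhood, $\Om\cap\SSu$ is a $C^{1,\alpha}$ arc and $u$ is $C^{1,\alpha}$ up to that arc from either side. At points of type (b), a parallel excess-decay argument centered on the triple-junction model gives three $C^1$ arcs meeting at $z$ with angles $\sfrac{2\pi}{3}$; here one uses that the triple junction is a calibrated minimizer after \cite{ABDM} so that the Euler--Lagrange system linearizes in a controlled way. At points of type (c), Bonnet's analysis of the crack-tip together with the David--L\'eger monotonicity formula (Lemma~\ref{l:rdlvartns}) forces, in a neighbourhood, $\Om\cap\SSu$ to be a single $C^1$ arc with $z$ as a free endpoint. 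Parametrizing these arcs and piecing them together using local finiteness then yields the $C^0$ arcs $\gamma_i$ of Conjecture~\ref{c:MS2}, with $C^1$ regularity on the open intervals $(a_i,b_i)$ and possibly only continuous matching at accumulation points of endpoints.

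The main obstacle is the \emph{classification of global minimizers of $\MS$ on $\R^2$}: to conclude that every blow-up falls in one of the three model classes. This is essentially the content of the global Mumford–Shah conjecture and is precisely the open problem; it is here that all existing proofs of Conjecture~\ref{c:MS2} require an extra hypothesis (e.g.\ Bonnet's connectedness assumption on $\Om\cap\SSu$, or uniform flatness, or local connectedness à la David). A secondary difficulty is the \emph{uniqueness of blow-ups}, needed to pass from ``some blow-up is of type (a), (b) or (c)'' to a well-defined local geometric type at $z$; Theorem~\ref{t:monform} gives monotonicity of a scalar quantity but not uniqueness of the tangent object, and closing this gap (as highlighted in the introduction, together with a sharp higher integrability of $\nabla u$) is exactly the energetic route to the conjecture advertised in Theorem~\ref{t:MSconj}.
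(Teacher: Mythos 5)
The statement you were asked to prove is Conjecture~\ref{c:MS2}, which is an \emph{open problem}: the paper contains no proof of it, and indeed the whole survey is organized around strategies that \emph{might} lead to it (complete classification of blow-ups, or sharp higher integrability as in Theorem~\ref{t:MSconj}). Your proposal is therefore necessarily a strategy outline rather than a proof, and to your credit you identify the two fatal gaps yourself: the classification of all blow-up limits (equivalently, of global Bonnet minimizers in $\R^2$), which is essentially equivalent to the conjecture via \cite[Proposition~71.2]{Dav05} and the David--L\'eger results, and the uniqueness of blow-ups at crack-tips. No argument in the paper closes either gap, so the proposal cannot be completed along these lines with the tools available.

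Beyond the gaps you acknowledge, two of your intermediate claims are not correct as stated. First, the Bucur--Luckhaus quantity $E_z(\rho)$ of Theorem~\ref{t:monform} is truncated at the constant $\tfrac{C(n)}{n-1}\Lambda^{2-n}$ (with $C(2)<1$), so its limit $\Theta(z)$ cannot distinguish regular points from triple junctions or crack-tips; the paper explicitly remarks that this formula ``is of little use in the blow-up analysis due to the truncation.'' Your partition of $\Om\cap\SSu$ cannot be driven by $\Theta(z)$. Second, the local finiteness of the sets of triple junctions and crack-tips is \emph{not} ``immediate from the density lower bound'': the lower bound \eqref{e:dlb} holds at every point of $\Om\cap\SSu$, regular or singular, so it separates nothing. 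What is actually known is only countability of $\Sigma_u^{(1)}$ (Corollary~\ref{c:tj}, via Moore's triod theorem), local finiteness of crack-tips \emph{conditionally} on $\nabla u\in L^{4,\infty}_{loc}$ (the lemma on the set $D_\e$ preceding Theorem~\ref{t:MSconj}), and the partial non-accumulation statements of Remark~\ref{r:triplen2}, which leave open the possibility that singular points of types (b) and (c) accumulate at a point of $\Sigma_u^{(3)}$. Establishing that $\Sigma_u^{(3)}=\emptyset$ and that the singular points are locally finite is part of what the conjecture asserts, so assuming it at the outset is circular.
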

The subtle difference between conjecture~\ref{c:MS1} and conjecture~\ref{c:MS2} above
is in the following point: assuming conjecture~\ref{c:MS2} holds, if 
$y_0=\gamma_i (a_i)$ is a ``loose end'' of the arc $\gamma_i$,  i.e.~it does not
belong to any other arc, then the techniques in \cite{B96} show that any 
\emph{blow~up limit},  i.e.~any limit of subsequences of the family $(u_{y_0,\rho})_\rho$ 
as in \eqref{e:blowup}, is a crack-tip but do not ensure the uniqueness of the limit 
itself (for more details on the notion of blow up see the proof of Proposition~\ref{p:bupchar}).

\subsection{Blow up analysis and the Mumford and Shah conjecture}\label{ss:bup}

The Mumford and Shah conjecture, in the form stated in conjecture~\ref{c:MS2}, has been 
attacked first in the contribution by Bonnet \cite{B96}. Bonnet's approach is based on a weaker notion 
of minimality for the strong formulation of the problem, that includes a topological condition to be 
satisfied by the competitors, though still sufficient to develop a regularity theory. 
\begin{definition}\label{d:Bmin}
Let $\Om\subseteq\R^2$ be an open set, a pair $(u,K)$, $K\subset\Om$ closed and 
$u\in W^{1,2}_{\mathrm{loc}}\big(\Om\setminus K\big)$, is a Bonnet minimizer of 
$\mathscr{F}$ in $\Om$ if $\mathscr{F}(u,K,\Om)\leq \mathscr{F}(v,L,\Om)$ among all couples 
$(v,L)$, $L\subset\Om$ closed and $v\in W^{1,2}_{\mathrm{loc}}\big(\Om\setminus L\big)$,  
such that there is a ball $B_\rho(x)\subset\Om$  for which
\begin{itemize}
\item[(i)] $u=v$ and $K=L$ on $\Om\setminus\overline{B_\rho(x)}$,
\item[(ii)] any pair of points in $\Om\setminus\big(K\cup\overline{B_\rho(x)}\big)$ that lie in different 
connected components of $\Om\setminus K$ are also in different connected components of $\Om\setminus L$.
\end{itemize}
Moreover, in case $\Om=\R^2$, we say that $(u,K)$ is a global minimizer of $\mathscr{F}$.
\end{definition}
In particular, under the assumption that $\Om\cap\SSu$ has a finite number of connected 
components Bonnet has classified all the blow~up limits of minimizers as in Definition~\ref{d:Bmin}, 
and then also of local minimizers, as
\begin{itemize}
 \item[(i)] constant functions,
 \item[(ii)] pure jumps (cf. \eqref{e:purejumps}), 
 \item[(iii)] triple junction functions (cf. \eqref{e:triplejunctions}),
 \item[(iv)] crack-tip functions (cf. \eqref{e:crack-tipmodel}),
\end{itemize}
establishing conjecture~\ref{c:MS2} in such a restricted framework. 
In particular, as already mentioned, Bonnet's result does not 
deal with the stronger conjecture~\ref{c:MS1} as it cannot exclude the possibility that $\gamma_i$ 
``spirals'' around $y_0$ infinitely many times (compare with the discussion at the end 
of \cite[Section 1]{B96}). 
The method of Bonnet relies on a key monotonicity formula for the rescaled Dirichlet energy
that so far has no counterpart in general. Recently, Lemenant \cite{Lem14}
has exhibited another monotone quantity that plays the role of the rescaled Dirichlet energy in 
higher dimensions in case $\Om\cap\SSu$ is contained in a sufficiently smooth cone. In view of this,  
a rigidity result for $\Om\cap\SSu$ in the $3$-dimensional case can be deduced.
Let us also point out that the almost monotonicity formula established by Bucur and Luckhaus in 
Theorem~\ref{t:monform} is of little use in the blow~up analysis due to the truncation with the constant 
$C(n)$. 

The contributions of L\'eger \cite{Leg99} and of David and L\'eger \cite{DavLeg02} improve 
upon Bonnet's results: the former addressing the case of $\Om\cap\SSu$ satisfying a suitable 
flatness assumption, the latter identifying pure jumps and triple junction functions as the only minimizers 
in the sense of Bonnet for which $\R^2\setminus\SSu$ is not connected.
All these efforts are directed to push forward Bonnet's ideas. Indeed, \cite[Proposition~71.2]{Dav05} shows 
in general, i.e.~with no extra connectedness assumptions on $\Om\cap\SSu$, that the complete classification of 
the blow~up limits of local minimizers as in the list above turns out to be a viable strategy to establish 
conjecture~\ref{c:MS1}. 
More precisely, coupling the latter piece of information with a detailed local description of the geometry 
of $\Om\cap\SSu$, that is the topic of the ensuing subsection~\ref{ss:epsregSu}, would yield the conclusion. 
A further interesting consequence of the analysis in the paper \cite{DavLeg02} is that the Mumford and Shah 
conjecture turns out to be equivalent to the uniqueness (up to rotations and translations) of crack-tips 
as global minimizers of the $\MS$ energy as conjectured by De Giorgi in \cite{DeG89}.

However, we shall not enter here into this streamline of results but rather refer for more details on them 
to the monograph \cite{Dav05} and to the recent review paper \cite{Lem15}.

Instead, a different (but related) perspective to the goal of understanding conjecture~\ref{c:MS1} is taken 
in what follows. We shall link the latter to a sharp higher integrability property of the approximate 
gradient following Ambrosio, Fusco and Hutchinson \cite{AFH}. 
In order to do this, in the next section we review the state of the art about the regularity properties of 
$\Om\cap\SSu$.

\section{Regularity of the jump set}\label{s:regSu}

The aim of this section is to survey on the regularity of $\Om\cap\SSu$. In subsection~\ref{ss:epsregSu} we shall 
first recall classical and more recent $\eps$-regularity results, and then state an estimate on the size of 
the subset of singular points in $\Om\cap\SSu$ that will be dealt with in details in Section~\ref{s:cpt}.
In particular, in subsection~\ref{ss:HI} the links of the higher integrability of the gradient with the 
Mumford and Shah conjecture~\ref{c:MS1} will be highlighted following the approach of Ambrosio, Fusco and Hutchinson 
\cite{AFH}. A slight improvement of the latter ideas leads to an energetic characterization of the 
conjecture~\ref{c:MS2} as exposed in subsection~\ref{ss:enconj}.

\subsection{$\eps$-regularity theorems}\label{ss:epsregSu}

The starting point to address the regularity of $\Om\cap\SSu$ for local minimizers is the 
ensuing $\eps$-regularity result. 
\begin{theorem}[Ambrosio, Fusco and Pallara \cite{AFP97}]\label{t:AFP97}
Let $u\in\MM(\Om)$, then there exists $\Sigma_u\subset\Om\cap\SSu$ 
relatively closed in $\Om$ with $\HH^{n-1}(\Sigma_u)=0$, and such that
$\Om\cap\SSu\setminus\Sigma_u$ is locally a  $C^{1,\gamma}$ hypersurface for 
all $\gamma\in(0,1)$ and $C^{1,1}$ if $n=2$. 

More precisely, there exist $\eps_0=\eps_0(n),\,\rho_0=\rho_0(n)>0$ such that 
\begin{equation}\label{e:sigma}
\Sigma_u=\{z\in\Om\cap\SSu:\,\mathscr{D}(z,\rho)+\mathscr{A}(z,\rho)
\ge\eps_0\quad\forall\rho\in(0,\rho_0\wedge \mathrm{dist}(z,\partial\Omega))\}
\end{equation}
where 
\[
\mathscr{D}_u(z,\rho):=\rho^{1-n}\int_{B_\rho(z)}|\nabla u|^2dy,
\quad\text{(scaled Dirichlet energy)} 
\]
\[
\mathscr{A}_u(z,\rho):=\rho^{-n-1}\min_{T\in\Pi}\int_{S_u\cap B_\rho(z)}
\mathrm{dist}^2(y,T)d\HH^{n-1}(y),
\quad\text{(scaled mean flatness)}, 
\]
with $\Pi$ the class of $(n-1)$-affine planes.
\end{theorem}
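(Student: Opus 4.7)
The plan is to establish an $\varepsilon$-regularity decay lemma and then iterate it Campanato-style. Specifically, I would prove: there exist dimensional constants $\tau, \vartheta \in (0,1)$ and $\varepsilon_1 > 0$ such that whenever $z \in S_u$ satisfies $\mathscr{D}_u(z,\rho) + \mathscr{A}_u(z,\rho) < \varepsilon_1$ at some scale $\rho < \rho_0$, one can find a new center $z' \in S_u \cap B_{\tau\rho/2}(z)$ and a new affine hyperplane for which
\[
\mathscr{D}_u(z', \tau\rho) + \mathscr{A}_u(z', \tau\rho) \leq \vartheta \bigl(\mathscr{D}_u(z,\rho) + \mathscr{A}_u(z,\rho)\bigr).
\]
Iterating this bound at all dyadic scales, with $\vartheta = \tau^{2\gamma}$ for any $\gamma \in (0,1)$, produces a Campanato-type decay forcing $\SSu$ near $z$ to be the graph of a $C^{1,\gamma}$ function over the optimal hyperplane with a uniform oscillation control of the normal. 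By construction this regularity holds on the relatively open set $\Om \cap \SSu \setminus \Sigma_u$.

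The decay lemma itself I would prove by contradiction and compactness. If it failed, there would exist local minimizers $u_k$, points $z_k \in S_{u_k}$, radii $\rho_k$ and optimal hyperplanes $T_k$ with infinitesimal energy $\varepsilon_k := \mathscr{D}_{u_k}(z_k,\rho_k) + \mathscr{A}_{u_k}(z_k,\rho_k) \to 0$ but failing the claimed decay. After the rescaling \eqref{e:blowup} recentered at $z_k$ and aligned with $T_k$, the Carriero--Leaci density bound (Theorem~\ref{t:CL}) gives uniform perimeter control of $S_{u_k}$ on $B_1$ and prevents collapse, while vanishing scaled flatness forces Hausdorff convergence of the $\overline{S_{u_k}}$ to a portion of the fixed hyperplane $\{x_n = 0\}$. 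Ambrosio's $SBV$ compactness Theorem~\ref{t:luigi}, coupled with an $SBV$ Poincar\'e--Wirtinger inequality applied separately on each side of $\{x_n = 0\}$ and a suitable normalization, yields a subsequential limit $u_\infty$ that is harmonic on $B_1 \setminus \{x_n = 0\}$. On such a limit the decay of both $\mathscr{D}_{u_\infty}$ and $\mathscr{A}_{u_\infty}$ is controlled by elliptic regularity with an explicit quadratic rate, contradicting the failure of decay after rescaling back.

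For the dimension estimate $\HH^{n-1}(\Sigma_u) = 0$, I would show that the smallness condition fails only on an $\HH^{n-1}$-negligible subset of $S_u$. Rectifiability of $S_u$ provides an approximate tangent hyperplane $\HH^{n-1}$-a.e., immediately yielding $\mathscr{A}_u(z,\rho) \to 0$. For the Dirichlet term, since $|\nabla u|^2 \LL^n$ and $\HH^{n-1}\res S_u$ are mutually singular measures, the density estimates of Lemma~\ref{l:de} force
\[
\liminf_{\rho \downarrow 0} \rho^{1-n}\int_{B_\rho(z)} |\nabla u|^2 \, dy = 0
\]
at $\HH^{n-1}$-a.e.\ $z \in S_u$. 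Combining these two facts gives $\HH^{n-1}(\Sigma_u \cap S_u) = 0$, and Proposition~\ref{p:strongweak} propagates the bound to all of $\Sigma_u \subseteq \Om \cap \SSu$.

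The hardest step is the blow-up analysis: one needs to couple Hausdorff convergence of the jump sets with $SBV$ convergence of the functions, which requires a Poincar\'e--Wirtinger inequality in $SBV$ that treats the two half-balls independently, together with careful control of how the density lower bound and the approximate normals pass to the limit. Once $C^{1,\gamma}$ regularity is established for every $\gamma < 1$, the Euler--Lagrange identity \eqref{e:EL} becomes a quasilinear prescribed-curvature equation on the smooth part of $\SSu$ with $H^1$ right-hand side; in dimension two this reduces to an ODE system whose classical regularity theory boosts the jump set to $C^{1,1}$.
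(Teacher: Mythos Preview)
The paper does not give its own proof of this theorem: immediately after the statement it refers the reader to \cite[Chapter~8]{AFP00} and to \cite{Fuscosur} for the full argument and a sketch, respectively. So there is no ``paper's proof'' to compare against; one can only compare your outline with the original Ambrosio--Fusco--Pallara scheme.

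Your outline is essentially that scheme: an excess-decay lemma proved by contradiction and blow-up (using the density lower bound, $SBV$ compactness, and a Poincar\'e--Wirtinger inequality to identify a limit which is harmonic on the two half-balls), followed by a Campanato iteration to reach $C^{1,\gamma}$, and finally a density/rectifiability argument for $\HH^{n-1}(\Sigma_u)=0$. This is the correct architecture. Two points are worth sharpening. First, in the blow-up step the key subtlety is that the rescalings $u_{z_k,\rho_k}$ are normalized by $\varepsilon_k^{1/2}$ (not merely translated and dilated), so that the limit $u_\infty$ carries nontrivial information; one must then prove \emph{strong} convergence of the normalized Dirichlet energies and of the (normalized) flatness, since the contradiction comes from comparing the \emph{rescaled} quantities $\varepsilon_k^{-1}\big(\mathscr{D}_{u_k}(\cdot,\tau\rho_k)+\mathscr{A}_{u_k}(\cdot,\tau\rho_k)\big)$ with the decay of the harmonic limit. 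Your sentence about ``the decay of $\mathscr{D}_{u_\infty}$ and $\mathscr{A}_{u_\infty}$'' elides this: for the limit the flatness is identically zero, and the contradiction relies on passing the harmonic decay back to the sequence via strong convergence, which in \cite{AFP00} requires a comparison/energy-gap argument rather than compactness alone. Second, the $C^{1,1}$ upgrade in $n=2$ is obtained in \cite{AFP00} via the Euler--Lagrange condition, but as a curvature equation for the arc with an $L^\infty$ right-hand side coming from the jump of $|\nabla u|^2$ (after Theorem~\ref{t:uSSu} gives smooth one-sided extensions), not merely an ``$H^1$ right-hand side''; your description is in the right spirit but imprecise. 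With these caveats, your plan matches the referenced proof.
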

For more details on Theorem~\ref{t:AFP97} we refer to \cite[Chapter~8]{AFP00}, that is 
entirely devoted to the proof of it, and to \cite{Fuscosur} for a hint of the strategy 
of proof. Here we shall only comment on the quantities involved in \eqref{e:sigma}.

First note that the affine change of variables mapping $B_\rho(x)$ into $B_1$ shows that 
$\mathscr{D}_u(x,\cdot)$ and $\mathscr{A}_u(x,\cdot)$ are equal to the Dirichlet energy 
and the mean flatness on $B_1$ of the rescaled maps $u_{x,\rho}$ in \eqref{e:blowup}, respectively. 

Further, the scaled mean flatness measures in an average sense the deviation of 
$\Om\cap\SSu$ from being flat in $z$. For instance, if $\Om\cap\SSu$ is a $C^1$ hypersurface in a 
neighborhood of $z$ it is easy to check that $\mathscr{A}_u(z,\rho)=o(1)$ as 
$\rho\downarrow 0$. Actually , the density lower bound in Theorem~\ref{t:CL} 
and the density upper bound in Proposition~\ref{p:dub} allow us to show that 
the rescaled mean flatness $\mathscr{A}_u$ is equivalent to its $L^\infty$ version, 
namely
\[
\mathscr{A}_{u,\infty}(z,\rho):= \rho^{-1}\min_{T\in\Pi}\sup_{y\in S_u\cap B_\rho(z)}
\mathrm{dist}(y,T).
\]
\begin{proposition}
 Let $u\in\MM(\Om)$, then for all $z\in\SSu$ and $B_\rho(z)\subset\Om$
 we have
 \[
\frac{\theta_0}{2^{n+1}}\,\mathscr{A}_{u,\infty}^{n+1}(z,\sfrac\rho 2)\leq 
\mathscr{A}_u(z,\rho)\leq n\,\omega_n\,\mathscr{A}_{u,\infty}^2(z,\rho).
 \]
 where $\theta_0$ is the constant in Theorem~\ref{t:CL}.
\end{proposition}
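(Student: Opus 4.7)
\emph{Proof plan.} The two inequalities are independent and I would handle them separately. The right-hand inequality is the elementary one: I would take $T_\star\in\Pi$ to be a plane realizing the minimum in the definition of $\mathscr{A}_{u,\infty}(z,\rho)$, so that $\mathrm{dist}(y,T_\star)\le\rho\,\mathscr{A}_{u,\infty}(z,\rho)$ for every $y\in S_u\cap B_\rho(z)$. Squaring and integrating against $\HH^{n-1}\res S_u$, then using the energy (hence perimeter) upper bound $\HH^{n-1}(S_u\cap B_\rho(z))\le\MS(u,B_\rho(z))\le n\,\omega_n\,\rho^{n-1}$ of Proposition~\ref{p:dub}, gives the desired bound
\[
\mathscr{A}_u(z,\rho)\le\rho^{-n-1}\cdot\rho^2\mathscr{A}_{u,\infty}^2(z,\rho)\cdot n\,\omega_n\,\rho^{n-1}
=n\,\omega_n\,\mathscr{A}_{u,\infty}^2(z,\rho).
\]

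The left-hand inequality is the substantive one and is where the density lower bound of Theorem~\ref{t:CL} enters. My idea is to pick $T_\ast\in\Pi$ minimizing the $L^2$ flatness at scale $\rho$, so that the integral defining $\mathscr{A}_u(z,\rho)$ is computed against this specific plane. By definition of the $L^\infty$ flatness at scale $\rho/2$, for this same $T_\ast$ we have
\[
\sup_{y\in\overline{S_u}\cap\overline{B_{\rho/2}(z)}}\mathrm{dist}(y,T_\ast)\ge \tfrac{\rho}{2}\,\mathscr{A}_{u,\infty}(z,\tfrac{\rho}{2})=:d_\ast,
\]
so I can select a point $y_1\in\overline{S_u}\cap\overline{B_{\rho/2}(z)}$ with $\mathrm{dist}(y_1,T_\ast)\ge d_\ast$. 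I would then fix a radius $r$ of the order of $d_\ast$ (concretely $r=d_\ast/2$, noting that $\mathscr{A}_{u,\infty}(\cdot,\cdot)\le 1$ because any plane through $z$ is admissible, which guarantees $r\le\rho/2$ and hence $B_r(y_1)\subset B_\rho(z)$) and use the triangle inequality to obtain $\mathrm{dist}(y,T_\ast)\ge d_\ast-r=d_\ast/2$ for every $y\in B_r(y_1)$.

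At this point the density lower bound of Theorem~\ref{t:CL} applied at the point $y_1\in\overline{S_u}$, for the admissible radius $r$, yields $\HH^{n-1}(S_u\cap B_r(y_1))\ge\theta_0\,r^{n-1}$. Combining with the pointwise lower bound above gives
\[
\int_{S_u\cap B_\rho(z)}\mathrm{dist}^2(y,T_\ast)\,d\HH^{n-1}\ge\Bigl(\tfrac{d_\ast}{2}\Bigr)^2\theta_0\Bigl(\tfrac{d_\ast}{2}\Bigr)^{n-1}=\theta_0\Bigl(\tfrac{d_\ast}{2}\Bigr)^{n+1},
\]
and dividing by $\rho^{n+1}$ and inserting the lower bound $d_\ast\ge\tfrac{\rho}{2}\mathscr{A}_{u,\infty}(z,\rho/2)$ delivers an inequality of the form $\mathscr{A}_u(z,\rho)\ge c_n\theta_0\,\mathscr{A}_{u,\infty}^{n+1}(z,\rho/2)$ with an explicit dimensional constant $c_n$.

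The main technical obstacle is the careful choice of the intermediate radius $r$: one must simultaneously guarantee $B_r(y_1)\Subset B_\rho(z)$ (so that the density lower bound of Theorem~\ref{t:CL} is applicable, taking $\rho$ smaller than the universal threshold $\varrho_0$ if necessary) and keep the product $(d_\ast-r)^2 r^{n-1}$ as large as possible in terms of $d_\ast$. A uniform bound on $d_\ast$ in terms of $\rho$, coming from $\mathscr{A}_{u,\infty}\le1$, is what makes the argument close; sharpening the constant to the precise $\theta_0/2^{n+1}$ stated above may require distinguishing cases according to whether $d_\ast\le\rho/2$ or not, and picking $r$ as the minimum of $d_\ast/2$ and $\rho/2$ in the two regimes.
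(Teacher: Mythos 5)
Your argument is correct and follows essentially the same route as the paper: the upper bound from the energy upper bound of Proposition~\ref{p:dub}, and the lower bound by taking the optimal $L^2$ plane, selecting a (near-)farthest point $\bar y$ of $\overline{S_u}\cap \overline{B_{\rho/2}(z)}$, and applying the density lower bound of Theorem~\ref{t:CL} on $B_{d/2}(\bar y)$. Your bookkeeping ($d_\ast=\tfrac{\rho}{2}\mathscr{A}_{u,\infty}(z,\rho/2)$, yielding $\theta_0/4^{n+1}$) is in fact the more careful one — the paper's displayed chain loses the same factor of $2$ it claims in the statement — and your observation that $\mathscr{A}_{u,\infty}\le 1$ guarantees $B_r(y_1)\subset B_\rho(z)$ is a point the paper glosses over.
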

\begin{proof}
 The estimate from above easily follows by the very definitions of $\mathscr{A}_u$ and 
 $\mathscr{A}_{u,\infty}$ by taking into account \eqref{e:dub}.

 Let then $\bar{T}$ be the affine hyperplane through $z$ giving the minimum in the definition 
 of $\mathscr{A}_{u}(z,\rho)$. If $\bar{y}\in S_u\cap B_{\sfrac\rho2}(z)$ is a point of almost 
 maximum distance from $\bar{T}$, i.e. for a fixed $\delta\in(0,1)$ 
 \[
  d:=\mathrm{dist}(\bar{y},\bar{T})\geq(1-\delta)
  \sup_{y\in S_u\cap B_{\sfrac\rho 2}(z)}\mathrm{dist}(y,\bar{T}),
 \]
then we can estimate as follows thanks to \eqref{e:dgcl}
\begin{multline*}
 \mathscr{A}_{u}(z,\rho)\geq \rho^{-n-1}\int_{S_u\cap B_{\sfrac d2}(\bar{y})}
\mathrm{dist}^2(y,\bar{T})d\HH^{n-1}(y)\geq \rho^{-n-1}\,\frac{d^2}4\,\HH^{n-1}\big(S_u\cap B_{\sfrac d2}(\bar{y})\big)\\
\geq \rho^{-n-1}\,{\theta_0}\,\Big(\frac{d}{2}\Big)^{n+1}
\geq (1-\delta)^{n+1}\frac{\theta_0}{2^{n+1}}\mathscr{A}_{u,\infty}^{n+1}(z,\sfrac\rho2).
\end{multline*}
The conclusion follows at once as $\delta\downarrow 0$.
\end{proof}

The local graph property of $\Om\cap\SSu$ established in Theorem~\ref{t:AFP97}, 
the Euler-Lagrange condition and the regularity theory for elliptic PDEs 
with Neumann boundary conditions determine the regularity of $u$ close to $\Om\cap\SSu$ 
(see \cite[Theorem 7.49]{AFP00} or \cite[Proposition 17.15]{Dav05} if $n=2$).
\begin{theorem}[Ambrosio, Fusco and Pallara \cite{AFP99}]\label{t:uSSu}
 Let $u\in\MM(\Om)$ and $A\cap\SSu$, $A\subset\Om$ open, be the graph 
 of a $C^{1,\gamma}$ function $\phi$, $\gamma\in(0,1)$. Then, $\phi\in C^{\infty}$ 
and $u$ has $C^\infty$ extension on each side of $A\cap\SSu$. 
\end{theorem}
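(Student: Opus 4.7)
The plan is a classical bootstrap regularity argument. Starting from the given $C^{1,\gamma}$ graph hypothesis, the goal is to upgrade the regularity of $\phi$ and of $u$ simultaneously using the two Euler–Lagrange identities \eqref{e:EL} and \eqref{e:ELo} together with elliptic Schauder theory on each side of $A\cap\SSu$. Write $\Sigma:=A\cap\SSu$ and let $\Omega^\pm$ denote the two open regions into which $\Sigma$ splits $A$.

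First I would extract the boundary conditions satisfied by $u$ on each side of $\Sigma$. Since $u\in\MM(\Omega)$ is harmonic off $\SSu$, we have $\Delta u=0$ in $\Omega^\pm$. To obtain the homogeneous Neumann condition $\partial_\nu u^\pm=0$ on $\Sigma$, I apply \eqref{e:ELo} (with $\gamma=0$) to competitors $v=u+\varepsilon\psi$ for $\psi\in C^\infty_c(\Omega^+)$: since $[u]\neq 0$ $\HH^{n-1}$-a.e.\ on $S_u$, small perturbations leave the jump set unchanged, so $S_v\subseteq S_u$ is admissible. Integration by parts on $\Omega^+$, justified by the $C^{1,\gamma}$ regularity of $\partial\Omega^+\cap A$, then yields $\int_\Sigma\psi\,\partial_{\nu^+} u^+\,d\HH^{n-1}=0$ for every such $\psi$, hence $\partial_{\nu^+}u^+=0$; the same argument on $\Omega^-$ gives $\partial_{\nu^-}u^-=0$. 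Classical Schauder theory for the Neumann problem (after locally straightening $\Sigma$ through a $C^{1,\gamma}$ change of variable) then provides $u\in C^{1,\gamma}(\overline{\Omega^\pm}\cap K)$ for each compact $K\subset A$, and more generally the implication $\phi\in C^{k,\gamma}\Longrightarrow u\in C^{k,\gamma}$ up to $\Sigma$ on each side.

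Next I would exploit \eqref{e:EL} to derive a transmission condition for $\phi$. Using $\Delta u=0$ on $\Omega^\pm$ and the already proved $\partial_\nu u^\pm=0$, the bulk term in \eqref{e:EL} can be rewritten via Gauss–Green on each side, where all normal-derivative boundary contributions vanish; combined with the surface integral $\int_\Sigma\mathrm{div}^{\Sigma}\phi\,d\HH^{n-1}$ and the identity $\int_\Sigma\mathrm{div}^\Sigma\phi\,d\HH^{n-1}=-\int_\Sigma H_\Sigma\langle\phi,\nu\rangle\,d\HH^{n-1}$, testing against $\phi=\eta\nu$ yields a relation of the shape
\[
H_\Sigma \;=\; \mathcal{J}(|\nabla u^+|^2,|\nabla u^-|^2)\qquad\text{on }\Sigma,
\]
where $H_\Sigma$ denotes the scalar mean curvature of the graph of $\phi$. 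Since $H_\Sigma$ is a quasilinear second-order elliptic differential operator in $\phi$ (the minimal-surface-type operator), this is an equation of the form $\mathcal{L}[\phi]=G$ with $\mathcal{L}$ elliptic at the given graph and $G$ depending on traces of $|\nabla u^\pm|^2$ on $\Sigma$.

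Finally I would run the bootstrap. Starting from $\phi\in C^{1,\gamma}$, Step~2 gives $u\in C^{1,\gamma}$ up to $\Sigma$ on each side, so $G\in C^{0,\gamma}(\Sigma)$; interior Schauder estimates for the quasilinear elliptic equation $\mathcal{L}[\phi]=G$ then push $\phi$ to $C^{2,\gamma}$. With $\phi\in C^{2,\gamma}$, the straightened Neumann problem gives $u\in C^{2,\gamma}$ up to $\Sigma$ on each side, hence $G\in C^{1,\gamma}$ and $\phi\in C^{3,\gamma}$. Iterating, $\phi\in C^\infty$ and $u$ admits a $C^\infty$ extension up to $\Sigma$ on each side. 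The main obstacle I anticipate is the clean derivation of the transmission condition: one must handle signs and orientations in \eqref{e:EL} carefully, verify that $\mathcal{L}$ is genuinely elliptic on the graph of $\phi$ (ellipticity of the mean curvature operator is standard but must be checked in the right coordinates), and rigorously justify the integration by parts at the first bootstrap step, when only $C^{1,\gamma}$ regularity of $\phi$ is available and $\nabla u$ is only known to be $L^2$ near $\Sigma$ a priori — this is the place where the interplay between the Schauder step and the transmission identity is most delicate.
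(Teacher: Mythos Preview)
The paper does not actually prove Theorem~\ref{t:uSSu}; it simply states it and refers to \cite[Theorem~7.49]{AFP00} (and \cite[Proposition~17.15]{Dav05} in dimension two). Your outline is precisely the classical bootstrap argument carried out in those references: weak Neumann condition from outer variations, Schauder regularity for $u^\pm$ up to $\Sigma$, the curvature transmission identity $H_\Sigma=|\nabla u^+|^2-|\nabla u^-|^2$ from inner variations, and then iteration. So the approach is correct and coincides with what the paper invokes.

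One small imprecision is worth flagging. When you write ``competitors $v=u+\varepsilon\psi$ for $\psi\in C^\infty_c(\Omega^+)$'' to derive $\partial_\nu u^+=0$, this cannot work as stated: if $\psi$ is compactly supported in the \emph{open} set $\Omega^+$ it vanishes near $\Sigma$, and the resulting identity only re-proves harmonicity in $\Omega^+$. What you need is $\psi\in W^{1,2}(A)$ with $\psi\equiv 0$ on $\Omega^-$ but $\psi$ possibly nonzero on $\Sigma$ (e.g.\ $\psi$ smooth on $\overline{\Omega^+}\cap A$, extended by zero). Then $S_v\subset S_u$, and $\int_{\Omega^+}\langle\nabla u,\nabla\psi\rangle=0$ is exactly the \emph{weak} formulation of the homogeneous Neumann problem on $\Omega^+$; no integration by parts is needed at this stage, and Schauder theory for the Neumann problem on a $C^{1,\gamma}$ domain applies directly. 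With this correction the first bootstrap step is clean, and the delicate point you identify (that the transmission identity must first be read distributionally before Schauder upgrades $\phi$ to $C^{2,\gamma}$) is exactly the right one.
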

Actually, Koch, Leoni and Morini \cite{KoLeoMor} proved that if $A\cap\SSu$ is 
$C^{1,\gamma}$ then it is actually analytic, as conjectured by De Giorgi (cf. 
\cite{DeG90, DeG91}).

Going back to the regularity issue for $\SSu$ we resume below the outcomes of a different approach developed 
by David in the $2$-dimensional case. In this setting it is also possible to address the situation in which 
$\Om\cap\SSu$ is close in the Hausdorff distance to a triple-junction 
(cf. Section~\ref{s:porosity} for more comments in this respect). 
\begin{theorem}[David~\cite{Dav96}, Corollary~51.17 and Theorem~53.4 \cite{Dav05}]\label{t:epsreg}
There exists $\varepsilon>0$ and an absolute constant $c\in(0,1)$ 
with the following properties. If $u\in\MM(\Om)$, $z\in \SSu$, 
$B_r (z)\subset \Omega$ and $\mathscr{C}$ is either a line or a triple junction such that 
\begin{equation}\label{e:eps}
\int_{B_{r}(z)}|\nabla u|^2\,dx+
\mathrm{dist}_{\HH}(\SSu\cap B_{r}(z),\mathscr{C}\cap 
B_{r}(z))\leq\varepsilon\,r,
\end{equation}
then there exists a $C^{1}$-diffeomorphism $\phi$ of
$B_{r}(z)$ onto its image with
\[
\SSu\cap B_{cr}(z)=\phi\left(\mathscr{C}\right)\cap B_{cr}(z).
\]
In addition, for any given $\delta\in(0,1/2)$, there is $\eps > 0$ such that, 
if \eqref{e:eps} holds, then 
$\SSu\cap (B_{(1-\delta)r} (z)\setminus B_{\delta r} (z))$
is $\delta$-close, in the $C^1$ norm, to 
$\mathscr{C}\cap (B_{(1-\delta)r}(z)\setminus B_{\delta r} (z))$.
\end{theorem}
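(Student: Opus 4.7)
The plan is to argue by compactness and contradiction, combined with a blow-up analysis and an iterative decay estimate. Suppose the first conclusion fails for every $\eps$ and $c$: then there exist a sequence $u_k\in\MM(B_{r_k}(z_k))$ with $z_k\in\overline{S_{u_k}}$ and configurations $\mathscr{C}_k$ (each a line or a triple junction through the center) such that the left-hand side of \eqref{e:eps} is bounded by $r_k/k$, yet no diffeomorphism $\phi$ of $B_{r_k}(z_k)$ straightens $\overline{S_{u_k}}$ onto $\mathscr{C}_k$ at the scale $cr_k$. After translating and rescaling via \eqref{e:blowup} we may take $z_k=0$, $r_k=1$, and up to a subsequence and rotation, $\mathscr{C}_k\to\mathscr{C}_\infty$ in Hausdorff distance in $B_1$, with $\mathscr{C}_\infty$ either a line or a fixed triple junction.

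The first step is to extract a limit. Subtracting suitable constants on each connected component of $B_1\setminus\overline{S_{u_k}}$ and truncating (both operations decrease $\MS$), the sequence is bounded in $SBV(B_1)\cap L^\infty$, so Ambrosio's theorem (Theorem~\ref{t:luigi}) produces a subsequential $L^1$-limit $u_\infty\in SBV(B_1)$ with $\nabla u_\infty\equiv 0$ by \eqref{e:scivol} and the vanishing Dirichlet energy. The density lower bound (Theorem~\ref{t:CL} or Theorem~\ref{t:maindlb}) applied to each $u_k$, together with the Hausdorff-closeness hypothesis, forces $\overline{S_{u_k}}\to\mathscr{C}_\infty$ in Hausdorff distance in $B_1$, whence $S_{u_\infty}=\mathscr{C}_\infty$ up to $\HH^1$-negligible sets. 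Therefore $u_\infty$ is either a pure jump as in \eqref{e:purejumps} or a triple-junction function as in \eqref{e:triplejunctions}.

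In the line case the result is immediate from the $\eps$-regularity theorem of Ambrosio--Fusco--Pallara (Theorem~\ref{t:AFP97}). Indeed, the Hausdorff-distance bound in \eqref{e:eps} controls $\mathscr{A}_{u,\infty}$ and hence, via the equivalence with $\mathscr{A}_u$ proved right after Theorem~\ref{t:AFP97}, controls the $L^2$ flatness $\mathscr{A}_u$; the Dirichlet term in \eqref{e:eps} controls $\mathscr{D}_u$. For $k$ large the sum is below the threshold $\eps_0$ at scale $\sim 1$, so $0\notin\Sigma_{u_k}$ and $\overline{S_{u_k}}\cap B_c(0)$ is a $C^{1,\gamma}$ graph with small norm over $\mathscr{C}_k$. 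Building a diffeomorphism $\phi$ of $B_{r_k}(z_k)$ from this graph parametrization (e.g.~by $(x,y)\mapsto (x,y-\psi(x))$ with $\psi$ the graph function, cut off away from $\mathscr{C}_k$) contradicts the assumed failure, and also furnishes the $\delta$-close $C^1$ parametrization on the annulus once the $C^{1,\gamma}$-norm of $\psi$ is small enough.

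The main obstacle is the triple-junction case, which lies outside the scope of Theorem~\ref{t:AFP97}. Here I would prove a decay estimate for the normalized excess
\[
\omega(x,\rho) := \rho^{-1}\!\left(\int_{B_\rho(x)}|\nabla u|^2\,dx + \inf_{\mathscr{C}'}\mathrm{dist}_{\HH}\!\left(\overline{S_u}\cap B_\rho(x),\mathscr{C}'\cap B_\rho(x)\right)\right),
\]
the infimum taken over lines and triple junctions through $x$, of the form $\omega(x,\theta\rho)\le\tfrac12\omega(x,\rho)$ once $\omega(x,\rho)$ is small enough, for some fixed $\theta\in(0,1)$. Such a decay is established by iterating the compactness argument of the first two steps and constructing, at each scale, a competitor that is close to the limiting triple-junction function; local minimality of $u_k$ then upgrades the Hausdorff-distance closeness to the next scale. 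Geometric decay of $\omega$ produces a Reifenberg-flat parametrization of $\overline{S_u}$ near the triple point which, combined with the (already settled) line case on the three arcs away from the triple point, assembles into the desired diffeomorphism $\phi$ and the refined $C^1$-closeness on the annulus. The hardest technical ingredient is the competitor construction at each scale: unlike in the line case, harmonic replacement must respect the tripartite connectivity of $B_\rho\setminus\overline{S_u}$, so one needs to construct diffeomorphisms of the ball that smoothly deform the three arcs while preserving the topology of the triple point, and to control the resulting error in Dirichlet energy and length; this is where David's network-theoretic arguments from \cite{Dav05} become indispensable.
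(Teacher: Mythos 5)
The first thing to say is that the paper does not actually prove this theorem: it is quoted from David (\cite{Dav96}, \cite[Corollary~51.17, Theorem~53.4]{Dav05}), and the only argument supplied in the text is Remark~\ref{r:epsreg}, which derives the final annulus statement by compactness, uniform interior $C^{1,\alpha}$ bounds from the segment case, and Ascoli--Arzel\`a. Your treatment of that last part (covering the three arcs of a triple junction away from the center by small balls in which the line case applies) coincides with the Remark, and your line case is essentially the Ambrosio--Fusco--Pallara route: the Hausdorff bound controls $\mathscr{A}_{u,\infty}$, hence $\mathscr{A}_u$ by the proposition following Theorem~\ref{t:AFP97}, and together with the Dirichlet term one falls below the threshold $\eps_0$ in \eqref{e:sigma}; this is sound, granted the quantitative form of Theorem~\ref{t:AFP97}.

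The genuine gap is the triple-junction case, which is the entire content of the theorem beyond Theorem~\ref{t:AFP97}. Your plan --- a decay estimate $\omega(x,\theta\rho)\le\tfrac12\omega(x,\rho)$ for a normalized excess, proved by compactness plus a competitor construction at each scale --- is the right shape of argument, but the step you defer (``harmonic replacement respecting the tripartite connectivity \dots this is where David's network-theoretic arguments from \cite{Dav05} become indispensable'') is precisely the theorem being proved; invoking it makes the argument circular. Concretely, what is missing is: (a) a competitor near a triple point that simultaneously controls the length excess of the three arcs, the position and angles of the junction, \emph{and} the Dirichlet energy, in a way that local minimality converts into decay of $\omega$ at the smaller scale --- note that the Hausdorff distance to the \emph{optimal} cone at scale $\theta\rho$ is not controlled by the one at scale $\rho$ without first pinning down the new junction point; and (b) the passage from geometric decay of $\omega$ to a genuine $C^1$ (not merely Reifenberg-flat) parametrization up to the junction. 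A secondary but real issue is your compactness step: subtracting constants on the connected components of $B_1\setminus\overline{S_{u_k}}$ and applying Theorem~\ref{t:luigi} is not straightforward, since there is no a priori control on the number or geometry of those components nor on any $L^p$ norm; this is exactly why the paper proves Theorem~\ref{t:cptness} via minimal Caccioppoli partitions, and since your sequence has vanishing Dirichlet energy you should invoke that theorem (together with Proposition~\ref{p:min=con}) rather than Ambrosio's compactness directly.
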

\begin{remark}\label{r:epsreg} 
The last sentence of Theorem~\ref{t:epsreg} is not contained in 
\cite[Corollary~51.17, Theorem~53.4]{Dav05}.
However it is a simple consequence of the theory developed in there. 
By scaling, we can assume $r=1$ and $x=0$. Fix a cone
$\mathscr{C}$, a $\delta > 0$ and 
a sequence $\{u_k\}\subset \mathcal{M} (B_1)$ for which the left hand side of 
\eqref{e:eps} goes to $0$.
If $\mathscr{C}$ is a segment, then it follows from \cite{Dav05}
(or \cite{AFP00}) that there are uniform $C^{1,\alpha}$ bounds on 
$\overline{S_{u_k}}\cap B_{1-\delta}$.
We can then use the Ascoli-Arzel\`a Theorem to conclude that 
$\overline{S_{u_k}}$ is converging in $C^1$ to $\mathscr{C}$.

In case the minimal cone $\mathscr{C}$ is a triple junction, then observe that 
$\mathscr{C}\cap (B_1 \setminus B_{\delta/2})$ consists of a three distinct 
segments at distance $\delta/2$ from each other. Covering each of these segments with balls of
radius comparable to $\delta$ and centered in a point belonging to the segment itself, we can argue as above
and conclude that, for $k$ large enough, $\overline{S_{u_k}}\cap 
(B_{1-\delta}\setminus B_\delta)$ consist of three arcs,
with uniform $C^{1,\alpha}$ estimates.  Once again the Ascoli-Arzel\`a Theorem shows that $\overline{S_{u_k}}\cap (B_{1-\delta}\setminus B_\delta)$ is converging 
in $C^1$ to $\mathscr{C}\cap (B_{1-\delta} \setminus B_\delta)$.
\end{remark}
\begin{remark}
 Actually assumption \eqref{e:eps} can be relaxed to
 \[
  \int_{B_{r}(x)}|\nabla u|^2\,dx\leq\varepsilon\,r
 \]
(see \cite[Proposition~60.1]{Dav05} or Theorem~\ref{t:cptness} below).
\end{remark}
Remarkably, Lemenant \cite{lemenant} extended such a result to the $3$-dimensional 
case with suitable changes in the statement (see also \cite{Lem15} for a sketch 
of the proof).

The techniques developed by David are also capable to describe in details the structure 
of $\SSu$ around points that corresponds to the model case of crack-tips despite uniqueness 
of blow~ups is not ensured.
\begin{theorem}[David, Theorem~69.29 \cite{Dav05}]\label{t:David-zurrione}
For all $\eps_0>0$ there exists $\varepsilon>0$ such that if $u\in\MM(\Omega)$ and 
\[
 \mathrm{dist}_{\HH}(\SSu\cap B_{r}(z),\sigma)<\eps\,r
\]
for some radius $\sigma$ of $B_r(z)\subset\Omega$, then $\SSu \cap B_{\sfrac r2}(z)$ 
consists of a single connected arc which joins some point $y_0\in B_{\sfrac r4}(z)$ with 
$\partial B_{\sfrac r2}(z)$ and which is smooth in $B_{\sfrac r2}(z)\setminus \{y_0\}$. 

More precisely, there is a point $y_0\in B_{\sfrac r{4}}(z)$ such that 
\[
\SSu \cap B_{\sfrac r2} (z)=\big\{y_0+ \rho (\cos \alpha (\rho), \sin \alpha (\rho))\big\}
\]
for some smooth function $\alpha: (0,\sfrac r2)\to \R$ which satisfies
\begin{equation}
\rho|\alpha'(\rho)|\leq \eps_0 \quad \mbox{for all $\rho\in (0,\sfrac r2)$,}
\qquad\lim_{\rho\downarrow 0}\rho|\alpha'(\rho)|=0.
\end{equation}
In addition, there is a constant $C$ such that, up to a change of sign, 
\[
 u\big(y_0+ \rho (\cos \theta, \sin \theta)\big)=
 \sqrt{\frac{2}{\pi}\rho}\,\sin\Big(\frac{\theta-\alpha(\rho)}{2}\Big)+C+\rho^{\sfrac 12}\omega(\rho,\theta)
\]
for all $\rho\in (0,\sfrac r2)$ and $\theta\in(\alpha(\rho)-\pi,\alpha(\rho)+\pi)$, with
$\lim_{\rho\downarrow 0}\sup_\theta|\omega(\rho,\theta)|=0$. Finally,
\[
 \lim_{\rho\downarrow 0}\frac{1}{\rho}\int_{B_{\rho}(y_0)}|\nabla u|^2dx=
 \lim_{\rho\downarrow 0}\frac{1}{\rho}\,\HH^1(\SSu\cap B_{\rho}(y_0))=1.
 \]
\end{theorem}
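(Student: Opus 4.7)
The plan is to rescale (taking $r=1$, $z=0$) and then build the arc from the outside inward by combining David's $\varepsilon$-regularity theorem for lines (Theorem~\ref{t:epsreg}) on dyadic annuli with the Bonnet--David classification of global minimizers (\cite{BD01}) to show that crack-tips are the only possible blow-ups near the tip. The first step is to locate the tip $y_0$. The Hausdorff closeness to a radius $\sigma$ together with the density lower bound (Theorem~\ref{t:CL}) and the elimination property force $\SSu\cap B_{1/2}$ to be connected and to contain the portion of $\sigma$ lying in $B_{1/2}\setminus B_{1/4}$; by the same density bound its "endpoint'' (the set of accumulation of $\SSu$ inside $B_{1/4}$) is contained in a single ball of radius comparable to $\varepsilon$, which pins down $y_0\in B_{1/4}$.

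Once $y_0$ is fixed, I would propagate the Hausdorff closeness hypothesis from $B_1(0)$ to each annulus $A_k := B_{2^{-k}}(y_0)\setminus B_{2^{-k-1}}(y_0)$: an elementary geometric argument, using again the density bound to prevent $\SSu$ from leaving a small tube around $\sigma$, shows that on every such annulus $\SSu$ is $\varepsilon_k$-close in Hausdorff distance to a segment with $\varepsilon_k\to 0$. Applying Theorem~\ref{t:epsreg} (for the line cone) on balls of radius $\sim 2^{-k}$ inside $A_k$ then yields that $\SSu\cap A_k$ is a single $C^{1,\alpha}$ arc with uniform bounds. Patching these arcs produces the smooth parametrization $\rho\mapsto y_0+\rho(\cos\alpha(\rho),\sin\alpha(\rho))$ on $(0,1/2)$, and the "$\delta$-closeness in $C^1$'' statement of Theorem~\ref{t:epsreg} (cf.\ Remark~\ref{r:epsreg}) gives $\rho|\alpha'(\rho)|\le\varepsilon_0$ once $\varepsilon$ is small enough.

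The limit $\rho|\alpha'(\rho)|\to 0$ and the asymptotic expansion of $u$ are the hard part and rely on the blow-up analysis at $y_0$. For a sequence $\rho_k\downarrow 0$, the rescalings $u_{y_0,\rho_k}$ (cf.\ \eqref{e:blowup}) are local minimizers with $\overline{S_{u_{y_0,\rho_k}}}$ Hausdorff-converging to a half-line; by Bonnet--David, any subsequential limit is a crack-tip function $\sqrt{2/\pi}\,\rho^{1/2}\sin(\theta/2)+C$ (the coefficient $\sqrt{2/\pi}$ being forced by plugging into the Euler--Lagrange equation \eqref{e:EL}, as noted in footnote~7). Combined with the uniform $C^{1,\alpha}$ control of the arc on annuli, this gives $C^1$-convergence of the rescaled arcs to the half-line, which is exactly $\rho|\alpha'(\rho)|\to 0$, and simultaneously gives the expansion $u(y_0+\rho(\cos\theta,\sin\theta))=\sqrt{2\rho/\pi}\,\sin((\theta-\alpha(\rho))/2)+C+\rho^{1/2}\omega(\rho,\theta)$ with $\omega\to 0$ uniformly in $\theta$.

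The main obstacle is precisely the last step: Bonnet--David only classify blow-up \emph{subsequential} limits, so a priori the angle $\alpha(\rho)$ and the constant $C$ could depend on the chosen sequence $\rho_k$. Uniqueness of the blow-up constant $C$ is easy (it is determined by the trace of $u$ on $\partial B_{1/2}$ via harmonicity of $u$ on $B_{1/2}\setminus\SSu$), but uniqueness of the tangent direction $\lim_{\rho\downarrow 0}\alpha(\rho)$ requires a quantitative decay of $\rho\alpha'(\rho)$. This is extracted from the monotonicity of Bonnet's normalized Dirichlet energy discussed in subsection~\ref{ss:bup}, together with the energy identity $\lim_{\rho\downarrow 0}\rho^{-1}\!\int_{B_\rho(y_0)}|\nabla u|^2=\lim_{\rho\downarrow 0}\rho^{-1}\HH^1(\SSu\cap B_\rho(y_0))=1$ which identifies the limiting profile as a crack-tip with unit multiplicity and thereby forces the spiraling to be infinitesimal but does not, on its own, give summability of $\alpha'$; getting the two stated limits simultaneously is where the bulk of David's technical work in \cite{Dav05} goes.
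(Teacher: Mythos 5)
The paper does not prove this theorem: it is quoted verbatim from David's monograph (Theorem~69.29 of \cite{Dav05}) and used as a black box, so there is no in-paper argument to compare yours against. Judged on its own, your outline correctly identifies the main ingredients of David's proof (the Bonnet--David classification of the cracktip as a global minimizer, the $\eps$-regularity theorem for lines away from the tip, and a monotonicity argument at the tip), but it has genuine gaps at exactly the load-bearing points.

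The first gap is the claim that ``an elementary geometric argument'' propagates the flatness hypothesis to every dyadic annulus $A_k=B_{2^{-k}}(y_0)\setminus B_{2^{-k-1}}(y_0)$ with errors $\eps_k\to 0$. The hypothesis gives Hausdorff closeness $\eps r$ at the top scale only; read at scale $2^{-k}r$ this is a relative error of order $2^k\eps$, which degenerates rather than improves. Obtaining decay of the flatness (and of the scaled Dirichlet energy, which Theorem~\ref{t:epsreg} also requires and which is not among your hypotheses --- near a cracktip $\int_{B_s(x)}|\nabla u|^2\approx s^2/\mathrm{dist}(x,y_0)$, so smallness holds only on balls of radius a small multiple of the distance to the tip) is the heart of the matter and cannot precede the regularity it is meant to establish; as written, your Step~2 is circular. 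The second gap is that the three hardest conclusions --- $\rho|\alpha'(\rho)|\to 0$, the uniform expansion of $u$, and the two energy-density limits --- are not proved but deferred to ``the bulk of David's technical work''; moreover the appeal to Bonnet's monotonicity presupposes that $\SSu\cap B_\rho(y_0)$ is connected for all small $\rho$, which is itself part of the statement. Finally, the assertion that the constant $C$ is ``determined by the trace of $u$ on $\partial B_{1/2}$'' does not address uniqueness of the blow-up limit, which (as the paper stresses immediately after the theorem, and in Proposition~\ref{p:bupchar}(ii)) remains open even given this theorem. So what you have is a plausible roadmap, not a proof.
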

As remarked above, the latter theorem does not guarantee that such arc is $C^1$ {\em up to the loose end} 
$y_0$: in particular it leaves the possibility that the arc spirals infinitely many times around it.
Hence, it does not establish the uniqueness of the blowup in the point $y_0$.

\subsection{Higher integrability of the gradient and the Mumford and Shah conjecture}\label{ss:HI}

Theorem~\ref{t:AFP97}, or better the characterization of the
{\em singular set} $\Sigma_u$ in \eqref{e:sigma}, can be employed 
to subdivide $\Sigma_u$ as follows: $\Sigma_u=\Sigma_u^{(1)}\cup\Sigma_u^{(2)}\cup\Sigma_u^{(3)}$,
where 
\begin{eqnarray*}
&&\Sigma_u^{(1)}:=\{x\in\Sigma_u:\,\lim_{\rho\downarrow 0} 
\mathscr{D}_u(x,\rho)=0\},\\
&&\Sigma_u^{(2)}:=\{x\in\Sigma_u:\,\lim_{\rho\downarrow 0} 
\mathscr{A}_u(x,\rho)=0\},\\
&&\Sigma_u^{(3)}:=\{x\in\Sigma_u:\,
\liminf_{\rho\downarrow 0}\mathscr{D}_u(x,\rho)>0,\,
\liminf_{\rho\downarrow 0}\mathscr{A}_u(x,\rho)>0\}.
\end{eqnarray*}
According to the Mumford and Shah conjecture~\ref{c:MS1} we should have $\Sigma_u^{(3)}=\emptyset$ if 
$n=2$. Furthermore, being inspired by the $2$d case, we shall refer to $\Sigma_u^{(1)}$ as the set of 
triple junctions, and to $\Sigma_u^{(2)}$ as the set of crack-tips.
Actually, in $2$-dimensions we will fully justify the latter terminology in Proposition~\ref{p:bupchar}
(see also Remark~\ref{r:triplen2}).

In the general $n$-dimensional setting De Giorgi conjectured that
$\HH^{n-2}(\Sigma_u\cap\Omega^\prime)<\infty$ for all $\Omega^\prime\subset\hskip-0.125cm\subset\Omega$
(cf. \cite[conjecture~6]{DeG91}).
In particular, the validity of the latter conjecture would imply $\dimh\Sigma_u\leq n-2$. 

A first breakthrough in this direction has been obtained by 
Ambrosio, Fusco and Hutchinson in \cite{AFH}.
\begin{theorem}[Ambrosio, Fusco and Hutchinson, \cite{AFH}]\label{t:AFH2}
For every $u\in\MM(\Om)$ 
\[
\dimh \Sigma_u^{(1)}\leq n-2.
\]
\end{theorem}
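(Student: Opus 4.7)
The plan is to reduce the bound $\dim_{\HH}\Sigma_u^{(1)}\le n-2$ to the classical codimension-two estimate on the singular set of minimizing Caccioppoli partitions, via a blow-up plus dimension-reduction argument; this is, not coincidentally, the content of the Caccioppoli-partition machinery developed in Section~\ref{s:cpt}.

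First I would perform the blow-up analysis at a point $x\in\Sigma_u^{(1)}$. Since $\mathscr{D}_u(x,\rho_k)\to 0$ along some infinitesimal sequence, the density upper bound of Proposition~\ref{p:dub} provides a uniform $\MS$-bound for the rescaled maps $u_{x,\rho_k}$ of \eqref{e:blowup} on $B_1$; hence Ambrosio's Theorem~\ref{t:luigi}, applied after truncation, yields a subsequence $u_{x,\rho_k}\to u_\infty$ in $L^1_{\mathrm{loc}}(\R^n)$. The lower semicontinuity \eqref{e:scivol} combined with $\int_{B_1}|\nabla u_{x,\rho_k}|^2\,dy=\mathscr{D}_u(x,\rho_k)\to 0$ forces $\nabla u_\infty\equiv 0$, so $u_\infty$ is piecewise constant and determines a Caccioppoli partition $(E_i)_i$ of $\R^n$. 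The scaling of $\MS$ together with the local minimality of $u$ and \eqref{e:scisurf} imply that $u_\infty$ locally minimizes the surface energy alone, i.e.\ $(E_i)_i$ is minimizing. Moreover, by \eqref{e:sigma} and $\mathscr{D}_u(x,\rho)\to 0$ one has $\mathscr{A}_u(x,\rho)\ge\eps_0/2$ for all small $\rho$; the mean flatness is scale invariant under \eqref{e:blowup}, and thus $\mathscr{A}_{u_\infty}(0,t)\ge\eps_0/2$ for every $t>0$, ruling out planar interfaces: $0$ is a \emph{genuinely singular} point of the partition $(E_i)_i$.

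Second, I would invoke the classical regularity theorem for minimizing Caccioppoli partitions: at $\HH^{n-1}$-a.e.\ point the interface set $J_{u_\infty}=\bigcup_i\partial^*E_i$ is locally a smooth hypersurface, and the complementary singular subset of $\overline{J_{u_\infty}}$ has Hausdorff dimension at most $n-2$. This is the codimension-two ingredient that drives the whole estimate.

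To transfer this bound from a single blow-up to $\Sigma_u^{(1)}$ itself, I would then run a Federer-type dimension-reduction argument. Assume for contradiction that $\HH^s(\Sigma_u^{(1)})>0$ for some $s>n-2$; at a point $x$ of positive upper $s$-density in $\Sigma_u^{(1)}$ one rescales both $u$ and the restriction $\HH^s\res\Sigma_u^{(1)}$ along a suitable sequence $\rho_k\downarrow 0$ and passes to a joint limit. The first yields a minimizing partition $u_\infty$ as above; the second, by the scale invariance of the conditions $\mathscr{D}_u\to 0$ and $\mathscr{A}_u\ge\eps_0/2$, is a non-trivial positive Radon measure supported in the singular set of $u_\infty$. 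Since that singular set has dimension at most $n-2<s$, this contradicts the previous step and proves $\dim_{\HH}\Sigma_u^{(1)}\le n-2$. The main obstacle will lie precisely in this transfer: blow-ups in $SBV$ are not unique, so the joint extraction must be handled carefully, and one must verify both that the rescaled measures do not disperse in the limit and that local minimality is stable under $L^1_{\mathrm{loc}}$ convergence of blow-ups -- exactly the delicate $SBV$ compactness and stability issues addressed in Section~\ref{s:cpt}.
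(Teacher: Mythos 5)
Your overall architecture is the paper's: blow up at a point of $\Sigma_u^{(1)}$, show the limit is a minimal Caccioppoli partition with a genuine singularity at the origin, invoke the Massari--Tamanini bound $\dimh\Sigma_{\mathscr E}\le n-2$ (Theorem~\ref{t:Caccest}), and derive a contradiction from a positive-density point of $\Sigma_u^{(1)}$. However, your compactness step fails as stated. Applying Ambrosio's Theorem~\ref{t:luigi} ``after truncation'' does not produce the blow-up limit: the rescaled maps $u_{x,\rho_k}=\rho_k^{-1/2}u(x+\rho_k\,\cdot)$ carry no uniform $L^\infty$ (nor even $L^1$) bound, and truncation at a fixed level can destroy the very object you need --- if $u_{x,\rho_k}$ jumps between two values that both exceed the truncation threshold, the truncated function is constant there and the jump set disappears. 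This is precisely the obstruction the paper points out before Theorem~\ref{t:cptness} (``we do not have any a priori control of the norms $\|u_k\|_{L^1}$, thus preventing the use of Ambrosio's $(G)SBV$ compactness theorem''). The actual argument replaces $u_{x,\rho_k}$ by piecewise-constant functions $w_k$ built from level sets selected via the coarea formula, with $\|u_{x,\rho_k}-w_k\|_{L^\infty}\le 2\|\nabla u_{x,\rho_k}\|_{L^1}^{1/2}$ and $\HH^{n-1}(S_{w_k}\setminus(S_{u_{x,\rho_k}}\cup H_k))=0$ with $\HH^{n-1}(H_k)=o(1)$, and then uses compactness for Caccioppoli partitions, which requires no control on the values. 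Likewise, minimality of the limit partition does not follow from \eqref{e:scivol}--\eqref{e:scisurf}: it requires the cut-and-paste comparison of Step~2 of Theorem~\ref{t:cptness} (good radii, the interpolation $v_k=\varphi\,\omega_k+(1-\varphi)u_k$, and control of the transition annulus via the measures $\mu_k$). You flag these as ``delicate issues'' but the specific mechanism you propose for them is the one that does not work.

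On the transfer step, the paper does not pass rescaled copies of $\HH^s\res\Sigma_u^{(1)}$ to a weak-$*$ limit measure; making that Federer-style argument rigorous would require upper $s$-density bounds for these measures that are not available here. Instead it works with the Hausdorff content $\HH^{s,\infty}$: from $\HH^s(\Sigma_u^{(1)})>0$ one gets a point with $\HH^{s,\infty}(\Sigma_u^{(1)}\cap B_{\rho_k})\ge\tfrac{\omega_s}{2^{s+1}}\rho_k^s$ along a sequence, proves the covering property \eqref{e:usc} (any neighborhood of $\Sigma_{\mathscr E}\cap\overline{B_1}$ eventually contains $\rho_k^{-1}\Sigma_u^{(1)}\cap\overline{B_1}$, via the characterization \eqref{e:sigma} of $\Sigma_u$ and the flatness of $J_{\mathscr E}$ at its regular points), and then uses the upper semicontinuity of $\HH^{s,\infty}$ under Hausdorff convergence (Lemma~\ref{l:haus}) to conclude $\HH^{s,\infty}(\Sigma_{\mathscr E}\cap\overline{B_1})\ge\tfrac{\omega_s}{2^{s+1}}>0$, contradicting Theorem~\ref{t:Caccest}. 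This content device is the concrete realization of your ``the rescaled measures do not disperse'' requirement; without it (or an equivalent), your dimension-reduction step remains a sketch.
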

Actually, the set $\Sigma_u^{(1)}$ turns out to be countable in $2$-dimensions (see 
Remark~\ref{r:triplen2} below).

In the same paper \cite{AFH}, Ambrosio, Fusco and Hutchinson investigated the 
connection between the higher integrability of $\nabla u$ and the 
Mumford and Shah conjecture. 

If conjecture~\ref{c:MS1} does hold, then $\nabla u\in L^p_{loc}$ 
for all $p<4$ (cf. with \cite[Proposition~6.3]{AFH} under $C^{1,1}$ 
regularity assumptions on $\SSu$, see also 
Theorem~\ref{t:MSconj} below). 
It was indeed conjectured by De Giorgi in all space dimensions that 
$\nabla u\in L^p_{loc}$ for all $p<4$ (cf. with \cite[conjecture 1]{DeG91}).
So far only a first step into this direction has been established. 
\begin{theorem}\label{t:hi}
There is $p>2$ such that $\nabla u\in L^p_{\mathrm{loc}}(\Om)$ for all $u\in\MM(\Om)$ and for 
all open sets $\Om\subseteq\R^n$.
\end{theorem}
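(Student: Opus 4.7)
The plan is to follow the approach of De Philippis and Figalli and deduce higher integrability in any dimension from the porosity of the jump set.

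First I would establish porosity of $\overline{S_u}$: there is $\delta_0=\delta_0(n)\in(0,1)$ so that for every $x\in\overline{S_u}$ and every sufficiently small $r$ one can find $y\in B_r(x)$ with $B_{\delta_0 r}(y)\cap\overline{S_u}=\emptyset$. The argument is a pigeon-hole covering. Supposing the contrary, I would choose a maximal disjoint family of balls $\{B_{\delta r}(y_j)\}_j$ with $y_j\in B_r(x)$, of cardinality $\sim \delta^{-n}$; each double $B_{2\delta r}(y_j)$ would then meet $\overline{S_u}$ and, by the Carriero--Leaci density lower bound (Theorem~\ref{t:CL}), contribute at least $\theta_0 (2\delta r)^{n-1}$ to $\HH^{n-1}(S_u\cap B_{3r}(x))$. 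Summation would produce a total $\gtrsim \delta^{-1} r^{n-1}$, contradicting the upper bound of Proposition~\ref{p:dub} as soon as $\delta$ is small enough, which fixes $\delta_0$.

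The second step converts porosity into the tubular measure estimate
\[
\big|\{x\in\Om':\, \mathrm{dist}(x,\overline{S_u})<t\}\big|\le C\,t^\alpha
\]
for some $\alpha=\alpha(n,\delta_0)>0$, as is classical for porous sets and is presumably the content of Section~\ref{s:porosity}. Combined with the fact that $u$ is harmonic on $\Om\setminus\overline{S_u}$ by \eqref{e:ELo} with $\gamma=0$, interior estimates for harmonic functions together with the energy upper bound \eqref{e:dub} give the pointwise estimate $|\nabla u(x)|^2\le C\,\mathrm{dist}(x,\overline{S_u})^{-1}$ on compact subsets of $\Om\setminus\overline{S_u}$.

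These two ingredients alone would only yield $|\nabla u|\in L^p$ for $p<2\alpha$, which is insufficient if $\alpha\le 1$. The final step, carrying the main technical weight, is a Gehring-type self-improvement: using the density of porosity-provided holes at every scale, one compares $u$ on a ball $B_r(x_0)\subset\subset \Om$ with a suitable harmonic replacement obtained by pushing the trace of $u$ off a small neighborhood of $\overline{S_u}$, and exploits the minimality of $u$ to play the surface cost of relocating the trace (controlled by the small Lebesgue measure of the neighborhood together with Proposition~\ref{p:dub}) against the Dirichlet gain. The outcome is a reverse H\"older inequality of the form
\[
\Big(\fint_{B_r(x_0)} |\nabla u|^{2q}\Big)^{1/(2q)} \le C\, \Big(\fint_{B_{2r}(x_0)} |\nabla u|^2\Big)^{1/2} + \mathrm{l.o.t.},
\]
for some $q>1$; Gehring's lemma then furnishes $p>2$ with $|\nabla u|\in L^p_{\mathrm{loc}}$. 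The hard part will be precisely this last step: the harmonic replacement must be performed so that the new jump surface is strictly smaller than the Dirichlet gain, and the accounting is delicate because the two terms of $\MS$ scale differently. Porosity is essential because it provides, at each scale, a definite amount of room in which to relocate the trace without paying a prohibitive $\HH^{n-1}$ price.
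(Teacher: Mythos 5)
Your plan correctly identifies the De Philippis--Figalli strategy as the route to all dimensions, but the decisive ingredient is misidentified, and the step that would have to carry the weight is left as a sketch that does not correspond to a workable mechanism.

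First, the porosity you prove in Step 1 is porosity of $\overline{S_u}$ \emph{as a subset of $\R^n$}. That statement is correct (it follows from Ahlfors regularity, i.e.\ Theorem~\ref{t:CL} plus Proposition~\ref{p:dub}, exactly by your pigeonhole argument), but it is essentially vacuous for the purpose at hand: the associated tubular estimate is $\LL^n(\{\mathrm{dist}(\cdot,\overline{S_u})<t\})\le Ct$, i.e.\ $\alpha=1$, and combined with $|\nabla u|^2\lesssim \mathrm{dist}(\cdot,\overline{S_u})^{-1}$ it only recovers $\nabla u\in L^p_{loc}$ for $p<2$, which is already known. What the argument actually requires is Theorem~\ref{t:RMS} (Rigot, Maddalena--Solimini): every ball $B_\rho(x)$ centered on $\overline{S_u}$ contains a sub-ball $B_r(y)$ with $r\geq\alpha_\eps\rho$ on which $\overline{S_u}$ is a $C^{1,\gamma}$ graph \emph{and} $r\|\nabla u\|^2_{L^\infty(B_r(y))}\leq C_0\eps$. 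This is porosity of the \emph{singular set} $\Sigma_u$ inside the metric space $\overline{S_u}$, enriched with a quantitative gradient bound on the good sub-balls; it is a deep consequence of the Ambrosio--Fusco--Pallara $\eps$-regularity theory, not of a covering argument, and no version of it appears in your proposal.

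Second, the paper's $n$-dimensional proof contains no reverse H\"older inequality and no Gehring lemma. Instead, Proposition~\ref{p:alfpor} runs a dyadic stopping-time construction at scales $M^{-j}$: Theorem~\ref{t:RMS} peels off, at each generation, a definite fraction of the jump set covered by balls on which $|\nabla u|^2< M^{2j+2}$, so the exceptional set $K_h\subset\overline{S_u}$ not yet covered satisfies $\HH^{n-1}(K_h)\le C\,h\,M^{-2\beta h}$ with geometric decay --- strictly better than the trivial Minkowski bound. One then shows that the superlevel set $\{|\nabla u|^2>M^{h+3}\}$ is contained in the $M^{-(h+1)}$-neighborhood of $K_h$ (points far from $\overline{S_u}$ are handled by subharmonicity of $|\nabla u|^2$; points near a good ball by the $L^\infty$ bound), whence $\LL^n(\{|\nabla u|^2>M^{h}\})\lesssim h\,M^{-h(1+2\beta)}$ and Cavalieri's formula gives $\nabla u\in L^{2q}_{loc}$ for $q<1+2\beta$. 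Your proposed alternative --- a harmonic replacement obtained by ``relocating the trace'' off a neighborhood of $\overline{S_u}$ --- does not obviously close: the jump set carries measure $\simeq r^{n-1}$ in $B_r$, so removing it forces a competitor whose Dirichlet energy is not controlled by the original one, and you give no accounting that makes the surface cost beat the Dirichlet gain. (A reverse H\"older inequality \emph{is} the route taken in the $2$d proof of De Lellis--Focardi, but there it is obtained by a compactness argument against minimal Caccioppoli partitions, which is specific to $n=2$ and unrelated to your trace-relocation scheme.) As it stands, the proposal has a genuine gap precisely at the step you flag as ``the hard part''.
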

A proof of Theorem~\ref{t:hi} in $2$-dimension has been given by De Lellis and Focardi in \cite{DF1}, 
shortly after De Philippis and Figalli established the result without any dimensional limitation in 
\cite{DePFig14}.
The exponent $p$ in both papers is not explicitly computed despite some suggestions to do that 
are also proposed.

The higher integrability can be translated into an estimate for the size of the 
singular set $\Sigma_u$ of $\SSu$ (see \cite[Corollary 5.7]{AFH}) that improves 
upon the conclusion of Theorem~\ref{t:AFP97} (cf. \cite{Min03}, \cite{Min03b}, 
\cite{KrMin06} and the survey \cite{Min08} for related issues for minima of 
variational integrals and for solutions to nonlinear elliptic systems in divergence form).
\begin{theorem}[Ambrosio, Fusco and Hutchinson \cite{AFH}]\label{t:AFH1}
If $u\in\MM(\Om)$ and $|\nabla u|\in L^p_{{loc}}(\Om)$ for some $p>2$, then 
\begin{equation}\label{e:stimaSigmau}
\mathrm{dim}_\HH\Sigma_u\leq\max\{n-2,n-\sfrac p2\}\in(0,n-1).
\end{equation}
 
\end{theorem}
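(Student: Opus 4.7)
The plan is to split $\Sigma_u$ into the stratum $\Sigma_u^{(1)}$, where the scaled Dirichlet energy vanishes, and its complement $T:=\Sigma_u\setminus\Sigma_u^{(1)}$, and to estimate the Hausdorff dimension of each piece separately. For the first piece I would simply invoke Theorem~\ref{t:AFH2}, which grants $\dimh\Sigma_u^{(1)}\leq n-2$; the entire work then reduces to bounding $\dimh T$ by means of the higher integrability hypothesis.

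The key observation is that, by the very definition of $\Sigma_u^{(1)}$, every $x\in T$ satisfies $\limsup_{\rho\downarrow 0}\mathscr{D}_u(x,\rho)>0$. To exploit this together with $|\nabla u|\in L^p_{\mathrm{loc}}$, I would first apply H\"older's inequality with conjugate exponents $p/2$ and $p/(p-2)$ on $B_\rho(x)$, obtaining
\[
\int_{B_\rho(x)}|\nabla u|^2\,dy\leq (\omega_n\rho^n)^{1-2/p}\Big(\int_{B_\rho(x)}|\nabla u|^p\,dy\Big)^{2/p},
\]
and then, setting $\mu:=|\nabla u|^p\,\LL^n\res\Om$ (a Radon measure finite on every $\Om'\subset\hskip-0.125cm\subset\Om$ by assumption), rewrite the previous inequality as
\[
\mathscr{D}_u(x,\rho)\leq\omega_n^{1-2/p}\,\rho^{\,1-2n/p}\,\mu(B_\rho(x))^{2/p}.
\]

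Next I would stratify $T=\bigcup_{k\in\N}T_k$ with $T_k:=\{x\in T:\,\limsup_{\rho\downarrow 0}\mathscr{D}_u(x,\rho)\geq 1/k\}$: the previous display forces $\limsup_{\rho\downarrow 0}\rho^{p/2-n}\mu(B_\rho(x))\geq c_k>0$ at every $x\in T_k$, with $c_k$ depending only on $n$, $p$ and $k$. The density estimate in Lemma~\ref{l:de}, applied on any $\Om'\subset\hskip-0.125cm\subset\Om$ with $s=n-p/2$, will then give $\HH^{n-p/2}(T_k\cap\Om')\leq c_k^{-1}\mu(\Om')<\infty$. Exhausting $\Om$ by compactly contained subsets and letting $k\to\infty$ yields $\dimh T\leq n-p/2$, which combined with the bound on $\Sigma_u^{(1)}$ proves \eqref{e:stimaSigmau}.

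I do not foresee a substantial obstacle: the deep input is Theorem~\ref{t:AFH2}, which is already granted, and the remaining H\"older--density step is mechanical. The only minor verification is that $T$ coincides with the subset of $\Sigma_u$ on which $\limsup_{\rho\downarrow 0}\mathscr{D}_u(x,\rho)>0$, which is tautological from the definition of $\Sigma_u^{(1)}$, so that no ``mixed'' stratum involving the behaviour of $\mathscr{A}_u$ needs a separate treatment.
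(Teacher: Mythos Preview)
Your proof is correct and uses the same ingredients as the paper: Theorem~\ref{t:AFH2} for the stratum $\Sigma_u^{(1)}$, and the combination of H\"older's inequality with the density estimate of Lemma~\ref{l:de} for the complement. The organization is mildly different---the paper introduces, for each $s\in(n-p/2,n-1)$, the auxiliary set $\Lambda_s$ where the upper $s$-density of $\mu=|\nabla u|^p\LL^n$ is infinite, shows $\HH^s(\Lambda_s)=0$, and then uses the same H\"older computation to prove $\Sigma_u\setminus\Lambda_s\subset\Sigma_u^{(1)}$, finally letting $s\downarrow n-p/2$; you instead split directly along $\Sigma_u^{(1)}$ and work at $s=n-p/2$, obtaining $\sigma$-finiteness of $\HH^{n-p/2}$ on $T$---but the two arguments are contrapositives of one another and rely on exactly the same estimates.
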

The estimate $\mathrm{dim}_\HH\Sigma_u<n-1$ has also been established by David \cite{Dav96} 
for $n=2$, and lately by Rigot \cite{Rig00} and by Maddalena and Solimini \cite{MadSol01} 
in general by establishing the porosity of $\Om\cap\SSu$. Despite this, it was not related to the 
higher integrability property of the gradient. In Section~\ref{s:porosity} below we shall 
comment more in details on how porosity implies such an estimate and moreover on how porosity 
can be employed to prove the higher integrability of the gradients of minimizers following 
De Philippis and Figalli \cite{DePFig14} (see Section~\ref{s:hi}). 

For the time being few remarks are in order:
\begin{itemize}
\item[(i)] the upper bound $p<4$ is motivated not only because we need the 
rhs in the estimate \eqref{e:stimaSigmau} to be positive if $n=2$, but also because explicit 
examples show that it is the best exponent one can hope for: consider in $2$ 
dimensions a crack-tip minimizer (Bonnet and David \cite{BD01}), i.e.~a function 
that up to a rigid motion can be written as 
\[
u(\rho,\theta)=C\pm \sqrt{\frac 2\pi \rho}\cdot\sin(\theta/2)
\] 
for $\theta\in(-\pi,\pi)$ and $\rho>0$, and some constant $C\in\R$.
Simple calculations imply that crack-tip minimizers  satisfy 
\[
|\nabla u|\in L^p_{{loc}}\setminus L^4_{{loc}}(\R^2)\quad
\text{for all }p<4;
\]

\item[(ii)] If we were able to prove the higher integrability property
for every $p<4$ then we would infer that $\mathrm{dim}_\HH\Sigma_u\leq n-2$, 
and actually in $2$-dimensions $\mathrm{dim}_\HH\Sigma_u=0$. 
Clearly, this would be a big step towards the solution in positive of the Mumford and 
Shah conjecture. For further progress in this direction see Theorem~\ref{t:MSconj} below.
\end{itemize}
Given Theorems~\ref{t:AFH2} and \ref{t:hi} for granted, Theorem~\ref{t:AFH1} is a simple consequence 
of soft measure theoretic arguments. We shall establish Theorem~\ref{t:AFH2} in Section~\ref{s:cpt}. 
Instead, here we prove Theorem~\ref{t:AFH1} to underline the role of the higher 
integrability that, in turn, shall be established in Sections~\ref{s:hi2} and \ref{s:hi} following 
two different paths. 
\begin{proof}[Proof of Theorem~\ref{t:AFH1}]
Suppose that $|\nabla u|\in L^p_{loc}(\Om)$ for some $p>2$, then for all $s\in(n-\sfrac p2,n-1)$ 
the set 
\[
\Lambda_s:=\left\{x\in\Om:\,
\limsup_\rho\rho^{-s}\int_{B_\rho(x)}|\nabla u|^pdy=\infty\right\}
\]
satisfies $\HH^s(\Lambda_s)=0$ by elementary density estimates for the Radon measure
\[
\mu(A):=\int_{A}|\nabla u|^pdy\quad \text{$A\subseteq\Om$ open subset}.
\]
Indeed, 
for all $\delta>0$, Proposition~\ref{p:strongweak} gives that
\[
\delta\,\HH^s(\Lambda_s)\leq \mu(\Lambda_s)\leq\mu(\Om)<\infty. 
\]
Therefore, $\HH^s(\Lambda_s)=0$. 

Hence, if we rewrite $\Sigma_u$ as the disjoint union of $\Sigma_u\cap \Lambda_s$
and of $\Sigma_u\setminus\Lambda_s$, we deduce $\HH^s(\Sigma_u\cap \Lambda_s)=0$ 
and thus the estimate $\mathrm{dim}_\HH(\Sigma_u\cap \Lambda_s)\leq s$.
 
Furthermore, it is easy to prove that $\Sigma_u\setminus\Lambda_s\subseteq\Sigma_u^{(1)}$. 
If $x\in\Sigma_u\setminus\Lambda_s$ by the higher integrability 
and H\"older inequality it follows that
\[
\mathscr{D}_u(x,\rho)=\rho^{1-n}\int_{B_\rho(x)}|\nabla u|^2dy
\leq\, \omega_n^{1-\frac 2p}\,\rho^{\frac 2p\big(s-n+\frac p2\big)}
\left(\rho^{-s}\int_{B_\rho(x)}|\nabla u|^pdy\right)^{\frac 2p}
\stackrel{\rho\downarrow 0^+}{\longrightarrow}0,
\]
since $s>n-\sfrac p2$. 
By taking into account Theorem~\ref{t:AFH2} we have that 
$\mathrm{dim}_\HH(\Sigma_u\setminus \Lambda_s)\leq n-2$.

In conclusion, we infer that for all $s\in(n-\sfrac p2,n-1)$
\[
\mathrm{dim}_\HH\Sigma_u=\max\{\mathrm{dim}_\HH(\Sigma_u\cap \Lambda_s), 
\mathrm{dim}_\HH(\Sigma_u\setminus\Lambda_s)\}\leq \max\{n-2,s\},
\] 
by letting $s\downarrow(n-\sfrac p2)$ we are done.
\end{proof}

\subsection{An energetic characterization of the Mumford and Shah conjecture~\ref{c:MS2}}\label{ss:enconj}

Let us go back to the upper bound $p<4$ in the higher integrability result. 
We consider again the crack-tip function in item (i) after Theorem~\ref{t:AFH1}. 
A simple calculation shows that its gradient belongs to $L^p_{loc}\setminus L^4_{loc}(\R^2)$ 
for all $p<4$. Beyond the scale of $L^p$ spaces something better holds true: 
$|\nabla u|\in L^{4,\infty}_{{loc}}(\R^2)$. The latter is a weak-Lebesgue 
space, i.e.~if $U\subseteq\R^2$ is open then $f\in L^{4,\infty}_{{loc}}(U)$ 
if and only if for all $U^\prime\subset\hskip-0.125cm\subset U$ there exists $K=K(U^\prime)>0$ 
such that 
\[
\LL^2\big(\{x\in U^\prime:\,|f(x)|>\lambda\}\big)\leq K\lambda^{-4}\quad\text{ for all }
\lambda>0.
\]
As a side effect of the considerations in \cite{DF1} 
one deduces an energetic characterization of the modified Mumford and Shah conjecture~\ref{c:MS2} 
(see \cite[Proposition~5]{DF1}). 
Indeed, the validity of the latter 
is equivalent to a sharp integrability property of the gradient of the minimizers.
\begin{theorem}[De Lellis and Focardi \cite{DF1}]\label{t:MSconj}
If $\Om\subseteq\R^2$, conjecture~\ref{c:MS2} is true for $u\in\MM(\Om)$
if and only if $\nabla u\in L^{4,\infty}_{loc}(\Om)$. 
\end{theorem}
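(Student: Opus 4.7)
The plan is to treat the two directions separately, using the fine regularity machinery assembled in the paper.

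\emph{The direction $\Rightarrow$.} Assume conjecture~\ref{c:MS2} holds for $u\in\MM(\Om)$ and fix $\Om^\prime\subset\hskip-0.125cm\subset\Om$. By (c1) the set $\SSu\cap\Om^\prime$ is covered by finitely many arcs; by (c2) its singular points in $\Om^\prime$ are finitely many, each being either a triple-junction or a loose endpoint (crack-tip). Away from the singular points, $\SSu$ is a $C^1$ curve and Theorem~\ref{t:uSSu} yields $C^\infty$ regularity of $u$ on each side, so $\nabla u$ is locally bounded on the complement of small disks around the singularities. Near a triple-junction the blow-up is the piecewise constant triple-junction function \eqref{e:triplejunctions} (a calibrated minimizer), and a standard perturbative argument keeps $\nabla u$ bounded in a neighborhood. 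Near a crack-tip $y_0$, Theorem~\ref{t:David-zurrione} provides the expansion
$$u(y_0+\rho(\cos\theta,\sin\theta))= \sqrt{\tfrac{2\rho}{\pi}}\,\sin\!\Big(\tfrac{\theta-\alpha(\rho)}{2}\Big)+C+\rho^{1/2}\omega(\rho,\theta),$$
with $\sup_\theta|\omega(\rho,\theta)|\to 0$ as $\rho\downarrow 0$, so $|\nabla u|(y)\simeq|y-y_0|^{-1/2}$ in a neighborhood of $y_0$; consequently
$$\LL^2\big(\{y\in B_r(y_0):|\nabla u(y)|>\lambda\}\big)\lesssim\lambda^{-4}.$$
Summing the contributions over the finitely many singularities gives $\nabla u\in L^{4,\infty}(\Om^\prime)$.

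\emph{The direction $\Leftarrow$.} Assume $\nabla u\in L^{4,\infty}_{loc}(\Om)$. Then $\nabla u\in L^p_{loc}(\Om)$ for every $p<4$, and Theorem~\ref{t:AFH1} yields
$$\dimh \Sigma_u\leq \sup_{p<4}\max\{0,2-\tfrac{p}{2}\}=0.$$
In particular $\HH^s(\Sigma_u)=0$ for every $s>0$, so $\SSu\setminus\Sigma_u$ is dense in $\SSu$ and, by Theorem~\ref{t:AFP97}, is locally a $C^{1,1}$ arc. To upgrade this to conjecture~\ref{c:MS2} one must classify each point of $\Sigma_u$ as either a triple-junction or a crack-tip. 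I would proceed by blow-up: a direct change of variables using $\nabla u_{x,\rho}(y)=\rho^{1/2}\nabla u(x+\rho y)$ shows
$$\|\nabla u_{x,\rho}\|_{L^{4,\infty}(B_R)}\leq\|\nabla u\|_{L^{4,\infty}(B_{\rho R}(x))},$$
so the weak-$L^4$ bound is \emph{scale-invariant} and every blow-up limit is itself a global minimizer with $\nabla v\in L^{4,\infty}_{loc}(\R^2)$. The $\eps$-regularity Theorem~\ref{t:epsreg} shows that if the blow-up at $x\in\Sigma_u$ is a pure jump or a triple junction, then $x\not\in\Sigma_u$, contradicting $x\in\Sigma_u$ unless the blow-up is degenerate (constant) or a crack-tip. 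Degenerate blow-ups are ruled out by the density lower bound (Corollary~\ref{c:dlbrough1}), and the crack-tip case is controlled by Theorem~\ref{t:David-zurrione}, which then gives the arc structure of $\SSu$ required by conjecture~\ref{c:MS2}.

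\emph{Main obstacle.} The delicate step is the exclusion of singular points of type $\Sigma_u^{(3)}$: the bound $\dimh\Sigma_u=0$ is far too weak on its own to forbid their existence (a Cantor-type set has dimension $0$ as well). What does the work is the scale invariance of the weak-$L^4$ norm, which transfers the integrability to every blow-up limit. Combined with Bonnet's monotonicity of the rescaled Dirichlet energy for the blow-up (viewed as a global minimizer) and the classification of such global minimizers coming from the Bonnet--David theory, this forces each blow-up to be one of the four model configurations (constant, pure jump, triple junction, crack-tip), and hence removes points from $\Sigma_u^{(3)}$. The technical heart of \cite{DF1} is precisely to carry out this blow-up classification cleanly, without relying on a priori connectedness hypotheses on $\SSu$.
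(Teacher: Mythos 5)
Your ``$\Rightarrow$'' direction follows the paper's route in outline (boundedness of $\nabla u$ away from the finitely many singular points via Theorem~\ref{t:uSSu}, and a $|y-y_0|^{-1/2}$ bound near the loose ends), though your derivation of the pointwise gradient bound from the expansion in Theorem~\ref{t:David-zurrione} is not legitimate as stated: that expansion controls $u$ only up to an error $\rho^{1/2}\omega(\rho,\theta)$ with $\omega\to0$ in $L^\infty$, which cannot be differentiated. The paper instead obtains $|\nabla u(x)|\leq C|x-x_i|^{-1/2}$ by blowing up at each endpoint, using Bonnet's blow-up theorem to identify the limit, proving uniform $C^{0,\alpha}$ bounds for $\nabla u_j$ on annuli (straightening $\overline{S_{u_j}}$, reflecting, Schauder), and then propagating the bound to all scales via the maximum principle and Hopf's lemma. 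This is repairable and essentially the same strategy.

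The ``$\Leftarrow$'' direction, however, has a genuine gap. Your plan is to classify the blow-up limit at each point of $\Sigma_u$ and invoke ``the classification of such global minimizers coming from the Bonnet--David theory \dots without relying on a priori connectedness hypotheses on $\SSu$.'' No such classification exists: as recalled in subsection~\ref{ss:bup}, Bonnet's list (constant, pure jump, triple junction, crack-tip) is proved only under the hypothesis that the jump set of the (global) minimizer has finitely many connected components, and removing that hypothesis is essentially equivalent to the Mumford--Shah conjecture itself (\cite[Proposition~71.2]{Dav05}). The scale invariance of the $L^{4,\infty}$ quasi-norm does pass to blow-up limits, but there is no theorem asserting that a global minimizer with $\nabla v\in L^{4,\infty}$ must be one of the four model configurations, so your argument assumes what has to be proved. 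The step you are missing is the paper's preliminary lemma: if $f\in L^{4,\infty}_{loc}$, then the set $D_\eps=\{x:\liminf_r r^{-1}\int_{B_r(x)}f^2\geq\eps\}$ is locally \emph{finite} (not merely of Hausdorff dimension zero). Applying this with $f=|\nabla u|$, every point of $\SSu$ outside the finite set $D_{\eps_0}$ admits a ball in which $\SSu$ is diffeomorphic to a segment or a triple junction (Theorems~\ref{t:epsreg} and~\ref{t:cptness}); a covering and arc-continuation argument then shows that $\overline{B_1}\cap\SSu$ has finitely many connected components, and only at that point can Bonnet's regularity theorems \cite[Theorems~1.1 and 1.3]{B96} be invoked to conclude. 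In short: the paper converts $L^{4,\infty}$ integrability into \emph{finiteness of the candidate crack-tip set} and hence finiteness of the connected components, rather than into a pointwise blow-up classification, and that is the idea your proposal lacks.
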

The characterization above would hold for the original Mumford and Shah conjecture~\ref{c:MS1} if 
$C^{1}$ regularity of $\SSu$ up to crack-tip points had been established.

To prove Theorem~\ref{t:MSconj} we need the following preliminary observation.
\begin{lemma} 
Let $f\in L^{4,\infty}_{loc}(\Om)$, $\Om\subseteq\R^2$, then for all $\e>0$ the set
\begin{equation}\label{e:ctip}\
D_\e:=\left\{x\in\Om:\,\liminf_r\frac 1r\int_{B_r(x)}f^2(y)\,dy\ge\e\right\}
\end{equation}
is locally finite.
\end{lemma}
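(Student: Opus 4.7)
The plan is to quantify the weak-$L^4$ integrability of $f$ as an estimate of the form $\int_E f^2\,dy\le C\,|E|^{1/2}$ on measurable subsets $E$, and then to bound the cardinality of $D_\e\cap K$ on a compact $K\subset\Omega$ by a disjoint-balls argument whose key ingredient is the choice of one common radius working for all candidate points.

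First I would fix an open set $\Omega'$ with $K\subset\subset\Omega'\subset\subset\Omega$, so that by hypothesis there exists $M>0$ with $\LL^2(\{|f|>\lambda\}\cap\Omega')\le M\lambda^{-4}$ for every $\lambda>0$. Then, by the layer-cake formula and splitting the distribution function at the crossover $s^\star=(M/|E|)^{1/4}$, I would derive that for every measurable $E\subset\Omega'$
\[
\int_E f^2\,dy \;=\; 2\int_0^\infty s\,\LL^2(\{|f|>s\}\cap E)\,ds
\;\le\; 2\int_0^\infty s\,\min(|E|,\,M s^{-4})\,ds \;=\; 2\,M^{1/2}\,|E|^{1/2}.
\]

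Next, assume by contradiction that $D_\e\cap K$ contains $N$ distinct points $x_1,\dots,x_N$, and set $d:=\min_{i\ne j}|x_i-x_j|>0$ and $d':=\mathrm{dist}(K,\partial\Omega')>0$. Fix $\eta\in(0,\e)$. By definition of $D_\e$, for each index $i$ there exists $\rho_i>0$ such that $\frac1r\int_{B_r(x_i)}f^2\,dy\ge\e-\eta$ for every $r\in(0,\rho_i)$. The decisive step is to pick a single radius $r_0\in(0,\min\{d/3,\,d'/2,\,\rho_1,\dots,\rho_N\})$: then the balls $B_{r_0}(x_i)$ are pairwise disjoint, contained in $\Omega'$, and each satisfies the lower bound $\int_{B_{r_0}(x_i)}f^2\,dy\ge(\e-\eta)r_0$ simultaneously.

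Summing these lower bounds and applying the estimate above to $E=\bigcup_i B_{r_0}(x_i)$, which has Lebesgue measure $\pi N r_0^2$, I would obtain
\[
N(\e-\eta)\,r_0 \;\le\; \sum_{i=1}^N\int_{B_{r_0}(x_i)}f^2\,dy \;=\; \int_E f^2\,dy \;\le\; 2\,M^{1/2}\,\pi^{1/2}\,N^{1/2}\,r_0.
\]
Dividing by $N^{1/2}r_0$ and letting $\eta\downarrow 0$ yields the explicit bound $N\le 4\pi M/\e^2$, independent of the particular points chosen, which proves that $D_\e\cap K$ is finite and therefore $D_\e$ is locally finite. There is no substantive obstacle beyond packaging these pieces: the only subtle point is realizing that the \emph{uniform} choice of $r_0$ is legitimate because we are working with only finitely many candidate points in $K$ at a time, and that disjointness combined with the $L^{4,\infty}\to L^2$ inequality forces a self-improving estimate on $N$.
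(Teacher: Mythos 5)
Your proof is correct. The skeleton is the same as the paper's (finitely many points of $D_\e\cap K$, a single common small radius $r_0$ making the balls disjoint and each carrying mass $\gtrsim\e r_0$, then a contradiction with the weak-$L^4$ hypothesis), but the way you exploit $f\in L^{4,\infty}$ is genuinely different and cleaner. You first prove the set-function inequality $\int_E f^2\,dy\le 2M^{1/2}\,|E|^{1/2}$ for measurable $E\subset\Omega'$ — essentially the quantitative embedding $L^{4,\infty}\hookrightarrow L^2$ on sets of finite measure, obtained by optimizing the layer-cake splitting at $s^\star=(M/|E|)^{1/4}$ — and then apply it once to the union $E=\bigcup_i B_{r_0}(x_i)$: the linear-in-$N$ lower bound against the $N^{1/2}$ upper bound gives $N\le 4\pi M/\e^2$. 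The paper instead works level set by level set inside each ball: it shows that a definite fraction of the mass of $f^2$ on $B_r(x)$ lives in an intermediate range $(\e/8\pi r)^{1/2}\le|f|<2(K/r\e)^{1/2}$, deduces a lower bound $\gtrsim\e^2r^2/K$ on the measure of a superlevel set in each ball, and sums these measures against the global weak-$L^4$ measure bound, arriving at $N\lesssim K^2\pi^2/\e^4$. Your route buys a shorter argument and a sharper constant ($M/\e^2$ versus $K^2/\e^4$); the paper's buys explicit information about where in the range of $|f|$ the mass concentrates, which is in the spirit of the crack-tip asymptotics motivating the lemma but is not needed for the statement. One small point of care that you handled correctly: unpacking $\liminf_{r\downarrow0}\ge\e$ as ``for every $\eta>0$ the bound $\e-\eta$ holds for \emph{all} sufficiently small $r$'' is what legitimizes the uniform choice of $r_0$ over the finitely many points.
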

\begin{proof}
We shall show in what follows that if $f\in L^{4,\infty}(\Om)$ then $D_\e$
is finite, an obvious localization argument then proves the general case.

Let $\e>0$ and consider the set $D_\e$ in \eqref{e:ctip} above.
First note that, for any $B_r (x)\subset \Omega$ and any $\lambda>0$ we have the estimate
\begin{align}\label{e:stimasublev}
\int_{\{y\in B_r(x):\,|f(y)|\geq \lambda\}}f^2(y)\,dy&\leq
\int_{\{y\in\Om:\,|f(y)|\geq \lambda\}}f^2(y)\,dy\notag\\
&=2\int_\lambda^{+\infty}t\,\LL^2\big(\{y\in\Om:\,|f(y)|\geq t\}\big)dt
\leq\int_\lambda^{+\infty}\frac{2K}{t^3}dt=\frac K{\lambda^2}.
\end{align}
If $x\in D_\e$ and $r>0$ satisfy 
\begin{equation}\label{e:stimamisura}
\int_{B_r(x)}f^2(y)\,dy\geq \frac \e2 r,
\end{equation}
choosing $\lambda=2(K/r\e)^{1/2}$ in \eqref{e:stimasublev} we conclude 
\begin{equation}\label{e:stimasuplev}
\int_{\{y\in B_r(x):\,|f(y)|<\,2( \frac K{r\e})^{1/2}\}}f^2(y)\,dy\geq
\frac \e4 r.
\end{equation}
Furthermore, the trivial estimate
\[
\int_{\{y\in B_r(x):\,|f(y)|<\lambda\}}f^2(y)\,dy<\pi\lambda^2r^2,
\]
implies for $\lambda=(\e/8\pi r)^{1/2}$
\begin{equation}\label{e:stimasublev1}
\int_{\{y\in B_r(x):\,|f(y)|<(\frac \e{8\pi r})^{1/2}\}}f^2(y)\,dy<\frac\e 8r.
\end{equation}
By collecting \eqref{e:stimasuplev} and \eqref{e:stimasublev1}
we infer
\[
\int_{\{y\in B_r(x):\,(\frac \e{8\pi r})^{1/2}\leq
|f(y)|<\,2(\frac K{r\e})^{1/2}\}}f^2(y)\,dy\geq\frac\e 8r,
\]
that in turn implies
\begin{equation}\label{e:stimafond}
\LL^2\big(\{y\in B_r(x):\,|f(y)|\geq (\frac \e{8\pi r})^{1/2}\}\big)
\geq\frac{\e^2r^2}{32K}.
\end{equation}
Let $\{x_1,\ldots,x_N\}\subseteq D_\e$ and $r>0$ be a radius such 
that the balls $B_r(x_i)\subseteq\Om$ are disjoint and 
\eqref{e:stimamisura} holds for each $x_i$. Then, from \eqref{e:stimafond} 
and the fact that $f\in L^{4,\infty}(\Om)$, we infer
\[
N\frac{\e^2r^2}{32K}\leq \LL^2\big(\{y\in\Om:\,
|f(y)|\geq(\frac \e{8\pi r})^{1/2}\}\big)
\leq\frac {K(8\pi r)^2}{\e^2}\Longrightarrow 
N\leq\frac {2^{11}K^2\pi^2}{\e^4},
\]
and the conclusion follows at once.
\end{proof}

We are now ready to give the proof of Theorem~\ref{t:MSconj}.
The ``if'' direction is achieved by first proving that $\SSu$ has locally finitely many 
connected components and then invoking the regularity theory developed by Bonnet \cite{B96}. 
In turn, the proof that the connected components are locally finite 
is a fairly simple application of David's $\eps$-regularity theory (see Theorem~\ref{t:epsreg}).
Vice versa, the ``only if'' direction is proved by means of Bonnet blow~up analysis
and standard elliptic regularity theory.
\begin{proof}[Proof of Theorem~\ref{t:MSconj}.]
To prove the direct implication we assume without loss of generality 
that $\Om=B_R$ for some $R>1$, being the result local. In addition, 
we may also suppose that $\SSu\cap\partial B_1=\{y_1,\ldots,y_M\}$.
Theorem~\ref{t:epsreg} and Theorem~\ref{t:cptness} in Section~\ref{s:cpt} below yield 
that there exists some $\e_0>0$ such that for all points 
$x\in B_R\setminus D_{\e_0}$ the set $\SSu\cap B_r(x)$ is 
either empty or diffeomorphic to a minimal cone, for some $r>0$. 
In particular, in the latter event $B_r(x)\setminus\SSu$ 
is not connected.

Supposing that $D_{\e_0}\cap B_1=\{x_1, \ldots, x_N\}$, and setting
\[
\Omega_k:= B_{1-1/k}\setminus \bigcup_{i=1}^N B_{1/k}(x_i)\,,
\]
a covering argument and the last remark give that for every 
$x\in\Omega_k\cap\SSu$ there 
is a continuous arc $\gamma_k:[0,1]\to\SSu$ 
with $\gamma_k(0)=x$ and $\gamma_k(1)=y\in\partial\Om_k$.
Then, the sequence $(\widetilde{\gamma}_k)_{k\in\N}$ of 
reparametrizations of the $\gamma_k$'s by arc length converges 
to some arc $\gamma:[0,1]\to\SSu$ with $\gamma(0)=x$ 
and $\gamma(1)\in\{x_1,\ldots,x_N,y_1,\ldots,y_M\}$.

From this, we deduce that $\overline{B_1}\cap\SSu$ has 
a finite number of connected components.
Bonnet's regularity results \cite[Theorems 1.1 and 1.3]{B96} 
then provide the thesis.

To conclude we prove the opposite implication. To this aim we 
consider $\Om^\prime\subset\hskip-0.125cm\subset\Om^{\prime\prime}\subset\hskip-0.125cm\subset\Om$ 
and suppose that $\SSu\cap\Om^{\prime\prime}$ is a finite 
union of $C^1$ arcs of finite length. Denote by $\{x_1,\ldots,x_N\}$ 
the end points of the arcs in $\Om^\prime$ and let $r>0$ be such that 
$B_{4r}(x_i)\subseteq\Om^\prime$ for all $i$, and 
$B_{4r}(x_i)\cap B_{4r}(x_j)=\emptyset$ if $i\neq j$.
Theorem~\ref{t:uSSu} implies 
that $\nabla u$ has a $C^{0,\alpha}$ extension on both sides of 
$(\Om^{\prime\prime}\cap\SSu)\setminus\cup_i\overline{B_{r}(x_i)}$
for all $\alpha<1$. In particular, $\nabla u$ is bounded on 
$\overline{\Om^\prime}\setminus\cup_iB_{2r}(x_i)$.

Next consider the sequence $r_k=r/2^{k-1}$, $k\geq 0$, and fix 
$i\in\{1,\ldots,N\}$. Then, by \cite[Proposition 37.8]{Dav05} 
(or \cite[Theorem 2.2]{B96}) we can extract a subsequence 
$k_j\uparrow\infty$ along which the blow~up functions
$u_{j}(x):=r_{k_j}^{-1/2}\big(u(x_i+r_{k_j}x)-c_{j}(x)\big)$ converge to some  
$w$ in $W^{1,2}_{loc}(B_4\setminus K)$, for some piecewise constant 
function $c_{j}:\Om\setminus\overline{S_{u_{j}}}\to\R$, and 
$(\overline{S_{u_{j}}})_{j\in\N}$ converges to some set $K$ in the 
Hausdorff metric.

By Bonnet's blow~up theorem \cite[Theorem 4.1]{B96} only two possibilities 
occur: either $x_i$ is a triple junction point, i.e., $K$ is a triple junction and $w$ is locally 
constant on $B_4\setminus K$, or $x_i$ is a crack-tip, i.e., up 
to a rotation $K=\{(x,0):\,x\le 0\}$ and 
$w(\rho,\theta)=C\pm\sqrt{\frac 2\pi \rho}\cdot\sin(\theta/2)$ 
for $\theta\in(-\pi,\pi)$, $\rho>0$ and some constant $C\in\R$ 
(note that in this argument we do not need to know that the blow~up limit is unique).

In both cases, we claim that $\nabla u_j$ has a $C^{0,\alpha}$ 
extension on the closure of each connected component of 
$U_j:=(B_3\setminus\overline{B_1})\setminus\overline{S_{u_{j}}}$ with 
$\sup_j\|\nabla u_{j}\|_{L^\infty(U_j)}\le C$. This follows as in 
\cite[Theorem 7.49]{AFP00} (or \cite[Proposition 17.15]{Dav05},
see also Remark~\ref{r:epsreg}) locally straightening 
$\overline{S_{u_{j}}}\cap(B_4\setminus\overline{B_{1/2}})$ onto 
$K\cap(B_4\setminus\overline{B_{1/2}})$ via a $C^{1,\alpha}$ 
conformal map, a reflection argument and standard Schauder 
estimates for the laplacian.
Scaling back the previous estimate gives 
\[
|\nabla u(x)|\leq C\,|x-x_i|^{-1/2}\quad\text{ for } x\in
\cup_{j\in\N}(\overline{B_{3r_{k_j}}(x_i)}\setminus B_{r_{k_j}}(x_i)),
\] 
in turn from this, the maximum principle and Hopf's lemma we infer 
\[
|\nabla u(x)|\leq C\,r_k^{-1/2}\quad\text{ for } x\in 
B_{2r}(x_i)\setminus B_{r_k}(x_i).
\]
The latter inequality finally implies $\nabla u\in L^{4,\infty}(B_{2r}(x_i))$. 

Eventually, we are able to conclude $\nabla u\in L^{4,\infty}(\Om^\prime)$, 
being on one hand $\nabla u$ bounded on 
$\overline{\Om^\prime}\setminus\cup_iB_{2r}(x_i)$, and on the other hand 
belonging to $L^{4,\infty}(\cup_iB_{2r}(x_i))$.
\end{proof}

\section{Hausdorff dimension of the set of triple-junctions }\label{s:cpt}

In order to prove Theorem~\ref{t:AFH2} we need to analyze the asymptotic behavior
of MS-minimizer in points of vanishing Dirichlet energy.
This issue has been first investigated in \cite[Proposition 5.3, Theorem 5.4]{AFH}. 
Those results hinge upon the notion of \emph{Almgren's area minimizing sets}, i.e.~a 
$\HH^{n-1}$ rectifiable set $S\subset B_1$ such that 
\[
\HH^{n-1}(S)\leq\HH^{n-1}(\varphi(S)),\quad
\forall\varphi\in\mathrm{Lip}(\R^n,\R^n),\,\,
\{\varphi\neq\mathrm{Id}\}\subset\hskip-0.125cm\subset B_1.
\]
Following this approach to infer Theorem~\ref{t:AFH2} requires a delicate study of the behavior 
of the composition of $SBV$ functions with Lipschitz deformations that are not necessarily 
one-to-one, and some specifications on the regularity theory for Almgren's area minimizing sets 
are needed (cf. \cite{AFH}). Therefore, following Ambrosio, Fusco and Hutchinson, Theorem~\ref{t:AFH1} 
is a straightforward corollary of a much deeper and technically demanding result
(given the higher integrability for granted). 

Instead, in Theorem~\ref{t:cptness} below (cf. \cite[Proposition 5.1]{DF1}) we set 
the analysis into the more natural framework of Caccioppoli partitions.
\begin{definition}\label{d:CP}
A Caccioppoli partition of $\Omega$ is a countable partition
$\mathscr{E} = \{E_i\}_{i=1}^\infty$ of $\Omega$ in sets of 
(positive Lebesgue measure and) finite perimeter with
$\sum_{i=1}^\infty \Per (E_i, \Omega) < \infty$. 

For each Caccioppoli partition $\mathscr{E}$ the \emph{set of interfaces} is given by
$J_{\mathscr{E}} := \bigcup_i \partial^* E_i$.

The partition $\mathscr{E}$ is said to be \emph{minimal} if 
\[
\HH^{n-1}(J_{\mathscr E})\leq \HH^{n-1}(J_{\mathscr F})
\]
for all Caccioppoli partitions ${\mathscr F}$ for which 
$\sum_{i=1}^\infty\LL^n\left((F_i\triangle E_i)
\cap(\Omega\setminus \Omega')\right)=0$,
for some open subset $\Omega'\subset\hskip-0.125cm\subset \Omega$.
\end{definition}
There is an important  correspondence between Caccioppoli partitions and 
the subspace $SBV_0$ of ``piecewise constant'' $SBV$ functions recalled 
as prototype example in subsection~\ref{ss:fsetting}, in such a 
way that minimizing the Mumford and Shah energy over $SBV_0$ 
corresponds exactly to the minimal area problem for Caccioppoli partitions
(see \cite[Theorems 4.23, 4.25 and 4.39]{AFP00}). 

Existence of minimal Caccioppoli partitions is guaranteed by Ambrosio's
$SBV$ closure and compactness Theorem~\ref{t:luigi} without imposing any 
$L^\infty$ bound simply by composition with $\arctan(t)$, provided the
partitions are either equi-finite or ordered, i.e.~if $\mathscr{E}=\{E_i\}_{i=1}^\infty$
then $\LL^n(E_i)\geq\LL^n(E_j)$ for $j\geq i$.

A regularity theory for minimal Caccioppoli partitions has been established 
by Massari and Tamanini \cite{MT}. We limit ourselves here to the ensuing statement.
\begin{theorem}[Massari and Tamanini \cite{MT}]\label{t:Caccest}
Let $\mathscr{E}$ be a minimal Caccioppoli partition in $\Om$, 
\[
\omega_{n-1}\leq \liminf_{r\downarrow 0}\frac{\HH^{n-1}(J_\mathscr{E}\cap B_r(x))}{r^{n-1}}\leq 
\limsup_{r\downarrow 0}\frac{\HH^{n-1}(J_\mathscr{E}\cap B_r(x))}{r^{n-1}}\leq n\omega_n. 
\]
In particular, $J_\mathscr{E}$ is essentially closed, i.e.~$\HH^{n-1}\big((\Om\cap\overline{J_\mathscr{E}})\setminus J_\mathscr{E}\big)=0$. 

Moreover, there exists a relatively closed subset $\Sigma_\mathscr{E}$ of $J_{\mathscr{E}}$ 
such that $J_{\mathscr{E}}\setminus \Sigma_\mathscr{E}$ is a $C^{1,\sfrac 12}$ hypersurface and 
$\dimh\Sigma_\mathscr{E}\le n-2$.
If, in addition, $n=2$, then $\Sigma_\mathscr{E}$ is locally finite.
\end{theorem}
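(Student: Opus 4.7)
The plan is to prove the four assertions separately: the density bounds, the essential closure, the $C^{1,\sfrac 12}$ regularity with its dimension estimate, and finally the locally finite singular set in the plane. For the upper bound, I would use a direct comparison. Fix $x\in\Omega$ and $r$ with $B_r(x)\subset\Omega$, choose any index $i_0$, and set the competitor $\mathscr{F}$ by $F_{i_0}:=E_{i_0}\cup B_r(x)$ and $F_i:=E_i\setminus B_r(x)$ for $i\ne i_0$. Then $\mathscr{F}$ is a Caccioppoli partition of $\Omega$ that agrees with $\mathscr{E}$ outside $B_r(x)$, satisfies $J_\mathscr{F}\cap B_r(x)=\emptyset$, and has $J_\mathscr{F}\setminus\overline{B_r(x)}=J_\mathscr{E}\setminus\overline{B_r(x)}$ with $J_\mathscr{F}\cap\partial B_r(x)\subseteq\partial B_r(x)$. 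Minimality therefore yields $\HH^{n-1}(J_\mathscr{E}\cap B_r(x))\le\HH^{n-1}(\partial B_r(x))=n\,\omega_n\, r^{n-1}$. For the lower bound at $x\in J_\mathscr{E}$, I would observe that $x\in\partial^\ast E_i$ for some $i$ by the definition of $J_\mathscr{E}$; De~Giorgi's density formula for the reduced boundary gives $\HH^{n-1}(\partial^\ast E_i\cap B_r(x))/(\omega_{n-1}\,r^{n-1})\to 1$ as $r\downarrow 0$, and since $\partial^\ast E_i\subseteq J_\mathscr{E}$ the lower bound $\omega_{n-1}$ follows.

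To extend the lower density bound to $x\in\Omega\cap\overline{J_\mathscr{E}}\setminus J_\mathscr{E}$ and thereby deduce essential closure, I would prove a clean-up statement: there is a dimensional constant $\eps_0>0$ such that $\HH^{n-1}(J_\mathscr{E}\cap B_r(x))<\eps_0\, r^{n-1}$ implies $\overline{J_\mathscr{E}}\cap B_{r/2}(x)=\emptyset$. The proof uses the relative isoperimetric inequality applied cell by cell in $B_\rho(x)$ for well-chosen $\rho\in(r/2,r)$: small total interface forces one cell $E_{i_0}$ to occupy almost all of $B_\rho(x)$ in Lebesgue measure, and the merging competitor used above then strictly saves perimeter unless the other cells are already essentially absent from $B_{r/2}(x)$. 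Combined with the pointwise bound on $J_\mathscr{E}$, this yields a uniform lower density $\omega_{n-1}$ on all of $\Omega\cap\overline{J_\mathscr{E}}$, and the essential closure $\HH^{n-1}(\Omega\cap\overline{J_\mathscr{E}}\setminus J_\mathscr{E})=0$ follows by applying Lemma~\ref{l:de} to $\mu:=\HH^{n-1}\res J_\mathscr{E}$ with $s=n-1$ and $t=\omega_{n-1}$, in perfect analogy with Proposition~\ref{p:strongweak}.

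For the regularity I would reduce to Tamanini's theorem. In a ball $B_r(x)$ meeting in Lebesgue measure only two cells $E_i,E_k$, minimality of $\mathscr{E}$ forces $E_i$ to be a local minimizer of perimeter there: any modification $F$ of $E_i$ with $F\triangle E_i\subset\hskip-0.125cm\subset B_r(x)$ can be completed to a partition competitor by transferring the displaced mass into the unique neighboring cell $E_k$. De~Giorgi's and Tamanini's regularity theorem then yields $\partial^\ast E_i\cap B_r(x)$ is a $C^{1,\sfrac 12}$ hypersurface outside a relatively closed set of Hausdorff dimension at most $n-8$. At the multi-junction locus, where three or more cells meet in every neighborhood, the cell-by-cell perimeter minimality degenerates; here one invokes Almgren's regularity theory for minimal clusters, or an equivalent dimension-reduction argument based on tangent cone classification, to bound the Hausdorff dimension of this locus by $n-2$. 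Collecting the contributions, $\Sigma_\mathscr{E}$ is relatively closed in $J_\mathscr{E}$ with $\dimh\Sigma_\mathscr{E}\le n-2$.

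Finally, for $n=2$ the classification of one-codimensional area-minimizing cones in $\R^2$ is elementary: only straight lines and the $Y$-shaped triple junction with equal $\sfrac{2\pi}3$ angles arise. Hence every $x\in\Sigma_\mathscr{E}$ has a triple-junction tangent cone, and an $\eps$-regularity statement in the spirit of Theorem~\ref{t:epsreg}, adapted to the piecewise-constant setting, shows that near such a point $J_\mathscr{E}$ is a $C^1$ perturbation of its tangent cone and hosts no further singularity; thus $\Sigma_\mathscr{E}$ is discrete and locally finite. The principal obstacle in the whole argument is the regularity step near multi-junctions: the two-cell reduction together with Tamanini's theorem handles the simple interfaces cleanly, but the codimension-two bound on the multi-junction set and the $\eps$-regularity near triple junctions require the full classification of $1$-codimensional minimal cones and Almgren's machinery for minimal clusters.
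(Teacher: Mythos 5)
The paper gives no proof of this theorem: it is quoted from Massari and Tamanini \cite{MT} as a black box, so there is no internal argument to compare yours against, and a full proof is not expected here. Judged on its own terms, your outline follows the standard route and its elementary parts are correct: the upper bound by filling $B_r(x)$ with a single cell, the lower bound at points of $J_\mathscr{E}$ via De Giorgi's blow-up of the reduced boundary, the elimination lemma via the relative isoperimetric inequality, the deduction of essential closedness from Lemma~\ref{l:de}, the two-cell reduction showing each $E_i$ is locally perimeter-minimizing where only two cells are present, and the $n=2$ conclusion via the classification of planar minimal cones (line and equal-angle triple junction) plus an $\varepsilon$-regularity statement near triple junctions.

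Two caveats. First, your elimination lemma gives a lower density bound at points of $\Omega\cap\overline{J_\mathscr{E}}\setminus J_\mathscr{E}$ with some dimensional constant $\varepsilon_0$, not the sharp constant $\omega_{n-1}$ appearing in the display: passing from a nearby point $y\in J_\mathscr{E}$ to $x$ loses the constant because the rate at which the density at $y$ approaches $\omega_{n-1}$ is not uniform in $y$. This does not affect essential closedness or any downstream use in the paper, but it falls short of the literal statement; recovering $\omega_{n-1}$ at closure points needs a monotonicity-type argument. Second, and more substantively, the codimension-two bound on the multi-junction locus is where essentially all of the content of \cite{MT} lives, and you invoke it (``Almgren's machinery'', ``dimension reduction'') rather than argue it: one needs compactness of blow-ups of the partition, classification of the relevant cones, and a Federer dimension-reduction argument for partitions, none of which follows from the two-cell reduction to Tamanini's theorem. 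Since the paper itself treats the whole statement as a citation, this level of detail is acceptable, but be aware that the $n-2$ bound is the hard step, not a corollary of the two-phase regularity.
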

We are now ready to prove a compactness result for sequences of MS-minimizers 
with vanishing $L^1$-gradient energy.  
\begin{theorem}[De Lellis and Focardi \cite{DF1}]\label{t:cptness}
Let $(u_k)_{k\in\N}\subset\MM(B_1)$ be such that 
\begin{equation}\label{e:gradvanq}
\lim_{k}\|\nabla u_k\|_{L^1(B_1)}= 0.
\end{equation}
Then, (up to the extraction of a subsequence not relabeled for convenience) 
there exists a  minimal Caccioppoli partition 
${\mathscr E}=\{E_i\}_{i\in\N}$
such that $(\overline{S_{u_k}})_{k\in\N}$ converges locally in the 
Hausdorff distance on $\overline{B_1}$ to $\overline{J_{\mathscr E}}$ and for all open sets $A\subseteq B_1$
\begin{equation}\label{e:enrgconv}
\lim_k\MS(u_k,A)=\lim_k\HH^{n-1}(S_{u_k}\cap A)=\HH^{n-1}(J_{\mathscr E}\cap A).
\end{equation}
\end{theorem}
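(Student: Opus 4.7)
The plan is to extract a Hausdorff limit $K$ of the jump sets, identify the connected components of $B_1\setminus K$ as a Caccioppoli partition $\mathscr{E}$, verify minimality via a competitor argument, and then deduce the energy convergence.

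First I would collect the uniform estimates. Proposition~\ref{p:dub} gives $\HH^{n-1}(S_{u_k}\cap \Omega')\leq C(\Omega')$ on every $\Omega'\subset\hskip-0.125cm\subset B_1$, and Theorem~\ref{t:CL} (equivalently, Theorem~\ref{t:maindlb}) provides a matching density lower bound on $\overline{S_{u_k}}$ with constants independent of $k$. A diagonal procedure on an exhaustion of $B_1$ then produces a subsequence along which $\overline{S_{u_k}}$ converges in local Hausdorff distance on $\overline{B_1}$ to a relatively closed set $K$. The uniform density lower bound passes to $K$, which, together with the Go\l\c{a}b-type lower semicontinuity for the $(n-1)$-Hausdorff measure under Hausdorff-converging sequences with uniform density lower bounds, gives $\HH^{n-1}(K\cap \Omega')\leq\liminf_k\HH^{n-1}(\overline{S_{u_k}}\cap \overline{\Omega'})<\infty$.

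Next I would build the partition and identify the limit function. Let $\{E_i\}_{i\in\N}$ be the connected components of $B_1\setminus K$, ordered by decreasing Lebesgue measure. Since $\bigcup_i\partial^* E_i\subseteq K$ and $\HH^{n-1}(K\cap\Omega')<\infty$, each $E_i$ has finite perimeter in $\Omega'$ and $\sum_i\Per(E_i,\Omega')\leq 2\HH^{n-1}(K\cap\Omega')<\infty$, so $\mathscr{E}=\{E_i\}$ is a Caccioppoli partition. On any $V\subset\hskip-0.125cm\subset E_i$ the functions $u_k$ are harmonic for $k$ large, hence \eqref{e:gradvanq} combined with interior gradient estimates forces $u_k-a_k^{(i)}\to 0$ locally uniformly on $E_i$, where $a_k^{(i)}$ is the mean of $u_k$ on a fixed ball in $E_i$. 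Choosing the normalization suitably one gets $u_k\to\sum_i a_i\chi_{E_i}$ in $L^1_{\mathrm{loc}}(B_1\setminus K)$; arguing that $\LL^n(K)=0$ (from $\HH^{n-1}(K\cap\Omega')<\infty$) shows that $\overline{J_{\mathscr{E}}}\subseteq K$ and in fact $K\setminus \overline{J_{\mathscr{E}}}$ cannot carry positive $\HH^{n-1}$-mass, for otherwise $K$ would contain a relatively open piece separating no components of $B_1\setminus K$, contradicting the density lower bound on $K$.

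To prove minimality of $\mathscr{E}$ I would argue by competition. Given a Caccioppoli partition $\mathscr{F}=\{F_i\}$ with $F_i=E_i$ outside $\Omega'\subset\hskip-0.125cm\subset B_1$, pick $\Omega''\subset\hskip-0.125cm\subset\Omega'$ with $\HH^{n-1}(J_{\mathscr{F}}\cap\partial\Omega'')=0$ and construct a trial function $v_k$ equal to $\sum_i a_i\chi_{F_i}$ on $\Omega''$, equal to $u_k$ outside $\Omega'$, and interpolated in the collar $\Omega'\setminus\Omega''$ via a standard cut-off using the $L^1$-approximation of $u_k$ by the constants $a_i$ on each $E_i\cap(\Omega'\setminus\Omega'')$. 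Since $\|\nabla u_k\|_{L^1}\to 0$ and $u_k-a_k^{(i)}\to 0$ in $L^1$ on each $E_i$, the Dirichlet cost and the extra $\HH^{n-1}$-mass created in the collar are both $o(1)$ as $k\to\infty$ followed by $\Omega''\uparrow\Omega'$. Testing the minimality of $u_k$ against $v_k$ yields
\[
\HH^{n-1}(S_{u_k}\cap\Omega'')\leq \HH^{n-1}(J_{\mathscr{F}}\cap\overline{\Omega''})+o(1),
\]
and the Go\l\c{a}b-type lower semicontinuity from Step~1 gives $\HH^{n-1}(J_{\mathscr{E}}\cap\Omega'')\leq\HH^{n-1}(J_{\mathscr{F}}\cap\overline{\Omega''})$, hence minimality after letting $\Omega''\uparrow\Omega'$. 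Specializing $\mathscr{F}=\mathscr{E}$ in the same estimate produces the matching upper bound for the surface energy, and since the Dirichlet contribution vanishes in the limit, \eqref{e:enrgconv} follows.

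The main obstacle lies in the construction of the competitor $v_k$ in the collar $\Omega'\setminus\Omega''$ with a controlled jump set and an $o(1)$ surface error: this requires choosing the constants $a_i$ so that they are compatible on neighboring components of $\mathscr{F}$, and invoking an $SBV$ Poincar\'e--Wirtinger type inequality (in the spirit of \cite[Theorem~B.6]{DF}) to convert the $L^1$-smallness of $\nabla u_k$ into an $L^1$-approximation by piecewise constants at the price of a controlled extra jump. The accompanying Go\l\c{a}b-type lower semicontinuity under Hausdorff convergence is standard once the uniform density lower bound of Theorem~\ref{t:CL} is at hand.
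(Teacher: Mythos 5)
Your overall architecture (compactness of the jump sets, identification of a limit partition, competitor argument, energy convergence) matches the shape of the result, but two of your key steps have genuine gaps, and both are resolved in the paper by a single device you do not have: the level-set quantization of $u_k$. Via the $BV$ Co-Area formula one partitions $\R$ into intervals of length $\|\nabla u_k\|_{L^1}^{\sfrac12}$ and picks, by the mean value theorem, levels $t_i^k$ with $\sum_i\Per(\{u_k\geq t_i^k\}\setminus S_{u_k})\leq\|\nabla u_k\|_{L^1}^{\sfrac12}$; the resulting piecewise constant $w_k$ satisfies $\nabla w_k=0$, $\|u_k-w_k\|_{L^\infty}\leq 2\|\nabla u_k\|_{L^1}^{\sfrac12}$ and $\HH^{n-1}(S_{w_k}\setminus S_{u_k})=o(1)$. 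The limit partition is then obtained from the $L^1$-compactness theorem for the Caccioppoli partitions induced by $w_k$, not from the connected components of a Hausdorff limit $K$.

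Concretely: (1) Your ``Go\l\c{a}b-type'' lower semicontinuity $\HH^{n-1}(J_{\mathscr E}\cap A)\leq\liminf_k\HH^{n-1}(S_{u_k}\cap A)$ under mere Hausdorff convergence is not available with constant $1$ in codimension one in $\R^n$. The uniform density lower bound only yields that the weak-$*$ limit $\mu$ of $\HH^{n-1}\res S_{u_k}$ dominates a fixed \emph{fraction} of $\HH^{n-1}\res K$, which is useless for the sharp comparison needed in the minimality and energy identities; the paper gets the constant-$1$ inequality from the perimeter lower semicontinuity along the $L^1$-converging partitions $\mathscr{E}_k\to\mathscr{E}$. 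Relatedly, showing $\HH^{n-1}(K\setminus\overline{J_{\mathscr E}})=0$ (no ``hair'' in the Hausdorff limit) does not follow from the density lower bound as you assert. (2) In the competitor $v_k=\varphi\,\omega_k+(1-\varphi)u_k$ the interpolation cost in the collar contains $|\nabla\varphi|^2|u_k-\omega_k|^2$ on a region crossed by the jump set, where your interior harmonic estimates give nothing; and the $SBV$ Poincar\'e--Wirtinger inequality you invoke is inapplicable here, since it operates in a regime of small jumps, whereas this is a regime of small gradients and order-one jumps (the paper states this explicitly). The global bound $\|u_k-w_k\|_{L^\infty}\leq 2\|\nabla u_k\|_{L^1}^{\sfrac12}$ is exactly what controls that term. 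Finally, your normalization $u_k\to\sum_i a_i\chi_{E_i}$ is unjustified: the constants on distinct components need not have convergent differences, which is precisely why the paper never passes to a limit of the functions themselves but only of the level sets.
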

Though this last statement is, intuitively, quite clear, it is technically 
demanding, because we do not have any a priori control of the norms 
$\|u_k\|_{L^1}$, thus preventing the use of Ambrosio's $(G)SBV$ compactness 
theorem. We can not even expect to gain pre-compactness via De Giorgi's $SBV$ 
Poincar\'e-Wirtinger type inequality, since the latter holds true in a regime 
of small jumps rather than of small gradients as the current one.
\begin{proof}[Proof of Theorem~\ref{t:cptness}] 
The sequence $(u_k)_{k\in\N}$ does not satisfy, a priori, any $L^p$ bound, 
thus in order to gain some insight on the asymptotic behavior of the 
corresponding jump sets we first construct a new sequence 
$(w_k)_{k\in\N}$ with null gradients introducing an infinitesimal error on the 
length of the jump set of $w_k$ with respect to that of $u_k$.
Then, we investigate the limit behavior of the corresponding Caccioppoli
partitions.

\noindent {\bf Step 1.} \emph{There exists a sequence 
$(w_k)_{k\in\N}\subseteq SBV(B_1)$ satisfying}
\begin{itemize}
\item[(i)] $\nabla w_k=0$ $\LL^n$ \emph{a.e. on} $B_1$,

\item[(ii)] $\|u_k-w_k\|_{L^\infty(B_1)}\leq 2\|\nabla u_k\|_{L^1(B_1)}^{\sfrac12}$,

\item[(iii)] $\HH^{n-1}\left(S_{w_k}\setminus(S_{u_k}\cup H_k)\right)=0$
\emph{for some Borel measurable set $H_k$, with $\HH^{n-1}(H_k)=o(1)$ as} 
$k\uparrow\infty$.
\end{itemize}
Note that in turn item (iii) implies that  
\begin{equation}\label{e:salti}
\MS(w_k)=\HH^{n-1}(S_{w_k})\leq\HH^{n-1}(S_{u_k})+o(1)\leq\MS(u_k)+o(1).
\end{equation}
In Step 2 below we shall eventually show that $|\MS (w_k) - \MS (u_k)| \leq o(1)$. 

Recall that the $BV$ Co-Area formula (see \cite[Theorem 3.40]{AFP00}) 
establishes
\begin{equation}\label{e:coareabv}
\int_{B_1}|\nabla u_k|dx=|Du_k|(B_1\setminus S_{u_k})=
\int_{\R}\Per\left(\{u_k\geq t\}\setminus S_{u_k}\right)dt.
\end{equation}
Denote by $I_i^k$ a partition of $\R$ of intervals of equal length 
$\|\nabla u_k\|_{L^1(B_1)}^{\sfrac12}$. Equation \eqref{e:coareabv} and the 
Mean value Theorem provide the existence of levels $t_i^k\in I_i^k$ 
satisfying 
\begin{equation}\label{e:wjbv}
\sum_{i=1}^\infty\Per\left(\{u_k\geq t_i^k\}\setminus S_{u_k}\right)
\leq\|\nabla u_k\|_{L^1(B_1)}^{\sfrac12}.
\end{equation}
Then define the functions $w_k$ to be equal to $t_i^k$ on 
$\{u_k\geq t_i^k\}\setminus\{u_k\geq t_{i+1}^k\}$. The choice of the $I_i^k$'s, 
\eqref{e:wjbv} and the very definition yield that $w_k$ belongs to $SBV(B_1)$ 
and that it satisfies properties (i) and (ii). 
To conclude, note that $\HH^{n-1}\left(S_{w_k}\setminus (\cup_i\partial^\ast\{u_k\geq t_i^k\}
\cup S_{u_k})\right)=0$ by construction, thus item (iii) 
follows at once from \eqref{e:wjbv}. 
\medskip

\noindent {\bf Step 2.} \emph{Compactness for the jump sets.}

Each function $w_k$ determines a Caccioppoli partition 
$\mathscr{E}_k=\{E_i^k\}_{i\in\N}$ of $B_1$ (see \cite[Lemma 1.11]{CT}). 
In addition, upon reordering the sets $E_i^k$'s, we may assume that 
$\LL^n(E_i^k)\geq\LL^n(E_j^k)$ if $i<j$.
Then, the compactness theorem for Caccioppoli partitions (see 
\cite[Theorem 4.1, Proposition 3.7]{LT} 
and \cite[Theorem 4.19]{AFP00}) 
provides us with a subsequence  (not relabeled) and a Caccioppoli 
partition $\mathscr{E}:=\{E_i\}_{i\in\N}$ such that 
\begin{equation}\label{e:lscCp}
\lim_j\sum_{i=1}^\infty\LL^n(E_i^k\triangle E_i)=0,\quad\text{ and }\quad
\sum_{i=1}^\infty\Per(E_i,A)\leq\liminf_k\sum_{i=1}^\infty\Per(E_i^k,A)
\end{equation}
for all open subsets $A$ in $B_1$.
We claim that $\mathscr{E}$ determines a minimal Caccioppoli partition
and in proving this we will also establish \eqref{e:enrgconv}. 

We start off observing that the first identity \eqref{e:lscCp} and the Co-Area formula yield 
the existence of a set $I\subset (0,1)$ of full measure such that
\begin{equation}\label{e:taglio}
\liminf_k\sum_{i=1}^\infty
\HH^{n-1}\left((E_i^k\triangle E_i)\cap\partial B_\rho\right)=0 \qquad \forall \rho\in I\, .
\end{equation}
Define the measures $\mu_k$ as
$\mu_k(A):=\MS(u_k,A)+\MS(w_k,A)$ ($A$ being an arbitrary Borel subset of $B_1$).
Proposition~\ref{p:dub} and item (iii) in Step 1 ensure that, 
upon the extraction of a further subsequence, $\mu_k$ converges weakly$^*$ 
to a finite measure $\mu$ on $B_1$. 
W.l.o.g. we may assume that for all $\rho\in I$ we have, in addition, 
$\mu(\partial B_\rho)=0$.

Let us now fix a Caccioppoli partition ${\mathscr F}:=\{F_i\}_{i\in\N}$  
suitable to test the minimality of ${\mathscr E}$, i.e.~$\sum_{i=1}^\infty\LL^n\left((F_i\triangle E_i)\cap(B_1\setminus\overline{B_t})\right)=0$ 
for some $t\in (0,1)$. Moreover, we may also suppose that  
$\sum_{i=1}^\infty\HH^{n-1}\left((F_i\triangle E_i)\cap\partial B_\rho\right)=0$
for all $\rho\in I\cap(t,1)$.
Let then $\rho$ and $r$ be radii in $I\cap(t,1)$ with $\rho<r$ and assume,
after passing to a subsequence (not relabeled) that the inferior limit in \eqref{e:taglio} is
actually a limit for these two radii. We define
\[
\omega_k:=\begin{cases}
w_k & \quad\mbox{on $B_1\setminus \overline{B_\rho}$}\cr
t_i^k & \quad \mbox{on $F_i\cap B_\rho$}.
\end{cases}
\]
Note that $\omega_k\in SBV(B_1)$ with $\nabla\omega_k=0$ $\LL^n$ a.e. 
on $B_1$, and since $t<\rho\in I$ it follows
\[
\HH^{n-1}\Big(S_{\omega_k}\setminus\big((J_{\mathscr F}\cap B_\rho)
\cup(\cup_{i\in\N}(E_i^k\triangle E_i)\cap\partial B_\rho)
\cup(S_{w_k}\cap(B_1\setminus\overline{B_\rho}))\big)\Big)=0.
\]
Consider $\varphi\in \mathrm{Lip}\cap C_c(B_1,[0,1])$ with 
$\varphi|_{B_r}\equiv 1$, and $|\nabla\varphi|\leq (1-r)^{-1}$ on 
$B_1$, and set $v_k:=\varphi\,\omega_k+(1-\varphi)\,u_k$. Clearly, $v_k$ 
is admissible to test the minimality of $u_k$. Then, simple calculations lead to  
\begin{eqnarray}\label{e:apprmin}
\lefteqn{\MS(u_k)\leq\MS(v_k)}\notag\\&&
\leq\MS(\omega_k)+2\MS(u_k,B_1\setminus\overline{B_r})
+\frac{2}{(1-r)^2}\|u_k-\omega_k\|^2_{L^2(B_1\setminus\overline{B_r})}\notag\\
&&\leq\HH^{n-1}\left(J_{\mathscr{F}}\right)
+\sum_{i\in\N}\HH^{n-1}\left((E_i^k\triangle E_i)\cap\partial B_\rho\right)
+\HH^{n-1}\left(S_{w_k}\setminus\overline{B_\rho}\right)\notag\\
&&+2\MS(u_k,B_1\setminus\overline{B_r})
+\frac{2}{(1-r)^2}\|u_k-w_k\|^2_{L^2(B_1\setminus\overline{B_r})}\notag\\
&&\leq\HH^{n-1}\left(J_{\mathscr{F}}\right)
+\sum_{i\in\N}\HH^{n-1}\left((E_i^k\triangle E_i)\cap\partial B_\rho\right)
+3\mu_k(B_1\setminus\overline{B_\rho})\notag\\&&
+\frac{2}{(1-r)^2}\|u_k-w_k\|^2_{L^\infty(B_1)}.
\end{eqnarray}
Note that in the third inequality we have used that $\omega_k$ and $w_k$ 
coincide on $B_1\setminus\overline{B_\rho}$, 
and that $\rho<r$. By letting $k\uparrow\infty$ in \eqref{e:apprmin}, 
we infer
\begin{multline*}
\HH^{n-1}(J_{\mathscr{E}})\leq\liminf_j\HH^{n-1}(S_{u_k})\leq
\liminf_j\MS(u_k)\leq\limsup_j\MS(u_k)\\
\leq\limsup_k\MS(v_k)\leq\HH^{n-1}\left(J_{\mathscr{F}}\right)
+3\mu(B_1\setminus\overline{B_\rho}),
\end{multline*} 
where we have used that $r$ and $\rho$ belong to $I$, inequality \eqref{e:salti}, 
the convergence $\mu_k\rightharpoonup^* \mu$, the estimate $\sup_k\MS(u_k,B_1\setminus B_t)
\leq n\omega_n(1-t^{n-1})$ for all $t\in(0,1)$, that is derived as inequality \eqref{e:dub} in
Proposition~\ref{p:dub}, and the limit
\eqref{e:taglio}. Finally, by letting $\rho\in I$ tend to $1^-$ we conclude 
\begin{equation}\label{e:Eminimal}
\HH^{n-1}(J_{\mathscr{E}})\leq\liminf_k\HH^{n-1}(S_{u_k})
\leq\liminf_k\MS(u_k)\leq\limsup_k\MS(u_k)\leq
\HH^{n-1}\left(J_{\mathscr{F}}\right),
\end{equation}
which proves the minimality of $\mathscr{E}$. In addition, choosing $\mathscr{E} = \mathscr{F}$, 
we infer \eqref{e:enrgconv} for $A=B_1$. Actually, for $\mathscr{E} = \mathscr{F}$ the same
same argument employed above gives \eqref{e:enrgconv} (it suffices to take $v_k=\varphi w_k
+(1-\varphi)u_k$).

In particular, $J_{\mathscr{E}}$ is essentially closed  (by Theorem~\ref{t:Caccest}) and it satisfies 
a density lower bound estimate.
Using this and the De Giorgi, Carriero, Leaci density lower bound in formula \eqref{e:dlb} we 
conclude that $(\overline{S_{u_k}})_{k\in\N}$ converges to $\overline{J_{\mathscr{E}}}$ in the 
local Hausdorff topology on $\overline{B_1}$.
\end{proof}
Interesting (immediate) consequences of Theorem~\ref{t:cptness} are contained in the ensuing two statements.
\begin{corollary}
 Let $(u_k)_{k\in\N}\subset\MM(B_1)$ be as in the statement of Theorem~\ref{t:cptness}, then 
 \[
\lim_{k}\|\nabla u_k\|_{L^2(B_1)}= 0,\quad\text{and }\quad 
\HH^{n-1}\res S_{u_k}\stackrel{\ast}{\to} \HH^{n-1}\res J_{\mathscr E}.
 \]
 Actually, $\HH^{n-1}\res S_{u_k}\to\HH^{n-1}\res J_{\mathscr E}$ in the 
 narrow convergence of measures, i.e.~in the duality with $C_b(\Omega)$.
\end{corollary}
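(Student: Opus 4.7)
The strategy is a straightforward two-step unpacking of Theorem~\ref{t:cptness}: the $L^2$ statement follows by subtracting the jump-length convergence from the full energy convergence, and the measure convergence follows from a Portmanteau/layer-cake argument since the theorem already supplies equality in the limit on \emph{every} open subset of $B_1$.

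First I would apply Theorem~\ref{t:cptness} with $A=B_1$ (which is itself an admissible open subset). This yields simultaneously
\[
\MS(u_k,B_1)=\int_{B_1}|\nabla u_k|^2\,dx+\HH^{n-1}(S_{u_k})\longrightarrow \HH^{n-1}(J_{\mathscr E})
\]
and $\HH^{n-1}(S_{u_k})\to\HH^{n-1}(J_{\mathscr E})$, where the latter limit is finite. Subtracting the two identities gives $\int_{B_1}|\nabla u_k|^2\,dx\to 0$, which is the first claim. As a byproduct we obtain the total-mass convergence $\mu_k(B_1)\to\mu(B_1)$, where I set $\mu_k:=\HH^{n-1}\res S_{u_k}$ and $\mu:=\HH^{n-1}\res J_{\mathscr E}$, and the uniform bound $\sup_k\mu_k(B_1)<\infty$.

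Next I would derive weak-$\ast$ convergence in the duality with $C_c(B_1)$, and then upgrade it to narrow convergence (duality with $C_b(B_1)$) through a single layer-cake computation that covers both cases. For $\varphi\in C_b(B_1)$, write $\varphi=\varphi^+-\varphi^-$ with $0\le \varphi^\pm\le\|\varphi\|_\infty$ and use
\[
\int_{B_1}\varphi^\pm\,d\mu_k=\int_0^{\|\varphi\|_\infty}\mu_k\bigl(\{\varphi^\pm>t\}\bigr)\,dt.
\]
Each superlevel set $\{\varphi^\pm>t\}$ is open in $B_1$, so by Theorem~\ref{t:cptness} the integrand converges pointwise in $t$:
\[
\mu_k\bigl(\{\varphi^\pm>t\}\bigr)\longrightarrow \mu\bigl(\{\varphi^\pm>t\}\bigr).
\]
The integrand is uniformly dominated by $\sup_k\mu_k(B_1)<\infty$ on the bounded interval $[0,\|\varphi\|_\infty]$, so Lebesgue's dominated convergence theorem applies and yields $\int\varphi\,d\mu_k\to\int\varphi\,d\mu$. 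Specializing to $\varphi\in C_c(B_1)$ gives the stated weak-$\ast$ convergence; retaining $\varphi\in C_b(B_1)$ gives the narrow convergence.

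There is no real obstacle here: the nontrivial analytic work is already contained in Theorem~\ref{t:cptness}, which provides both the identification of the limit measure $\HH^{n-1}\res J_{\mathscr E}$ and the exact (not merely one-sided) convergence on every open set together with the crucial total-mass bookkeeping $\mu_k(B_1)\to\mu(B_1)$. The only point to watch is that $C_b(B_1)$ contains functions which do not extend continuously to $\overline{B_1}$, but since such $\varphi$ is still bounded and its superlevel sets are genuinely open in $B_1$, the layer-cake argument handles this uniformly and no boundary regularity of $\mu$ on $\partial B_1$ is needed.
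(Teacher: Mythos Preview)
Your proof is correct and follows essentially the same approach as the paper, which simply states ``It is an easy consequence of the equalities in \eqref{e:enrgconv}.'' You have filled in the details: the $L^2$ claim by subtracting the two limits in \eqref{e:enrgconv} for $A=B_1$, and the measure convergence by a layer-cake/Portmanteau argument exploiting that \eqref{e:enrgconv} supplies convergence on \emph{every} open subset together with the total mass.
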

\begin{proof}
It is an easy consequence of the equalities in \eqref{e:enrgconv}. 
\end{proof}
\begin{corollary}\label{c:triple}
Let $x\in \Sigma_u^{(1)}$ and $\rho_k\downarrow 0$, then (up to subsequences not relabeled)
there exists a minimal Caccioppoli partition ${\mathscr E}$ such that $(S_{u_{x,\rho_k}})_{k\in\N}$,
$u_{x,\rho_k}$ defined in \eqref{e:blowup}, converges locally in the Hausdorff distance to 
$\overline{J_{\mathscr E}}$ and
\[
 \HH^{n-1}\res S_{u_{x,\rho_k}}\stackrel{\ast}{\to} \HH^{n-1}\res J_{\mathscr E}.
\]
\end{corollary}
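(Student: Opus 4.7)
The plan is to deduce the corollary by showing that the rescaled family $(u_{x,\rho_k})_{k\in\N}$ fits precisely into the hypotheses of Theorem~\ref{t:cptness}, so that the conclusion follows immediately after verifying local minimality and vanishing $L^1$-gradient energy for the rescalings. The only genuinely new observation required is that the assumption $x \in \Sigma_u^{(1)}$, i.e.\ $\lim_{\rho\downarrow 0}\mathscr{D}_u(x,\rho)=0$, upgrades to vanishing of the $L^1$-norm of the gradient via Cauchy--Schwarz.

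First I would observe that local minimality scales correctly: from the identity
\[
\MS(u_{x,\rho},B_1) = \rho^{1-n}\MS(u, B_\rho(x))
\]
recorded in subsection~\ref{ss:dlb} together with the change of variables $y \mapsto x+\rho y$, one sees that if $u \in \MM(\Omega)$ and $B_\rho(x) \subset \Omega$, then $u_{x,\rho} \in \MM(B_1)$. In particular, $u_{x,\rho_k} \in \MM(B_1)$ for all $k$ large enough.

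Second I would verify the $L^1$-gradient smallness hypothesis. Since $\nabla u_{x,\rho}(y)=\rho^{1/2}\nabla u(x+\rho y)$, the change of variables gives
\[
\int_{B_1}|\nabla u_{x,\rho}|\,dy = \rho^{1/2-n}\int_{B_\rho(x)}|\nabla u|\,dz \leq \omega_n^{1/2}\,\bigl(\rho^{1-n}\int_{B_\rho(x)}|\nabla u|^2\,dz\bigr)^{1/2} = \omega_n^{1/2}\,\mathscr{D}_u(x,\rho)^{1/2},
\]
by Cauchy--Schwarz. As $x \in \Sigma_u^{(1)}$, the right-hand side vanishes as $k\uparrow \infty$ along $\rho_k$, so $\|\nabla u_{x,\rho_k}\|_{L^1(B_1)}\to 0$.

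With these two facts in hand, Theorem~\ref{t:cptness} applied to $(u_{x,\rho_k})_{k\in\N}$ yields, up to a subsequence, a minimal Caccioppoli partition $\mathscr{E}$ such that $(\overline{S_{u_{x,\rho_k}}})_{k\in\N}$ converges locally in the Hausdorff distance on $\overline{B_1}$ to $\overline{J_\mathscr{E}}$ and $\MS(u_{x,\rho_k},A)\to \HH^{n-1}(J_\mathscr{E}\cap A)$ for every open $A\subseteq B_1$. Since $\|\nabla u_{x,\rho_k}\|_{L^2(B_1)}^2 \leq \MS(u_{x,\rho_k}, B_1)$ is uniformly bounded by Proposition~\ref{p:dub} and, combined with \eqref{e:enrgconv},
\[
\HH^{n-1}(S_{u_{x,\rho_k}}\cap A)\longrightarrow \HH^{n-1}(J_\mathscr{E}\cap A)
\]
for all open $A \subseteq B_1$, the Portmanteau-type criterion (upper semicontinuity on closed sets plus total mass convergence) gives $\HH^{n-1}\res S_{u_{x,\rho_k}} \stackrel{\ast}{\to} \HH^{n-1}\res J_\mathscr{E}$. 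There is essentially no obstacle beyond the scaling book-keeping; the only subtlety is the passage from $L^2$-smallness of $\nabla u$ on $B_\rho(x)$ (which is what the hypothesis gives) to the $L^1$-smallness of $\nabla u_{x,\rho_k}$ on $B_1$ needed by Theorem~\ref{t:cptness}, and this is exactly the Cauchy--Schwarz step above.
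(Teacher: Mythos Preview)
Your proof is correct and follows exactly the approach the paper intends: the corollary is stated without an explicit argument because it is a direct application of Theorem~\ref{t:cptness} to the rescaled minimizers $u_{x,\rho_k}$, and you have filled in precisely the two verifications needed (scaling of local minimality and the Cauchy--Schwarz passage from $\mathscr{D}_u(x,\rho_k)\to 0$ to $\|\nabla u_{x,\rho_k}\|_{L^1(B_1)}\to 0$). The only cosmetic point you might add is that, for local Hausdorff convergence on all of $\R^n$ rather than just $\overline{B_1}$, one applies Theorem~\ref{t:cptness} on $B_R$ for each $R>0$ (noting $u_{x,\rho_k}\in\MM(B_R)$ for $k$ large) and runs a diagonal extraction; but this is routine.
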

\begin{remark}
Under the assumptions of Corollary~\ref{c:triple}, it is natural to expect the limit partition ${\mathscr E}$ 
to be \emph{conical}, i.e. ${\mathscr E}=\{E_i\}_{i=1}^\infty$ with the $E_i$'s cones with vertices in the origin,
as a result of the blow up procedure. 
In general this latter property can be proven only for suitable sequences $\rho_k\downarrow 0$ by combining
a blow~up argument and Theorem~\ref{t:cptness} (cf. \cite[Proposition~5.8]{AFH}).

Actually, in $2$-dimensions the result is true for every sequence $\rho_k\downarrow 0$ since a structure 
theorem for minimal Caccioppoli partitions assures that (locally) they are minimal connections (cf. 
Proposition~\ref{p:min=con}).
The lack of monotonicity formulas for the Mumford and Shah problem prevents the derivation of such a statement 
in the general case.
\end{remark}

A more precise result in the $2$d case will be established in Proposition~\ref{p:bupchar}.
Note that no uniqueness is ensured for the limits except for $2$d in view of David's 
$\eps$-regularity result (see Theorem~\ref{t:epsreg}), and in $3$d in view of the analogous 
result established by Lemenant in \cite{lemenant}.

By means of Theorems~\ref{t:Caccest}, \ref{t:cptness} and standard blow~up arguments we are 
able to establish Theorem~\ref{t:AFH2}. Let us first recall a technical lemma.
\begin{lemma}[Section~3.6 \cite{Sim83}]\label{l:haus}
 Let $s\geq 0$, then
 \begin{itemize}
  \item[(i)] $\HH^s(\Sigma)=0\Longleftrightarrow\HH^{s,\infty}(\Sigma)=0$, for all sets $\Sigma\subseteq\R^n$;
  \item[(ii)] if $\Sigma_j$ and $\Sigma$ are compact sets such that 
  $\sup_{\Sigma_j}\mathrm{dist}\big(\cdot,\Sigma\big)\downarrow 0$ as $j\uparrow\infty$, then
\[
\HH^{s,\infty}(\Sigma)\geq \limsup_j\HH^{s,\infty}(\Sigma_j);
\]
  \item[(iii)] if $\HH^s(\Sigma)>0$, then for $\HH^s$-a.e. $x\in\Sigma$
  \[
  \limsup_{\rho\downarrow 0^+}\frac{\HH^{s,\infty}(\Sigma\cap B_\rho(x))}{\omega_s\rho^s}
\geq 2^{-s}.
  \]
 \end{itemize}
\end{lemma}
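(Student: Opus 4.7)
The plan is to establish the three assertions in order, each via a short measure-theoretic argument comparing the Hausdorff content $\HH^{s,\infty}$ with the Hausdorff measure $\HH^s$.

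For (i), the direction $\HH^s(\Sigma)=0\Rightarrow\HH^{s,\infty}(\Sigma)=0$ is trivial from the pointwise domination $\HH^{s,\infty}\leq\HH^s$. For the converse, any cover $\Sigma\subset\bigcup_i E_i$ realizing $\sum_i\omega_s(\mathrm{diam}(E_i)/2)^s<\varepsilon$ automatically satisfies the uniform diameter bound $\mathrm{diam}(E_i)<2(\varepsilon/\omega_s)^{1/s}=:\delta(\varepsilon)$, since each individual summand is bounded by $\varepsilon$. Hence the same cover is admissible for the infimum defining $\HH^s_{\delta(\varepsilon)}(\Sigma)$, giving $\HH^s_{\delta(\varepsilon)}(\Sigma)\leq\varepsilon$; letting $\varepsilon\downarrow 0$ (so that $\delta(\varepsilon)\downarrow 0$) yields $\HH^s(\Sigma)=\sup_\delta\HH^s_\delta(\Sigma)=0$.

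For (ii), I would run a standard upper semicontinuity argument exploiting compactness. Fix $\eta>0$ and an $\eta$-optimal cover $\Sigma\subset\bigcup_i F_i$ with $\sum_i\omega_s(\mathrm{diam}(F_i)/2)^s<\HH^{s,\infty}(\Sigma)+\eta$; then slightly enlarge each $F_i$ to an open $E_i\supset F_i$, with inflation so small that the total sum grows by at most a further $\eta$, so that $U:=\bigcup_i E_i$ is an open neighborhood of $\Sigma$. Compactness of $\Sigma$ produces some $\theta>0$ with $\{x:\mathrm{dist}(x,\Sigma)<\theta\}\subset U$, and the convergence hypothesis $\sup_{\Sigma_j}\mathrm{dist}(\cdot,\Sigma)\downarrow 0$ forces $\Sigma_j\subset U$ for all $j$ large enough. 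Therefore $\{E_i\}$ is admissible for $\Sigma_j$ and $\HH^{s,\infty}(\Sigma_j)\leq\HH^{s,\infty}(\Sigma)+2\eta$; sending $\eta\downarrow 0$ concludes.

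For (iii), my plan is to argue by contradiction in the spirit of the classical lower density theorem for $\HH^s$. Suppose for some $t\in(0,2^{-s})$ the set
\[
B_t:=\{x\in\Sigma:\limsup_{\rho\downarrow 0^+}\HH^{s,\infty}(\Sigma\cap B_\rho(x))/(\omega_s\rho^s)<t\}
\]
has $\HH^s(B_t)>0$; by (i) this also forces $\HH^{s,\infty}(B_t)>0$. After restricting to the super-level set $B_t^\eta\subset B_t$ on which the inequality $\HH^{s,\infty}(\Sigma\cap B_\rho(x))<t\omega_s\rho^s$ holds uniformly for all $\rho<\eta$, I would combine two ingredients: the content upper bound at each ball of a Vitali-selected disjoint family $\{B_{\rho_i}(x_i)\}$ centered at points $x_i\in B_t^\eta$, and the classical $\HH^s$ lower density theorem guaranteeing $\HH^s(B_t^\eta\cap B_{\rho_i}(x_i))\geq(2^{-s}-\varepsilon)\omega_s\rho_i^s$ at $\HH^s$-a.e. selection center. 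The disjointness of the family then pins $\sum_i\omega_s\rho_i^s$ against $\HH^s(B_t^\eta)$ from above, while summing the content bounds $\HH^{s,\infty}(\Sigma\cap B_{\rho_i}(x_i))<t\omega_s\rho_i^s$ and using countable subadditivity of $\HH^{s,\infty}$ bounds $\HH^{s,\infty}(B_t^\eta)$ from above by a constant multiple of $t\,\HH^s(B_t^\eta)$, a chain which eventually closes to a contradiction via (i). The main obstacle I anticipate, and the reason the execution requires the delicate handling detailed in \cite[Section~3.6]{Sim83}, is the geometric mismatch between the diameter $2\rho$ of a ball and the $\HH^{s,\infty}$-cost $\omega_s\rho^s$ that leaves an inherent factor of $2^s$ in the comparison; absorbing this factor demands that the upper density of $\HH^{s,\infty}$ be tightly coupled to the lower density of $\HH^s$, since $\HH^{s,\infty}$ fails to enjoy the additivity on disjoint Borel unions that underlies the classical proof.
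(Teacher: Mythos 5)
The paper states this lemma without proof, quoting \cite[Section~3.6]{Sim83}, so your argument can only be judged on its own merits. Parts (i) and (ii) are correct and are essentially the standard proofs: in (i), the observation that a cover of total content below $\varepsilon$ automatically has all diameters below $2(\varepsilon/\omega_s)^{1/s}$, hence is admissible for the size-$\delta(\varepsilon)$ pre-measure $\HH^s_{\delta(\varepsilon)}$, is exactly the right point; in (ii), enlarging an almost optimal cover to an open set $U\supset\Sigma$ and using compactness to place a full $\theta$-neighborhood of $\Sigma$ (hence $\Sigma_j$ for $j$ large) inside $U$ is sound.

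Part (iii) has a genuine gap, and the difficulty you flag at the end is not a technicality to be absorbed but the point where your strategy fails. Follow your chain to its conclusion: subadditivity of $\HH^{s,\infty}$ over the Vitali family (which covers $B_t^\eta$ up to an $\HH^s$-null, hence $\HH^{s,\infty}$-null, set) gives $\HH^{s,\infty}(B_t^\eta)\le t\sum_i\omega_s\rho_i^s$, while the measure-theoretic lower density theorem plus disjointness gives $\sum_i\omega_s\rho_i^s\le(2^{-s}-\varepsilon)^{-1}\HH^s(B_t^\eta)$; altogether $\HH^{s,\infty}(B_t^\eta)\le t\,2^s(1+o(1))\,\HH^s(B_t^\eta)$. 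Although $t2^s<1$, this is no contradiction: $\HH^{s,\infty}\le\HH^s$ holds always, and there is no reverse inequality $\HH^{s,\infty}\ge c\,\HH^s$ for general sets, so part (i), being purely qualitative, cannot convert ``content $\le$ (small constant) $\times$ measure'' into ``measure $=0$''; and since the two sides of your final inequality carry different set functions, the estimate cannot be iterated either. (Note also that the ``classical lower density theorem'' you invoke is item (iii) itself with $\HH^s$ in place of $\HH^{s,\infty}$; the content version is strictly stronger, and it is the one actually needed in the proof of Theorem~\ref{t:AFH2}, where it must be combined with (ii).) The correct proof keeps the same set function on both sides and uses no Vitali covering. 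Fix $t<2^{-s}$ and $\delta>0$, let $E$ be the set of $x\in\Sigma$ with $\HH^{s,\infty}(\Sigma\cap B_\rho(x))<t\omega_s\rho^s$ for all $\rho\in(0,\delta)$ (intersected with a large ball so that $\HH^s_\delta(E)<\infty$), and take a cover $\{C_j\}$ of $E$ with $\diam C_j<\delta$, $C_j\cap E\ni x_j$ and $\sum_j\omega_s(\diam C_j/2)^s\le\HH^s_\delta(E)+\varepsilon$. Each $E\cap C_j$ lies in $\Sigma\cap B_{\rho_j}(x_j)$ with $\rho_j:=\diam C_j$, which by hypothesis admits a cover $\{D_{j,k}\}_k$ with $\sum_k\omega_s(\diam D_{j,k}/2)^s<t\omega_s\rho_j^s=t2^s\,\omega_s(\diam C_j/2)^s$; since each single summand is below $t\omega_s\rho_j^s$ and $t<2^{-s}$, every $D_{j,k}$ has diameter $<2t^{1/s}\rho_j<\rho_j<\delta$, so $\{D_{j,k}\}_{j,k}$ is again an admissible $\delta$-cover of $E$ and $\HH^s_\delta(E)\le t2^s\big(\HH^s_\delta(E)+\varepsilon\big)$. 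Letting $\varepsilon\downarrow0$ forces $\HH^s_\delta(E)=0$ and hence, by (i), $\HH^s(E)=0$.
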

The strategy of proof below is essentially that by Ambrosio, Fusco and Hutchinson as reworked by 
De Lellis, Focardi and Ruffini in light of \cite[Theorem~\ref{t:Caccest}]{DFR}.
\begin{proof}[Proof of Theorem~\ref{t:AFH2}]
We argue by contradiction: suppose that there exists $s> n-2$ such that 
$\HH^s\big(\Sigma_u^{(1)}\big)>0$. 
From this we infer that $\HH^{s,\infty}\big(\Sigma_u^{(1)}\big)>0$, 
and moreover that for $\HH^s$-a.e. $x\in \Sigma_u^{(1)}$ it holds
\begin{equation}\label{e:limsup}
\limsup_{\rho\downarrow 0^+}
\frac{\HH^{s,\infty}(\Sigma_u^{(1)}\cap B_\rho(x))}{\rho^s}
\ge \frac{\omega_s}{2^s}
\end{equation}
(see for instance \cite[Theorem 2.56 and formula (2.43)]{AFP00}). 
Without loss of generality, suppose that \eqref{e:limsup} holds at $x=0$, and 
consider a sequence $\rho_k\downarrow 0$ for which
\begin{equation}\label{e:assurdo}
\HH^{s,\infty}(\Sigma_u^{(1)}\cap B_{\rho_k})\ge\frac{\omega_s}{2^{s+1}}\rho_k^s
\qquad\text{for all $k\in\N$}.
\end{equation}
Theorem~\ref{t:cptness} provides a subsequence, not relabeled for convenience, 
and a minimal Caccioppoli partition $\mathscr{E}$ such that 
\begin{equation}\label{e:narrow}
\lim_k\HH^{n-1}(S_{u_k}\cap A)=\HH^{n-1}(J_{\mathscr E}\cap A)
\qquad \mbox{for all open sets $A\subseteq B_1$.} 
\end{equation}
and that
\begin{equation}\label{e:DeLF}
\rho_k^{-1}\overline{S_u}\to\overline{J_\mathscr{E}}\,\,\text{ locally Hausdorff}.
\end{equation}
In turn, from the latter we claim that if $\mathcal{F}$ is any open cover  
of $\Sigma_\mathscr{E}\cap \overline B_1$, then for some $h_0\in\N$ 
\begin{equation}\label{e:usc}
\rho_k^{-1}\Sigma_u^{(1)}\cap\overline B_1\subseteq 
\cup_{F\in\mathcal{F}}F\qquad \text{for all $k\ge k_0$}. 
\end{equation}
Indeed, if this is not the case we can find a sequence 
$x_{k_j}\in \rho_{k_j}^{-1}\Sigma_u^{(1)}\cap\overline B_1$ 
converging to some point $x_0\notin\Sigma_\mathscr{E}$. 
If $T^\mathscr{E}_{x_0}$ is the tangent plane to $J_\mathscr{E}$ at $x_0$
(which exists by the property of $\Sigma_\mathscr{E}$ in Theorem~\ref{t:Caccest}), 
then for some $\rho_0$ we have
\[
\rho^{-1-n}\int_{B_\rho(x_0)\cap J_\mathscr{E}}
\mathrm{dist}^2(y,T^\mathscr{E}_{x_0})d\HH^{n-1}<\varepsilon_0,
\quad \text{ for all $\rho\in(0,\rho_0)$}.
\]
In turn, from the latter inequality and the convergence in \eqref{e:narrow},
it follows that, for $\rho\in(0,\rho_0\wedge 1)$,
\[
\limsup_{j\uparrow\infty}\rho^{-1-n}\int_{B_\rho(x_{k_j})\cap\rho_{k_j}^{-1}S_u}
\mathrm{dist}^2(y,T^\mathscr{E}_{x_0})\,d\HH^{n-1}<\varepsilon_0.
\]
Therefore, as $x_{k_j}\in\rho_{k_j}^{-1}\Sigma_u^{(1)}$, we get for 
$j$ large enough 
\[
\limsup_{\rho\downarrow 0}
\left(\mathscr{D}(x_{k_j},\rho)+\mathscr{A}(x_{k_j},\rho)\right)<\varepsilon_0,
\]
a contradiction in view of the characterization of the singular set in \eqref{e:sigma}. 

To conclude, we note that by \eqref{e:usc} we get
\[
\HH^{s,\infty}(\Sigma_\mathscr{E}\cap\overline{B_1})\geq
\limsup_{k\uparrow\infty}\HH^{s,\infty}(\rho_k^{-1}\Sigma_u^{(1)}\cap\overline{B_1});
\]
given this, \eqref{e:assurdo} and \eqref{e:DeLF} yield that
\[
\HH^s(\Sigma_\mathscr{E}\cap \overline{B_1})\ge\HH^{s,\infty}(\Sigma_\mathscr{E}\cap\overline{B_1})\ge
\limsup_{k\uparrow\infty}\HH^{s,\infty}(\rho_k^{-1}\Sigma_u^{(1)}\cap\overline{B_1})\ge
\frac{\omega_s}{2^{s+1}},
\]
thus contradicting Theorem~\ref{t:Caccest}.
\end{proof}
\begin{corollary}\label{c:tj}
 If $\Omega\subseteq\R^2$ and $u\in\MM(\Om)$, then $\Sigma_u^{(1)}$ is at most countable.
\end{corollary}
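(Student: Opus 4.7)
The plan is to show that every $x_0\in\Sigma_u^{(1)}$ is isolated in $\Sigma_u^{(1)}$; since $\Omega\subseteq\R^2$ is separable, this immediately gives the claim. Fix such an $x_0$ and an arbitrary sequence $\rho_k\downarrow 0$. The condition $x_0\in\Sigma_u^{(1)}$ yields $\mathscr{D}_u(x_0,\rho_k)=\rho_k^{-1}\int_{B_{\rho_k}(x_0)}|\nabla u|^2dx\to 0$, and hence by Cauchy-Schwarz $\|\nabla u_{x_0,\rho_k}\|_{L^1(B_1)}\to 0$. Corollary~\ref{c:triple} therefore produces, along a subsequence (not relabeled), a minimal Caccioppoli partition $\mathscr{E}$ of $B_1$ such that $\rho_k^{-1}(\overline{S_u}-x_0)\to\overline{J_{\mathscr{E}}}$ in the local Hausdorff distance on $\overline{B_1}$; in particular $0\in\overline{J_{\mathscr{E}}}$, because $0$ belongs to $\rho_k^{-1}(\overline{S_u}-x_0)$ for every $k$.

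The next task is to show $0\in\Sigma_{\mathscr{E}}$. The characterization \eqref{e:sigma} together with $x_0\in\Sigma_u$ implies $\mathscr{D}_u(x_0,\rho)+\mathscr{A}_u(x_0,\rho)\geq\varepsilon_0$ for all small $\rho$, and the vanishing of $\mathscr{D}_u(x_0,\cdot)$ forces $\mathscr{A}_u(x_0,\rho)\geq\varepsilon_0/2$ eventually. If instead $0$ were a regular point of $J_{\mathscr{E}}$, the existence of a tangent line and a further rescaling at $0$ would propagate the flatness to arbitrarily small scales and, by the Hausdorff convergence already established, would give $\mathscr{A}_u(x_0,r)\to 0$ along some $r\downarrow 0$, contradicting the lower bound above. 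Hence $0$ must lie in $\Sigma_{\mathscr{E}}$.

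Now I would invoke the planar structure theory for minimal Caccioppoli partitions. Theorem~\ref{t:Caccest} ensures that $\Sigma_{\mathscr{E}}$ is locally finite; the classical classification of minimal conical Caccioppoli partitions in $\R^2$ (only single-line half-space partitions and $120^{\circ}$ triple junctions, cf.~Proposition~\ref{p:min=con}) ensures that a further blow-up of $\mathscr{E}$ at $0$ along some $\eta_j\downarrow 0$ converges to such a cone, and since $0\in\Sigma_{\mathscr{E}}$ excludes the flat case, the limit is a triple junction $\mathscr{C}$. Fixing a small $\eta$ for which $J_{\mathscr{E}}\cap B_\eta$ is sufficiently close to $\mathscr{C}\cap B_\eta$ (relative to $\eta$) and then letting $k$ be large, I would set $r_k:=\eta\rho_k\downarrow 0$ and verify that
\[
\int_{B_{r_k}(x_0)}|\nabla u|^2\,dx+\mathrm{dist}_{\HH}\big(\overline{S_u}\cap B_{r_k}(x_0),(x_0+r_k\mathscr{C})\cap B_{r_k}(x_0)\big)<\varepsilon\,r_k,
\]
where $\varepsilon$ is the constant of Theorem~\ref{t:epsreg}. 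That $\varepsilon$-regularity theorem then produces a $C^1$-diffeomorphism $\phi$ with $\overline{S_u}\cap B_{cr_k}(x_0)=\phi(\mathscr{C})\cap B_{cr_k}(x_0)$. Since $\phi(\mathscr{C})$ is a smooth triple junction whose only non-regular point is $\phi(0)=x_0$, every other point of $\overline{S_u}\cap B_{cr_k}(x_0)$ lies on a $C^1$ arc and thus (by Theorem~\ref{t:uSSu}) has $\mathscr{D}_u+\mathscr{A}_u$ infinitesimal there, so it does not belong to $\Sigma_u$, let alone to $\Sigma_u^{(1)}$. Consequently $\Sigma_u^{(1)}\cap B_{cr_k}(x_0)=\{x_0\}$, which is the sought isolation.

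The main obstacle is the tangent-cone extraction in the third step. Because the Mumford and Shah functional lacks a monotonicity formula, the partition $\mathscr{E}$ is not a priori conical, so one must combine a double blow-up with the planar classification of minimal Caccioppoli cones in order to isolate a genuine triple-junction cone $\mathscr{C}$ suitable to feed into Theorem~\ref{t:epsreg}. Once this structural input is secured, the $\varepsilon$-regularity theorem upgrades the soft Hausdorff approximation to $C^1$ triple-junction rigidity of $\overline{S_u}$ in a whole ball around $x_0$, which is exactly what pins $x_0$ as the unique singular point nearby.
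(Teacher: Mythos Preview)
Your approach is correct and actually proves a little more than countability: you show that every point of $\Sigma_u^{(1)}$ is isolated \emph{within} $\Sigma_u^{(1)}$, so the set is discrete in the subspace topology (in fact this is exactly what the paper records in Remark~\ref{r:triplen2}). The paper's own proof is different: instead of isolation, it associates with each $x_0\in\Sigma_u^{(1)}$ a \emph{triod} contained in $\overline{S_u}$ (three Jordan arcs with common endpoint $x_0$) and then appeals to Moore's topological theorem that any family of pairwise disjoint triods in the plane is at most countable. Your argument trades this topological black box for a direct use of Theorem~\ref{t:epsreg}, and it gives the finer discreteness statement; the price is that it does not upgrade to local finiteness in $\Omega$, since accumulation of $\Sigma_u^{(1)}$ onto $\Sigma_u^{(3)}$ cannot be excluded (again, see Remark~\ref{r:triplen2}).

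One simplification is available and removes the ``main obstacle'' you flag. Once you know $0\in\Sigma_{\mathscr{E}}$, you do not need a second blow-up of $\mathscr{E}$: by Proposition~\ref{p:min=con}\,(ii)--(iii), for every small $t$ the set $\overline{J_{\mathscr{E}}}\cap\overline{B}_t$ is a minimal connection of at most three boundary points, hence either a segment or a Steiner $Y$; the segment is ruled out since $0$ would then be regular, and the unique branch point of the $Y$ must equal $0$ because $0\in\Sigma_{\mathscr{E}}$. Thus $\overline{J_{\mathscr{E}}}\cap B_\eta$ is \emph{exactly} a triple junction centred at $0$ for all small $\eta$, and you can take $\mathscr{C}$ to be this very cone, bypassing any tangent-cone extraction. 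Finally, the identification $\phi(0)=x_0$ is forced: if the triple point of $\phi(\mathscr{C})$ were not $x_0$, then $\overline{S_u}$ would be a single $C^1$ arc near $x_0$, whence Theorem~\ref{t:uSSu} yields $\mathscr{D}_u(x_0,\rho)+\mathscr{A}_u(x_0,\rho)\to 0$, contradicting $x_0\in\Sigma_u$.
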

\begin{proof}
This claim follows straightforwardly from the compactness result Theorem~\ref{t:cptness}, 
David's $\varepsilon$-regularity Theorem~\ref{t:epsreg}, and 
a direct application of Moore's triod theorem showing that in the plane 
every system of disjoint triods, i.e.~unions of three Jordan arcs that 
have all one endpoint in common and otherwise disjoint, is at most countable
(see \cite[Theorem~1]{Moore} and \cite[Proposition~2.18]{pommerenke}). 
\end{proof}
\begin{remark}
 Analogously, in $3$-dimensions the set of points with blow~up a $\mathbb{T}$ cone, 
i.e.~a cone with vertex the origin constructed upon the $1$-skeleton of a regular tetrahedron, is at most countable. 
The latter claim follows thanks to Theorem~\ref{t:cptness}, the $3$d extension of David's $\varepsilon$-regularity 
result by Lemenant in \cite[Theorem~8]{lemenant}, and a suitable extension of Moore's theorem on triods established 
by Young in \cite{Young}.

Let us point out that we employ topological arguments to compensate for the lack of monotonicity
formulas. The latter would allow one to exploit Almgren's stratification type results and get, actually, 
a more precise picture of the set $\Sigma_u^{(1)}$ (cf.~with \cite[Theorem 3.2]{White} and \cite{FMS15}).
\end{remark}

\begin{remark}\label{r:triplen2}
In $2$d Theorem~\ref{t:Caccest} provides the local finiteness of the singular set for 
minimal Caccioppoli partitions, the blow~up limits of $\rho^{-1}(\SSu-x)$ in points $x\in \Sigma_u^{(1)}$. 
This conclusion is far from being established for the set $\Sigma_u^{(1)}$ itself.
With the results at hand one can prove that every convergent sequence $(x_j)_{j\in\N}\subset\Sigma_u^{(1)}$ 
has a limit $x_0\notin\big(\Sigma_u^{(1)}\cup \Sigma_u^{(2)}\big)$. 
To show this, first note that $x_0\notin\Sigma_u^{(1)}$ thanks to item (iii) in Proposition~\ref{p:min=con} 
or Theorem~\ref{t:epsreg};  moreover $x_0\notin\Sigma_u^{(2)}$ thanks to item (ii) in Proposition~\ref{p:bupchar} 
below and Theorem~\ref{t:David-zurrione}.
Similarly, any converging sequence $(x_j)_{j\in\N}\subset\Sigma_u^{(2)}$ has a limit $x_0\notin\big(\Sigma_u^{(1)}\cup\Sigma_u^{(2)}\big)$.
Therefore, in both instances, it might happen that the limit point $x_0$ belong to 
\[
\Sigma_u^{(3)}=\{x\in\Sigma_u:\,\liminf_{\rho\downarrow 0}\mathscr{D}_u(x,\rho)>0,\,
\liminf_{\rho\downarrow 0}\mathscr{A}_u(x,\rho)>0\}.
\]
\end{remark}

We conclude the section by justifying the denomination used for the sets $\Sigma_u^{(1)}$ and 
$\Sigma_u^{(2)}$ in $2$-dimensions.
\begin{proposition}\label{p:bupchar}
 Let $\Om\subseteq\R^2$ and $u\in\MM(\Om)$, then
 \begin{itemize}
  \item[(i)] $x\in\Sigma_u^{(1)}$ if and only if every blow~up of $u$ in $x$ is a triple junction function;
  \item[(ii)] $x\in\Sigma_u^{(2)}$ if and only if  every blow~up of $u$ in $x$ is a crack-tip function.
  \end{itemize}
\end{proposition}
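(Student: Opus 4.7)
The plan is to handle each of (i) and (ii) by coupling Theorem~\ref{t:cptness} with David's $\eps$-regularity results (Theorem~\ref{t:epsreg} and Theorem~\ref{t:David-zurrione}), exploiting repeatedly that for $x\in \Sigma_u^{(i)}$ the characterization \eqref{e:sigma} of the singular set forces the ``complementary" quantity ($\mathscr{A}_u$ in case (i), $\mathscr{D}_u$ in case (ii)) to remain bounded below by $\eps_0/2$ at small scales.

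For the nontrivial direction of (i), fix $x\in\Sigma_u^{(1)}$ and $\rho_k\downarrow 0$. Since $\mathscr{D}_u(x,\rho_k)=\|\nabla u_{x,\rho_k}\|_{L^2(B_1)}^2\to 0$, Theorem~\ref{t:cptness} (applied on $B_R$ for each $R>0$ and diagonalized in $R$) yields, along a subsequence, a minimal Caccioppoli partition $\mathscr{E}$ of $\R^2$ such that $\rho_k^{-1}(\SSu-x)\to\overline{J_{\mathscr{E}}}$ locally in Hausdorff distance. The origin belongs to $\overline{J_{\mathscr{E}}}$, and it cannot be a regular point of $J_\mathscr{E}$: otherwise $\SSu$ would be Hausdorff-close at scale $\rho_k$ to a line, and Theorem~\ref{t:epsreg} would produce a $C^1$ graph near $x$, contradicting $x\in\Sigma_u$. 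Theorem~\ref{t:Caccest} makes $\Sigma_\mathscr{E}$ locally finite in two dimensions, and combining this with the lower bound $\mathscr{A}_u(x,\rho_k)\ge \eps_0/2$ (which prevents $0$ from becoming regular under further rescaling) one identifies $J_\mathscr{E}$ with a triple junction cone at $0$. Theorem~\ref{t:epsreg} with this cone as reference then shows that at scale $\rho_k$ the set $\SSu$ is a $C^1$ diffeomorphic image of the cone and that $u_{x,\rho_k}$, after subtraction of the piecewise constant mean values on the three sectors, converges to a triple junction function as in \eqref{e:triplejunctions}.

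For the direct direction of (ii), assume $x\in\Sigma_u^{(2)}$, so $\mathscr{A}_u(x,\rho)\to 0$; via the equivalence with its $L^\infty$ counterpart $\mathscr{A}_{u,\infty}$ (established in the Proposition following Theorem~\ref{t:AFP97}), $\SSu\cap B_\rho(x)$ lies, in one-sided Hausdorff distance, within $o(\rho)$ of some line $T_\rho$ through (essentially) $x$. The complementary bound $\mathscr{D}_u(x,\rho)\ge \eps_0/2$ rules out, via Theorem~\ref{t:epsreg} with a line as reference cone, that $\SSu$ be close to the whole of $T_\rho$; hence $\SSu$ must be close to a half-line issuing from $x$, which triggers Theorem~\ref{t:David-zurrione}: at each small $r$ the tip $y_0(r)\in B_{r/4}(x)$ is produced, and uniqueness of the arc structure across nested scales forces $y_0(r)$ to be a single point equal to $x$. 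The explicit expansion of $u$ around $y_0=x$ provided in the same theorem identifies every blow-up of $u$ at $x$ with a crack-tip function as in \eqref{e:crack-tipmodel}. The two converse implications are essentially immediate: in (i), a triple junction function has vanishing approximate gradient, so an Urysohn argument forces $\mathscr{D}_u(x,\rho)\to 0$; in (ii), the jump set of a crack-tip function is contained in a line, so Hausdorff convergence of jump sets forces $\mathscr{A}_u(x,\rho)\to 0$. In both cases $x\in\Sigma_u$ because the blow-up is not regular at the origin.

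The main obstacle lies in the direct implications. In (i), the delicate point is ruling out that the limit partition $\mathscr{E}$ harbours a non-conical or higher-complexity singularity at $0$, which requires the $2$d structure theorem for minimal Caccioppoli partitions beyond the regularity furnished by Theorem~\ref{t:Caccest}. In (ii), the subtle step is upgrading the one-sided, averaged control encoded by $\mathscr{A}_u\to 0$ into the two-sided Hausdorff closeness to a radius required by Theorem~\ref{t:David-zurrione}, and then identifying the tip $y_0$ consistently with $x$ across all scales.
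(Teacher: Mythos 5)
Part (i) of your argument is essentially the paper's: compactness (Theorem~\ref{t:cptness}, i.e.\ Corollary~\ref{c:triple}) produces a minimal Caccioppoli partition whose set of interfaces is singular at the origin, and the identification of that singularity as a triple junction is precisely the content of the planar structure result Proposition~\ref{p:min=con} (each connected component of $\overline{J_{\mathscr E}}\cap \overline B_t$ is a minimal connection and $\HH^0(\overline{J_{\mathscr E}}\cap\partial B_t)\le 3$). This is an available ingredient of the paper, not an unresolved ``obstacle''; local finiteness of $\Sigma_{\mathscr E}$ plus a flatness lower bound would not by themselves rule out, say, a non-conical singularity, so you do need to invoke that structure theorem explicitly. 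The limit function is then a triple junction \emph{function} because the scaled Dirichlet energies vanish, so the blow-up limit is locally constant.

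Part (ii) contains a genuine gap. You exclude that $\SSu\cap B_\rho(x)$ is close to a \emph{full} line by ``Theorem~\ref{t:epsreg} with a line as reference cone together with $\mathscr{D}_u(x,\rho)\ge\eps_0/2$''. But Theorem~\ref{t:epsreg} asserts $\bigl(\text{close to a cone}\bigr)\ \text{and}\ \bigl(\text{small Dirichlet energy}\bigr)\ \Longrightarrow\ \bigl(\text{regular}\bigr)$; its contrapositive at a singular point only says that closeness to the line fails \emph{or} the energy is not small. Since you already know the energy is not small, you learn nothing about the geometry, so the full-line configuration is not excluded. Consequently you never legitimately reach the hypothesis $\mathrm{dist}_{\HH}(\SSu\cap B_r(x),\sigma)<\eps\, r$ of Theorem~\ref{t:David-zurrione}, which moreover requires \emph{two-sided} Hausdorff closeness to a radius, whereas $\mathscr{A}_{u,\infty}(x,\rho)\to 0$ only gives one-sided containment near a line. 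The paper closes this gap at the level of the blow-up limit: $(w,K)$ is a global Bonnet minimizer with $\overline{S_w}$ contained in a line; L\'eger's classification \cite{Leg99} forces $\overline{S_w}$ to be empty, a line, or a half-line; the first two cases are killed by Liouville-type arguments (subharmonicity of $|\nabla w|^2$ with the linear bound $\int_{B_r}|\nabla w|^2dx\le 2\pi r$, and an even reflection across the line), which contradict $\mathscr{D}_w(0,r)\ge\eps_0$; the remaining half-line case is computed explicitly via $z\mapsto z^2$ and a Fourier expansion, yielding \eqref{e:crack-tipmodel}. Some such global ingredient (L\'eger's theorem or a substitute) is indispensable; without it your chain of $\eps$-regularity statements does not close.
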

\begin{proof}
We start off recalling that $(u,\Om\cap\SSu)$ is an \emph{essential pair}, 
 i.e.~$\HH^1(\SSu\cap B_r(x))>0$ for all $x\in \SSu$ and $B_r(x)\subseteq\Om$ (see Theorem~\ref{t:DGCL}).
Then the existence and several properties of blow~up limits are guaranteed by 
\cite[Propositions~37.8, 40.9, Corollary~38.48]{Dav05}. 
More precisely, with fixed a point $x\in\Om$ and a sequence $\rho_k\downarrow 0$, up to subsequences not relabeled, 
we may assume that the sets $K_k:=\rho_k^{-1}(\SSu-x)$ locally Hausdorff converges in $\R^2$ to some closed 
set $K$ as $k\uparrow\infty$. 
Then there is a subsequence (not relabeled for convenience)
and continuous piecewise constant functions $c_k$ on $\R^2\setminus K_k$, such that the pairs $(u_{x,\rho_k},K_k)$
with $u_{x,\rho_k}(y):=\rho_k^{-\sfrac12}\big(u(x+\rho_k\,y)-c_k(y)\big)$ satisfy: 
\begin{itemize}
 \item[(a)] $(u_{x,\rho_k})_k$ converges to some $w$ in $W^{1,2}_{\mathrm{loc}}(\R^2\setminus K)$\footnote{
 As $(K_k)_k$ locally Hausdorff converges to $K$, every $O\subset\hskip-0.125cm\subset\R^2\setminus K$ is 
 contained for $k$ sufficiently big in $\rho_k^{-1}(\Om-x)\setminus K$, so that the convergence of 
 $(u_{x,\rho_k})$ in $W^{1,2}(O)$ is well defined.},
 \item[(b)] $(w,K)$ is a global Bonnet minimizer according to Definition~\ref{d:Bmin}, and an essential pair
  (see \cite[Remark~54.8]{Dav05}),
 \item[(c)] for $\LL^1$ a.e. $r>0$
\[
 \lim_k\int_{B_r\setminus K_k}|\nabla u_{x,\rho_k}|^2dy=\int_{B_r\setminus K}|\nabla w|^2dy,\qquad
 \lim_k\HH^1\big(B_r\setminus K_k\big)=\HH^1\big(B_r\setminus K\big).
\]
 \end{itemize}

To prove the direct implication in case (i) note that by Corollary~\ref{c:triple}, the jump set of any 
blow~up limit in a point $x\in\Sigma_u^{(1)}$ is a minimal Caccioppoli partition ${\mathscr{E}}$, and that 
$0$ is a singular 
point for it (cf. the argument leading to \eqref{e:usc} in the proof of Theorem~\ref{t:AFH2}). Finally, 
Proposition~\ref{p:min=con} below ensures then that $J_{\mathscr{E}}$ is (locally) a triple junction
around $0$. 

For the direct implication in item (ii), as by the very definition of $\Sigma_u^{(2)}$ for $\LL^1$ a.e. $r>0$
\[
\lim_{k}\mathscr{A}_u(x,r\,\rho_k)=\lim_{k}\mathscr{A}_{u_{x,\rho_k}}(0,r)=
\mathscr{A}_w(0,r)=0,
\] 
$\overline{S_w}$ is actually contained in a $1$-dimensional vector space. 
In this case a result by L\'eger \cite{Leg99} ensures 
that $\overline{S_w}$ is either empty or a line or a half line (cf. \cite[Theorem~64.1]{Dav05}).
Therefore, the energy upper bound in \eqref{e:dub} and item (iii) above yield for $\LL^1$ a.e. $r>0$
\begin{equation}\label{e:dircontra}
\lim_{k}\mathscr{D}_u(x,r\,\rho_k)=\lim_{k}\mathscr{D}_{u_{x,\rho_k}}(0,r)=
\mathscr{D}_w(0,r)\in[\e_0,2\pi\,r].
\end{equation}
The possibility that $\overline{S_w}=\emptyset$ is ruled out as follows: in such a case $|\nabla w|^2$ 
would be subharmonic on $\R^2$, being $w$ harmonic there, and thus we would deduce that
\[
 \sup_{B_{\sfrac r2}}|\nabla w|^2\leq \frac{4}{\pi r^2}\int_{B_r}|\nabla w|^2dx
 \stackrel{\eqref{e:dub}}{\leq}\frac 8r.
\]
By letting $r\uparrow\infty$ we would conclude $w$ to be constant, in contrast to \eqref{e:dircontra}. 
Analogously, if $\overline{S_w}$ would be a line, $w$ would be harmonic in $\R^2\setminus\overline{S_w}$.
Considering the restriction of $w$ to one of the two half-spaces forming $\R^2\setminus\overline{S_w}$ and 
performing an even reflection, since $\frac{\partial w}{\partial\nu}=0$ on $\overline{S_w}$ we would get 
an harmonic function $\widetilde{w}$ on $\R^2$ satisfying for all $r>0$
\[
 \int_{B_r}|\nabla \widetilde{w}|^2dx\leq 4\pi r.
\]
Arguing as before $\widetilde{w}$ would be constant. Hence, $w$ would be locally constant on 
$\R^2\setminus\overline{S_w}$, leading again to a contradiction to \eqref{e:dircontra}. 
Therefore, $\overline{S_w}$ is a half-line, that up to a rotation can be written as 
$\overline{S_w}=\{(x,0):\,x\leq 0\}$. Then the map $\widetilde{w}:\{z\in\mathbb{C}:\,\mathrm{Re}z\geq 0\}\to\R$
defined by $\widetilde{w}(z):=w(z^2)$ is harmonic,  $\frac{\partial \widetilde{w}}{\partial\nu}=0$ on 
$\{\mathrm{Re}z= 0\}$ and it satisfies
\begin{equation}\label{e:tildew}
 \int_0^r\int_{-\sfrac\pi 2}^{\sfrac\pi 2}\Big(\rho\Big|\frac{\partial\widetilde{w}}{\partial r}\Big|^2
 +\frac1\rho\Big|\frac{\partial\widetilde{w}}{\partial \theta}\Big|^2\Big)d\rho\, d\theta
=\int_{B_{r^2}}|\nabla w|^2\leq 2\pi r^2.
\end{equation}
In view of \eqref{e:tildew}, the even extension of $\widetilde{w}$ on $B_1$, 
that we still denote by $\widetilde{w}$, has Fourier decomposition
\[
 \widetilde{w}(r,\theta)=\alpha_0+\beta_1\,r\,\sin\theta.
\]
Changing back coordinates and taking into account the minimality of $w$ we get
 \[
 w(r\cos\theta,r\sin\theta)=\alpha_0\pm\sqrt{\frac 2\pi r}\,\sin\frac\theta 2,
\]
the conclusion follows at once.

The reverse implications in both cases are easily concluded. Indeed, in case (i) 
if $(\rho_k)_k$ satisfies 
\[
\lim_k\mathscr{D}_u(x,\rho_k)=\limsup_{\rho\downarrow 0} \mathscr{D}_u(x,\rho),
\]
then the blow~up limit $w$ of $(u_{x,\rho_k})_k$ is a triple junction function
by assumption. By taking into account item (c) above we infer for $\LL^1$ a.e. $r>0$
\[
 \lim_k\mathscr{D}_u(x,r\rho_k)=\lim_k\mathscr{D}_{u_{x,\rho_k}}(0,r)=\mathscr{D}_w(0,r)=0,
\]
thus implying that $x\in \Sigma_u^{(1)}$.

Similarly, one can prove the reverse implication in case (ii).
\end{proof}

Actually, in the first instance of Proposition~\ref{p:bupchar} uniqueness of the blow~up limit 
is ensured by Theorem~\ref{t:epsreg}. As already remarked, case (ii) is still open.

\section{Higher integrability of the gradient in dimension $2$}\label{s:hi2}

The higher integrability of the gradient has been first established by De Lellis and Focardi~\cite{DF1} 
in dimension $2$. 
Following a classical path, the key ingredient to establish Theorem~\ref{t:hi}
is a reverse H\"older inequality for the gradient, which we state 
independently (see \cite[Theorem 1.3]{DF1}).
\begin{theorem}[De Lellis and Focardi \cite{DF1}]\label{t:rH}
For all $q\in(1,2)$ there exist $\rho\in(0,1)$ and $C>0$ such that
\begin{equation}\label{e:rH}
\|\nabla u\|_{L^2(B_\rho)}\leq C\|\nabla u\|_{L^q(B_1)} \qquad 
\text{for any } u\in\MM(B_1).
\end{equation}
\end{theorem}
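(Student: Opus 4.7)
The plan is to argue by contradiction, combining the compactness Theorem \ref{t:cptness} with David's $\varepsilon$-regularity (Theorem \ref{t:epsreg}) and classical elliptic reverse H\"older estimates. Suppose \eqref{e:rH} fails for some $q\in(1,2)$: for every $\rho\in(0,1)$ and $C>0$ one finds a counterexample. Fix $\rho=\sfrac12$ and pick $u_k\in\MM(B_1)$ with $\|\nabla u_k\|_{L^2(B_\rho)}>k\,\|\nabla u_k\|_{L^q(B_1)}$. The density upper bound Proposition \ref{p:dub} yields $\|\nabla u_k\|_{L^2(B_\rho)}^2\leq 2\pi\rho$, so $\|\nabla u_k\|_{L^q(B_1)}\to 0$ and, by H\"older, $\|\nabla u_k\|_{L^1(B_1)}\to 0$. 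Theorem \ref{t:cptness} and the corollary thereafter then produce a subsequence (not relabeled) and a minimal Caccioppoli partition $\mathscr{E}$ such that $\overline{S_{u_k}}\to\overline{J_\mathscr{E}}$ locally in the Hausdorff distance, $\HH^1(S_{u_k}\cap A)\to\HH^1(J_\mathscr{E}\cap A)$ for all open $A\subseteq B_1$, and $\|\nabla u_k\|_{L^2(B_1)}\to 0$.

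The heart of the argument is to prove, for $k$ large, a local reverse H\"older estimate on a geometric cover of $\overline{B_\rho}$. By Theorem \ref{t:Caccest}, in dimension two $J_\mathscr{E}$ is a $C^{1,\sfrac12}$ hypersurface away from a locally finite singular set $\Sigma_\mathscr{E}$. Pick a finite cover of $\overline{B_\rho}$ by balls $B_{r_i/2}(x_i)\subset B_{r_i}(x_i)\subset\hskip-0.125cm\subset B_1$ of three types: (a) $\overline{B_{r_i}(x_i)}\cap\overline{J_\mathscr{E}}=\emptyset$; (b) $\overline{B_{r_i}(x_i)}\cap\overline{J_\mathscr{E}}$ is a piece of a single $C^{1,\sfrac12}$ arc bounded away from $\Sigma_\mathscr{E}$; (c) $B_{r_i}(x_i)$ contains exactly one point of $\Sigma_\mathscr{E}$ at which three smooth arcs meet under $\sfrac{2\pi}{3}$ angles. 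Hausdorff and energy convergence together with Theorem \ref{t:epsreg} ensure that for $k$ large $\overline{S_{u_k}}\cap B_{r_i}(x_i)$ has the corresponding structure and is $C^1$-close to the limit.

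On each ball of the cover $u_k$ is harmonic in every connected component of $B_{r_i}(x_i)\setminus\overline{S_{u_k}}$ and satisfies homogeneous Neumann conditions on $\overline{S_{u_k}}$ (Theorem \ref{t:uSSu}). In case (a), Caccioppoli's inequality combined with Sobolev--Poincar\'e yields
\[
\|\nabla u_k\|_{L^2(B_{r_i/2}(x_i))}\leq C_i\,\|\nabla u_k\|_{L^q(B_{r_i}(x_i))};
\]
in case (b), a Schwarz reflection across the $C^1$ arc reduces matters to the harmonic situation of (a); in case (c), a bilipschitz uniformization onto a fixed triple-junction model followed by elliptic estimates on the three Lipschitz sectors of opening angle $\sfrac{2\pi}{3}$ gives the same inequality, with constants $C_i$ uniform in $k$. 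Summing over the finite cover and using the inclusion $\|a\|_{\ell^2}\leq\|a\|_{\ell^q}$ (valid for $q\leq 2$) together with bounded overlap gives $\|\nabla u_k\|_{L^2(B_\rho)}\leq C\|\nabla u_k\|_{L^q(B_1)}$ for $k$ large, contradicting the choice of $u_k$.

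The main obstacle lies in case (c): establishing a reverse H\"older estimate at a triple junction requires elliptic estimates on a Lipschitz domain whose boundary has three corners of opening angle $\sfrac{2\pi}{3}$, with Neumann data on the smooth faces, and uniform constants under the $C^1$-perturbations supplied by $\varepsilon$-regularity. A secondary subtlety is that the geometric cover depends on the extracted subsequence through the limit $\mathscr{E}$; since the goal is merely to contradict the divergence of the ratio $\|\nabla u_k\|_{L^2(B_\rho)}/\|\nabla u_k\|_{L^q(B_1)}$, working along a single subsequence is sufficient.
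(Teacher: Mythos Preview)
Your contradiction setup, the appeal to Theorem~\ref{t:cptness}, and the reduction to the structure of a minimal Caccioppoli partition all match the paper. The argument diverges at the core technical step.

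The paper does \emph{not} cover $\overline{B_\rho}$ and prove a reverse H\"older inequality on each piece via elliptic regularity. Instead it works on a \emph{single} ball centered at the origin. Using Proposition~\ref{p:min=con}(iii) it finds a radius $t$ for which $\overline{J_{\mathscr E}}\cap B_t$ is empty, a diameter, or a triple junction; by Theorem~\ref{t:epsreg} the same trichotomy holds for $\overline{S_{u_k}}\cap B_t$ for large $k$. On a good slice $\partial B_r$ (with $r\sim t$ chosen by Fubini so that the trace $g_k:=u_k|_{\partial B_r}$ has controlled $W^{1,q}$ norm on each boundary arc) the paper builds a \emph{competitor}: it takes the minimal connection $K_k$ of the finitely many points $\overline{S_{u_k}}\cap\partial B_r$, and on each of the two or three resulting sectors defines a $W^{1,2}$ function $w_k^i$ with trace $g_k$ and $\int_{B_r}|\nabla w_k^i|^2\le C\big(\int_{\gamma^i}|g_k'|^q\big)^{2/q}$ (this is \cite[Lemma~7]{DF1}, essentially the embedding $W^{1,q}(\gamma)\hookrightarrow H^{1/2}(\gamma)$). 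Because $\overline{S_{u_k}}\cap\overline{B_r}$ is a connected set joining the same boundary points, $\HH^1(K_k)\le\HH^1(S_{u_k}\cap B_r)$, so testing minimality of $u_k$ against $w_k$ gives
\[
\int_{B_\rho}|\nabla u_k|^2\le\int_{B_r}|\nabla w_k|^2+\HH^1(K_k)-\HH^1(S_{u_k}\cap B_r)\le C\Big(\int_{B_1}|\nabla u_k|^q\Big)^{2/q},
\]
which is the contradiction.

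Your route via Caccioppoli, reflection, and bilipschitz straightening on a cover is plausible and would work, but it trades the paper's one variational comparison for the elliptic ``main obstacle'' you yourself flag: uniform reverse H\"older estimates for the Neumann Laplacian on sectors of opening $\sfrac{2\pi}{3}$, stable under the $C^1$-perturbations produced by $\eps$-regularity. The paper's competitor argument sidesteps this entirely---minimality absorbs the length term and only a one-dimensional trace estimate is needed---which is precisely what makes the proof short. Conversely, your approach has the advantage of fixing $\rho=\sfrac12$ a priori and needing only the generic structure of $\Sigma_{\mathscr E}$ from Theorem~\ref{t:Caccest}, rather than the finer Proposition~\ref{p:min=con}(iii) at the origin.
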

Using the obvious scaling invariance of \eqref{e:ms}, Theorem~\ref{t:rH} 
yields a corresponding reverse H\"older inequality for balls of arbitrary 
radius. Theorem~\ref{t:hi} is then a consequence of a by now classical result. 
\begin{theorem}[Giaquinta and Modica~\cite{GM}]
 Let $v\in L^q_{loc}(\Omega)$, $q>1$, be nonnegative such that for some constants 
 $\beta>0$,  $\lambda\geq 1$ and $R_0>0$
 \[
  \left(\fint_{B_r(z)}v^q\,dy\right)^{\sfrac 1q}\leq \beta\fint_{B_{\lambda\,r}(z)}v\,dy
 \]
for all $z\in\Omega$, $r\in\big(0,R_0\wedge \mathrm{dist}(z,\partial\Omega)\big)$.

 Then $v\in L^p_{loc}(\Omega)$ for some $p>q$ and there is $C=C(\beta,n,q,p,\lambda)>0$
 such that 
 \[
 \left(\fint_{B_r(z)}v^p\,dy\right)^{\sfrac 1p}\leq C\,
 \left(\fint_{B_{2r}(z)}v^q\,dy\right)^{\sfrac 1q}.
 \]
\end{theorem}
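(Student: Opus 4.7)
The plan is to follow the classical stopping-time / Calder\'on-Zygmund scheme for self-improving integrability inequalities: the task is essentially real-analytic, independent of the Mumford-Shah structure. I would fix $z\in\Omega$ and a radius $R$ with $B_{4\lambda R}(z)\subset\Omega$ so that the reverse H\"older hypothesis is available on all sub-balls of interest, and aim at the distributional bound
\[
\bigl|\{x\in B_R(z):\,v(x)>t\}\bigr|\,\leq\,C\,t^{-p}
\]
for $t$ above a threshold $t_0$ proportional to $\fint_{B_{2R}(z)}v$, with some exponent $p>q$. Once such a bound is in hand, integrating the distribution function from $t_0$ to $+\infty$, and controlling the sublevel $\{v\leq t_0\}$ trivially by $t_0^p|B_R|$, yields the $L^p$-estimate claimed.

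The core of the argument is a good-$\lambda$ type inequality for $v^q$. For each level $t>t_0$ and each $x$ in the superlevel set $E_t:=\{v>t\}\cap B_R(z)$, I would use Lebesgue differentiation together with the choice of $t_0$ to select a maximal radius $r_x>0$ such that
\[
\fint_{B_{r_x}(x)}v\,dy\,=\,\eta^{-1}t,
\]
where $\eta\in(0,1)$ is a free parameter to be tuned. A Vitali covering then produces a countable disjoint subfamily $\{B_{r_i}(x_i)\}_i$ whose fixed dilates cover $E_t$ up to a null set, and by maximality the averages on any larger concentric ball are strictly smaller than $\eta^{-1}t$.

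Applying the reverse H\"older hypothesis on each $B_{r_i}(x_i)$ gives
\[
\int_{B_{r_i}(x_i)}v^q\,dy\,\leq\,\beta^q\,r_i^{\,n(1-q)}\Bigl(\int_{B_{\lambda r_i}(x_i)}v\,dy\Bigr)^q.
\]
I would then split the inner integral according to whether $v\leq\eta t$ or $v>\eta t$: the first piece is controlled by $\eta t\,|B_{\lambda r_i}|$, while the second is a portion of $E_{\eta t}$. Summing over $i$, using disjointness of the $B_{r_i}$ and bounded overlap of the $B_{\lambda r_i}$, and discarding the harmless part, leads to an inequality of the form
\[
\int_{E_t}v^q\,dy\,\leq\,A\,\eta^{q-1}\int_{E_{\eta t}}v^q\,dy\,+\,B\,t^q\,|E_{\eta t}|.
\]

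The main obstacle, and the step requiring the most care, is to choose $\eta$ so that the self-improving factor $A\eta^{q-1}$ can be absorbed. Once this is achieved, a standard iteration lemma on nonincreasing distribution functions (see, e.g., the classical argument in Giaquinta's book) promotes the relation above to $|E_t|\leq Ct^{-p}$ for some $p=q+\varepsilon$ with $\varepsilon=\varepsilon(\beta,\lambda,n,q)>0$; equivalently, $v\in L^p_{\mathrm{loc}}$ with the quantitative local bound. A final covering of $B_r(z)$ by interior balls, together with the scaling invariance of the hypothesis, delivers the stated averaged reverse H\"older-type estimate on $B_{2r}(z)$.
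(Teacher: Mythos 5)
The paper does not actually prove this theorem: it is quoted from Giaquinta and Modica \cite{GM} as a known result, so there is no internal proof to compare yours against. Your outline is the classical route to the Gehring-type lemma --- a stopping-time/Calder\'on--Zygmund decomposition of the superlevel sets, the reverse H\"older inequality applied on each stopping ball, a resulting level-set inequality, and the iteration lemma for distribution functions --- and with the standard corrections it does yield the statement.

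Two points in your sketch need repair. First, for $x\in E_t$ you cannot in general select a radius with $\fint_{B_{r_x}(x)}v\,dy=\eta^{-1}t$: Lebesgue differentiation only gives $\fint_{B_r(x)}v\,dy\to v(x)>t$ as $r\downarrow 0$, and $v(x)$ need not exceed $\eta^{-1}t$, so the averages may never reach that level. The standard fix is to stop where the average equals $t$ itself (possible for a.e.\ $x\in E_t$ once $t$ exceeds a suitable multiple of $\fint_{B_{2R}(z)}v\,dy$), or equivalently to cover only $E_{\eta^{-1}t}$ and relabel the levels. Second, the ``absorption'' as you describe it cannot be performed at fixed $t$: since $E_{\eta t}\supset E_t$, the term $A\eta^{q-1}\int_{E_{\eta t}}v^q\,dy$ dominates the left-hand side, and subtracting it yields nothing. (Moreover, estimating $t^{q-1}\int_{E_{\eta t}}v\,dy$ by the $q$-th power of $v$ produces the factor $\eta^{1-q}$, which is large for small $\eta$, not $\eta^{q-1}$.) What actually happens in the classical argument is that one multiplies the level-set inequality by $t^{p-q-1}$, integrates in $t$ from $t_0$ up to a truncation level, applies Fubini, and only then absorbs; the smallness that makes the absorption work is that of $p-q$, not of $\eta$. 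This is precisely the content of the iteration lemma you invoke at the end, so once that step is stated and used correctly the proof closes.
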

The exponent $p$ could be explicitly estimated in terms of $q$, $C$ and $\rho$. 
However, since our argument for Theorem~\ref{t:rH} is indirect, we do not have any explicit estimate for $C$ 
($\rho$ can instead be computed). Hence, combining Theorem~\ref{t:hi} with 
\cite{AFH} we can only conclude that the dimension of the singular set of 
$\SSu$ is strictly smaller than $1$. 
Guy David pointed out that the corresponding dimension estimate could be made explicit. 
In fact, he suggested to the Authors of \cite{DF1} that also the constant $C$ in Theorem~\ref{t:rH} 
might be estimated: a viable strategy would combine the core argument of this 
paper with some ideas from \cite{Dav05} (the proof of Theorem~\ref{t:rH} given 
here makes already a fundamental use of the paper \cite{Dav05}, but depends 
only on the $\eps$-regularity theorems for ``triple junctions'' and ``segments'' stated in Section~\ref{s:regSu}). 
However, the resulting estimate would give an extremely small number, whereas 
the proof would very likely become much more complicated. 

In spite of the dimensional restriction, the indirect proof has as interesting side 
results Theorem~\ref{t:cptness} and its related consequences highlighted in 
Section~\ref{s:cpt}.
No dimensional limitation is present in Theorem~\ref{t:cptness}, instead dimension $2$ 
enters dramatically in the proof of Theorem~\ref{t:rH} as the structure of minimal Caccioppoli 
partitions in $\R^2$ can be described precisely via minimal connections. Recall that 
a \emph{minimal connection} of $\{q_1,\ldots,q_N\}\subset \R^2$ is any minimizer of the Steiner problem
\[
\min\left\{\HH^1(\Gamma):\,\Gamma 
\text{ closed and connected, and }q_1,\ldots,q_N\in\Gamma\right\}.
\]
\begin{proposition}[Proposition~11, Lemma~12 \cite{DF1}]\label{p:min=con}
Let $\mathscr{E}$ be a minimal Caccioppoli partition in $\Om\subset\R^2$, then 
\begin{itemize}
\item[(i)] $\HH^0(\overline{J_\mathscr{E}}\cap\partial B_\rho(x))<+\infty$
if $B_\rho(x)\subset\hskip-0.125cm\subset \Omega$;

\item[(ii)] $\HH^0(K\cap \partial B_\rho(x))\geq 2$ for each connected component $K$ of 
$\overline{J_\mathscr{E}}\cap \overline{B}_\rho(x)$, and it is a minimal connection
of $K\cap \partial B_\rho(x)$;

\item[(iii)] if $\Om=B_1$, then there exists $\rho_0\in(0,1)$ such that 
for all $t\in(0,\rho_0)$
\[
\HH^0(\overline{J_{\mathscr{E}}}\cap\partial B_{t})\leq 3,\; \text{ and }\;
\HH^1(\overline{J_{\mathscr{E}}}\cap B_{t})\leq 3t.
\]
\end{itemize}
\end{proposition}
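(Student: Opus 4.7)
My plan is to prove (ii) first by a direct competitor argument, deduce (i) from (ii) combined with the Massari--Tamanini regularity of Theorem~\ref{t:Caccest}, and obtain (iii) by a local structural analysis at the centre of $B_1$.

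\emph{Proof of (ii).} Fix a connected component $K$ of $\overline{J_\mathscr E}\cap\overline{B_\rho(x)}$. Since this intersection is compact and the remaining components are closed and disjoint from $K$, they sit at positive distance from $K$, and I can choose an open $V\subset\subset\Omega$ with $K\subset V$, $V\cap\overline{J_\mathscr{E}}=K$, and $V\cap\partial B_\rho(x)$ a relatively open neighbourhood of $K\cap\partial B_\rho(x)$ in $\partial B_\rho(x)$. For any connected $K'\subset V\cap\overline{B_\rho(x)}$ with $K\cap\partial B_\rho(x)\subset K'$, I will build a competitor $\mathscr F$ agreeing with $\mathscr E$ on $\Omega\setminus V$, having $J_\mathscr F\cap V=K'$, and whose cells in $V\setminus K'$ are labelled so as to match the trace of $\mathscr E$ on $\partial V$. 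The locality of the modification and the invariance of the boundary trace will give the length balance $\HH^1(J_{\mathscr F})-\HH^1(J_{\mathscr E})=\HH^1(K')-\HH^1(K)$; minimality of $\mathscr E$ will then yield $\HH^1(K)\leq\HH^1(K')$ for every admissible $K'$, i.e.\ $K$ is a minimal connection of $K\cap\partial B_\rho(x)$. The bound $\HH^0(K\cap\partial B_\rho(x))\geq 2$ is immediate, since otherwise the minimal connection would have zero length, forcing $\HH^1(K)=0$ and reducing $K$ to a single point.

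\emph{Proof of (i).} In dimension two, Theorem~\ref{t:Caccest} gives that $\Sigma_\mathscr E\cap\overline{B_\rho(x)}$ is finite and that $\overline{J_\mathscr E}\setminus\Sigma_\mathscr E$ is a $C^{1,1/2}$ one-manifold. The minimal-connection property of (ii) applied on small scales forces every arc of this manifold to be a straight line segment, and at each point of $\Sigma_\mathscr E$ exactly three segments must meet at angles of $2\pi/3$ (a Plateau condition obtained by competing against small triple-junction configurations). Hence $\overline{J_\mathscr E}\cap\overline{B_\rho(x)}$ is a union of line segments whose endpoints lie in $(\Sigma_\mathscr E\cap\overline{B_\rho(x)})\cup\partial B_\rho(x)$; each segment meets $\partial B_\rho(x)$ in at most two points, so it suffices to bound the total number of edges. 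This will follow combining the density upper bound of Theorem~\ref{t:Caccest} with (ii): an infinite family of edges would force either (a) accumulation of distinct endpoints on $\partial B_\rho(x)$ at a single limit $p_\ast\notin\Sigma_\mathscr E$, contradicting the $C^{1,1/2}$ single-arc structure of $\overline{J_\mathscr E}$ near $p_\ast$, or (b) pairs of endpoints collapsing so that the corresponding minimal connections shrink, incompatible with the triple-junction configuration of $\overline{J_\mathscr E}$ at the limit point.

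\emph{Proof of (iii).} Using Theorem~\ref{t:Caccest} in dimension two, choose $\rho_0>0$ such that $\Sigma_\mathscr E\cap B_{\rho_0}\subseteq\{0\}$. For $t\in(0,\rho_0)$, by (i)--(ii) the set $\overline{J_\mathscr E}\cap B_{\rho_0}$ consists, away from $0$, of finitely many line segments, and three cases arise: if $0\notin\overline{J_\mathscr E}$ then no segment can terminate inside $B_{\rho_0}\setminus\{0\}$, so $\overline{J_\mathscr E}\cap B_t=\emptyset$ for $t$ small; if $0\in\overline{J_\mathscr E}\setminus\Sigma_\mathscr E$ a single straight line passes through $0$, giving $\HH^0=2$ and $\HH^1(\overline{J_\mathscr E}\cap B_t)=2t$; and if $0\in\Sigma_\mathscr E$, Plateau's condition produces exactly three half-segments emanating from $0$ at angles $2\pi/3$, giving $\HH^0=3$ and $\HH^1(\overline{J_\mathscr E}\cap B_t)=3t$. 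In every case $\HH^0(\overline{J_\mathscr E}\cap\partial B_t)\leq 3$ and $\HH^1(\overline{J_\mathscr E}\cap B_t)\leq 3t$.

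\textbf{Main obstacle.} The crux of the plan is the competitor construction in (ii): I have to verify that an arbitrary connected $K'$ really produces a legitimate Caccioppoli partition $\mathscr F$ agreeing with $\mathscr E$ on $\Omega\setminus V$, with no spurious interface on $\partial V$ and a canonical labelling of the cells of $V\setminus K'$. The separation of $K$ from the other components of $\overline{J_\mathscr E}\cap\overline{B_\rho(x)}$ and the choice of $V$ so that $\partial V\cap\overline{J_\mathscr E}=K\cap\partial B_\rho(x)$ are precisely what make the trace of $\mathscr E$ on $\partial V$ determine the labels unambiguously. A secondary technicality, used in both (i) and (iii), is the derivation of the Plateau triple-junction law from minimality alone, which again rests on a local competitor analysis with small Steiner trees; this is standard in two dimensions but must be invoked carefully.
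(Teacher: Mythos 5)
Your overall strategy (cut-and-paste competitors for (ii), structure plus counting for (i), a case analysis at the centre for (iii)) is the natural one, but as written the argument for (ii) has two genuine gaps, and the second makes your order of deduction circular. First, by restricting competitors to connected sets $K'\subseteq V\cap\overline{B_\rho(x)}$ you can only conclude that $K$ minimizes length among connected sets containing $P:=K\cap\partial B_\rho(x)$ \emph{and contained in the small neighbourhood $V$ of $K$}; this does not say that $K$ is a minimal connection of $P$, since a Steiner minimizer of $P$ has no reason to lie in $V$. To obtain the actual statement you must admit every compact connected $K'\subseteq\overline{B_\rho(x)}$ with $P\subseteq K'$ (this class suffices because a Steiner minimizer of points of $\partial B_\rho(x)$ may be assumed to lie in the closed disk, by projecting onto the convex hull of $P$), and the competitor partition must then be built from $\bigl((\overline{J_\mathscr{E}}\cap\overline{B_\rho(x)})\setminus K\bigr)\cup K'$ rather than inside an isolating neighbourhood. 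The labelling problem you flag as the ``main obstacle'' is precisely the content of the proof: one needs the planar separation lemma that a compact connected subset of $\overline{B_\rho(x)}$ containing $P$ separates, inside $B_\rho(x)$, any two adjacent arcs of $\partial B_\rho(x)\setminus P$, so that the cells of $\mathscr{E}$ outside $B_\rho(x)$ extend to a legitimate Caccioppoli partition with $J_{\mathscr F}\cap B_\rho(x)$ contained in the modified interface (and with $\leq$, not $=$, in the length balance, since portions of $K'$ need not be jump). You state this property but never prove it, and it is not automatic.

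Second, the isolation step is unjustified at the point where you invoke it: distinct connected components of a compact set need not lie at positive distance from one another (they can accumulate), so ``the remaining components sit at positive distance from $K$'' requires proof. For $\overline{J_\mathscr{E}}\cap\overline{B_\rho(x)}$ it follows only from the local structure --- finitely many straight segments meeting in threes at the finitely many points of $\Sigma_{\mathscr{E}}$ --- which is essentially statement (i). Hence you cannot prove (ii) first and deduce (i) from it. The fix is to reverse the order: from Theorem~\ref{t:Caccest} and the local area-minimality of each single regular interface (replacing one $C^{1,\sfrac12}$ arc with fixed endpoints is always admissible, so each arc is a segment and no closed loops occur), together with the density upper bound in a slightly larger ball $B_{\rho'}(x)$ (which forces any segment crossing $\partial B_\rho(x)$ to have length at least $2\sqrt{\rho'^2-\rho^2}$), one gets directly that $\overline{J_\mathscr{E}}\cap\overline{B_\rho(x)}$ is a finite union of segments and triple points; this yields (i) and the mutual separation of the components, after which (ii) (with the unrestricted competitor class) and (iii) go through as you outline. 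Note that the paper itself does not prove this proposition but quotes it from \cite{DF1}, so there is no internal proof to compare with; the above is what your sketch needs in order to be complete.
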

We are now ready to sketch the proof of Theorem~\ref{t:rH} in $2$-dimensions following
De Lellis and Focardi \cite{DF1}.
\begin{proof}[Proof of Theorem~\ref{t:rH}]
We fix an exponent $q\in (1,2)$ and a suitable radius $\rho$ (whose choice 
will be specified later) for which \eqref{e:rH} is false, that is
\begin{equation}\label{e:rH_contra}
\|\nabla u_k\|_{L^2(B_\rho)}\geq k\|\nabla u_k\|_{L^q(B_1)}\quad
\text{for a sequence }(u_k)_{k\in\N}\in \MM (B_1).
\end{equation}
Since the Mumford and Shah energy of any $u\in \MM (B_1)$ can be easily
bounded a priori by $2\pi$ (cf. Proposition~\ref{p:dub}), we have 
$\|\nabla u_k\|_{L^q(B_1)}\to 0$. Theorem~\ref{t:cptness} and 
Proposition~\ref{p:min=con} then show that:
\begin{itemize}
\item[(i)] The $L^2$ energy of the gradients of $u_k$ converge to $0$;
\item[(ii)] $\overline{S_{u_k}}$  converge in the local Hausdorff metric 
to the (closure of) set of interfaces of a minimal Caccioppoli partition 
$\overline{J_{\mathscr{E}}}$;
\item[(iii)] $\HH^0(\overline{J_{\mathscr{E}}}\cap\partial B_{t})\leq 3$
for $t\in(0,\rho_0)$.
\end{itemize}
An elementary argument shows the existence of $t\geq \rho_0/4$ such that
\begin{itemize}
  \item[(a)] either $\overline{J_{\mathscr{E}}}\cap B_{t}=\emptyset$; 
  
  \item[(b)] or $\overline{J_{\mathscr{E}}}\cap B_{t}$ is a 
  segment and $\partial B_{t}\setminus \overline{J_{\mathscr{E}}}$ 
  is the union of two arcs each with length $<\frac{4\pi}{3}t$; 
  
  \item[(c)] or $\overline{J_{\mathscr{E}}}\cap B_{t}$ is a triple junction
  and $\partial B_{t}\setminus\overline{J_\mathscr{E}}$ the union of  
  three arcs each with length $<(2\pi-\frac{1}{8})t$. 
  
\end{itemize}
In any case we set $\rho:=\rho_0/9$ (cf. with \eqref{e:rH_contra}).
By Theorem~\ref{t:epsreg} (we keep the notation introduced there), we may find a constant 
$\beta\in(0,1/3)$ such that for all $k$ sufficiently big one of the following alternatives happens
\begin{itemize}
  \item[(a$_1$)] $\overline{S_{u_k}}\cap B_{t}=\emptyset$;

  \item[(b$_1$)] For each $s\in ((1-\beta)t,t)$,  
  $\partial B_{s}\setminus \overline{S_{u_k}}$ is the union of two arcs 
  $\gamma_1^k$ and $\gamma_2^k$ each with length $< (2\pi-\frac{1}{9})s$, 
  whereas $\overline{S_{u_k}}\cap B_s$ is connected and divides $B_s$ in two components 
  $B_1^k$, $B_2^k$ with $\partial B_i^k=\gamma_i^k\cup(\overline{S_{u_k}}\cap\overline{B_s})$;

  \item[(c$_1$)] For each $s\in ((1-\beta)t,t)$, $\partial B_s\setminus \overline{S_{u_k}}$ 
  is the union of three arcs $\gamma_1^k$, $\gamma_2^k$ and $\gamma^k_3$ each with 
  length $<(2\pi-\frac{1}{9})s$, whereas $\overline{S_{u_k}}\cap B_s$ is connected and divides 
  $B_s$ in three connected components $B_1^k$, $B_2^k$ and $B_3^k$ with 
  $\partial B_i^k\subset\gamma_i^k\cup  (\overline{S_{u_k}}\cap \overline{B_s})$.
\end{itemize}

Choose then $r\in(\sfrac 23t,t)$ and a subsequence (not relabeled) such that  
\[
g_k:=u_k|_{\partial B_r}\in W^{1,q}(\gamma,\HH^1)\qquad\text{for any connected component 
of $\partial B_r\setminus S_{u_k}$}
\] 
and
\[
\int_{\partial B_r\setminus S_{u_k}}|g_k^\prime|^qd\HH^1\leq
\frac 3{t}\int_{B_{t}}|\nabla u_k|^q\,dx
\leq\frac {12}{\rho_0}\int_{B_1}|\nabla u_k|^q\,dx.
\]

Let us first deal with the (easier) case (a). By compactness, as  
$\overline{J_{\mathscr{E}}}\cap B_t=\emptyset$ then
\[
\overline{S_{u_k}}\cap B_t=\emptyset\quad\text{for } k\gg 1.
\]
Hence, being $u_k\in\MM(B_1)$ we get that $u_k$ is the harmonic 
extension of its trace in $B_t$. 
In conclusion, as $\rho=\sfrac{\rho_0}9<\sfrac 23\,t<r$ we have
\begin{multline*}
\int_{B_{\sfrac{\rho_0}9}} |\nabla u_k|^2dx \leq \int_{B_r}|\nabla u_k|^2dx
\leq C\,\min_\lambda \|g_k-\lambda\|_{H^{\sfrac12}(\partial B_r)}^2\\ 
\stackrel{W^{1,q}\hookrightarrow H^{\sfrac12}}{\leq} 
C\,\left(\int_{\partial B_r}|g_k^\prime|^q\,d\HH^1\right)^{\sfrac2q}
\le C\,\left(\frac {12}{\rho_0}\int_{B_1}|\nabla u_k|^q\,dx\right)^{\sfrac2q},
\end{multline*}
for some $C>0$ (independent of $k$), contradicting \eqref{e:rH_contra}.

In case (b) or (c) hold the construction is similar. 
Denote by $K_k$ the minimal connection relative to $\overline{S_{u_k}}\cap\partial B_r$.
Then $K_k$ splits $\overline{B_r}$ into two (case (b$_1$)) or three
(case (c$_1$)) regions denoted by $B^i_r$.
Let $\gamma^i$ be the arc of $\partial B_r$ contained in the boundary 
of $B^i_r$. Having all the arcs length uniformly bounded from below, 
it is easy to check that for all $i$ we can find a function $w_k^i\in W^{1,2}(B_r)$ with 
boundary trace $g_k$ and satisfying for some absolute constant $C>0$
\begin{equation}\label{e:wij}
\int_{B_r}|\nabla w_k^i|^2\,dx\leq C
\left(\int_{\gamma^i}|g_k^\prime|^q\,d\HH^1\right)^{\sfrac 2q}
\end{equation}
(cf. \cite[Lemma~7]{DF1}). Denote by $w_k$ the function equal to $w^i_k$ on $B^i_k$, 
then $w_k\in SBV(B_r)$ and $S_{w_k}\subseteq K_k$. The minimality of $u_k$ implies then that
\begin{multline*}
\int_{B_{\sfrac{\rho_0}9}} |\nabla u_k|^2 \leq \int_{B_r}|\nabla u_k|^2\leq\int_{B_r}|\nabla w_k|^2+
\HH^1 (K_k) - \HH^1 (S_{u_k}\cap B_r)\\
\leq\int_{B_r}|\nabla w_k|^2\stackrel{\eqref{e:wij}}{\leq} 
C\left(\int_{\partial B_r\setminus S_{u_k}}|g_k^\prime|^q\,d\HH^1\right)^{\sfrac 2q}
\leq C\left(\frac {12}{\rho_0}\int_{B_1}|\nabla u_k|^q\,dx\right)^{\sfrac 2q}, 
\end{multline*}
contradicting \eqref{e:rH_contra}.
\end{proof}

\section{Higher integrability of the gradient in any dimension: Porosity of the Jump set}\label{s:porosity}

A central role in establishing the higher integrability of the gradient in any dimension shall
be played by the following improvement of Theorem~\ref{t:AFP97}.

\begin{theorem}[Rigot \cite{Rig00}, Maddalena and Solimini \cite{MadSol01}]\label{t:RMS}
There are dimensional constants $\eps(n),\, C_0(n)>0$ such that 
for every $\eps\in(0,\eps(n))$ there exists $\alpha_\eps\in(0,\sfrac 12)$ such that if $u\in\MM(B_2)$ 
and $B_\rho(x)\subset\Om$, with $x\in \SSu$ and $\rho\in(0,1)$, then there exists a ball 
$B_r(y)\subset B_\rho(x)$ with radius $r\in(\alpha_\eps\rho,\rho)$ such that 
\begin{itemize}
 \item[(i)] $\mathscr{D}_u(y,r)+\mathscr{A}_u(y,r)<\eps_0$, $\eps_0>0$ the constant in Theorem~\ref{t:AFP97};
 \item[(ii)] $\SSu\cap B_r(y)$ is a $C^{1,\gamma}$ graph, for all $\gamma\in(0,1)$, containing $y$;
 \item[(iii)] 
\begin{equation}\label{e:neureg}
 r\|\nabla u\|^2_{L^\infty(B_r(y))}\leq C_0\,\eps.
\end{equation} 
\end{itemize}
\end{theorem}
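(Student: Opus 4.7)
The plan is to combine a quantitative porosity argument for $\SSu$ with the $\eps$-regularity theorem of Ambrosio--Fusco--Pallara (Theorem~\ref{t:AFP97}) and the higher regularity of $u$ at a regular interface encapsulated in Theorem~\ref{t:uSSu}.

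The core of the argument is property (i): for any prescribed $\eps \in (0, \eps_0)$, there must exist $y \in \SSu \cap B_\rho(x)$ and $r \in (\alpha_\eps \rho, \rho)$ on which $\mathscr{D}_u(y,r) + \mathscr{A}_u(y,r) < \eps$. I would prove this by a stopping-time/iterated-covering scheme keyed to the global energy budget $\MS(u, B_\rho(x)) \leq n\omega_n \rho^{n-1}$ from Proposition~\ref{p:dub}. Arguing by contradiction, assume every candidate sub-ball is ``bad'', i.e.\ satisfies $\mathscr{D}_u + \mathscr{A}_u \geq \eps$. Using Vitali's covering lemma together with the density lower bound of Theorem~\ref{t:CL}, at each dyadic scale $2^{-k}\rho$ one can extract a pairwise disjoint family of bad sub-balls of that radius, centered on $\SSu$, of cardinality at least $c_n 2^{k(n-1)}$. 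Summing the scaled-energy contributions over $k = 0, 1, \ldots, K$ (and paying only bounded overlap for the Dirichlet part) yields a lower bound of order $K\eps\rho^{n-1}$, which exceeds the global budget $n\omega_n \rho^{n-1}$ once $K \geq K_0(n, \eps)$. This forces a good ball to appear at depth at most $K_0$, giving $\alpha_\eps \geq 2^{-K_0}$.

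Granted (i), property (ii) is immediate from the characterization \eqref{e:sigma}: with $\eps < \eps_0$ we have $y \notin \Sigma_u$, so Theorem~\ref{t:AFP97} places $\SSu$ in a $C^{1,\gamma}$ neighborhood of $y$; a routine covering/scaling step, absorbable into $\alpha_\eps$ by shrinking $r$ by a fixed dimensional factor, extends the graph structure to the whole ball $B_r(y)$. For (iii), I would tighten the choice in Step~1 so that the Dirichlet part alone obeys $\mathscr{D}_u(y, r) \leq \eps$ (at the cost of a smaller $\alpha_\eps$, still depending only on $n$ and $\eps$). By Theorem~\ref{t:uSSu}, $u$ extends smoothly on each side of the $C^{1,\gamma}$ interface and satisfies the homogeneous Neumann condition on it, as dictated by the inner-variation Euler--Lagrange equation \eqref{e:EL}. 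Flattening the interface by a local $C^{1,\gamma}$ diffeomorphism and performing an even reflection produces a weak solution of a uniformly elliptic divergence-form equation on $B_{r/2}(y)$; standard Moser/De~Giorgi--Nash interior estimates then give
\[
\|\nabla u\|_{L^\infty(B_{r/2}(y))}^2 \leq \frac{C}{r^n}\int_{B_r(y)} |\nabla u|^2\, dx = \frac{C}{r}\,\mathscr{D}_u(y,r) \leq \frac{C\eps}{r},
\]
and a further harmless shrinking of $r$ (again absorbed into $\alpha_\eps$) yields \eqref{e:neureg} with $C_0 = C(n)$.

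The main obstacle is Step~1: the quantitative porosity at scale $\alpha_\eps\rho$ with $\alpha_\eps$ bounded below by a positive constant depending only on $n$ and $\eps$. The bookkeeping of the iterated covering is delicate because bad balls at different scales may nest, so one has to ensure that their contributions to the total energy genuinely add up despite this nesting. The key tension is between the density upper bound, which caps the total energy, and the density lower bound, which forces enough singular centers at each scale to generate many disjoint bad balls; turning this tension into a concrete finite depth $K_0 = K_0(n, \eps)$ is the crux of the argument.
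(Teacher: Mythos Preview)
The paper does not prove this theorem; it is quoted from the literature (Rigot, Maddalena--Solimini) and used as a black box in Section~\ref{s:hi}. So there is no ``paper's own proof'' to compare against, and your sketch must be judged on its merits.

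Your Step~1 contains a genuine gap. You argue that if every sub-ball at every dyadic scale $2^{-k}\rho$, $k\le K$, is bad, then summing the scaled excesses yields a lower bound of order $K\eps\rho^{n-1}$ on $\MS(u,B_\rho(x))$, contradicting the upper bound $n\omega_n\rho^{n-1}$. But this sum is illusory: at each fixed scale $k$ the $\sim 2^{k(n-1)}$ disjoint bad balls contribute total Dirichlet energy $\gtrsim\eps\rho^{n-1}$, yet the balls at scale $k+1$ sit \emph{inside} those at scale $k$, so you are integrating $|\nabla u|^2$ over nested regions and counting essentially the same energy $K$ times. The quantities $\mathscr{D}_u$ and $\mathscr{A}_u$ are scale-invariant ratios; having them bounded below along an infinite chain of nested balls is perfectly compatible with the global energy bound (think of the crack-tip, where $\mathscr{D}_u(0,\rho)\equiv 1$ at every scale). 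No pigeonhole on the raw energy budget will force a good ball to appear at controlled depth. You acknowledge the nesting issue in your last paragraph, but it is not a bookkeeping subtlety---it is fatal to the scheme as written.

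The actual arguments in \cite{Rig00,MadSol01} (and David's $2$d precursor) are substantially deeper. They do not pigeonhole on the total energy but instead establish a \emph{Carleson packing condition} on the set of ``bad'' scales and locations: roughly, the quasi-minimality of $\SSu$ implies that the collection of balls where the flatness (or a related excess) exceeds $\eps$ satisfies a Carleson estimate, which in turn forces, inside every $B_\rho(x)$, a sub-ball of radius $\ge\alpha_\eps\rho$ where the excess is small. This is the ``big pieces of $C^{1,\alpha}$-graphs'' mechanism of uniform rectifiability, and it uses the minimality to produce competitors (not just the upper bound \eqref{e:dub}) in order to control how often the excess can fail to decay. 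Your Steps~2 and~3 (deducing (ii) and (iii) from (i) via Theorems~\ref{t:AFP97} and~\ref{t:uSSu} plus boundary Schauder/Moser estimates) are correct in outline, but they rest entirely on Step~1.
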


We can restate the result above by saying that $\Sigma_u$ is $(\alpha_\eps,1)$-\emph{porous} in 
$\SSu$ according to the following definition.
\begin{definition}
Given a metric space $(X,d_X)$, a subset $K$ is $(\alpha,\delta)$-\emph{porous} in $X$, with 
 $\alpha\in(0,\sfrac12)$ and $\delta>0$, if for every $x\in X$ and $\rho\in(0,\delta)$ we can 
 find $y\in B_\rho(x)$ and $r\in(\alpha\rho,\rho)$ such that 
 \[
 B_r(y)\subset B_\rho(x)\setminus K.          
 \]
\end{definition}
Clearly, in our case $X=\overline{S_u}$, $K=\Sigma_u$ and $d_X$ is the metric induced 
by the Euclidean one. 
The Hausdorff dimension estimate in the papers by David \cite{Dav96}, Rigot \cite{Rig00} and 
Maddalena and Solimini \cite{MadSol01} follows from the porosity property in Theorem~\ref{t:RMS} 
and Theorem~\ref{t:porosity} below. 

To this aim we recall that, given an Alfhors regular metric space $(X,d_X)$ of dimension $\ell$, 
i.e.~$(X,d_X)$ is complete and there is $\Lambda>0$ such that
\[
 \Lambda^{-1} r^\ell\leq \HH^\ell(B_r(z))\leq \Lambda r^\ell
\quad\text{for all $z\in X$ and $r>0$},
 \]
the \emph{lower/upper Minkowski dimension} of $K$ is defined as 
\[
\underline{\mathrm{dim}}_{\mathcal{M}}K:=\inf\{s\in(0,\ell]:\,\mathcal{M}_\ast^s(K)=0\},\quad
\overline{\mathrm{dim}}_{\mathcal{M}}K:=\inf\{s\in(0,\ell]:\,\mathcal{M}^{\ast\, s}(K)=0\},
\]
where the \emph{lower/upper Minkowski content} is given by
\[
\mathcal{M}_\ast^s(K):=\liminf_{r\downarrow 0}\frac{\HH^\ell\big((K)_r\big)}{r^{\ell-s}},
\qquad
\mathcal{M}_{\ast\,s}(K):=\limsup_{r\downarrow 0}\frac{\HH^\ell\big((K)_r\big)}{r^{\ell-s}}.
\]
and
\begin{equation}\label{e:rinvolucro}
(K)_r:=\{x\in X:\,d_X(x,K)<r\}.
 \end{equation}
 Let us first relate the $2$-dimensions introduced above.
\begin{lemma}\label{l:HMdim}
For all sets $K\subset X$ 
\[
\dimh K\leq\mathrm{dim}_\MM K.
\]
\end{lemma}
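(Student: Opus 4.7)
The plan is to prove the stronger statement $\dimh K \leq \underline{\mathrm{dim}}_{\MM} K$ via a routine covering argument, from which the stated inequality follows a fortiori. For any $s > \underline{\mathrm{dim}}_{\MM} K$ the hypothesis $\mathcal{M}_\ast^s(K) = 0$ furnishes a sequence $r_j \downarrow 0$ along which $\HH^\ell\big((K)_{r_j}\big)/r_j^{\ell - s} \to 0$; I will convert this information into efficient covers of $K$ and conclude $\HH^{s,\infty}(K) = 0$, whence $\HH^s(K) = 0$ by Lemma~\ref{l:haus}(i), and finally $\dimh K \leq s$.

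The central step is a Vitali-type packing bound: for each $r > 0$ I would select a maximal pairwise disjoint family $\{B_r(x_i)\}_{i=1}^{N(r)}$ with centers $x_i \in K$, so that by maximality $\{B_{2r}(x_i)\}_{i=1}^{N(r)}$ covers $K$, while disjointness together with $B_r(x_i) \subseteq (K)_r$ and Ahlfors regularity give
\[
\Lambda^{-1} N(r) \, r^\ell \leq \sum_{i=1}^{N(r)} \HH^\ell\big(B_r(x_i)\big) \leq \HH^\ell\big((K)_r\big),
\]
and hence $N(r) \leq \Lambda \, \HH^\ell\big((K)_r\big) \, r^{-\ell}$.

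Evaluating at $r = r_j$ and estimating $\HH^{s,\infty}(K)$ against the cover $\{B_{2r_j}(x_i)\}_{i=1}^{N(r_j)}$, there exists a dimensional constant $c_s > 0$ with
\[
\HH^{s, \infty}(K) \leq c_s \, N(r_j) \, r_j^s \leq c_s \, \Lambda \, \frac{\HH^\ell\big((K)_{r_j}\big)}{r_j^{\ell - s}} \longrightarrow 0 \quad \text{as } j \to \infty,
\]
so that $\HH^{s, \infty}(K) = 0$ and hence $\HH^s(K) = 0$. By definition of Hausdorff dimension this gives $\dimh K \leq s$, and letting $s \downarrow \underline{\mathrm{dim}}_{\MM} K$ yields $\dimh K \leq \underline{\mathrm{dim}}_{\MM} K \leq \overline{\mathrm{dim}}_{\MM} K$, which contains the claim.

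The argument is essentially classical and I foresee no real obstacle: the only point worth a moment of attention is that Ahlfors regularity makes the packing and covering numbers of $K$ at scale $r$ both comparable to $\HH^\ell\big((K)_r\big)/r^\ell$, which is precisely the quantity entering the definition of the Minkowski content.
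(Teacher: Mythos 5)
Your argument is correct and is essentially the paper's own proof: both hinge on the comparison between the maximal $r$-packing number of $K$ (bounded via Ahlfors regularity by $\HH^\ell\big((K)_r\big)r^{-\ell}$, since the disjoint balls sit inside $(K)_r$) and the $2r$-covering number, which then controls the Hausdorff pre-measure $\HH^{s,\infty}(K)$ by $N(r_j)r_j^s$ along a sequence realizing the lower Minkowski content. The only cosmetic difference is that you carry the Ahlfors constant $\Lambda$ explicitly and route the conclusion through $\HH^{s,\infty}=0\Rightarrow\HH^s=0$, while the paper writes the same estimate with $\omega_\ell r^\ell$ and concludes directly; both yield the stronger bound by the lower Minkowski dimension.
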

\begin{proof}
We may assume $\mathrm{dim}_\MM K<\ell$ since otherwise the inequality is trivial.

 Let $N(K,r)$ be the minimal number of balls of radius $r$ covering $K$, and $P(K,r)$ be the 
 maximal number of disjoint balls with centers belonging to $K$, then
\[
 N(K,2r)\leq P(K,r)\footnote{One can also prove that $P(K,r)\leq N(K,\sfrac r2)$}.
\]
Indeed, set $N:=N(K,2r)$, $P:=P(K,r)$ and let $\mathscr{B}=\{B_r(x_i)\}_{i=1}^P$ be 
the corresponding maximal family of disjoint balls with $x_i\in K$. 
If there exists $x\in K\setminus\cup_{i=1}^P B_{2r}(x_i)$, then $\{B_r(x)\}\cup\mathscr{B}$ 
is a disjoint family of balls with centers on $K$, a contradiction. 

Therefore, for all $s\in(\mathrm{dim}_\MM K,\ell)$
\[
N\omega_\ell r^\ell\leq P\omega_\ell r^\ell\leq \HH^\ell\big((K)_r\big)\Longrightarrow 
N\omega_\ell r^s\leq\frac{\HH^\ell\big((K)_r\big)}{r^{\ell-s}},
\]
from which one easily conclude that 
$2^{-s}\frac{\omega_\ell}{\omega_s}\HH^s(K)\leq\MM^s_\ast(K)=0$.
\end{proof}
 We are now ready to state an estimate on the Hausdorff dimension of porous sets.
 \begin{theorem}[David and Semmes~\cite{DavSem97}]\label{t:porosity}
 If $(X,d_X)$ is Alfhors regular of dimension $\ell$, 
%
then every $(\alpha,\delta)$-porous subset $K$ of $X$ of diameter $d$ satisfies 
 \[
  \HH^\ell\big((K)_r\big)\leq C\, r^{\ell-\eta}\quad \forall r\in(0,d),
 \]
 for some constant $C=C(\ell,\delta,d)>0$ and $\eta=\eta(\alpha,d,\Lambda)\in[0,\ell)$.
Hence,
 \[
 \dimh K\leq\mathrm{dim}_{\mathcal{M}}K \leq \eta.
 \]
\end{theorem}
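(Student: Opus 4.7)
The plan is to reduce the Hausdorff-dimension claim to the Minkowski-content estimate via Lemma~\ref{l:HMdim}, and to establish the decay $\HH^\ell\big((K)_r\big)\le C r^{\ell-\eta}$ through an iterative covering argument that extracts a definite volume loss from porosity at every dyadic scale.

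First I would introduce a Christ-type dyadic decomposition of $(X,d_X)$ at scale parameter $\lambda\in(0,1)$: a family $\{Q^j_\beta\}_{j\ge 0,\,\beta\in\mathcal{I}_j}$ of Borel subsets such that each level-$(j+1)$ cube is contained in a unique level-$j$ ``parent'', and each $Q^j_\beta$ is squeezed between concentric balls $B_{c_1\lambda^j}(x^j_\beta)\subseteq Q^j_\beta\subseteq B_{c_2\lambda^j}(x^j_\beta)$, so that $\HH^\ell(Q^j_\beta)$ is comparable to $\lambda^{j\ell}$. The existence of such a decomposition on any Alfhors-regular space is standard (Christ cubes, cf.~David--Semmes). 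Denote by $N_j$ the number of level-$j$ cubes meeting $K$; since $K$ has diameter $d$, there is a starting level $j_0$ with $\lambda^{j_0}$ comparable to $d\wedge\delta$ and $N_{j_0}\le C_0=C_0(\ell,\Lambda,d,\delta)$.

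The heart of the argument is a one-step combinatorial reduction. Fix $K_0\in\N$ with $\lambda^{K_0}\le\alpha c_1/(4c_2)$, consider a level-$j$ cube $Q$ meeting $K$ with $\lambda^j<\delta$, pick $x\in Q\cap K$, and apply porosity to $B_{c_1\lambda^j}(x)\subset Q$: this yields a hole $B_{\alpha c_1\lambda^j}(y)\subseteq Q\setminus K$. Any level-$(j+K_0)$ sub-cube of $Q$ whose Christ-center lies within distance $\alpha c_1\lambda^j/2$ of $y$ is, by the choice of $K_0$, contained in the hole and hence disjoint from $K$. Comparing $\HH^\ell$-measures via Alfhors regularity and the two-sided size control on Christ cubes forces the number of such ``deleted'' sub-cubes to be at least $\kappa\,\alpha^\ell$ times the total number $M$ of level-$(j+K_0)$ descendants of $Q$, for some $\kappa=\kappa(\ell,\Lambda,c_1,c_2)>0$ independent of $\alpha$ and $j$. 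Therefore
\[
N_{j+K_0}\le (1-\kappa\alpha^\ell)\,M\,N_j,\qquad M\le C\lambda^{-K_0\ell}.
\]

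Iterating from the base level $j_0$ yields $N_j\le C\,\lambda^{-j(\ell-\tau)}$ with $\tau:=-\log(1-\kappa\alpha^\ell)/(K_0\log(1/\lambda))>0$. For $r\in(0,d)$, choosing $j$ with $\lambda^{j+1}<r\le\lambda^j$ and observing that $(K)_r$ is covered by the $r$-enlargements of the level-$j$ cubes meeting $K$ (each of $\HH^\ell$-measure $\le C r^\ell$) gives
\[
\HH^\ell\big((K)_r\big)\le C\,N_j\,r^\ell\le C\,r^{\ell-\eta},\qquad\eta:=\ell-\tau<\ell,
\]
which is exactly the announced bound; the estimate $\dimh K\le\underline{\mathrm{dim}}_{\mathcal{M}} K\le\eta$ then follows by Lemma~\ref{l:HMdim} and the definition of Minkowski content. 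The technically delicate point is forcing $\kappa$ to depend only on $\Lambda$ and $\ell$ rather than on $\alpha$: generic Vitali-style packing arguments produce extra powers of $\Lambda$ that multiply the full descendant count $M$ and can therefore swallow the $\kappa\alpha^\ell$ saving, so one really needs the two-sided size comparison built into the Christ decomposition to isolate the combinatorics of ``sub-cubes per parent'' from the scale ratio and to keep $\kappa$ uncoupled from $\alpha$.
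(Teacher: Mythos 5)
Your overall architecture---Christ dyadic cubes on the Ahlfors regular space, deletion of a definite proportion of descendants at each scale via porosity, and the reduction $\dimh K\leq\mathrm{dim}_{\MM}K$ from Lemma~\ref{l:HMdim}---is exactly the intended route (cf.\ the Remark following Proposition~\ref{p:salli}). However, the iteration step has a genuine gap. In $\R^n$ a cube has \emph{exactly} $k^n$ children, so deleting one child multiplies the covering number by $(k^n-1)$ against a trivial growth of exactly $k^n$, and the exponent $\gamma$ with $k^\gamma=k^n-1$ comes out. On a general Ahlfors regular space the number $M(Q)$ of level-$(j+K_0)$ descendants of $Q$ is only pinched between $\Lambda^{-2}(c_1/c_2)^\ell\lambda^{-K_0\ell}$ and $\Lambda^{2}(c_2/c_1)^\ell\lambda^{-K_0\ell}$. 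Your recursion $N_{j+K_0}\leq(1-\kappa\alpha^\ell)MN_j$ is thus only valid with $M=M_{\max}=\Lambda^2(c_2/c_1)^\ell\lambda^{-K_0\ell}$, and after $m$ iterations it yields $N_j\lambda^{j\ell}\leq C\bigl[(1-\kappa\alpha^\ell)\,\Lambda^2(c_2/c_1)^\ell\bigr]^m$. The bracket is $\geq 1$ as soon as $\Lambda^2(c_2/c_1)^\ell\geq(1-\kappa\alpha^\ell)^{-1}$, which is the generic situation since the gain $\kappa\alpha^\ell$ is small while the count ratio is a fixed constant $\geq 1$; enlarging $K_0$ does not help, because both the gain and the loss per unit level then scale like $1/K_0$. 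So the conclusion $N_j\leq C\lambda^{-j(\ell-\tau)}$ does not follow. This is precisely the ``extra powers of $\Lambda$'' failure you flag in your last paragraph, and the two-sided size control of the Christ cubes does \emph{not} remove it: it controls each $M(Q)$ from above and below, but the ratio of the two bounds still enters multiplicatively at every iteration.

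The repair is to iterate on measure rather than on cardinality. Let $U_j$ denote the union of the level-$j$ cubes meeting $K$. Apply porosity to the ball $B_{c_1\lambda^j}(x_Q)$ centered at the \emph{Christ center} $x_Q$ of each such cube $Q$ (porosity holds at every point of $X$, not only on $K$; this is also needed because for an arbitrary $x\in Q\cap K$ the inclusion $B_{c_1\lambda^j}(x)\subset Q$ you assert is generally false, and a hole protruding from $Q$ would spoil the per-parent bookkeeping). This produces pairwise disjoint holes $H_Q\subset Q\setminus K$ with concentric half-radius balls $H_Q'$ satisfying $\HH^\ell(H_Q')\geq\Lambda^{-2}\big(\alpha c_1/(4c_2)\big)^\ell\,\HH^\ell(Q)$. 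By your choice of $K_0$, every level-$(j+K_0)$ cube meeting $H_Q'$ lies in $H_Q$ and so misses $K$; hence $U_{j+K_0}\cap H_Q'=\emptyset$ for all $Q$, and summing over the disjoint cubes gives $\HH^\ell(U_{j+K_0})\leq(1-\theta)\,\HH^\ell(U_j)$ with $\theta=\theta(\alpha,\Lambda,\ell,c_1,c_2)>0$ independent of $j$ and $K_0$. Since $\HH^\ell\big((K)_r\big)\leq C\,\HH^\ell(U_j)$ for $r\sim\lambda^j$, the geometric decay of $\HH^\ell(U_j)$ yields $\HH^\ell\big((K)_r\big)\leq C\,r^{\ell-\eta}$ with $\ell-\eta=-\log(1-\theta)/\big(K_0\log(1/\lambda)\big)>0$, and the dimension bound then follows from Lemma~\ref{l:HMdim} exactly as you indicate.
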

The higher integrability property of the gradient for MS-minimizers will be (essentially) a consequence 
of the result above. Actually, we cannot take advantage directly of Theorem~\ref{t:porosity} since we 
are not able to relate $(\Sigma_u)_r$ and $\{|\nabla u|^2\geq r^{-1}\}$.
Therefore, we shall prove a suitable version of Theorem~\ref{t:porosity} and establish its links with 
the higher integrability property in the Section~\ref{s:hi} following the approach by De Philippis and 
Figalli \cite{DePFig14}.

In passing, we mention that recently porosity has been employed in several instances to estimate the Hausdorff 
dimension of singular sets of solutions to variational problems (cf. \cite{KKPS00}, \cite{KrMin07}, 
\cite{DePFig14bis}).

In the rest of the present section we shall comment on porosity in the more standard Euclidean setting, 
i.e.~$X=\R^n$, and prove analogous results to those of interest for us. This is done to get more acquainted with
porosity and it is intended as a warm up to the proof of Theorem~\ref{t:hi} by De Philippis and Figalli.

Let then $K\subseteq\R^n$ be a $(\alpha,\delta)$-porous set. Few remarks are in order:
\begin{itemize}
 \item[(i)] By Lebesgue's differentiation theorem clearly $\LL^n(K)=0$;
 \item[(ii)] $K$ is nowhere dense, i.e.~$\text{int}\overline{K}=\emptyset$, since the latter is equivalent to: 
 for every $x\in X$ and $\rho>0$ there exists $y\in X$ and $r>0$ such that $B_r(y)\subset B_\rho(x)\setminus K$.
 \item[(iii)] Zaj{\'{\i}}{\v{c}}ek \cite{Zaj87} actually proved that there are non-porous sets which are nowhere 
 dense and with zero Lebesgue measure.
\end{itemize}
An elementary covering argument actually provides an estimate on the Hausdorff dimension of $K$ and therefore
improves item (i) above.
\begin{proposition}[Salli~\cite{Sal91}]\label{p:salli}
 Suppose that $K$ is a bounded $(\alpha,\delta)$-porous set in $\R^n$ with 
$\diam K\leq d$, then 
\begin{equation}\label{e:salli}
\LL^n\big((K)_r\big)\leq C\, r^{n-\gamma}\quad\quad
\forall r\in(0,d).
\end{equation}
for some constants $C=C(n,\delta,d)>0$ and $\gamma=\gamma(\alpha,n)<n$. In particular,
 \begin{equation}\label{e:salli2}
\dimh K\leq\mathrm{dim}_\MM K\leq\gamma.
 \end{equation} 
 \end{proposition}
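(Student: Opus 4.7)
The plan is to bound the covering number $N(K, r) := \inf\{N : K \subset \bigcup_{i=1}^N B_r(x_i)\}$ by $C\, r^{-\gamma}$ for some $\gamma = \gamma(\alpha, n) < n$, and then deduce \eqref{e:salli} from the elementary inclusion $(K)_r \subset \bigcup_i B_{2r}(x_i)$, which yields $\LL^n((K)_r) \leq N(K, r) \cdot \omega_n (2r)^n$. The dimension bounds \eqref{e:salli2} then follow from the definition of upper Minkowski dimension together with Lemma \ref{l:HMdim}.

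The core of the argument is a single-scale decay lemma phrased through \emph{cube} counts rather than ball counts, so as to avoid the doubling inefficiency of a pure Vitali argument. Concretely, I would show: there exist an integer $s = s(n, \alpha)$ and a constant $\theta = \theta(n, \alpha) < 1$ such that, for every $x \in K$ and every $\rho \in (0, \delta)$, the set $K \cap B_\rho(x)$ meets at most $\theta \cdot s^n$ of the axis-parallel cubes of side $\rho/s$ in the standard grid of side $\rho/s$. First, at most $C_1(n)\, s^n$ such cubes meet $B_\rho(x)$. Porosity supplies $y \in B_\rho(x)$ with $B_{\alpha\rho}(y) \cap K = \emptyset$. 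For $s$ chosen with $2\sqrt{n}/s < \alpha$, every grid cube whose center lies in the smaller ball $B_{\alpha\rho - \sqrt{n}\rho/s}(y)$ is entirely contained in $B_{\alpha\rho}(y)$, and hence disjoint from $K$. A lattice-point count bounds the number of such \emph{discarded} cubes from below by $c_1(n)\,\alpha^n\, s^n - O(s^{n-1})$. Choosing $s$ large enough that the lower-order error is absorbed into half of the main term yields the asserted bound, and $\theta < 1$ is read off.

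Iterating this lemma starting from a single axis-parallel cube of side comparable to $d$ that contains $K$, at generation $j$ one obtains at most $(\theta s^n)^j$ cubes of side $d\, s^{-j}$ meeting $K$. Each such cube fits in a ball of radius $\sqrt{n}\, d\, s^{-j}/2$, so $N\!\left(K, \sqrt{n}\, d\, s^{-j}/2\right) \leq (\theta s^n)^j$. For an arbitrary $r \in (0, d)$ pick $j$ with $d\, s^{-j-1} \leq r < d\, s^{-j}$, to conclude $N(K, r) \leq C(n, \alpha, d)\, r^{-\gamma}$ with
\[
\gamma := n + \frac{\log \theta}{\log s} < n.
\]
Feeding this into $\LL^n((K)_r) \leq N(K, r) \cdot \omega_n (2r)^n$ delivers \eqref{e:salli}.

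The delicate point is the quantitative choice of $s$ in the single-scale lemma: one must extract a savings proportional to the full \emph{volume} $\sim \alpha^n \rho^n$ of the porosity ball, and not merely to a single ball's worth at the coarse scale $\alpha\rho$. This forces the grid to be taken strictly finer than the porosity scale, with $s$ chosen much larger than $\sqrt n/\alpha$; it is exactly this strict geometric inequality $\theta < 1$ that translates into the strictly subcritical exponent $\gamma < n$. Once this step is in place, the iteration and the passage to the Hausdorff dimension via Lemma \ref{l:HMdim} are routine.
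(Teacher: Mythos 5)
Your overall strategy is the paper's: an iterated grid subdivision in which, at each generation, the porosity hole swallows some of the sub-cubes, followed by a dyadic interpolation in $r$ and the passage to $\dimh$ via Lemma~\ref{l:HMdim}. In fact the paper gets away with something weaker than your single-scale lemma: it subdivides each parent cube into $k^n$ children with $k$ chosen only so that $\alpha k>2$, discards a \emph{single} child contained in the porosity hole, and reads off $\gamma$ from $k^\gamma=k^n-1<k^n$. Your refinement — taking the grid much finer than the porosity scale so as to discard a fraction $\sim\alpha^n$ of the children — is closer to Salli's sharp quantitative bound, but it is not needed for the qualitative statement $\gamma<n$; your remark that one ``must'' extract savings proportional to the full volume of the hole is therefore not accurate.

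Two points in your write-up need repair before the argument closes. First, the single-scale lemma as stated does not yield $\theta<1$: the survivors are bounded by $\bigl(C_1(n)-\tfrac12 c_1(n)\alpha^n\bigr)s^n$, and since $C_1(n)>1$ this is not below $s^n$. The fraction $\theta$ must be measured against the total number of cubes meeting $B_\rho(x)$ (or, better, against the $s^n$ children of a fixed parent cube), not against $s^n$ with the ball-count constant left in. Second, and relatedly, your lemma is phrased for balls $B_\rho(x)$ centered at points $x\in K$, while the iteration runs over grid cubes; the porosity hole $B_{\alpha\rho}(y)\subset B_\rho(x)$ may then lie largely outside the parent cube $Q$, so the discarded cubes you count need not be children of $Q$, and the bound $(\theta s^n)^j$ at generation $j$ does not follow from the lemma as stated. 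The paper's fix is to apply porosity at the \emph{center} of each cube with $\rho$ equal to its inradius (legitimate, since porosity is required at every $x\in X$, not only at $x\in K$), which forces the hole inside the cube. With the lemma restated per cube in this way, and with the observation that the first few generations (cube side above the porosity scale $\delta$) only contribute to the constant $C(n,\delta,d)$, your proof is correct.
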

\begin{proof}
The building step of the argument goes as follow: consider a cube $Q$ of $\diam Q<\delta$ and center 
$x_Q$, then by taking into account the $(\alpha,\delta)$-porosity of $K$, we find a point $y_Q$ such 
that $B_{\alpha\,\diam Q/2\sqrt{n}}(y_Q)\subset B_{\diam Q/2\sqrt{n}}(x_Q)\setminus K$.
If $\{Q_i\}_i$ is a covering of $Q$ of $k^n$ sub cubes with $\diam Q_i=\diam Q/k$, $k\in\N$, then at 
least one of those cubes does not intersect $K$ if $k$ is sufficiently big. Indeed, it suffices to impose 
\[
 \alpha\,\diam Q>2\frac{\diam Q}{k}\Longleftrightarrow \alpha\,k>2.
\]
Hence, we may choose $k=k(\alpha)$ for which the previous condition is satisfied.

Therefore, given a covering of $K$ of $m=m(\delta,d)$ cubes with diameter 
$\sfrac\delta 2$, we can construct another covering made of $m(k^n-1)=mk^\gamma$ 
cubes of diameter $\sfrac\delta{2k}$, where $\gamma=\gamma(\alpha,n)\in(0,n)$ is such 
that $k^\gamma=k^n-1$.

Clearly, we can iterate this procedure in each of the new cubes, so that for all $N\in\N$ we may find 
a covering of $K$ made of $mk^{N\,\gamma}$ cubes $\{Q_i^k(x_i^k)\}_{i=1}^{mk^{N\,\gamma}}$ of diameter 
$\sfrac\delta{2k^N}$. In particular, each ball $B_{\sfrac\delta{4k^N}}(x_i^k)$ contains $Q_i^k$, and 
their union covers $K$. Thus,  
\[
(K)_{\sfrac\delta{4k^N}}\subset\cup_{i=1}^{mk^{N\,\gamma}}B_{\sfrac{\delta}{2k^N}}(x_i^k).
\]
Hence  
\[
 \LL^n\left((K)_{\sfrac\delta{4k^N}}
 \right)
 \leq \omega_nmk^{N\,\gamma}\left(\frac\delta{2k^N}\right)^n=o(1)\quad N\uparrow\infty.
\]
Estimate \eqref{e:salli} follows at once by a simple dyadic argument on the radii, i.e.~given $r>0$
choosing $k$ such that $r\in[\frac\delta{2k^{N+1}},\frac\delta{2k^N})$.

Instead, estimate \eqref{e:salli2} is an easy consequence of Lemma~\ref{l:HMdim}.
\end{proof}
\begin{remark}
Theorem~\ref{t:porosity} can be proved exactly as Proposition~\ref{p:salli} once 
the existence of a family of dyadic cubes in $(X,d_X)$ has been established 
(cf. \cite[Lemma 5.8]{DavSem97}). By this, we mean a collection $\{\triangle_j\}_{\Z\ni j<j_0}$
of families of measurable subsets of $X$, $j_0=\infty$ if $\diam X=\infty$ and otherwise 
$j_0\in\Z$ such that $2^{j_0}\leq\diam X<2^{j_0+1}$, having the following properties:
\begin{itemize}
 \item[(i)] each $\triangle_j$ is a partition of $X$, i.e.~$X=\cup_{Q\in\triangle_j}Q$ for any $j$ as above;
 \item[(ii)] $Q\cap Q^\prime=\emptyset$ whenever $Q$, $Q^\prime\in \triangle_j$ and $Q\neq Q^\prime$;
 \item[(iii)] if $Q\in\triangle_j$ and $Q^\prime\in \triangle_k$ for $k\geq j$, then either $Q\subseteq Q^\prime$
 or $Q\cap Q^\prime=\emptyset$;
 \item[(iv)] $\lambda^{-1}\,2^{j}\leq\diam Q\leq\lambda\, 2^j$ and 
 $\lambda^{-1}2^{j\ell}\leq\HH^\ell(Q)\leq\lambda 2^{j\ell}$ for all $j$ and all $Q\in\triangle_j$;
 \item[(v)] for all $j$ and all $Q\in\triangle_j$, and $\tau>0$
 \[
  \HH^\ell\left(\{x\in Q:\,\mathrm{dist}(x,X\setminus Q)\leq\tau 2^j\}\right)
 +\HH^\ell\left(\{x\in X\setminus Q:\,\mathrm{dist}(x,Q)\leq\tau 2^j\}\right)\leq \lambda\tau^{1/\lambda}\HH^\ell(Q).
 \]
\end{itemize}
with $\lambda=\lambda(\ell,\Lambda)$ (for the existence of such families see \cite{Dav91} and \cite{Sem00}).
\end{remark}
With fixed a given porosity $\alpha\in(0,\sfrac12)$, we are then interested in analyzing the worst case, i.e.
\[
 D(\alpha,n):=\sup\left\{\dimh K:\,K\subset\R^n\text{ is $(\alpha,\delta)$-porous for some $\delta>0$}\right\}.
\]
We can easily deduce the estimate 
\[
n-1\leq D(\alpha,n)\leq\gamma(\alpha,n)<n,
\]
as $(n-1)$-dimensional vector spaces are $(\alpha,\delta)$-porous for all $\delta>0$ and $\alpha\in(0,\sfrac12)$. 
Furthermore, Mattila (see, for instance, \cite[Theorem~11.14]{Matt95}) has shown that 
\begin{equation}\label{e:mattila}
 \lim_{\alpha\uparrow \sfrac12}D(\alpha,n)=n-1.
\end{equation}
\begin{remark}
Salli \cite{Sal91} has actually improved upon the previous result by showing that 
\[
 n-1+\frac{B(n)}{|\ln(1-2\alpha)|}\leq D(\alpha,n)\leq  n-1+\frac{A(n)}{|\ln(1-2\alpha)|}
 \quad\quad\text{for all $\alpha\in(0,\sfrac12)$},
\]
for some strictly positive dimensional constants $A$ and $B$.
\end{remark}

Finally, we note that the analogous property in \eqref{e:mattila} in the case of interest for us, 
if true, would then let us conclude another characterization of the conjectured estimate on the
Hausdorff dimension of $\Sigma_u$:
\emph{If $\Sigma_u$ is $(\alpha,\delta)$-porous in $\overline{S_u}$ for all $\alpha\in(0,\sfrac12)$ 
and some $\delta=\delta(\alpha)>0$, then $\dimh\Sigma_u\leq n-2$.}

\section{Higher integrability of the gradient in any dimension: the proof}\label{s:hi}

In this section we shall prove the higher integrability property 
of the gradient following De Philippis and Figalli \cite{DePFig14}.
We shall first establish in Proposition~\ref{p:alfpor} below a particular 
case of Theorem~\ref{t:porosity} that is sufficient for our purposes. 

To this aim we recall that the conclusions of Theorem~\ref{t:CL} and Proposition~\ref{p:dub} 
show the Alfhors regularity of $\Om\cap\SSu$: for some constants $C_0=C_0(n)>0$, $\rho_0=\rho_0(n)>0$
\begin{equation}\label{e:SuAR}
C_0^{-1}\,r^{n-1} \leq \HH^{n-1}(\SSu\cap B_r(z)) \leq C_0\,r^{n-1}
\end{equation}
for all 
$z\in \SSu$, and all $r\in(0,\rho_0\wedge \mathrm{dist}(z,\partial\Omega))$, $u\in\MM(B_2)$.

To prove Proposition~\ref{p:alfpor} we need two technical lemmas. The first one is obtained via
De Giorgi's slicing/averaging principle.
\begin{lemma}\label{l:technical}
 There are dimensional constants $M_1$, $C_1$ such that if $M\geq M_1$ 
 for every $u\in \MM(B_2)$ we can find three decreasing sequences of radii such that 
 \begin{itemize}
  \item[(i)] $1\geq R_h\geq S_h\geq T_h\geq R_{h+1}$;
  \item[(ii)] $8M^{-(h+1)}\leq R_h-R_{h+1}\leq M^{-(h+1)/2}$, and $S_h-T_h=T_h-R_{h+1}=4M^{-(h+1)}$;
  \item[(iii)] $\HH^{n-1}\big(\SSu\cap(\Bb_{S_h}\setminus\Bb_{R_{h+1}})\big)\leq C_1M^{-(h+1)/2}$;
  \item[(iv)] $R_\infty=S_\infty=T_\infty\geq 1/2$.
  \end{itemize}
\end{lemma}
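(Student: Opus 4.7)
The key tool is De Giorgi's slicing/averaging principle, combined with the density upper bound of Proposition~\ref{p:dub}, which provides $\HH^{n-1}(\SSu \cap \Bb_1) \leq n \omega_n$ for every $u \in \MM(B_2)$. I first fix $M_1$ as a large dimensional constant so that, whenever $M \geq M_1$, both $M^{1/2} \geq 16$ (in order that the pigeonhole step below is non-trivial at every scale $h$) and $\sum_{h \geq 0} M^{-(h+1)/2} = (\sqrt{M}-1)^{-1} \leq 1/2$ hold; $M_1 = 256$ clearly suffices. I also set $C_1 := 16\, n\, \omega_n$.

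I then build the three sequences by induction on $h$, starting from $R_0 := 1$. Given $R_h \in [1/2,1]$, I divide the radial interval $[R_h - M^{-(h+1)/2}, R_h]$, from the right endpoint, into $N_h := \lfloor M^{(h+1)/2}/8 \rfloor \geq 2$ consecutive closed sub-intervals $[a_i, b_i]$, each of length exactly $8 M^{-(h+1)}$. The half-open annuli $\Bb_{b_i} \setminus \Bb_{a_i}$ are pairwise disjoint and contained in $\Bb_1$, so by additivity of $\HH^{n-1} \res \SSu$ and Proposition~\ref{p:dub}
\begin{equation*}
\sum_{i=1}^{N_h} \HH^{n-1}\bigl(\SSu \cap (\Bb_{b_i} \setminus \Bb_{a_i})\bigr) \leq \HH^{n-1}(\SSu \cap \Bb_1) \leq n \omega_n.
\end{equation*}
Pigeonhole then produces an index $i_*$ for which the $i_*$-th summand is at most $n \omega_n / N_h \leq C_1\, M^{-(h+1)/2}$, and I define
\begin{equation*}
R_{h+1} := a_{i_*}, \qquad T_h := a_{i_*} + 4 M^{-(h+1)}, \qquad S_h := b_{i_*} = a_{i_*} + 8 M^{-(h+1)}.
\end{equation*}

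The four properties are then immediate consequences of the construction. Property~(i) and the equalities $S_h - T_h = T_h - R_{h+1} = 4 M^{-(h+1)}$ in (ii) follow from the definitions, while the two remaining bounds in (ii) come from $R_h - R_{h+1} \geq b_{i_*} - a_{i_*} = 8 M^{-(h+1)}$ and $R_h - R_{h+1} \leq R_h - (R_h - M^{-(h+1)/2}) = M^{-(h+1)/2}$; property~(iii) is exactly the pigeonhole estimate; and for (iv) the telescoping bound
\begin{equation*}
R_0 - R_\infty \;=\; \sum_{h=0}^{\infty}(R_h - R_{h+1}) \;\leq\; \sum_{h=0}^{\infty} M^{-(h+1)/2} \;\leq\; \tfrac{1}{2}
\end{equation*}
gives $R_\infty \geq 1/2$, and the squeeze $R_{h+1} \leq T_h \leq S_h \leq R_h$ forces $T_\infty = S_\infty = R_\infty$. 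The lemma is essentially a packaging statement, and there is no serious obstacle: the only point that requires attention is the calibration of the exponent $1/2$ in $M^{-(h+1)/2}$, which has to be simultaneously summable in $h$ and large enough, relative to the annular thickness $8 M^{-(h+1)}$, for pigeonhole against the Ahlfors bound $n\omega_n$ to produce a gain.
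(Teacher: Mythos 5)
Your proof is correct and follows essentially the same route as the paper's: De Giorgi slicing/averaging over $N_h=\lfloor M^{(h+1)/2}/8\rfloor$ disjoint sub-annuli of width $8M^{-(h+1)}$ inside an annulus of width $M^{-(h+1)/2}$, with the pigeonhole estimate calibrated against the energy upper bound of Proposition~\ref{p:dub}, and (iv) obtained by summing the geometric series. The only (harmless) difference is that you make the constants explicit ($M_1=256$, $C_1=16\,n\,\omega_n$), whereas the paper leaves them implicit.
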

\begin{proof}
Let $R_1=1$, given $R_h$ we construct $S_h$, $T_h$ and $R_{h+1}$ as follows.

Set $N_h:=\lfloor M^{(h+1)/2}/8\rfloor\in\N$ and fix $M_1\in\N$ such that 
$N_h\geq \lfloor M^{(h+1)/2}/16\rfloor$ for $M\geq M_1$. Here, 
$\lfloor\alpha\rfloor$ denotes the integer part of $\alpha\in\R$. 

The annulus $B_{R_h}\setminus \Bb_{R_h-8M^{-(h+1)/2}}$ contains the $N_h$ disjoint sub annuli 
$\Bb_{R_h-8(i-1)M^{-(h+1)}}\setminus \Bb_{R_h-8iM^{-(h+1)}}$, $i\in\{1,\ldots,N_h\}$, of equal 
width $8\,M^{-(h+1)}$. By averaging we can find an index $i_h\in\{1,\ldots,N_h\}$ such that
\begin{multline*}
\HH^{n-1}\big(K\cap(\Bb_{R_h-8(i_h-1)M^{-(h+1)/2}}\setminus\Bb_{R_h-8i_h\,M^{-(h+1)/2}})\big)\\
\leq \frac 1{N_h}\,\HH^{n-1}\big(K\cap (\Bb_{R_h}\setminus \Bb_{R_h-8M^{-(h+1)/2}})\big)
\stackrel{\text{d.u.b. in \eqref{e:SuAR}}}{\leq} 
C_0\,\frac{R_h^{n-1}}{N_h}\leq C_1\,M^{-(h+1)/2},
\end{multline*}
so that (iii) is established. Finally, set 
\[
 S_h:=R_h-8(i_h-1)M^{-(h+1)},\,\,R_{h+1}:=R_h-8i_h\,M^{-(h+1)},\,\,T_h:=\frac12(S_h+R_{h+1}),
\]
then items (i) and (ii) follow by the very definition, and item (iv) from (ii) if 
$M_1$ is sufficiently big.
\end{proof}
The second lemma has a geometric flavor.
\begin{lemma}\label{l:grafico}
Let $f:\R^{n-1}\to\R$ be Lipschitz with
\begin{equation}\label{e:lip}
f(0)=0,\quad\|\nabla f\|_{L^\infty}\leq\eta.
\end{equation}
If $G:=\mathrm{graph}(f)\cap B_{2}$ and $\eta\in(0,\sfrac{1}{15}]$, then 
for all $\delta\in(0,\sfrac12)$ and $x\in (\overline{B}_{1+\delta}\setminus B_{1})\cap G$
\[
\mathrm{dist}(x,(\overline{B}_{1+2\delta}\setminus B_{1+\delta})\cap G)\leq \frac 32 \delta.
\]
\end{lemma}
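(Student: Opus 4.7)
The plan is to move along the graph $G$ from $x$ in the outward radial direction of its base point $x'\in\R^{n-1}$, and to apply the intermediate value theorem to find the first time the resulting curve meets $\partial B_{1+\delta}$; the task will be to show this first crossing time is at most $3\delta/2$.

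Write $x=(x',f(x'))$. Since $f(0)=0$ and $\|\nabla f\|_\infty\leq\eta$, one has $|f(x')|\leq\eta|x'|$, which gives $|x'|\leq |x|\leq|x'|\sqrt{1+\eta^2}$ and in particular $|x'|\geq 1/\sqrt{1+\eta^2}$. I will set $y'(t):=x'+t\,x'/|x'|$ and $y(t):=(y'(t),f(y'(t)))$, so that $t\mapsto y(t)$ lies on $G$ for $t$ small (and in fact as long as $|y(t)|<2$) and satisfies $|y(t)-x|\leq t\sqrt{1+\eta^2}$. Expanding $f(y'(t))^2=(f(x')+r(t))^2$ with $|r(t)|\leq \eta t$, the cross term $2f(x')r(t)$ is bounded in absolute value by $2\eta^2|x'|t$ thanks to $|f(x')|\leq\eta|x'|$, which after straightforward simplification yields the linear-in-$t$ lower bound
\[
|y(t)|^2-|x|^2\;\geq\;2t|x'|(1-\eta^2)+t^2.
\]

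Let $t^*:=\inf\{t\geq 0:|y(t)|\geq 1+\delta\}$. If $t^*=0$ then $|x|=1+\delta$ and the conclusion is trivial with $y(t^*)=x$; otherwise continuity of $t\mapsto|y(t)|$ forces $|y(t^*)|=1+\delta$, and hence $y(t^*)\in(\overline B_{1+2\delta}\setminus B_{1+\delta})\cap G$. The displayed inequality guarantees $|y(t)|\geq 1+\delta$ as soon as $2t|x'|(1-\eta^2)\geq 2\delta+\delta^2$, so combining with $|x'|\geq 1/\sqrt{1+\eta^2}$ one obtains
\[
|y(t^*)-x|\;\leq\; t^*\sqrt{1+\eta^2}\;\leq\;\delta\,\frac{(2+\delta)(1+\eta^2)}{2(1-\eta^2)}.
\]
The numerical hypotheses $\delta\leq 1/2$ and $\eta\leq 1/15$ are precisely tuned so that the last fraction does not exceed $3/2$: indeed $(2+\delta)/2\leq 5/4$ and $(1+\eta^2)/(1-\eta^2)\leq 113/112$, whose product is $565/448<3/2$.

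The only delicate point in this plan is the linear lower bound above, whose proof crucially uses $f(0)=0$ to control the cross term $2f(x')r(t)$; without that hypothesis an additional term of indefinite sign comparable to $\eta|f(x')|t$ would appear, and the sharp constant $3/2$ might fail. I do not anticipate any further obstacle, as the remaining steps reduce to the elementary numerical verification outlined above.
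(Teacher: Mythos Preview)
Your proof is correct and follows essentially the same idea as the paper: push the base point of $x$ radially outward and track the resulting point on the graph; indeed your curve $y(t)$ coincides with the paper's $\hat x=(\lambda x',f(\lambda x'))$ under the reparametrization $\lambda=1+t/|x'|$. The only difference is in execution: the paper fixes the explicit value $\lambda=1+\tfrac{5}{4}\delta|x|^{-1}$ and leaves the verification that $\hat x$ lands in the target annulus to the reader, whereas you use the intermediate value theorem to hit $\partial B_{1+\delta}$ exactly, which makes that membership automatic and is arguably cleaner.
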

\begin{proof}
Clearly by \eqref{e:lip} we get
\[
 \|f\|_{W^{1,\infty}(B_2)}\leq 3\eta.
\]
Let $x=(y,f(y))\in (\overline{B}_{1+\delta}\setminus B_{1})\cap G$ and $\hat{x}:=(\lambda\,y,f(\lambda\,y))$,
with $\lambda$ to be chosen suitably. Note that as $|x|\geq 1$ we have
\begin{multline*}
 |f(\lambda\,y)-\lambda\,f(y)|\leq |f(\lambda\,y)-f(y)|+|\lambda-1|\,|f(y)|\\
 \leq |\lambda-1|\,\big(\|\nabla f\|_{L^\infty}\,|y|+\|f\|_{L^\infty}\big)\leq 3\eta|\lambda-1|\,|x|.
\end{multline*}
Hence,
\[
 |\hat{x}-x|\leq |\hat{x}-\lambda\,x|+|\lambda-1||x| \leq\big(3\eta+1\big)|\lambda-1|\,|x|.
\]
It is easy to check that the choice $\lambda=1+\frac{5}{4}\delta|x|^{-1}$ gives the conclusion.
\end{proof}

We are now ready to prove the version of Theorem~\ref{t:porosity} of interest for our purposes.
\begin{proposition}[De Philippis and Figalli \cite{DePFig14}]\label{p:alfpor}
Let $C_0,C_1,M_1$ be the constants in \eqref{e:SuAR} and Lemma \ref{l:technical}, 
respectively.

There exist dimensional constants $C_2,\, M_2>0$ and $\alpha\in(0,\sfrac 14)$, 
$\beta\in(0,\sfrac 14)$, with $M_2\geq M_1$, such that for every $M\geq M_2$, 
$u\in\MM(B_2)$, we can find families $\FF_j$ of disjoint balls
 \[
 \FF_j=\left\{B_{\alpha M^{-j}}(y_i):\,y_i\in \SSu,\,1\leq j\leq N_j\right\}
 \]
such that for all $h\in\N$ 
\begin{itemize}
 \item[(i)]  $B$, $B^\prime\in\cup_{j=1}^h\FF_j$ are distinct balls, then 
 $(B)_{4M^{-(h+1)}}\cap(B^\prime)_{4M^{-(h+1)}}=\emptyset$;
 \item[(ii)] if $B_{\alpha M^{-j}}(y_i)\subset\FF_j$, then 
 $\SSu\cap B_{2\alpha M^{-j}}(y_i)$ is a $C^{1,\gamma}$ graph, $\gamma\in(0,1)$ any, 
 containing $y_i$,
 \[
\mathscr{D}_u\big(y_i,2\alpha M^{-j}\big)+\mathscr{A}_u\big(y_i,2\alpha M^{-j}\big)<\eps_0;
 \]
\begin{equation}\label{e:gradbdd}
\|\nabla u\|_{L^\infty\big(B_{2\alpha M^{-j}}(y_i)\big)}<M^{j+1}; 
\end{equation}
 \item[(iii)] let $\{R_h\}$, $\{S_h\}$, $\{T_h\}$ be the sequences of radii in 
 Lemma~\ref{l:technical}, and let
 \[
  K_h:=(\SSu\cap\overline{B}_{S_h})\setminus\left(\cup_{j=1}^h\cup_{\FF_j}B\right),
 \]
 (note that by construction $K_{h+1}\subset K_h\setminus \cup_{\FF_{h+1}}B$), and 
 \[
 \widetilde{K_h}:= (\SSu\cap\overline{B}_{T_h})\setminus
\left(\cup_{j=1}^h\cup_{\FF_j}(B)_{2M^{-(h+1)}}\right)\subset K_h.
 \]
 Then, there exists a finite set of points $\CC_h:=\{x_i\}_{i\in I_h}\subset\widetilde{K_h}$ such that
 \begin{equation}\label{e:centres}
  |x_j-x_k|\geq 3M^{-(h+1)}\quad\forall j,k\in I_h,\,j\neq k;
 \end{equation}
 \begin{equation}\label{e:ingro}
  (K_h\cap \Bb_{R_{h+1}})_{M^{-(h+1)}}\subset 
	\cup_{i\in I_h}B_{8M^{-(h+1)}}(x_i);
 \end{equation}
 \begin{equation}\label{e:stimaKh}
  \HH^{n-1}(K_h)\leq C_1\,h\,M^{-2h\beta};
 \end{equation}
 \begin{equation}\label{e:measureingro}
  \LL^n\left((K_h\cap \Bb_{R_{h+1}})_{M^{-(h+1)}}\right)\leq C_2\,h\,M^{-h(1+2\beta)-1}.
 \end{equation}
 
 \item[(iv)] $\Sigma_u\cap B_{\sfrac12}\subset K_h$ for all $h\in\N$ and 
 \begin{equation}\label{e:sigmaur}
 \LL^n\big((\Sigma_u\cap B_{\sfrac12})_{r}\big)\leq C_2\,r^{1+\beta}\qquad\forall r\in(0,\sfrac 12].
 \end{equation}
In particular, $\dim_{\MM}(\Sigma_u\cap B_{\sfrac12})\leq n-1-\beta$.
 \end{itemize}
\end{proposition}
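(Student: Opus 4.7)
The plan is to construct $\FF_j$, $\CC_h$, $K_h$, and $\widetilde K_h$ simultaneously by induction on $h$, using Theorem~\ref{t:RMS} as the porosity input at each step and Lemma~\ref{l:technical} to book-keep the annular loss between scales. I fix $\eps$ small enough that passing from the ball of radius $r\in(\alpha_\eps\rho,\rho)$ produced by Theorem~\ref{t:RMS} to a concentric ball of radius $2\alpha\rho$ preserves the graph regularity, the smallness of $\mathscr{D}_u+\mathscr{A}_u$, and the gradient estimate \eqref{e:neureg} (using $\mathscr{D}_u(y,r')\leq\omega_n r'\|\nabla u\|_\infty^2$ and an analogous graph-based bound for $\mathscr{A}_u$), set $\alpha:=\alpha_\eps/2$, and pick $M_2\geq M_1$ large depending only on $n$ so that the disjointness estimates close up. Starting from $\FF_0:=\emptyset$, at step $h\to h+1$ I take $\CC_h\subset\widetilde K_h$ to be a maximal $3M^{-(h+1)}$-separated set (finite because $\HH^{n-1}(\widetilde K_h)<\infty$, and giving \eqref{e:centres}) and, for each $x_i\in\CC_h$, apply Theorem~\ref{t:RMS} to $B_{M^{-(h+1)}}(x_i)\subset B_2$ to produce $B_{r_i}(y_i)$ with $r_i\geq\alpha_\eps M^{-(h+1)}$; setting $\FF_{h+1}:=\{B_{\alpha M^{-(h+1)}}(y_i)\}$ and noting $B_{2\alpha M^{-(h+1)}}(y_i)\subset B_{r_i}(y_i)$ transfers~(ii) to the smaller scale, while \eqref{e:gradbdd} follows from \eqref{e:neureg}. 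Disjointness~(i) at level $h+1$ is automatic between old balls (since $M^{-(h+2)}<M^{-(h+1)}$) and, for the new balls, comes from the separation of $\CC_h$, the bound $|y_i-x_i|\leq M^{-(h+1)}$, and the gap $d(x_i,B')>2M^{-(h+1)}$ built into $\widetilde K_h$.

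For the covering \eqref{e:ingro} I split on $y\in K_h\cap\Bb_{R_{h+1}}$. If $y\in\widetilde K_h$, maximality of $\CC_h$ yields $x_i$ with $|y-x_i|<3M^{-(h+1)}$. If $y\notin\widetilde K_h$, some $B=B_{\alpha M^{-j}}(y_j)\in\FF_j$, $j\leq h$, satisfies $d(y,B)\leq 2M^{-(h+1)}$, and $y$ sits on the $C^{1,\gamma}$ graph of $\SSu$ over $B_{2\alpha M^{-j}}(y_j)$; a rescaled application of Lemma~\ref{l:grafico} with $\delta\sim M^{j-h-1}/\alpha$ produces $\tilde y\in\SSu$ on this graph with $|\tilde y-y|\leq 3M^{-(h+1)}$ and $d(\tilde y,B)\geq 2M^{-(h+1)}$, while (i) at level $h$ gives $d(\tilde y,B')\geq 4M^{-(h+1)}$ for every other old $B'$, so $\tilde y\in\widetilde K_h$ and the first case applied to $\tilde y$ closes the argument. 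The counting bound $|I_h|\leq C\,M^{(n-1)(h+1)}\HH^{n-1}(K_h)$ follows from the disjointness of the balls $B_{\sfrac32 M^{-(h+1)}}(x_i)$, the inclusion $\SSu\cap B_{\sfrac32 M^{-(h+1)}}(x_i)\subset K_h$ (guaranteed by $|x_i|\leq T_h=S_h-4M^{-(h+1)}$ and the $\widetilde K_h$-gap), and the density lower bound in \eqref{e:SuAR}; combined with $\LL^n\big((K_h\cap\Bb_{R_{h+1}})_{M^{-(h+1)}}\big)\leq C|I_h|M^{-n(h+1)}$, this reduces \eqref{e:measureingro} to \eqref{e:stimaKh}.

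For \eqref{e:stimaKh}, each $B\in\FF_{h+1}$ contains a Lipschitz graph of small slope over a disc of radius $\alpha M^{-(h+1)}$ and so subtracts at least $c_n\alpha^{n-1}M^{-(n-1)(h+1)}$ from $\HH^{n-1}(K_h)$; coupled with the reverse count $|I_h|\geq c\,M^{(n-1)(h+1)}\big(\HH^{n-1}(K_h)-C_1 M^{-(h+1)/2}\big)$, derived from covering $K_h\cap\Bb_{R_{h+1}}$ via \eqref{e:ingro} and the density upper bound in \eqref{e:SuAR}, this yields a contraction $\HH^{n-1}(K_{h+1})\leq\theta\,\HH^{n-1}(K_h)+\tilde C\,M^{-(h+1)/2}$ for some $\theta=\theta(\alpha,n)\in(0,1)$ uniform in $h$, the error absorbing the annular loss from Lemma~\ref{l:technical}(iii). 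Choosing $\beta\in(0,\sfrac14)$ with $M^{-2\beta}\geq\theta$ and iterating delivers \eqref{e:stimaKh}. Item (iv) is then immediate: every $x\in\Sigma_u\cap B_{\sfrac12}$ is excluded from every good ball (whose interior is regular) and lies in $\Bb_{R_{h+1}}$ since $R_{h+1}\geq\sfrac12$ by Lemma~\ref{l:technical}(iv), so $x\in K_h$ for all $h$; given $r\in(0,\sfrac12]$, pick $h$ with $M^{-(h+1)}\leq r<M^{-h}$ and apply \eqref{e:measureingro}, absorbing the factor $h\sim\log(1/r)$ into a marginally smaller exponent to conclude \eqref{e:sigmaur}, from which the Minkowski bound is immediate. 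The hardest technical step will be Case~2 of \eqref{e:ingro}, where the substitute point $\tilde y$ has to be engineered from a $y$ just outside an old ball by combining the quantitative graph control of Theorem~\ref{t:RMS}(ii) with Lemma~\ref{l:grafico}; calibrating $\alpha$, $\eps$, and $M$ so that $\FF_{h+1}$ stays disjoint from the previous families while the search radius $M^{-(h+1)}$ remains compatible with the porosity output of Theorem~\ref{t:RMS} is the other main source of bookkeeping.
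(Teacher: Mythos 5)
Your proposal is correct and follows essentially the same route as the paper: the same inductive construction via maximal $3M^{-(h+1)}$-separated nets in $\widetilde K_h$ fed into Theorem~\ref{t:RMS}, the same two-case covering argument for \eqref{e:ingro} using Lemma~\ref{l:grafico} to relocate points stranded near old balls, the same two-sided count of $\#(I_h)$ via Ahlfors regularity leading to the contraction $\HH^{n-1}(K_{h+1})\leq\theta\,\HH^{n-1}(K_h)+\tilde C M^{-(h+1)/2}$ with the annular loss absorbed by Lemma~\ref{l:technical}(iii), and the same dyadic conclusion for \eqref{e:sigmaur}. The only cosmetic deviations (justifying the mass subtracted by each new ball via the Lipschitz-graph content rather than via the identity $K_h\cap B_{M^{-(h+1)}}(x_i)=\SSu\cap B_{M^{-(h+1)}}(x_i)$ plus the density lower bound) do not change the argument.
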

\begin{proof}
For notational convenience we set $K=\SSu$. In what follows we shall repeatedly use Theorem~\ref{t:RMS} 
with $\eps\in(0,1)$ fixed and sufficiently small. 

We split the proof in several steps.
\smallskip

\noindent{\bf Step 1.} \emph{Inductive definition of the families $\FF_j$.}

For $h=1$ we define 
 \[
\FF_1:=\emptyset,\,\, K_1=K\cap\Bb_{S_1},\,\,\widetilde{K_1}=K\cap\Bb_{T_1}, 
\] 
and choose $\CC_1$ to be a maximal family of points at distance $3M^{-2}$ from each other. 
Of course, properties (i) and (ii) and \eqref{e:centres} are satisfied. To check the others, 
one can argue as in the verification below.
 
 Suppose that we have built the families $\{\FF_j\}_{j=1}^h$ as in the statement, to construct 
 $\FF_{h+1}$ we argue as follows. 
 Let $\CC_h=\{x_i\}_{i\in I_h}\subset\widetilde{K_h}$ be a family of points satisfying \eqref{e:centres}, 
 i.e.~$|x_i-x_k|\geq 3M^{-(h+1)}$ for all $j,\,k\in I_h$ with $j\neq k$, and consider 
 \[
\GG_{h+1}:=\{B_{M^{-(h+1)}}(x_i)\}_{i\in I_h}.
 \]
 By the porosity assumption on $K$ for every ball $B_{M^{-(h+1)}}(x_i)\in\GG_{h+1}$ 
 we can find a sub-ball $B_{2\alpha\,M^{-(h+1)}}(y_i)\subset B_{M^{-(h+1)}}(x_i)\setminus K$,
 $\alpha\in(0,\sfrac 14)$ for which the theses of Theorem~\ref{t:RMS} are satisfied. Then, define
 \[
 \FF_{h+1}:=\{B_{\alpha\,M^{-(h+1)}}(y_i)\}_{i\in I_h}.
 \]
 By condition \eqref{e:centres}, the balls $B_{\frac32 M^{-(h+1)}}(x_i)$ are disjoint and
 do not intersect
 \[
  \cup_{j=1}^h\cup_{\FF_j}(B)_{\frac12M^{-(h+1)}}
 \]
 by the very definition of $\widetilde{K_h}$. Thus, item (i) and (ii) are satisfied. 
 
 Hence, we can define $K_{h+1}$, $\widetilde{K}_{h+1}$ and $\CC_{h+1}$ as in the statement.
\smallskip

 \noindent{{\bf Step 2.}} \emph{Proof of \eqref{e:ingro}.}

Let $x\in (K_h\cap \Bb_{R_{h+1}})_{M^{-(h+2)}}$ and let $z$ be a point of minimal distance from
$K_h\cap \Bb_{R_{h+1}}$. 
In case $z\in \widetilde{K}_{h+1}$, by maximality there is $x_i\in \CC_{h+1}$ such that 
$|z-x_i|\leq 3M^{-(h+2)}$ and thus we conclude $x\in B_{5M^{-(h+2)}}(x_i)$.
Instead, if $z\in(K_h\cap \Bb_{R_{h+1}})\setminus\widetilde{K}_{h+1}$, the definitions of 
${K_{h+1}}$ and $\widetilde{K}_{h+1}$ yield the existence of a ball $\widetilde{B}\in\cup_{j=1}^{h+1}\FF_j$ 
for which $z\in (K\cap(\widetilde{B})_{2M^{-(h+2)}})\setminus \widetilde{B}$. In view of property (ii), 
a rescaled version of Lemma \ref{l:grafico} gives a point $y$ satisfying
\[
y\in \big(K\cap(\widetilde{B})_{4M^{-(h+2)}}\big)\setminus (\widetilde{B})_{2M^{-(h+2)}},\quad
\text{and}\quad |z-y|\leq 3M^{-(h+2)}.
\]
Therefore, as $z\in \overline{B}_{R_{h+2}}$ and $T_{{R_{h+1}}}=R_{h+2}+4M^{-(h+2)}$ we get
by property (i) and the definition of $\widetilde{K}_{h+1}$ 
\[
y\in \big(K\cap(\widetilde{B})_{4M^{-(h+2)}}\cap\widetilde{B})_{4M^{-(h+2)}}\big)
\setminus (\widetilde{B})_{2M^{-(h+2)}}.
\]
Finally, by maximality we may find $x_i\in \CC_{h+1}$ such that $|y-x_i|\leq 3M^{-(h+2)}$. In conclusion,
we have
\[
|x-x_i|\leq|x-z|+|z-y|+|y-x_i|\leq 7M^{-(h+2)},
\]
so that  \eqref{e:ingro} follows at once.
\smallskip

\noindent{{\bf Step 3.}} \emph{the $K_h$'s satisfy a suitable d.l.b. as that of $K$ in \eqref{e:SuAR}.}

We claim that for every $h\in\N$
\begin{equation}\label{e:dlbKh}
 K_{h}\cap B_{M^{-(h+1)}}(x_i)=K\cap B_{M^{-(h+1)}}(x_i)\qquad\text{for all $x_i\in \CC_{h}$}.
\end{equation}
In particular, from the latter we infer the conclusion of this step. 
 
The equality above is proven by contradiction: assume we can find $x_i\in \CC_{h}$ and 
 \[
x\in (K\setminus K_{h})\cap B_{M^{-(h+1)}}(x_i). 
 \]
As $x_i\in\widetilde{K}_{h}$ then $x_i\in B_{T_{h}}$, in turn implying 
$x\in B_{S_{h}}$ since $S_h-T_h=4M^{-(h+1)}$.
Therefore $x\in\big(K\setminus K_{h}\big)\cap\Bb_{S_{h}}$, and by definition of 
$K_{h}$ we can find a ball $B\in \FF_{j}$, $j\leq h$, such that $x\in B$. 
We conclude that
\[
\textrm{dist}(x_i,B)\leq|x-x_i|\leq M^{-(h+1)},
\] 
contradicting that $x_i\in\widetilde{K}_{h}$.
\smallskip

\noindent{{\bf Step 4.}} \emph{Proof of \eqref{e:stimaKh}.}

We get first a lower bound for $\#(I_h)$: use \eqref{e:ingro} and the d.u.b in \eqref{e:SuAR} to get
 \[
  \HH^{n-1}\big(K_h\cap \Bb_{R_{h+1}}\big)=
  \HH^{n-1}\big(K_h\cap \Bb_{R_{h+1}}\cap\cup_{i\in I_h}B_{8M^{-(h+1)}}(x_i)\big)\leq
 C_0\,\#(I_h)\big(8M^{-(h+1)}\big)^{n-1}
 \]
that is
 \begin{equation}\label{e:Ihbound}
 \#(I_h)M^{-(h+1)(n-1)}\geq 8^{1-n}C_0^{-1}  \HH^{n-1}\big(K_h\cap \Bb_{R_{h+1}}\big).
 \end{equation}
 Thus, we estimate as follows
 \begin{multline}\label{e:measure}
  \HH^{n-1}(K_{h+1})\leq\HH^{n-1}\left(K_h\setminus\cup_{\FF_{h+1}}B\right)
  \stackrel{\text{disjoint balls}}{=}\HH^{n-1}(K_h)-\sum_{\FF_{h+1}}\HH^{n-1}(K_h\cap B)\\
  \stackrel{\text{d.l.b. in \eqref{e:SuAR}, \eqref{e:dlbKh}}}{\leq}
	\HH^{n-1}(K_h)-\frac{\alpha^{n-1}}{C_0}\#(I_h) M^{-(h+1)(n-1)}
  \stackrel{\eqref{e:Ihbound}}{\leq}\HH^{n-1}(K_h)-\frac{8^{1-n}\alpha^{n-1}}{C_0^2}\HH^{n-1}\big(K_h\cap \Bb_{R_{h+1}}\big)\\
  =(1-\eta)\HH^{n-1}(K_h)+\eta\left(\HH^{n-1}(K_h)-\HH^{n-1}\big(K_h\cap \Bb_{R_{h+1}}\big)\right)\\
  \stackrel{\text{def. of $K_h$}}{\leq}(1-\eta)\HH^{n-1}(K_h)+\eta\,\HH^{n-1}\big(K\cap(\Bb_{S_h} \setminus \Bb_{R_{h+1}})\big)
  \stackrel{\text{(iii) Lemma~\ref{l:technical}}}{\leq}(1-\eta)\HH^{n-1}(K_h)+C_1M^{-\frac{h+1}2},
 \end{multline}
where we have set $\eta:=\sfrac{8^{1-n}\alpha^{n-1}}{C_0^2}$.

By iteration of \eqref{e:measure}, we find by Young inequality
\[
 \HH^{n-1}(K_h)\leq C_1\sum_{i=0}^h(1-\eta)^{h-i}M^{-\sfrac i2}
 \leq C_1\,h\,\max\{(1-\eta)^h,M^{-\sfrac h2}\}.
\]
Choose $\beta\in(0,\sfrac 14)$ such that $(1-\eta)\leq M^{-2\beta}$, 
the previous estimate then yields \eqref{e:stimaKh}, 
\begin{equation*}
 \HH^{n-1}(K_h)\leq C_1\,h\,\max\{M^{-2h\beta},M^{-\sfrac h2}\}=C_1\,h\,M^{-2h\beta}.
\end{equation*}

\smallskip

 \noindent{{\bf Step 5.}} \emph{Proof of \eqref{e:measureingro}.}

Then, we exploit \eqref{e:ingro} to get
 \begin{multline}\label{e:rozza}
  \LL^n\left((K_{h+1}\cap \Bb_{R_{h+2}})_{M^{-(h+2)}}\right)\leq 
  \LL^n\left(\cup_{i\in I_{h+1}}B_{8M^{-(h+2)}}(x_i)\right)\leq 
	\#(I_{h+1})\big(8M^{-(h+2)}\big)^n\\
  \stackrel{\text{d.l.b. in \eqref{e:SuAR}, \eqref{e:dlbKh}}}{\leq} \frac{8^n}{C_0}M^{-(h+2)}\sum_{i\in 
	I_{h+1}}\HH^{n-1}\big(K_{h+1}\cap B_{M^{-(h+2)}}(x_i)\big)\\
  \stackrel{\text{disjoint balls}}{\leq}\frac{8^n}{C_0}M^{-(h+2)}\HH^{n-1}\big(K_{h+1}\big)
  \stackrel{\eqref{e:stimaKh}}{\leq} \frac{8^n\,C_1}{C_0}\,(h+1)\,M^{-2(h+1)\beta-(h+2)}.
  \end{multline}
\smallskip

\noindent{{\bf Step 6.}} \emph{Proof of \eqref{e:sigmaur}}
  
By construction we have that $\Sigma_u\cap B_{\sfrac 12}\subseteq K_h$. 
Therefore, 
\eqref{e:measureingro} gives as $R_h\geq R_\infty\geq\sfrac 12$
\begin{equation*}
 \LL^n\big((\Sigma_u\cap B_{\sfrac12})_{M^{-(h+1)}}\big)\leq
 \LL^n\left((K_{h}\cap \Bb_{R_{h+1}})_{M^{-(h+1)}}\right)
 \leq C_2\,h\,M^{-h(1+2\beta)-1}. 
\end{equation*}
Hence, if $r\in (M^{-(h+2)},M^{-(h+1)}]$ we get
\[
 \LL^n\big((\Sigma_u\cap B_{\sfrac12})_{r}\big)\leq 
 C_2\,h\,M^{-h(1+2\beta)-1}\leq  C_2\,M^{-h(1+\beta)-1}\leq C_2\,r^{1+\beta}.
\]
\end{proof}
\begin{remark}
Apart from Step~2, all the arguments in the other steps of Proposition~\ref{p:alfpor} employ only 
the Alfhors regularity of $\Om\cap\SSu$ and its consequence Lemma \ref{l:technical}.

In addition, note that one can easily infer the (more) intrinsic estimates 
\begin{equation*}
  \HH^{n-1}\left( \SSu\cap(K_h\cap \Bb_{R_{h+1}})_{M^{-(h+1)}}\right)\leq C_2\,h\,M^{-2h\beta},
 \end{equation*} 
 and
 \begin{equation*}
 \HH^{n-1}\big(\SSu\cap(\Sigma_u\cap B_{\sfrac12})_{r}\big)\leq C_2\,r^\beta\qquad\forall r\in(0,\sfrac 12].
 \end{equation*}
 Indeed, by arguing as in \eqref{e:rozza} we get
 \begin{multline*}
  \HH^{n-1}\left(\SSu\cap(K_{h+1}\cap \Bb_{R_{h+2}})_{M^{-(h+2)}}\right)\leq 
  \HH^{n-1}\left(\SSu\cap\cup_{i\in I_{h+1}}B_{8M^{-(h+2)}}(x_i)\right)\\
  \leq \#(I_{h+1})\big(8M^{-(h+2)}\big)^{n-1}
  \stackrel{\text{d.l.b. in \eqref{e:SuAR}, \eqref{e:dlbKh}}}{\leq} \frac{8^{n-1}}{C_0}\sum_{i\in 
	I_{h+1}}\HH^{n-1}\big(K_{h+1}\cap B_{M^{-(h+2)}}(x_i)\big)\\
  \stackrel{\text{disjoint balls}}{\leq}\frac{8^{n-1}}{C_0}\HH^{n-1}\big(K_{h+1}\big)
  \stackrel{\eqref{e:stimaKh}}{\leq} \frac{8^{n-1}\,C_1}{C_0}\,(h+1)\,M^{-2(h+1)\beta}.
 \end{multline*}
\end{remark}
As outlined in Section~\ref{s:porosity} the former result leads to the higher integrability of 
the gradient for MS-minimizers in any dimension.
\begin{theorem}[De Philippis and Figalli \cite{DePFig14}]\label{t:hiDePF}
There is $p>2$ such that $\nabla u\in L^p_{\mathrm{loc}}(\Om)$ for all $u\in\MM(\Om)$ and for 
all open sets $\Om\subseteq\R^n$.
 \end{theorem}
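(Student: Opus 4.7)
The plan is to combine the Minkowski content estimate on $\Sigma_u$ provided by Proposition~\ref{p:alfpor}(iv), namely $\LL^n((\Sigma_u\cap\overline{B}_{1/2})_r)\leq Cr^{1+\beta}$ for $r\in(0,\sfrac{1}{2}]$, with a pointwise bound of the form $|\nabla u(x)|\leq C\,\mathrm{dist}(x,\Sigma_u)^{-1/2}$, and then conclude via a dyadic decomposition on dist-to-$\Sigma_u$ level sets. By a standard covering/rescaling argument, it suffices to show that for every $u\in\MM(B_2)$ one has $\nabla u\in L^p(B_{1/4})$ for some $p>2$ with a bound depending only on~$n$.

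I would first establish the pointwise estimate. For $x\in B_{1/4}\setminus\SSu$, set
\[
d(x):=\min\{\mathrm{dist}(x,\Sigma_u\cap\overline{B}_{1/2}),\,\sfrac{1}{8}\};
\]
the claim is that $|\nabla u(x)|\leq C\,d(x)^{-1/2}$ for some dimensional $C$. If $\mathrm{dist}(x,\SSu)\geq d(x)/4$ then $u$ is harmonic on $B_{d(x)/4}(x)\subset B_1$, hence $|\nabla u|^2$ is subharmonic and the sub-mean-value property combined with the energy upper bound in Proposition~\ref{p:dub} immediately yields $|\nabla u(x)|^2\leq C/d(x)$. If instead there exists $z\in\SSu$ with $|x-z|<d(x)/4$, then $\mathrm{dist}(z,\Sigma_u)\geq 3d(x)/4$, so $\Sigma_u$ is absent from $B_{d(x)/2}(z)$: the $\varepsilon$-regularity theorem~\ref{t:AFP97}, together with a blow-up/compactness argument that forces the $\varepsilon$-regularity scale at $z$ to be comparable to $\mathrm{dist}(z,\Sigma_u)$, produces a $C^{1,\gamma}$ graph representation of $\SSu$ on $B_{cd(x)}(z)$ for some dimensional $c>0$, and the Schauder-type estimate of Theorem~\ref{t:uSSu}, suitably rescaled, then provides $|\nabla u(x)|\leq C\,d(x)^{-1/2}$.

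I would then decompose dyadically: choosing $h_0$ with $M^{-h_0}\leq \sfrac{1}{8}<M^{-h_0+1}$ and setting
\[
A_h:=\{x\in B_{1/4}:\,M^{-(h+1)}\leq d(x)<M^{-h}\}\qquad(h\geq h_0),
\]
the inclusion $A_h\subset(\Sigma_u\cap\overline{B}_{1/2})_{M^{-h}}$ combined with Proposition~\ref{p:alfpor}(iv) yields $\LL^n(A_h)\leq CM^{-h(1+\beta)}$, while the pointwise bound gives $|\nabla u|^p\leq CM^{hp/2}$ on~$A_h$. Summing,
\[
\int_{B_{1/4}}|\nabla u|^p\,dx \leq C + \sum_{h\geq h_0}\int_{A_h}|\nabla u|^p\,dx \leq C + C\sum_{h\geq h_0}M^{h(p/2-1-\beta)},
\]
which is finite for every $p\in(2,2+2\beta)$, proving the theorem.

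The main obstacle is the pointwise estimate in the second regime above: while Theorem~\ref{t:AFP97} guarantees the smoothness of $\SSu$ at every regular point at \emph{some} a priori unquantified scale, the dyadic argument requires a uniform lower bound on that scale of order $\mathrm{dist}(z,\Sigma_u)$. The natural route is by contradiction and blow-up: a sequence of rescaled minimizers $u_k$ with $0\in\SSu_k$, $\mathrm{dist}(0,\Sigma_{u_k})=1$, but with vanishing $\varepsilon$-regularity scale at~$0$, would -- by the compactness of MS-minimizers -- converge to a global minimizer violating Theorem~\ref{t:AFP97} at its own origin. Equivalently, one can leverage Theorem~\ref{t:RMS} iteratively inside the singular-free ball $B_{d(z,\Sigma_u)/2}(z)$ to cover it by a controlled family of good sub-balls on which the gradient obeys the required $d(x)^{-1/2}$ bound.
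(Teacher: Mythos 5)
The dyadic integration at the end is fine and matches the paper's use of Cavalieri's formula, but the pivot of your argument --- the pointwise bound $|\nabla u(x)|\leq C\,\mathrm{dist}(x,\Sigma_u)^{-1/2}$ --- is a genuine gap, and neither of the two fixes you sketch closes it. The characterization \eqref{e:sigma} only tells you that a point $z\in\SSu\setminus\Sigma_u$ satisfies $\mathscr{D}_u(z,\rho)+\mathscr{A}_u(z,\rho)<\eps_0$ at \emph{some} scale $\rho=\rho(z)>0$; nothing forces $\rho(z)\gtrsim\mathrm{dist}(z,\Sigma_u)$. Your compactness route does not produce a contradiction: if $u_k$ are minimizers with $0\in\overline{S_{u_k}}$, $\mathrm{dist}(0,\Sigma_{u_k})=1$ and regularity scale at $0$ tending to zero, the limit minimizer may perfectly well have a singular point at the origin --- knowing that every point of $\overline{S_{u_k}}\cap B_1$ is regular for $u_k$ gives no uniform $C^{1,\gamma}$ bounds, and a sequence of graphs with degenerating scales can converge in the Hausdorff distance to a singular configuration, so nothing ``violates Theorem~\ref{t:AFP97}'' in the limit. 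Showing that this degeneration cannot happen is equivalent to the uniform regularity scale you are trying to prove, so the argument is circular. The alternative of iterating Theorem~\ref{t:RMS} inside the singular-free ball also fails to give the pointwise bound: porosity only produces, at each dyadic scale, good balls covering a definite \emph{fraction} of $\SSu$, and the residual set left after all iterations has small dimension but is in general strictly larger than $\Sigma_u$ and need not lie in any neighborhood of it.

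This is precisely why the proof of De Philippis and Figalli is organized around the residual sets $K_h$ of Proposition~\ref{p:alfpor} rather than around $\Sigma_u$. The good balls of generation $j$ carry the quantitative bound \eqref{e:gradbdd}, $\|\nabla u\|_{L^\infty}<M^{j+1}$, so a point where $|\nabla u|^2>M^{h+3}$ either lies far from $\SSu$ (and is excluded by subharmonicity of $|\nabla u|^2$, as in your first regime) or has its nearest point of $\SSu$ in $K_h$; hence the superlevel set is contained in $(K_h\cap B_{R_{h+1}})_{M^{-(h+1)}}$, and it is the measure of that set --- estimate \eqref{e:measureingro}, i.e.\ item (iii) of Proposition~\ref{p:alfpor}, not item (iv) --- that is summed in Cavalieri's formula. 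If you replace $\Sigma_u$ by $K_h$ throughout your dyadic decomposition, your argument becomes the paper's; as written, with $\Sigma_u$, it rests on an unproved pointwise estimate.
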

\begin{proof} 
Clearly, it is sufficient for our purposes to prove a localized estimate. Hence,
for the sake of simplicity we suppose that $\Om=B_2$. 

We keep the notation of Proposition~\ref{p:alfpor} and furthermore denote for all $h\in\N$ 
 \begin{equation}\label{e:Ah}
  A_h:=\left\{x\in B_2\setminus K:\,|\nabla u(x)|^2>M^{h+1}\right\}.
 \end{equation}
We claim that 
\begin{equation}\label{e:claimAh}
A_{h+2}\cap B_{R_{h+2}}\subset (K_h\cap B_{R_{h+1}})_{M^{-(h+1)}}.
\end{equation}
Given this for granted we conclude as follows: we use \eqref{e:measureingro} to deduce that 
\begin{equation}\label{e:estmeas}
\LL^n(A_{h+2}\cap B_{R_{h+2}})\leq\LL^n\big((K_h\cap B_{R_{h+1}})_{M^{-(h+1)}}\big)\leq
C_2\,h\,M^{-h(1+2\beta)-1}.
\end{equation}
Therefore, recalling that $\sfrac 12\leq R_\infty\leq R_h$, in view of \eqref{e:estmeas} 
and Cavalieri's formula for $q>1$ we get that 
\begin{multline*}
 \int_{B_{\sfrac12}}|\nabla u|^{2q}dx=q\int_0^{\infty}t^{q-1}
 \LL^n\big(\left\{x\in B_{\sfrac12}\setminus K:\,|\nabla u(x)|^2>t\right\}\big)dt\\
\leq q\sum_{h\geq 3}\int_{M^h}^{M^{h+1}}t^{q-1}
 \LL^n\big(\left\{x\in B_{\sfrac12}\setminus K:\,|\nabla u(x)|^2>t\right\}\big)dt+M^{3q}\LL^n(B_{\sfrac12})\\
\leq \sum_{h\geq 0}M^{(h+4)q}\LL^n\big(A_{h+2}\cap B_{\sfrac 12}\big)+M^{3q}\LL^n(B_{\sfrac12})
\leq C_2\,\sum_{h\geq 0}h\,M^{(h+4)q-h(1+2\beta)-1}+M^{3q}\LL^n(B_{\sfrac12}).
 \end{multline*}
 The conclusion follows at once by taking $q\in(1,1+2\beta)$ and $p=2q$.
\smallskip

 Let us now prove formula \eqref{e:claimAh} in two steps. 
 
 \noindent{{\bf Step 1.}} \emph{For all $M> n$ and $R\in(0,1]$
 we have that}
 \begin{equation}\label{e:prelimAh}
  A_h\cap B_{R-2M^{-h}}\subset\big(K\cap {B_R}\big)_{M^{-h}}
	\qquad\text{for all $h\in\N$}. 
 \end{equation}
Indeed, for $x\in A_h\cap B_{R-2M^{-h}}$ let  $z\in K$ be such that $\mathrm{dist}(x,K)=|x-z|$.
If $|x-z|>M^{-h}$ then $B_{M^{-h}}(x)\cap K=\emptyset$ so that $u$ is harmonic on $B_{M^{-h}}(x)$.
Therefore, by subharmonicity of $|\nabla u|^2$ on the same set and the d.u.b. in 
\eqref{e:SuAR} we infer that
\[
 M^{h+1}\stackrel{x\in A_h}{\leq}|\nabla u(x)|^2\leq\fint_{B_{M^{-h}}(x)}|\nabla u|^2\leq n\,M^h,
\]
that is clearly impossible for $M>n$. 

Finally, as $x\in B_{R-2M^{-h}}$ and $|x-z|\leq M^{-h}$ we conclude that $z\in B_R$.
\smallskip

\noindent{{\bf Step 2.}} \emph{Proof of \eqref{e:claimAh}.}

Since $R_{h+1}-R_{h+2}\geq 8M^{-(h+2)}$ (cf. (i) Lemma~\ref{l:technical}), we apply Step~1 to $A_{h+2}$ 
and $R=R_{h+1}$ and then \eqref{e:prelimAh} implies that 
\[
A_{h+2}\cap B_{R_{h+2}}\subset\big(K\cap B_{R_{h+1}}\big)_{M^{-(h+1)}}.
\]
Let $x\in A_{h+2}\cap B_{R_{h+2}}$, $z\in K\cap B_{R_{h+1}}$ be a point of 
minimal distance, and suppose that $z\in K\setminus K_h$. 

Since $R_{h+1}\leq S_h$, by the very definition of $K_h$ we find a ball 
$B\in\cup_{j=1}^h\FF_j$ such that $z\in B$. Since $B=B_t(y)$ for some $y$ and with the radius 
$t\geq\alpha\,M^{-h}$, then $x\in B_{2t}(y)$ as $|x-z|\leq M^{-(h+1)}$ for $M$ sufficiently large. 
Thus, estimate $|\nabla u(x)|^2<M^{h+1}$ follows from \eqref{e:gradbdd} in item (ii) of 
Proposition~\ref{p:alfpor}. This is in contradiction with $x\in A_{h+2}$.
\end{proof}

\bibliographystyle{alpha}

\end{document}